\documentclass[12pt]{amsart}
\usepackage{url, 
	amssymb,setspace, mathrsfs,fontenc, comment}
\usepackage[alphabetic]{amsrefs}
\usepackage{fullpage} 
\usepackage{color}
\usepackage{tikz-cd}
\usepackage[all]{xy}
\usepackage{amsmath}

\usepackage{url, 
	amssymb,setspace, mathrsfs,fontenc}
\usepackage{amsrefs}
\usepackage{graphicx}
\usepackage[mathcal]{euscript}
\usepackage{verbatim}
\usepackage{hyperref}
\hypersetup{backref=true}
\usepackage{mathtools}
\usepackage{tikz}
\usetikzlibrary{chains}

\tikzset{node distance=2em, ch/.style={circle,draw,on chain,inner sep=2pt},chj/.style={ch,join},every path/.style={shorten >=4pt,shorten <=4pt},line width=1pt,baseline=-1ex}

\newtheorem{thm}{Theorem}
\newtheorem{lem}[thm]{Lemma}
\newtheorem{prop}[thm]{Proposition}

\newtheorem{cor}[thm]{Corollary}
\newtheorem{defe}[thm]{Definition}

\theoremstyle{remark}
\newtheorem{rem}[thm]{Remark}

\DefineSimpleKey{bib}{myurl}
\newcommand\myurl[1]{\url{#1}}
\BibSpec{webpage}{
	+{}{\PrintAuthors} {author}
	+{,}{ \textit} {title}
	+{}{ \parenthesize} {date}
	+{,}{ \myurl} {myurl}
}
\usepackage{arydshln}

\newcommand{\nc}{\newcommand}

\nc{\ssec}{\subsection}

\nc{\on}{\operatorname}

\nc{\sE}{\mathscr{E}}
\nc{\sF}{\mathscr{F}}
\nc{\sL}{\mathscr{L}}
\nc{\sD}{\mathscr{D}}
\nc{\sA}{\mathscr{A}}

\nc{\cC}{\mathcal{C}}
\nc{\cG}{\mathcal{G}}
\nc{\cV}{\mathcal{V}}
\nc{\CB}{\mathcal{B}}
\nc {\K}{\mathcal{K}}

\nc{\cE} {\mathcal{E}}
\nc{\Kl}{\mathrm{Kl}}
\nc{\cO}{\mathcal{O}}
\nc{\cF}{\mathcal{F}}
\nc{\cZ}{\mathcal{Z}}
\nc{\bcZ}{\overline{\mathcal{Z}}}
\nc{\bcB}{\overline{\mathcal{B}}}
\nc{\cD}{\mathcal{D}}
\nc{\cDt}{\mathcal{D}^\times}
\nc{\cH}{\mathcal{H}}
\nc{\bZ}{\mathbb{Z}}
\nc{\bH}{\mathbb{H}}
\nc{\bQ}{\mathbb{Q}}
\nc{\bR}{\mathbb{R}}
\nc{\bC}{\mathbb{C}}
\nc{\bQl}{\overline{\mathbb{Q}}_\ell}
\nc{\bQlt}{\bQl^\times} 
\nc{\FG}{\mathrm{FG}}
\nc{\dR}{\mathrm{dR}}
\nc{\dv}{\dot{v}}
\nc{\du}{\dot{u}}
\nc{\bbR}{\mathbb{R}}

\nc{\uG}{\underline{G}}
\nc{\uc}{\underline{c}}
\nc{\uu}{\underline{u}}
\nc{\cU}{\mathcal{U}}
\nc{\rat}{\mathrm{rat}}
\nc{\Hyp}{\mathrm{Hyp}}
\nc{\Lie}{\mathrm{Lie}}
\nc{\ctheta}{\check{\theta}}
\nc{\nil}{\mathrm{nil}}
\nc{\bLX}{\overline{LX}}
\nc{\bOmega}{\overline{\Omega}}
\nc{\tOmega}{\widetilde{\Omega}}

\nc{\fF}{\mathfrak{F}}
\nc{\fB}{\mathfrak{B}}
\nc{\fZ}{\mathfrak{Z}}
\nc{\fx}{\mathfrak{x}}
\nc{\fy}{\mathfrak{y}}
\nc{\fb}{\mathfrak{b}}
\nc{\fk}{\mathfrak{k}}
\nc{\fI}{\mathfrak{i}}
\nc{\fj}{\mathfrak{j}}
\nc{\fg}{\mathfrak{g}}
\nc{\fu}{\mathfrak{u}}
\nc{\fl}{\mathfrak{l}}
\nc{\fn}{\mathfrak{n}}
\nc{\cP}{\mathcal{P}}
\nc{\cQ}{\mathcal{Q}}
\nc{\ft}{\mathfrak{t}}
\nc{\fz}{\mathfrak{z}}
\nc{\fc}{\mathfrak{c}}
\nc{\cfc}{\check{\mathfrak{c}}}
\nc{\fh}{\mathfrak{h}}
\nc{\fp}{\mathfrak{p}}
\nc{\cfp}{\mathring{\mathfrak{p}}}
\nc{\bone}{\mathbf{1}}
\nc{\tg}{\mathtt{g}}
\nc{\hfg}{\widehat{\fg}}
\nc{\ch}{\check{\fh}}
\nc{\hP}{\hat{P}}
\nc{\hg}{\widehat{\mathfrak{g}}}
\nc{\gO}{\mathfrak{g}[\![t]\!]}
\nc{\Ug}{\widehat{U}(\mathfrak{g})}
\nc{\dl}{/\!\!/}

\nc{\bGm}{\mathbb{G}_m}
\nc{\bGa}{\mathbb{G}_a}
\nc{\bL}{\mathbf{L}}
\nc{\bK}{\mathbf{K}}
\nc{\bJ}{\mathbf{J}}
\nc{\bI}{\mathbf{I}}
\nc{\bV}{\mathbb{V}}
\nc{\bM}{\mathbb{M}}
\nc{\bP}{\mathbb{P}}
\nc{\bA}{\mathbb{A}}
\nc{\bN}{\mathbb{N}}

\nc {\Q}{\mathrm{Q}}
\nc{\diag}{\mathrm{diag}}
\nc{\diff}{\mathrm{diff}}
\nc{\ev}{\mathrm{ev}}
\nc{\Res}{\mathrm{Res}}
\nc{\Fl}{\mathcal{F}\ell}
\nc{\Ad}{\mathrm{Ad}}
\nc{\ad}{\mathrm{ad}}
\nc{\pr}{\mathrm{pr}}
\nc{\Sl}{\mathfrak{sl}}
\nc{\gl}{\mathfrak{gl}}
\nc{\ra}{\rightarrow}
\nc{\tra}{\twoheadrightarrow}
\nc{\hra}{\hookrightarrow}
\nc{\quo}{\mathopen{ /\!/}}
\nc{\GL}{\mathrm{GL}}
\nc{\SL}{\mathrm{SL}}
\nc{\Sp}{\mathrm{Sp}}
\nc{\SO}{\mathrm{SO}}
\nc{\so}{\mathfrak{so}}
\nc{\PGL}{\mathrm{PGL}}
\nc{\Bun}{\mathrm{Bun}}
\nc{\supp}{\mathrm{supp}}
\nc{\bgamma}{\bar{\gamma}}
\nc{\ab}{\mathrm{ab}}
\nc{\td}{\mathrm{d}}
\nc{\Ht}{\mathrm{ht}}
\nc{\tX}{\tilde{X}}

\nc         {\rar}[1]       {\stackrel{#1}{\longrightarrow}}

\nc{\fa}{\mathfrak{a}}
\nc{\Hit}{\mathrm{Hit}}

\nc{\RS}{\mathrm{RS}}
\nc{\Loc}{\mathrm{Loc}}
\nc{\tLoc}{\widetilde{\mathrm{Loc}}}
\nc{\reg}{\mathrm{reg}}
\nc{\im}{\mathrm{Im}}

\nc{\tp}{\mathfrak{p}}
\nc{\cA}{\mathcal{A}}
\nc{\cY}{\mathcal{Y}}

\nc{\opp}{\mathrm{opp}}
\nc{\Ind}{\mathrm{Ind}}
\nc{\sAn}{\mathrm{can}}
\nc{\Lg}{\check{\fg}}
\nc{\cDelta}{\check{\Delta}}
\nc{\cPhi}{\check{\Phi}}
\nc{\LV}{\check{V}}
\nc{\Lh}{\check{h}}
\nc{\LG}{\check{G}}
\nc{\cT}{\check{T}}
\nc{\ct}{\check{\ft}}
\nc{\cB}{\check{B}}
\nc{\cb}{\check{\fb}}
\nc{\cN}{\check{N}}
\nc{\sN}{\mathcal{N}}
\nc{\cn}{\check{\fn}}
\nc{\Spec}{\mathrm{Spec}}
\nc{\End}{\mathrm{End}}
\nc{\crho}{\check{\rho}}
\nc{\clambda}{\check{\lambda}}
\nc{\rX}{\mathring{X}}
\nc{\ru}{\mathring{u}}

\nc{\sW}{\mathscr{W}}
\nc{\sH}{\mathscr{H}}
\nc{\sV}{\mathscr{V}}
\nc{\geom}{\mathrm{geom}}
\nc{\Irr}{\mathrm{Irr}}
\nc{\fm}{\mathfrak{m}}
\nc{\aff}{\mathrm{aff}}
\nc{\Aut}{\mathrm{Aut}}
\nc{\cJ}{\mathcal{J}}
\nc{\fs}{\mathfrak{s}}
\nc{\Stab}{\mathrm{Stab}}
\nc{\st}{\mathrm{st}}
\nc{\tw}{{\widetilde{w}}}
\nc{\gen}{\mathrm{gen}}
\nc{\genn}{\mathrm{genn}}
\nc{\sss}{\mathrm{ss}}
\nc{\fsp}{\mathfrak{sp}}
\nc{\Hom}{\mathrm{Hom}}
\nc{\bm}{\mathbf{m}}
\nc{\HG}{\mathcal{HG}}
\nc{\Gal}{\mathrm{Gal}}
\nc{\Sym}{\mathrm{Sym}}
\nc{\rank}{\mathrm{rank}}

\nc{\calX}{\mathcal{X}}
\nc{\tP}{\mathtt{P}}
\nc{\tL}{\mathtt{L}}
\nc{\tU}{\mathtt{U}}

\nc{\tW}{\widetilde{W}}
\nc{\tdb}{\tilde{b}}
\nc{\tdd}{\tilde{d}}
\nc{\tv}{\tilde{v}}
\nc{\Hk}{\on{Hk}}
\nc{\cL}{\mathcal{L}}
\nc{\talpha}{\widetilde{\alpha}}
\nc{\tQ}{{\widetilde{Q}}}
\nc{\ochi}{\overline{\chi}}
\nc{\tdelta}{\widetilde{\Delta}}
\nc{\wt}{\mathrm{wt}}
\nc{\fQ}{\mathfrak{Q}}
\nc{\bbP}{\mathbb{P}}

\nc{\inv}{\mathrm{inv}}
\nc{\Rep}{\mathrm{Rep}}
\nc{\Conn}{\mathrm{Conn}}
\nc{\Hecke}{\mathrm{Hecke}}
\nc{\Gr}{\mathrm{Gr}}
\nc{\GR}{\mathrm{GR}}
\nc{\IC}{\mathrm{IC}}
\nc{\Std}{\mathrm{Std}} 
\nc{\Db}{\mathrm{D}^{\mathrm{b}}}
\nc{\tr}{\mathrm{tr}}
\nc{\gr}{\mathrm{gr}}
\nc{\tmin}{\mathrm{min}}
\nc{\Fun}{\mathrm{Fun}~}

\nc{\bbA}{\mathbb{A}}
\nc{\mO}{\mathrm{O}}

\newcommand{\quash}[1]{}

\AtEndDocument{\bigskip{\footnotesize

\textsc{Tsao-Hsien Chen, School of Mathematics, University of Minnesota, Twin cities, Minneapolis, MN 55455 } \par
\textit{E-mail address}: \texttt{chenth@umn.edu} \par
		
\textsc{Lingfei Yi, Shanghai Center for Mathematical Sciences, Fudan University, Shanghai 200438, China} \par
\textit{E-mail address}: \texttt{yilingfei@fudan.edu.cn} \par
}}

\begin{document} 
\renewcommand{\thepart}{\Roman{part}}

\renewcommand{\partname}{\hspace*{20mm} Part}

\title{Matsuki duality for loop groups} 
\date{\today}

 \author{Tsao-Hsien Chen and Lingfei Yi}

\dedicatory{To George Lusztig
with deepest admiration}

\maketitle
\begin{abstract}
    We establish  versions of Matsuki duality for loop groups. The main result is a bijection between  symmetric loop group orbits and  real 
     polynomial loop group orbits on the affine Grassmannians or  affine flag varieties. Along the way we obtain orbit  parametrizations  and make connections with vector bundles on real and twistor-$\mathbb P^1$ and Kottwitz sets .
    
\end{abstract}

\tableofcontents
\section{Introduction}
Let $G$ be a complex connected reductive group.
Let $G_\bR$ be a real from of $G$ and let $K$ be a symmetric subgroup of $G$ such that $K_\bR=G_\bR\cap K$
is a maximal compact subgroup of both $G_\bR$ and $K$.
The celebrated Matsuki duality for $G$  is a bijection 
\[K\backslash\mathcal B\longleftrightarrow G_\bR\backslash\mathcal{B} \]
between 
 $K$-orbits and $G_\bR$-orbits on the flag variety $\mathcal B$ of $G$ with  remarkable properties \cite{M}.
In this paper we establish a version of Matsuki duality for loop groups.
The 
main result is a bijection 
\[
  K(\!(t)\!)\backslash\Fl\longleftrightarrow LG_\bR\backslash\Fl  
\]
between 
$K(\!(t)\!)$-orbits and 
$LG_\bR$-orbits on the affine flag variety $\Fl$ with several remarkable properties. Here 
$K(\!(t)\!)$ is the loop group of $K$ and $LG_\bR$ is the polynomial loop group of $G_\bR$ 
consisting of maps that
take the unit circle $S^1\subset\mathbb C$ to $G_\bR\subset G$.
We also establish a version of Matsuki duality for 
twisted loop groups orbits 
and make connections with vector bundles on the twistor-$\mathbb P^1$ and Kottwitz sets $B(G_\bbR)$ for $G_\bR$. 

Our approach is entirely combinatorial and 
is inspired by 
the work \cite{N} of Nadler on Matsuki duality for affine Grassmannians and 
our previous work \cite{CY} on singularities of $ K(\!(t)\!)$-orbit closures on $\Fl$.
The method of proof
treats simultaneously all
the real forms that are in the same  pure inner class.
Along the way, we obtain  parametrizations for both 
$K(\!(t)\!)$ and $LG_\bR$-orbits
and the equivariant local systems on them.
In a forthcoming work \cite{CNY}, we will provide a 
Morse-theoretic interpretation and refinement 
of the Matuski duality and use it to
upgrade the Matuski duality to an equivalence of equivariant derived categories, extending the works  \cite{MUV} and \cite{CN} in the setting of 
flag varieties and affine Grassmannians.
Such an equivalence is a key ingredient to relate geometric Langlands for real groups and relative Langlands duality, see \cite{C,CN}.

We now describe the paper in more details.
% The bijection is a key ingredient in geometric approach  to representation theory of real groups, see, e.g., \cite{MUV}.%

\subsection{Main results}
\subsubsection{}
We fix $\epsilon\in\{1,-1\}$.
Let $F=\bC(\!(t)\!)$ and  $\cO=\bC[\![t]\!]$ be the formal Laurent series and Taylor series rings.
Let $G(F)$ and $G(\cO)$ be the formal loop group and formal arc group of $G$.
Let $G[t,t^{-1}]\subset G(F)$
be the polynomial loop group and 
$G[t]\subset G(\cO)$, $G[t^{-1}]\subset G(F)$
be the polynomial arc group and the negative polynomial arc group.
Let $I\subset G(\cO)$ be an Iwahori subgroup associated to a Borel subgroup $B$ and we denote by $I_p=I\cap G[t]$.
Let $\Gr=G(F)/G(\cO)$ and $\Fl=G(F)/I$ be the affine Grassmannian and affine flag variety respectively

Let $\eta_0$ be a conjugation of $G$ such that $G_\bR=G^{\eta_0}$
and $\theta_0$ be a complex Cartan involution on $G$ such that $K=G^{\theta_0}$.
Let $G_c=G^{\eta_0\circ\theta_0}=G^{\theta_0\circ\eta_0}$ be a compact real from of $G$ and let $T_c\subset G_c$
be a maximal torus.
Consider  the twisted involution on $G(F)$:
\begin{equation}
	\theta(\gamma)(t)=\theta_0(\gamma(\epsilon t)).
\end{equation}
We denote by $G(F)^\theta$ 
the $\theta$ fixed points subgroup 
of $G(F)$ 
and the $\theta$ anti-fixed points set  by
\[G(F)^{\on{inv}\circ\theta}=\{\gamma\in G(F)|\theta(\gamma)=\gamma^{-1}\}.\]
The loop group $G(F)$ acts on $G(F)^{\on{inv}\circ\theta}$  by the $\theta$-conjugation action 
$
h\cdot_{\theta} \gamma=h\gamma\theta(h)^{-1}$.
Consider the twisted involution on $G[t,t^{-1}]$:
\begin{equation}
	\eta(\gamma)(t)=\eta_0(\gamma(\epsilon \bar t^{-1})).
\end{equation}
Denote by $G[t,t^{-1}]^\eta$
the $\eta$ fixed point subgroup of $G[t,t^{-1}]$
and the $\eta$ anti fixed points 
 by 
\[G[t,t^{-1}]^{\on{inv}\circ\eta}=\{\gamma\in G[t,t^{-1}]|\eta(\gamma)=\gamma^{-1}\}.\]
Then $G[t,t^{-1}]^{\on{inv}\circ\eta}$ is stable under 
the $\eta$-conjugation action on $G[t,t^{-1}]$ by 
$h\cdot_{\eta}\gamma=g\gamma\eta(\gamma)^{-1}$.
Note that in the un-twisted setting, that is, when $\epsilon=1$, we have 
$G(F)^{\theta}=K(F)$ and $G[t,t^{-1}]^\eta=LG_\bR$
consisting of maps 
$\gamma:\bC^\times\to G$
that take the unit circle $S^1\subset\mathbb C$ to $G_\bR\subset G$.
\subsubsection{}
The first main result of the paper is a version of Matsuki duality for loop groups: 
\begin{thm}\label{intro: main 1}\mbox{}
\begin{itemize}
    \item [(i)]
    There is a bijection
	\[
	G(\cO)\backslash_\theta G(F)^{\inv\circ\theta}
	\simeq
	G[t]\backslash_\eta(G[t,t^{-1}])^{\inv\circ\eta}
	\]
   between $G(\cO)$-orbits
   on $G(F)^{\on{inv}\circ\theta}$
   and $G[t]$-orbits on $G[t,t^{-1}]^{\on{inv}\circ\eta}$
   such that the intersection of the corresponding 
   $G(\cO)$-orbit and $G[t]$-orbit
   is a single  $G_c$-orbit.
   Furthermore, $G(\cO)$-equivariant local systems on a $G(\cO)$-orbit corresponds bijectively to $G[t]$-equivariant local systems on the corresponding $G[t]$-orbit.
   \item[(ii)]
   There is a bijection
	\[
	I\backslash_\theta G(F)^{\inv\circ\theta}
	\simeq
	I_p\backslash_\eta(G[t,t^{-1}])^{\inv\circ\eta}
	\]
   between $I$-orbits
   on $G(F)^{\on{inv}\circ\theta}$
   and $I_p$-orbits on $G[t,t^{-1}]^{\on{inv}\circ\eta}$
   such that the intersection of the corresponding 
   $I$-orbit and $I_p$-orbit
   is a single $T_c$-orbit.
   Furthermore, $I$-equivariant local systems on an $I$-orbit
   corresponds bijectively to $I_p$-equivariant local systems on the corresponding $I_p$-orbit.
\end{itemize}
\end{thm}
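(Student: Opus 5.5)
We follow Nadler's approach for affine Grassmannians and the combinatorics of \cite{CY}. The key object is the common subset
\[
\mathcal{X}=G(F)^{\inv\circ\theta}\cap G[t,t^{-1}]^{\inv\circ\eta},
\]
where we regard $G[t,t^{-1}]\subset G(F)$. This is the set of polynomial loops $\gamma$ with $\theta(\gamma)=\gamma^{-1}=\eta(\gamma)$. Such loops take values in the fixed points of $\theta_0\circ\eta_0$ on the circle after the twist, so $\mathcal{X}$ consists of ``compact-type'' loops. The group $G_c$ (respectively $T_c$) acts on $\mathcal{X}$ by twisted conjugation. Indeed, $G_c=G(\cO)^\theta\cap G[t]^\eta$ up to the natural identification by constant loops, and $T_c$ is the corresponding intersection for $I$ and $I_p$.

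The plan is to prove three statements.
\begin{itemize}
\item[(a)] Every $G(\cO)$-orbit on $G(F)^{\inv\circ\theta}$ meets $\mathcal{X}$.
\item[(b)] Every $G[t]$-orbit on $G[t,t^{-1}]^{\inv\circ\eta}$ meets $\mathcal{X}$.
\item[(c)] For $x,y\in\mathcal{X}$, the following are equivalent: $x$ and $y$ lie in the same $G(\cO)$-orbit; they lie in the same $G[t]$-orbit; they lie in the same $G_c$-orbit. The analogous statement holds with $I$, $I_p$ and $T_c$.
\end{itemize}
Together these give the bijection, and show that corresponding orbits meet exactly in one $G_c$- (respectively $T_c$-) orbit.

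For (a) and (b) we use Cartan/Iwasawa-type decompositions. Any $\gamma\in G(F)^{\inv\circ\theta}$ can be $\theta$-conjugated by $G(\cO)$ into a normal form $x=\dot w\, t^\lambda$ with $x\theta(x)=1$. Here $w$ is a twisted involution in the (extended) affine Weyl group of a $\theta$-stable maximal torus $T$ compatible with $T_c$, and the normal form is defined up to constants in $T$. This is the classification of $\theta$-twisted conjugacy classes via the affine Bruhat/Iwasawa decomposition $G(F)=\bigsqcup I\tilde W I$, together with a Lang-type vanishing argument for $\theta$-twisted cohomology of the pro-unipotent radical of $I$. Likewise, any $\gamma\in G[t,t^{-1}]^{\inv\circ\eta}$ can be $\eta$-conjugated by $I_p$ into the same kind of representative, using the Birkhoff-type decomposition $G[t,t^{-1}]=\bigsqcup I_p\tilde W I_p^{-}$ and the fact that $\eta$ exchanges $t$ and $\bar t^{-1}$. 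The key point is that the torus part can be normalized so that $\theta_0$ and $\eta_0$ act compatibly on it. This lets the representatives $\dot w t^\lambda c$ with $c\in T$ be chosen inside $\mathcal{X}$: we adjust $c$ by a square-root/polar decomposition $T=T_c\cdot\exp(i\ft_c)$, killing the noncompact part by a twisted conjugation.

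For (c), suppose $x,y\in\mathcal{X}$ are related by $h\in G(\cO)$, so that $y=h x\theta(h)^{-1}$. Applying $\eta$ and using $\eta(x)=\theta(x)$, we get $y=\eta\theta(h)\,x\,\eta(h)^{-1}$. We compare the two expressions through the uniqueness of the Iwahori/Birkhoff factorizations. The stabilizer computations show that $h$ can be replaced by an element of $G(\cO)\cap G[t]^{\eta\theta}$-type, whose evaluation at $t=0$ lies in $G_c$; in the Iwahori case this element lies in $T_c$. This mirrors Nadler's argument that $G(\cO)\cap G[t^{-1}]=G$.

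For local systems, we identify equivariant local systems with representations of component groups of stabilizers. We show that the stabilizers $\Stab_{G(\cO)}(x)$ and $\Stab_{G[t]}(x)$ both retract onto $\Stab_{G_c}(x)$, since both are extensions of its complexification by pro-unipotent/contractible groups. This gives isomorphic $\pi_0$.

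The main obstacle is (c), specifically the uniqueness. Showing that $G(\cO)$-equivalence of compact representatives implies $G_c$-equivalence requires controlling the stabilizers and the $\theta$-twisted Weyl group combinatorics of \cite{CY} simultaneously for all real forms in the pure inner class. We would first try a Cartan-decomposition argument on the positive-definite Hermitian-type element $h\,\eta\theta(h)^{-1}$.
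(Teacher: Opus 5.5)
\paragraph{There is a genuine gap.} Your claim (c) is false on the set $\mathcal{X}=G(F)^{\inv\circ\theta}\cap G[t,t^{-1}]^{\inv\circ\eta}=(G[t^{\pm1}]^{\eta_c})^{\inv\circ\theta}$, because this set is far too large.

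\textbf{Counterexample.} Take $G=\GL_2$, $\epsilon=1$, $\theta_0=\mathrm{id}$, $\eta_0=\eta_{c,0}$ (the $\mathrm{U}(2)$ example of Section~6). For real $\phi$ put $c=\cos 2\phi$, $s=\sin 2\phi$ and
\[
\gamma_\phi=\begin{pmatrix} c & st^{-1}\\ st & -c\end{pmatrix}.
\]
\begin{itemize}
\item We have $\gamma_\phi^2=1$, and $\gamma_\phi$ is Hermitian on $|t|=1$. Hence $\gamma_\phi\in\mathcal{X}$ for every $\phi$.
\item For $s\neq0$ the Cartan type is $(1,-1)$, and that stratum is a single $G(\cO)$-orbit. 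So all these $\gamma_\phi$ are $G(\cO)$-conjugate to $\gamma_{\pi/4}=\begin{pmatrix}0&t^{-1}\\ t&0\end{pmatrix}$.
\item For $c\neq 0$ one has
\[
\gamma_\phi=\begin{pmatrix}1&0\\ (s/c)t&1\end{pmatrix}\begin{pmatrix}c&0\\ 0&-c^{-1}\end{pmatrix}\begin{pmatrix}1&(s/c)t^{-1}\\ 0&1\end{pmatrix}\in G[t]\,G\,G[t^{-1}].
\]
\end{itemize}
So $\gamma_{\pi/8}$ has Birkhoff type $0$, while $\gamma_{\pi/4}$ has Birkhoff type $(1,-1)$. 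They lie in the same $G(\cO)$-orbit but in different $G[t]$-orbits. They are also not $G_c$-conjugate: $G_c$ conjugates each Fourier coefficient, and the constant coefficients $\mathrm{diag}(c,-c)$ and $0$ are not conjugate.

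Consequently the assignment ``$\mathcal{O}_X\mapsto$ the $G[t]$-orbit of a point of $\mathcal{O}_X\cap\mathcal{X}$'' is ill-defined. The example also shows that non-matched orbits can meet in a positive-dimensional family, so no argument of the kind you sketch for (c) can succeed.

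Two further slips in (c):
\begin{itemize}
\item $\eta$ does not act on $G(\cO)$, so ``applying $\eta$'' to $y=hx\theta(h)^{-1}$ with $h\in G(\cO)$ is meaningless.
\item $G(\cO)^\theta\cap G[t]^\eta=K\cap G_\bR$, not $G_c$. The group compatible with both twisted actions is $\{h\in G[t]:\theta(h)=\eta(h)\}=G[t]^{\eta_c}=G_c$.
\end{itemize}

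\paragraph{The missing idea} is to pass to the smaller locus $\mathcal{X}_0=((Gt^{X_*(T)}G)^{\inv\circ\theta})^{\eta_c}$, built from the finite-dimensional pieces $Gt^\lambda G$. In the Iwahori case it is $((N_{G(F)}(T(F)))^{\inv\circ\theta})^{\eta_c}$. The paper's proof has two steps.
\begin{itemize}
\item \emph{Correct version of your (a)--(c).} Each $G(\cO)$-orbit and each $G[t]$-orbit meets $\mathcal{X}_0$ in exactly one $G_c$-orbit. This comes from parametrizing both sides by $\bigsqcup_\lambda L_{\lambda,c}\backslash C_{\lambda,0}$. The tools are a $t$-adic filtration argument on the $\theta$-side, unipotent square roots on the $\eta$-side, and $L_\lambda=L_{\lambda,c}\exp(\fp_{c,\lambda})$ on both. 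The matching is then defined by equality of parameters.
\item \emph{Matched orbits meet only inside $\mathcal{X}_0$.} This is Lemma~\ref{l:intersection 2}. It uses $G[t^{-1}]\cap(\Ad_{t^{-\lambda}}G(\cO))G(\cO)=(L^{<0}G\cap\Ad_{t^{-\lambda}}G[t])G$ and $\eta_c$-invariance to force the intersection into $Gt^\lambda G$. The decomposition $G=G_c\exp(\fp_c)$ then cuts it down to $G_c\cdot_\theta x_\lambda$. In the Iwahori case the analogous argument forces the intersection into $\tw T$.
\end{itemize}

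Your normal forms $\dot w t^\lambda c$ with $c\in T_c$ do lie in $\mathcal{X}_0$, so (a) and (b) survive once (c) is restricted to $\mathcal{X}_0$. The stabilizer comparison should likewise be run at these points: evaluate $Z_{G(\cO)}(x_\lambda)\to Z_{L_\lambda}(g_0)$, then apply the Cartan retraction onto $Z_{L_{\lambda,c}}(g_0)$. Note that on the $\eta$-side the reductive part is a real form, not a complexification.
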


Theorem \ref{intro: main 1} is the combination of  Theorem \ref{t:affine Matsuki} and Theorem \ref{t:Iwahori Matsuki}.
We refer to Section \ref{spherical orbits para}
and \ref{iwahori orbits para} for a more
detailed explanation of the statement including 
orbits parametrizations and 
the characterization of the bijections.

The anti-fixed points set $G(F)^{\on{inv}\circ\theta}$ 
(resp. $G[t,t^{-1}]^{\on{inv}\circ\eta}$)
is in fact a finite union of $G(F)$-orbits 
(resp. $G[t]$-orbits) 
and, by restricting to the orbit through the based points, we obtain 
a Matsuki duality for $\Gr$
and $\Fl$:

\begin{thm}\label{intro: main 2}\mbox{}
\begin{itemize}
    \item[(i)] 
    There is a bijection
	\[
	 G(F)^{\theta}\backslash\Gr
	\simeq
G[t,t^{-1}]^{\eta}\backslash\Gr
	\]
   between $G(F)^{\theta}$-orbits
   and $G[t,t^{-1}]^{\eta}$-orbits on $\Gr$
   such that the intersection of the corresponding 
   $G(F)^{\theta}$-orbit and $G[t,t^{-1}]^{\eta}$-orbit 
   is a single $G(F)^\theta\cap G[t,t^{-1}]^\eta$-orbit.
   Furthermore, $G(F)^\theta$-equivariant local systems on 
   a $G(F)^{\theta}$-orbit corresponds bijectively to $G[t,t^{-1}]^{\eta}$-equivariant local systems on the
   corresponding $G[t,t^{-1}]^{\eta}$-orbit.
   \item[(ii)]
   There is a bijection
	\[
	 G(F)^{\theta}\backslash\Fl
	\simeq
G[t,t^{-1}]^{\eta}\backslash\Fl
	\]
   between $G(F)^{\theta}$-orbits
   and $G[t,t^{-1}]^{\eta}$-orbits on $\Fl$
   such that the intersection of the corresponding 
   $G(F)^{\theta}$-orbit and $G[t,t^{-1}]^{\eta}$-orbit 
   is a single  $G(F)^\theta\cap G[t,t^{-1}]^\eta$-orbit.
   Furthermore, $G(F)^\theta$-equivariant local systems on a $G(F)^{\theta}$-orbit corresponds bijectively to 
   $G[t,t^{-1}]^{\eta}$-equivariant local systems 
   on the corresponding $G[t,t^{-1}]^{\eta}$-orbit.
\end{itemize}
\end{thm}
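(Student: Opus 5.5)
The plan is to deduce Theorem \ref{intro: main 2} from Theorem \ref{intro: main 1} by restricting to the components of the anti-fixed point sets that pass through the base point. Consider the maps
\[
\pi_\theta: G(F)\to G(F)^{\inv\circ\theta},\ \ g\mapsto g\theta(g)^{-1},\qquad \pi_\eta: G[t,t^{-1}]\to G[t,t^{-1}]^{\inv\circ\eta},\ \ g\mapsto g\eta(g)^{-1}.
\]
The map $\pi_\theta$ is $G(F)$-equivariant for left multiplication on the source and the $\theta$-twisted conjugation on the target. Its image is the $\theta$-twisted orbit $\cO_\theta$ of $1$, and it identifies $G(F)/G(F)^\theta$ with $\cO_\theta$. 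The same holds for $\pi_\eta$ and $\cO_\eta$.

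Passing to inverses and using the standard correspondence $H\backslash G/K\leftrightarrow K\backslash G/H$, we get
\[
G(F)^\theta\backslash \Gr \simeq G(\cO)\backslash_\theta \cO_\theta,
\]
and similarly with $I$ in place of $G(\cO)$ for $\Fl$. Stabilizers correspond: for $x=gG(\cO)$, the stabilizer of $x$ in $G(F)^\theta$ is identified with the stabilizer of $\pi_\theta(g^{-1})$ in $G(\cO)$, after conjugation. So equivariant local systems, being representations of component groups of stabilizers, also correspond.

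On the real side, note that $G[t,t^{-1}]$ acts transitively on $G(F)/G(\cO)$ and $G[t,t^{-1}]\cap G(\cO)=G[t]$. Hence $\Gr=G[t,t^{-1}]/G[t]$, and likewise $\Fl=G[t,t^{-1}]/I_p$. This gives
\[
G[t,t^{-1}]^\eta\backslash\Gr\simeq G[t]\backslash_\eta\cO_\eta.
\]
This is the only place where the polynomial loop group matters.

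It then suffices to show that the bijection of Theorem \ref{intro: main 1} sends orbits inside $\cO_\theta$ exactly to orbits inside $\cO_\eta$. By Theorem \ref{intro: main 1}, corresponding orbits $\cO$ and $\cO'$ meet in a nonempty $G_c$-orbit (resp. $T_c$-orbit). So it is enough to prove
\[
\cO_\theta\cap G[t,t^{-1}]^{\inv\circ\eta}=\cO_\eta\cap G(F)^{\inv\circ\theta},
\]
that is, that a point lying in both anti-fixed sets is in the $\theta$-orbit of $1$ if and only if it is in the $\eta$-orbit of $1$. This is the main obstacle. The intersection of the two anti-fixed sets should be a union of $G_c$-orbits of elements of the compact torus type, such as twisted cocharacters $\lambda(t)$ times elements of $T_c$ satisfying $\theta_0\eta_0=$ the Cartan involution relation.

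I would first try to attach to such a point a discrete invariant that detects the $G(F)$-orbit and the $G[t]$-orbit simultaneously. A natural candidate is the class in $H^1(\bZ/2,\cdot)$ (the pure inner class, or Kottwitz-type set) given by its value at a fixed point of $t\mapsto\epsilon t$ or $t\mapsto \epsilon\bar t^{-1}$, together with its image in $\pi_0$. One then checks that this invariant is trivial for $\theta$ if and only if it is trivial for $\eta$, using that $\eta_0\theta_0$ is the compact conjugation. Failing that, I would read the statement off directly from the explicit orbit parametrizations in Sections \ref{spherical orbits para} and \ref{iwahori orbits para}.

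Finally, the "single orbit" claim follows by transporting the $G_c$-orbit intersection through the identifications above. The group $G_c$ (resp. $T_c$) acting on the anti-fixed sets corresponds to the stabilizer of $1$ acting on $\Gr$ (resp. $\Fl$). This gives a single $G(F)^\theta\cap G[t,t^{-1}]^\eta$-orbit, once one checks that this intersection group is the compact group mapping onto the relevant $G_c$-piece.
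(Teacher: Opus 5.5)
Your reduction is sound and has the same architecture as the paper. You identify $G(F)^\theta$-orbits on $\Gr$ (resp. $\Fl$) with twisted $G(\cO)$- (resp. $I$-)orbits inside $\cO_\theta=\tau_\theta(G(F))$, do the same on the polynomial side with $\cO_\eta=\tau_\eta(G[t^{\pm1}])$, and then check that the bijection of Theorem~\ref{intro: main 1} matches these subsets by testing at the ($\eta_c$-fixed) intersection points.

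\textbf{The gap.} The step you call the main obstacle is exactly what this theorem adds to Theorem~\ref{intro: main 1}, and you do not prove it.

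\begin{itemize}
\item The explicit parametrizations cannot settle it. They describe all twisted orbits on $G(F)^{\inv\circ\theta}$ and $G[t^{\pm1}]^{\inv\circ\eta}$, and say nothing about which ones lie in the image of $\tau_\theta$ or $\tau_\eta$. For $\epsilon=1$ that image is in general a proper subset, e.g.\ for $(\GL_n,\mathrm{O}_n)$.
\item Your invariant via evaluation at a fixed point is unavailable for $\epsilon=-1$. The map $t\mapsto-\bar t^{-1}$ has no fixed points, and formal loops cannot be evaluated at $t=0$.
\item For $\epsilon=1$ the invariant is plausible, but you would have to identify the formal obstruction in $H^1(\bC(\!(t)\!),K)$ with a topological monodromy in $\pi_0(G_\bR)$. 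You do not do this.
\item Your last paragraph tacitly assumes that every $\eta_c$-fixed point of $\cO_\theta$ has a $\tau_\theta$-preimage in $G[t^{\pm1}]^{\eta_c}$. This is needed to pull the intersection $G_c$-orbit back to $\Gr$, and you never establish it.
\end{itemize}

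\textbf{How the paper closes it.}

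\begin{itemize}
\item One inclusion is Lemma~\ref{l:tau_theta cpt preimage}. It uses the unique factorization $G(F)=G[t^{\pm1}]^{\eta_c}\cdot G(\cO)_\fp$ into $\theta$-stable pieces, which shows every $\eta_c$-fixed point of $\cO_\theta$ equals $\tau_\theta(k)$ with $\eta_c(k)=k$. Since $\theta(k)=\eta(k)$ for such $k$, the point also lies in $\cO_\eta$. Together with Theorem~\ref{intro: main 1}, this makes $G(F)^\theta\backslash\Gr\to G[t^{\pm1}]^\eta\backslash\Gr$ well defined and injective.
\item The reverse inclusion cannot be obtained the same way, because $G(\cO)_\fp$ has no $\eta$-stable analogue.
\item For $\epsilon=-1$, the paper proves $\tau_\theta$ is surjective. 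The obstruction is an $H^1$ of a connected reductive group over $\bC(\!(t^2)\!)$, a field of dimension $\le 1$, so it vanishes.
\item Surjectivity of $\tau_\eta$ then follows. Every $G[t]$-orbit has an $\eta_c$-fixed representative, which by the lemma is $\tau_\theta(k)=\tau_\eta(k)$ for some $\eta_c$-fixed $k$. So both base-point components are everything.
\item For $\epsilon=1$, surjectivity is imported from Nadler's theorem \cite{N}.
\item The $\Fl$ case is deduced from the $\Gr$ case, since the Iwahori correspondence refines the spherical one (Remark~\ref{r:compatibility}).
\end{itemize}

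To complete your argument you need these inputs, or a fully worked-out version of your invariant for $\epsilon=1$ together with a separate surjectivity argument for $\epsilon=-1$.
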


Theorem \ref{intro: main 2} is the combination of Theorem \ref{Gr} and Theorem \ref{Fl}.
We refer to Section \ref{spherical orbits para}
and \ref{iwahori orbits para} for a more
detailed explanation
including the compatibility with Theorem \ref{intro: main 1}.
In the un-twisted case $\epsilon=1$,  Theorem \ref{intro: main 2}.(i) specializes to the Matsuki duality for $\Gr$ in \cite{N}.
The case of affine flag varieties $\Fl$ and the twisted setting $\epsilon=-1$ are new (even in the case of $\Gr$). 

\begin{rem}
In \cite{CNY}, we will 
show that that all the correspondences in 
    Theorem \ref{intro: main 1} and Theorem \ref{intro: main 2}
    can be characterized by the property that
    the intersection of the corresponding orbits is a single orbit of
    $G_c,T_c,G(F)^\theta\cap G[t,t^{-1}]^\eta$ respectively.
    For the case of Theorem \ref{intro: main 2}.(ii) and $\epsilon=1$,
    this is known in \cite[Theorem 1.2]{N}.

\end{rem}

\subsubsection{}
Finally, we make connectoins to 
vector bundles on real and twistor-$\mathbb P^1$. 
Let $\mathbb P^1$ be the complex projective line
with coordinate $t$.
Consider the conjugation on $\mathbb P^1$
\[\eta:\mathbb P^1\to \mathbb P^1,\ \ \eta(t)=
\epsilon\bar{t}^{-1}.\]
Denote by 
$\mathbb P^1_{\eta}$ the scheme over $\mathbb R$ that is obtained by descending 
the complex projective line $\mathbb P^1$ to $\mathbb R$ via the conjugation $\eta$.
If $\epsilon=1$, then $\mathbb P^1_\eta\cong\mathbb P^1_\mathbb R$ is the real projective with real points $\mathbb P^1_\mathbb R(\mathbb R)=S^1=\{t\in\mathbb{C}||t|=1\}$ the unit circle.
If $\epsilon=-1$, then  $\mathbb P^1_\eta\cong\widetilde{\mathbb P}^1_\mathbb R\cong\on{Proj}(\bbR[x,y,z]/(x^2+y^2+z^2))$ is the 
so called twistor-$\mathbb P^1$.
Denote by 
\[\Bun_G(\mathbb P^1)_{\eta}(\mathbb R)\]
the set of isomorphism classes of 
$G_\bbR$-bundles on $\mathbb P^1_\eta$
and 
\[\Bun_G(\mathbb P^1,0,\infty)_{\eta}(\mathbb R)\]
 the set of isomorphism classes of $G_\bR$-bundles on $\mathbb P^1_\eta$
together with  $B$-reductions  at $[0]=[\infty]\in\mathbb P^1(\bC)/\eta$.
The uniformization of real bundles on $\mathbb P^1_\eta$ provides an identification of $\Bun_G(\mathbb P^1)_{\eta}(\mathbb R)$ and 
$\Bun_G(\mathbb P^1,0,\infty)_{\eta}(\mathbb R)$ with the right hand sides of Theorem \ref{intro: main 1},
and hence we obtain the following 
Matsuki duality between 
spherical and Iwahori orbits on 
$G(F)^{\on{inv}\circ\theta}$
and $G_\bR$-bundles on $\mathbb P^1_\eta$:

\begin{thm}\label{intro: main 3}\mbox{}
\begin{itemize}
    \item[(i)]  
    There is a bijection
	\[
	G(\cO)\backslash_\theta G(F)^{\inv\circ\theta}
	\simeq\Bun_G(\mathbb P^1)_{\eta}(\mathbb R)
	\]
   between $G(\cO)$-orbits
   on $G(F)^{\on{inv}\circ\theta}$
   and isomorphism classes of $G_\bR$-bundle 
   on $\mathbb P^1_\eta$.
   \item[(ii)]  
   There is a bijection
	\[
	I\backslash_\theta G(F)^{\inv\circ\theta}
	\simeq\Bun_G(\mathbb P^1,0,\infty)_{\eta}(\mathbb R)
	\]
   between $I$-orbits
   on $G(F)^{\on{inv}\circ\theta}$
   and isomorphism classes of $G_\bR$-bundles 
   on $\mathbb P^1_\eta$ together with $B$-reductions at $[0]=[\infty]\in\mathbb P^1(\bC)/\eta$.
   \end{itemize}
\end{thm}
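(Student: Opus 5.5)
The plan is to compose Theorem \ref{intro: main 1} with a uniformization statement for real bundles on $\mathbb P^1_\eta$. Concretely, I would prove two bijections
\[
G[t]\backslash_\eta G[t,t^{-1}]^{\inv\circ\eta}\simeq \Bun_G(\mathbb P^1)_\eta(\mathbb R),\qquad
I_p\backslash_\eta G[t,t^{-1}]^{\inv\circ\eta}\simeq \Bun_G(\mathbb P^1,0,\infty)_\eta(\mathbb R),
\]
and then compose each with the corresponding bijection of Theorem \ref{intro: main 1}.

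To set up the first bijection, I would describe a $G_\bR$-bundle on $\mathbb P^1_\eta$ as a $G$-bundle $\cE$ on $\mathbb P^1$ together with an $\eta$-semilinear isomorphism $\sigma:\eta^*\overline{\cE}\simeq\cE$ that is compatible with $\eta_0$ on the structure group and satisfies $\sigma\circ\eta^*\bar\sigma=\mathrm{id}$. Here the conjugation $\eta$ swaps the discs $|t|<1$ and $|t|>1$ and preserves $\bC^\times$. Since $G$ is connected, every $G$-bundle on $\mathbb A^1=\mathbb P^1\setminus\{\infty\}$ is trivial; the $\mathbb A^1$ case is standard, via Raghunathan or Quillen–Suslin type results. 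So I would choose a trivialization $\phi$ of $\cE|_{\mathbb A^1}$. Transporting it by $\sigma$ gives a trivialization $\eta^*\bar\phi$ over $\eta(\mathbb A^1)=\mathbb P^1\setminus\{0\}$. The transition function on the overlap $\bC^\times$ is an element $\gamma\in G[t,t^{-1}]$, and the cocycle condition $\sigma\circ\eta^*\bar\sigma=\mathrm{id}$ translates exactly into $\eta(\gamma)=\gamma^{-1}$. Changing $\phi$ by $h\in G[t]=\Aut(\text{trivial bundle on }\mathbb A^1)$ changes $\gamma$ to $h\gamma\eta(h)^{-1}$. Conversely, any $\gamma$ in the anti-fixed set glues a bundle with real structure. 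This yields the first bijection.

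For the Iwahori version, I would note that $0$ and $\infty=\eta(0)$ form a single closed point $[0]$ of $\mathbb P^1_\eta$. A $B$-reduction at $[0]$ is therefore a $B$-reduction of the fiber $\cE_0$; the one at $\infty$ is then determined by $\sigma$. Requiring $\phi$ to carry this reduction to the standard $B$ cuts the gauge group down to $\{h\in G[t]: h(0)\in B\}=I_p$, which gives the second bijection.

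The main obstacle is checking the conventions. First, the real structure on the bundle must be compatible with $\eta_0$, so that one gets $G_\bR$-bundles and not bundles for another pure inner form. Second, the $\eta$-conjugation action has to match the paper's normalization of $\eta(\gamma)(t)=\eta_0(\gamma(\epsilon\bar t^{-1}))$, including the sign $\epsilon$ in the twistor case. I expect both to reduce to unwinding definitions carefully. A further subtlety is that when $\epsilon=-1$ the descended curve has no real points. This does not matter for the argument, because the triviality of bundles on $\mathbb A^1$ is used only over $\bC$.
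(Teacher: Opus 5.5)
Your proposal is correct and follows essentially the same route as the paper. The paper proves the uniformization bijections (Propositions \ref{Unif spherical} and \ref{unif Iwahori}) by your argument: trivialize $\cE|_{\mathbb A^1_0}$, transport the trivialization by $c$ to get an element of $G[t,t^{-1}]^{\inv\circ\eta}$ well-defined up to $\eta$-twisted conjugation by $G[t]$ (respectively $I_p$ for trivializations sending $\cF_0$ to $B$), and then composes with Theorem \ref{intro: main 1}. The triviality of $G$-bundles on $\mathbb A^1$ and the converse gluing step, which you make explicit, are left implicit there.
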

Theorem \ref{intro: main 3} is the combination of  Proposition \ref{Unif spherical} and Proposition \ref{unif Iwahori}.
We refer to Section \ref{Real bundles}
 for a more
detailed explanation of the statement including 
the connections with Kottiwtz set $B(G_\bR)$ for $G_\bR$, Section \ref{Kottwitz}.
Theorem \ref{intro: main 3}
is a key ingredient 
connecting relative and real Langlands duality, see \cite{C} and \cite{CN}. 

\begin{rem}
In the case $G_\bR=\GL_n(\mathbb R$),
the set 
$\Bun_G(\mathbb P^1)_{\eta}(\mathbb R)$ 
is what  Simpson  called 
circular or antipodal  real twistor structures
depending on whether $\epsilon=1$ or $-1$, see  \cite[Section 2]{S}.
\end{rem}

\subsection{Derived equivalences}
In a forthcoming work \cite{CNY}, we will provide a Morse-theoretic interpretation and refinement of the Matsuki dualities
 and use it to obtain a lift of the main results   
to an equivalence of derived categories of sheaves,
extending the previous work 
in  \cite{MUV} and  \cite{CN} in the settings of flag varieties $\mathcal B$ and affine Grassmannians $\Gr$ with  $\epsilon=1$.

\begin{rem}
In the un-twisted setting of $\Gr$
    with $\epsilon=1$ in \cite{N} and \cite{CN}, there is a
    remarkable connection between the Matsuki duality and
the spherical orbits on the real affine Grassmannian $\Gr_\bR=G_\bR(\bR(\!(t)\!))/G_\bR(\bR[\![t]\!])$.
Using such a connection one can give another construction
of the Matsuki duality 
and its lifting to derived equivalences
without the Morse-theoretic interpretation.
Such a connection to real affine Grassmannians is not available in the setting of affine flag varieites $\Fl$
and the twsited setting $\epsilon=-1$.

\end{rem}

\subsection{Organization}
In Section \ref{loop group}, we recall basic notions in real groups and discuss
involutions on loop groups. 
In Section \ref{spherical orbits para}, 
we study 
spherical orbits parametrizations and establish
Matsuki duality for spherical orbits.
In Section \ref{iwahori orbits para}, we study 
Iwahori orbits parametrizations and 
establish Matsuki duality for Iwahori orbits.
In Section \ref{Real bundles}, 
we study connections with 
bundles on real and twsitor-$\mathbb{P}^1$
and Kottwitz sets.
In Section \ref{examples}, we discuss examples for real froms of $\GL_2$.

\subsection{Acknowledgements} 
The paper and our previous work \cite{CY} are inspired by Lusztig's 
influential works \cite{LV} and \cite{KL} and  his questions in \cite{L}. 
We dedicate this work to him with our deepest gratitude.

The authors also would like to thank Dougal Davis and 
David Nadler for useful discussions.
The research of
T.-H.~Chen is supported by NSF grant DMS-2143722.
Lingfei Yi is supported by the National Key R\&D Program of China No. 2024YFA1014600 and Grant No. JIH1414062Y of Fudan University. 

\section{Loop groups}\label{loop group}
\subsection{Group data}
Let $G$ be a complex connected reductive group.
Let $\eta_0$ be a conjugation on $G$ with real form
$G_\bbR=G^{\eta_0}$.
Let $\eta_{c,0}$ be a compact conjugation of $G$
such that $\theta_0=\eta_0\circ\eta_{c,0}=\eta_{c,0}\circ\eta_0$ is an involution on $G$.
Let $G_c=G^{\eta_{c,0}}$ and $K=G^{\theta_0}$ 
 be the associated  compact real form and symmetric subgroup.
Let $\fg=\fk\oplus\fp_c$ the eigenspace decomposition
of $\eta_{c,0}$, where $\fk=\Lie(K)$.
Let $S_\bbR\subset G_\bbR$ be an $\eta_{c,0}$-stable maximal split torus 
contained in a maximal torus $T_\bR\subset G_\bbR$.
Pick a minimal parabolic $P_\bbR\subset G_\bbR$ such that $T_\bbR\subset P_\bbR$
and a Levi factor $L_\bbR$ such that 
$T_\bbR\subset L_\bbR$.
Denote $T_c=T^{\eta_{c,0}}$, $W_c=N_{G_c}(T_c)/T_c\simeq W$.
Note $\theta_0,\eta_0$ have the same action on $W_c$.

We denote by 
$S\subset T\subset L\subset P$
the complexifications.
Then $T$ is stable under all $\theta_0,\eta_0,\eta_{c,0}$.
We pick a Borel subgroup $B$
such that $T\subset B\subset P$.
Denote the Lie algebras by $\fs,\ft,\fb,\fp,\fl.\fg$.

The involution $\theta_0$ (resp. $\eta_0$)
is called quasi-split if 
$B=P$. In this case we have 
$T=B\cap\theta_0(B)$ and $B^-=\theta_0(B)$
is the opposite Borel subgroup.

We denote by $X_*(T)$ the cocharacter lattice and $X_*(T)^+$ the set of dominant coweights 
associated to $B$. 
We have a natural action of $\theta_0$ on $X_*(T)$
given by 
$\theta_0(\lambda)(z)=\theta_0(\lambda(z))$, $z\in\bC^\times=\mathbb G_m(\bC)$. 
Notice that in the quasi-split case $\theta_0$ sends dominant coweights $X_*(T)^+$ to $-X_*(T)^+$.

\subsection{Involutions on loop groups}
We fix $\epsilon\in\{1,-1\}$.
Let $F=\bC(\!(t)\!)$, $\cO=\bC[\![t]\!]$.
Let $G(F)$ and $G(\cO)$ be the formal loop group and formal arc group of $G$.
Consider the twisted involution on $G(F)$:
\begin{equation}
	\theta(\gamma)(t)=\theta_0(\gamma(\epsilon t)).
\end{equation}
We have $\theta(G(\cO))=G(\cO)$.
We denote by $G(F)^\theta$ (resp. $G(\cO)^\theta$)
the $\theta$ fixed points subgroup 
of $G(F)$ (resp. $G(\cO)$).
We denote the $\theta$ anti-fixed points set by
\[
G(F)^{\on{inv}\circ\theta}=\{\gamma\in G(F)|\theta(\gamma)=\gamma^{-1}\}.
\]
Note that $G(F)^{\on{inv}\circ\theta}$ is stable under the $\theta$-conjugation action on $G(F)$ by
\[
h\cdot_\theta \gamma=h\gamma\theta(h)^{-1}.
\]

Let $G[t,t^{-1}]\subset G(F)$
be the polynomial loop group and 
$G[t]\subset G(\cO)$, $G[t^{-1}]\subset G(F)$
be the polynomial arc group and the negative polynomial arc group.
Consider the twisted involution on $G[t,t^{-1}]$:
\begin{equation}
	\eta(\gamma)(t)=\eta_0(\gamma(\epsilon \bar t^{-1})).
\end{equation}
We have $\eta(G[t])=G[t^{-1}]$.
Denote by $G[t,t^{-1}]^\eta$
the $\eta$ fixed point subgroup of $G[t,t^{-1}]$.
Denote  
\[G[t,t^{-1}]^{\on{inv}\circ\eta}=\{\gamma\in G[t,t^{-1}]|\eta(\gamma)=\gamma^{-1}\}.\]
Then $G[t,t^{-1}]^{\on{inv}\circ\eta}$ is stable under 
the $\eta$-conjugation action on $G[t,t^{-1}]$ by 
\[
h\cdot_\eta\gamma=h\gamma\eta(h)^{-1}.
\]

\begin{lem}\label{theta=eta}
For any $\lambda\in X_*(T)$, viewed as element in 
$G[t,t^{-1}]$, we have 
\[\eta(\lambda)=\theta(\lambda).\]
\end{lem}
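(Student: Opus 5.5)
Both sides are elements of the torus loop group $T[t,t^{-1}]$, so the plan is to evaluate them pointwise on $\bC^\times$ and compare. A cocharacter $\lambda\in X_*(T)$ gives the loop $t\mapsto\lambda(t)$. From the definitions,
\[
\theta(\lambda)(t)=\theta_0(\lambda(\epsilon t)),\qquad \eta(\lambda)(t)=\eta_0(\lambda(\epsilon\bar t^{-1})).
\]
Since $\lambda$ is a homomorphism, $\lambda(\epsilon t)=\lambda(\epsilon)\lambda(t)$ and $\lambda(\epsilon\bar t^{-1})=\lambda(\epsilon)\lambda(\bar t)^{-1}$. So it suffices to prove two things:
\[
\eta_0(\lambda(\epsilon))=\theta_0(\lambda(\epsilon)),\qquad \eta_0(\lambda(\bar t)^{-1})=\theta_0(\lambda(t)) \text{ for all } t\in\bC^\times.
\]

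The key input is that $T$ is stable under $\eta_{c,0}$ and $T_c=T^{\eta_{c,0}}$ is its compact form. Hence $\eta_{c,0}$ acts on $T$ as the compact conjugation. Concretely, for every $\mu\in X_*(T)$ and $z\in\bC^\times$ we have
\[
\eta_{c,0}(\mu(z))=\mu(\bar z^{-1}).
\]
To justify this, identify $T\cong(\bC^\times)^r$ via a basis of $X_*(T)$. Every anti-holomorphic involution of $(\bC^\times)^r$ has the form $z\mapsto A\bar z$ with $A\in\GL_r(\bZ)$, $A^2=1$, acting on cocharacters. Its fixed points are compact only when $A=-1$, because otherwise the fixed set contains a factor $\bR^\times_{>0}$. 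This characterization of the compact conjugation is the one step needing care, and it is where I expect the only real obstacle.

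Given this, use $\eta_0=\theta_0\circ\eta_{c,0}$, noting that $\theta_0$ preserves $T$ since $T$ is stable under $\eta_0$ and $\eta_{c,0}$. Then
\[
\eta_0(\lambda(\bar t)^{-1})=\theta_0\bigl(\eta_{c,0}(\lambda(\bar t))^{-1}\bigr)=\theta_0\bigl(\lambda(t^{-1})^{-1}\bigr)=\theta_0(\lambda(t)),
\]
which is the second identity.

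For the first identity, $\epsilon=\pm1$ gives $\bar\epsilon^{-1}=\epsilon$. The compact-conjugation formula therefore gives $\eta_{c,0}(\lambda(\epsilon))=\lambda(\epsilon)$, and so $\eta_0(\lambda(\epsilon))=\theta_0(\lambda(\epsilon))$.

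Multiplying the two identities and using that $\eta_0,\theta_0$ are group homomorphisms gives $\eta(\lambda)(t)=\theta(\lambda)(t)$ for all $t\in\bC^\times$. Hence $\eta(\lambda)=\theta(\lambda)$ in $G[t,t^{-1}]$.
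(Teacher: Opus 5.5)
Your proof is correct and follows the paper's argument. Both use $\eta_0=\theta_0\circ\eta_{c,0}$ together with the fact that $\eta_{c,0}$ acts on $T$ by $\mu(z)\mapsto\mu(\bar z^{-1})$; the paper simply asserts this via a suitable isomorphism $T\cong\mathbb G_m^n$, while you justify it from compactness of $T_c$, and your splitting off of $\lambda(\epsilon)$ is harmless but unnecessary, since applying the formula to $z=\epsilon\bar t^{-1}$ directly gives $\eta_{c,0}(\lambda(\epsilon\bar t^{-1}))=\lambda(\epsilon t)$.
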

\begin{proof}
Indeed, 
there is an isomorphism $T\cong\mathbb G_m^n$ such that  
$\eta_{c,0}(\lambda(t))=(\overline{\lambda(t)})^{-1}=\lambda(\bar t^{-1})$ and it follows that 
\[\eta(\lambda)(t)=\eta_0(\lambda(\epsilon \bar t^{-1}))=\theta_0\circ\eta_{c,0}(\lambda(\epsilon \bar t^{-1}))=\theta_0(\lambda(\epsilon t))=\theta(\lambda)(t).\]
\end{proof}

\begin{rem}
Note that $\eta$ does not act on $G(F)$ or $G(\cO)$ 
because it sends $G(\!(t)\!)$ to $G(\!(t^{-1})\!)$.
\end{rem}

\section{Spherical orbits parametrization}\label{spherical orbits para}
In this section we classify spherical orbits on anti-fixed points by twisted conjugation, for $\theta$ and $\eta$ respectively.
The Matsuki correspondence will follow from comparing the two classifications.
We also study orbit intersections and the induced correspondence
on the affine Grassmannians.

\subsection{Complex spherical orbits}\label{complex orbits}
\subsubsection{}
By Cartan decomposition, we have 
\[
G(F)^{\inv\circ\theta}=\bigsqcup_{\lambda\in X_*(T)^+}
(G(\cO)t^\lambda G(\cO))^{\inv\circ\theta}.
\]

Any $g_1t^\lambda g_2\in(G(\cO)t^\lambda G(\cO))^{\inv\circ\theta}$,
$g_1,g_2\in G(\cO)$ satisfies
$g_1t^\lambda g_2=\theta(g_2)^{-1}t^{-\theta_0(\lambda)}\epsilon^{\theta_0(\lambda)}\theta(g_1)^{-1}$,
so that $t^\lambda$ and $t^{-\theta_0(\lambda)}$
are in the same $G(\cO)$-double coset,
$\lambda$ and $-\theta_0(\lambda)$
are in the same Weyl group orbit.
We assume $\theta_0(B^-)=\Ad_{w_1}B$,
where we choose $w_1\in N_{G_c}(T_c)$.
Then we have
\begin{equation}\label{eq:lambda,general}
	\lambda=-w_1^{-1}\theta_0(\lambda).
\end{equation}
Note that when $\theta_0$ is quasi-split, $w_1=1$.

Define parabolic subgroup $P_\lambda$, 
its Levi $L_\lambda$, and its unipotent radical $U_\lambda$
associated to $\lambda$,
with Lie algebras $\fp_\lambda,\fl_\lambda,\fu_\lambda$.
Let $P_\lambda^-$ be the opposite parabolic, 
with unipotent radical $U_\lambda^-$.
Then $\Ad_{w_1^{-1}}\theta_0$ preserves $L_\lambda$,
maps $P_\lambda, U_\lambda$ to $P_\lambda^-,U_\lambda^-$.
Denote $w_2=\theta_0(w_1)w_1$.
Observe $\lambda=-w_1^{-1}\theta_0(-w_1^{-1}\theta_0(\lambda))
=w_2^{-1}\lambda$,
so that $w_2\in L_\lambda$.
For such $\lambda$, we have
\begin{equation}\label{eq:A_lambda general}
	\begin{split}
		(t^\lambda G(\cO))^{\inv\circ\theta}
		&=\{t^\lambda gw_1^{-1}\mid g\in G(\cO),\ 
		\Ad_{t^\lambda} g=w_2\Ad_{w_1^{-1}}\theta(g)^{-1}\epsilon^\lambda\}\\
		&=:\{t^\lambda gw_1^{-1}\mid g\in A_\lambda\subset G(\cO)\cap\Ad_{t^{-\lambda}}G(\cO)\}.
	\end{split}
\end{equation}

\subsubsection{}
Let subspace $A_{\lambda,0}\subset A_\lambda$ be
\begin{equation}\label{eq:A_lambda,0 general}
	A_{\lambda,0}=\{g_0\in L_\lambda\mid g_0=w_2\Ad_{w_1^{-1}}\theta_0(g_0)^{-1}\epsilon^\lambda\}.
\end{equation}

Twisted conjugation of $h\in G(\cO)\cap\Ad_{t^\lambda}G(\cO)$(resp. $h\in L_\lambda$) 
on $A_\lambda$ (resp. $A_{\lambda,0}$) is given by
\[
h\cdot g=(\Ad_{t^{-\lambda}}h)g(\Ad_{w_1^{-1}}\theta(h))^{-1}
\quad(\text{resp. }h\cdot g=hg\Ad_{w_1^{-1}}\theta_0(h^{-1})).
\]

\begin{prop}\label{p:coset conj classes,general}
	The inclusion $A_{\lambda,0}\subset A_\lambda$ induces a bijection
	\[
	L_\lambda\backslash A_{\lambda,0}\simeq 
	(G(\cO)\cap\Ad_{t^\lambda}G(\cO))\backslash A_\lambda.
	\]
\end{prop}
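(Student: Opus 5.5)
The plan is to construct a retraction $\pi\colon A_\lambda\to A_{\lambda,0}$ that intertwines the two twisted conjugation actions, and then to show that every fiber of $\pi$ lies in a single orbit of a pro-unipotent subgroup.

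\textbf{Setup.} Write
\[
H=G(\cO)\cap\Ad_{t^\lambda}G(\cO),\qquad H'=G(\cO)\cap\Ad_{t^{-\lambda}}G(\cO),
\]
so that $A_\lambda\subset H'$ and $\Ad_{t^{-\lambda}}H=H'$. Root by root, the Lie algebra of $H$ is $\ft\otimes\cO\oplus\bigoplus_\alpha\fg_\alpha\otimes t^{\max(0,\langle\alpha,\lambda\rangle)}\cO$, and that of $H'$ is given by the same formula with $-\lambda$ in place of $\lambda$.

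\textbf{Reduction maps.} Reduction mod $t$ sends $H$ onto one of the parabolics $P_\lambda^{\pm}$ and $H'$ onto the opposite one. Following this with the Levi projection gives group homomorphisms $p\colon H\to L_\lambda$ and $p'\colon H'\to L_\lambda$. Their kernels $H^+$ and $H'^+$ are pro-unipotent. Both maps restrict to the identity on $L_\lambda\subset H\cap H'$.

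\textbf{Compatibilities.} I would check three facts.
\begin{itemize}
\item $p'(\Ad_{t^{-\lambda}}h)=p(h)$, because $t^{\lambda}$ is central in $L_\lambda$.
\item $p'(\Ad_{w_1^{-1}}\theta(g))=\Ad_{w_1^{-1}}\theta_0(p'(g))$. This uses that $\Ad_{w_1^{-1}}\theta_0$ preserves $L_\lambda$ and swaps $P_\lambda$ and $P^-_\lambda$, together with $\lambda=-w_1^{-1}\theta_0(\lambda)$, which ensures $\Ad_{w_1^{-1}}\theta$ exchanges $H$ and $H'$ appropriately.
\item $w_2\in L_\lambda$ and $\epsilon^\lambda\in T$.
\end{itemize}
Applying $p'$ to the defining equation of $A_\lambda$ then gives $\pi:=p'|_{A_\lambda}\colon A_\lambda\to A_{\lambda,0}$. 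This map is the identity on $A_{\lambda,0}$ and satisfies $\pi(h\cdot g)=p(h)\cdot\pi(g)$.

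\textbf{Injectivity and surjectivity on orbits.} Injectivity on orbit sets is now immediate: if $g_0,g_0'\in A_{\lambda,0}$ and $h\cdot g_0=g_0'$ with $h\in H$, then applying $\pi$ gives $p(h)\cdot g_0=g_0'$ with $p(h)\in L_\lambda$. Surjectivity amounts to showing that every $g\in A_\lambda$ is $H^+$-conjugate to $\pi(g)$.

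\textbf{Main obstacle: the surjectivity step.} Write $g=g_0u$ with $g_0=\pi(g)$ and $u\in H'^+$. I would use the congruence filtration $H^+=H_1\supset H_2\supset\cdots$, where $H_n$ is cut out by the $t$-adic valuation condition shifted by $\lambda$ (equivalently, the filtration by $\langle\alpha,\lambda\rangle$-weights plus $t$-degree). Each graded piece $H_n/H_{n+1}$ is a finite-dimensional vector space. On it, the twisted action of $H_n$ on the fiber $\pi^{-1}(g_0)$ modulo $H'_{n+1}$ is by translation $X\mapsto X-\sigma(X)$ up to a twist. Here $\sigma$ is the linear involution induced by $\Ad_{g_0^{-1}}$ composed with $\Ad_{w_1^{-1}}\theta$. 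The anti-fixed-point equation forces the error term $Y$ at level $n$ to satisfy $\sigma(Y)=-Y$. Since $2$ is invertible, $Y=\tfrac12(Y-\sigma Y)$ lies in the image of $X\mapsto X-\sigma X$; this is the vanishing of $H^1(\bZ/2,V)$ for a vector space $V$.

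Killing the error level by level produces a sequence $h_n\in H_n$ whose product converges $t$-adically in $H^+$, because the $H_n$ form a basis of neighborhoods of the identity. The limit conjugates $g$ to $g_0$. The delicate part is setting up the filtration so that it is simultaneously stable under $\Ad_{t^{-\lambda}}$ and $\Ad_{w_1^{-1}}\theta$ (the latter exchanges $H$ and $H'$) and so that $\sigma$ really acts linearly on each graded piece. I would handle this by indexing both $H$ and $H'$ by the same function $n(\alpha,k)$ on affine roots $\alpha+k\delta$, chosen to be invariant under the involution $\alpha\mapsto -w_1^{-1}\theta_0\alpha$.
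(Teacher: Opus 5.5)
Your argument is correct in outline and shares the paper's skeleton. Injectivity comes from reduction to the Levi quotient: your equivariant retraction $\pi$ is a clean packaging of the paper's remark that $G(\cO)\cap\Ad_{t^\lambda}G(\cO)\subset P_\lambda^-G_1(\cO)$. Surjectivity comes from removing the error level by level with the $\tfrac12$-trick and taking a convergent product.

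The routes differ in the filtration. The paper uses the plain congruence subgroups $G_k(\cO)$. At level $k$ it uses the anti-fixed-point equation and $g_0\in A_{\lambda,0}$ to show that the error lies in $G_k(\cO)\cap\Ad_{t^{-\lambda}}G_k(\cO)$, so its degree-$k$ term lies in $\fp_\lambda$. It then removes the $\fu_\lambda$-component by a one-sided correction with $h\in\exp(t^k\fu_\lambda^-)$, for which $\Ad_{t^{-\lambda}}h\in G_{k+1}(\cO)$. Only the $\fl_\lambda$-component needs halving. Implicitly this is the filtration $\min(k,k+m)$ in the notation below.

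Your weighted filtration instead places each piece and its $\sigma$-partner at the same level, so one uniform halving handles everything. The price is that the weight must be chosen correctly, and the invariance you state (under $\alpha\mapsto-w_1^{-1}\theta_0\alpha$) is not the operative condition. Write $\fg^{(m)}$ for the sum of root spaces with $\langle\alpha,\lambda\rangle=m$. In $H'$-coordinates, $\sigma$ sends $\fg^{(m)}t^k$ to $\fg^{(-m)}t^{k+m}$, and it involves $\Ad_{g_0^{-1}}$ with $g_0\in L_\lambda$ arbitrary. So you need a weight that depends only on $(m,k)$ and satisfies $n(m,k)=n(-m,k+m)$. The choice $n=2k+m$, the sum of the $H$- and $H'$-valuations, works:
\begin{itemize}
\item it is additive;
\item it is $\Ad_{L_\lambda}$-stable and $\sigma$-stable;
\item it is $\geq 1$ exactly on $H'^+$.
\end{itemize}
By contrast, the naive ``degree plus weight'' $k+m$ is not $\sigma$-stable.

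There is also a minor slip in your compatibilities. Since $\Ad_{w_1^{-1}}\theta$ carries $H'$ to $H$, the second one should read $p(\Ad_{w_1^{-1}}\theta(g))=\Ad_{w_1^{-1}}\theta_0(p'(g))$ for $g\in H'$. Accordingly, you apply $p$ to the defining equation and use $p(\Ad_{t^\lambda}g)=p'(g)$.

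Finally, in characteristic $0$ you could drop the filtration altogether. Define the group-level map $\sigma(x)=\Ad_{t^{-\lambda}g_0^{-1}\theta_0(w_1)}\theta(x)^{-1}$; it is an involutive anti-automorphism of the pro-unipotent group $H'^+$. The fiber condition is $\sigma(u)=u$, and $h\in H^+$ acts on $g_0u$ by $u\mapsto vu\sigma(v)$, with $v=\Ad_{g_0^{-1}t^{-\lambda}}h$ ranging over $H'^+$. Hence $v=u^{-1/2}$ finishes in one step, exactly as in the paper's proof of the real analogue for $B_\lambda$. The level-by-level arguments, yours and the paper's, have the advantage of remaining purely algebraic in large positive characteristic.
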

\begin{proof}	
	We inductively show that $g\in A_\lambda$
	can be twisted conjugated into $A_{\lambda,0}$ up to 
	$G_k(\cO):=\ker(G(\cO)\rightarrow G(\cO/t^k))$,
	$\forall k>0$.
	This will imply the surjectivity of the statement 
	by taking a (convergent) product of
	the twisted conjugations.
	
	Write $g=g_0\tilde{g}_1$ where $g_0\in G, \tilde{g}_1\in G_1(\cO)$.
	From \eqref{eq:A_lambda general}, $g\in G(\cO)\cap\Ad_{t^{-\lambda}}G(\cO)$,
	so that $g_0\in G\cap \Ad_{t^{-\lambda}}G(\cO)=P_\lambda$
	and $\tilde{g}_1\in G_1(\cO)\cap\Ad_{t^{-\lambda}}G(\cO)$.
	Write $g_0=g_{0,1}g_{0,2}$ where $g_{0,1}\in L_\lambda,g_{0,2}\in U_\lambda$.
	Observe that $\Ad_{t^\lambda}g_{0,1}=g_{0,1}$,
	$\Ad_{t^\lambda}g_{0,2}\in G_1(\cO)$,
	$\Ad_{t^\lambda}\tilde{g}_1\in U_\lambda^- G_1(\cO)$.
	Therefore, from \eqref{eq:A_lambda general} we obtain that $g_{0,1}\in A_{\lambda,0}$.
	
	Let $h=\Ad_{w_1^{-1}}\theta_0(g_{0,2})\in U_\lambda^-\subset G(\cO)\cap\Ad_{t^\lambda}G(\cO)$.
	Observe that $\Ad_{t^{-\lambda}}U_\lambda^-\subset G_1(\cO)$.
	We get
	\[
	h\cdot g\in g_{0,1}G_1(\cO).
	\]
	This establish the induction basis for $k=1$.
	Moreover, observe that $G(\cO)\cap\Ad_{t^\lambda}G(\cO)\subset P_\lambda^-G_1(\cO)$.
	Thus two elements in $A_{\lambda,0}G_1(\cO)\cap A_\lambda$
	are twisted conjugated by $G(\cO)\cap\Ad_{t^\lambda}G(\cO)$
	if and only if their factors in $A_{\lambda,0}$
	are twisted conjugated by $L_\lambda$.
	This implies the injectivity of the statement.
	
	It remains to establish the induction.
	Assume the induction hypothesis holds for $k>0$.
	We need to show that any $g\in A_{\lambda,0}G_k(\cO)\cap A_\lambda$
	can be twisted conjugated into $A_{\lambda,0}G_{k+1}(\cO)$.
	Write $g=g_0g_k\tilde{g}_{k+1}$
	where $g_0\in A_{\lambda,0}$,
    $g_k=\exp(t^kX),X\in\fg$, 
    and $\tilde{g}_{k+1}\in G_{k+1}(\cO)$.
	Note that $g\in A_\lambda$ and 
	$g_0\in L_\lambda$ commutes with $t^\lambda$. 
	We deduce that
	\[
	g_k\tilde{g}_{k+1}\in G_k(\cO)\cap\Ad_{t^{-\lambda}}G_k(\cO)
	\subset\exp(t^k\fp_\lambda)G_{k_1}(\cO),
	\]
	i.e. $X\in\fp_\lambda$.
	Write $X=X_1+X_2$, $X_1\in\fl_\lambda$, $X_2\in\fu_\lambda$.
	Twisted conjugating by $h=\exp((\epsilon t)^k\Ad_{w_1^{-1}}\theta_0(X_2))$,
	we can eliminate $X_2$ without changing $g_0$ and $X_1$.
	Thus we may assume $X_2=0$.
	The equation \eqref{eq:A_lambda general} implies
	\begin{equation}\label{eq:3}
		\exp(t^kX_1)\tilde{g}_{k+1}=
        \Ad_{g_0^{-1}w_2g_0}[
		(\Ad_{t^{-\lambda}g_0^{-1}w_1^{-1}}\theta_0(\tilde{g}_{k+1}(\epsilon t)^{-1}))
		\exp(-(\epsilon t)^k\Ad_{g_0^{-1}w_1^{-1}}\theta_0(X_1))].	
	\end{equation}
	
	Note that 
	$a=\Ad_{t^{-\lambda}g_0^{-1}w_1^{-1}}\theta_0(\tilde{g}_{k+1}(\epsilon t)^{-1})
	\in\Ad_{t^{-\lambda}}G_{k+1}(\cO)$.
	Also, we can see from \eqref{eq:3} that
	$a\in\exp(t^k\fl_\lambda)G_{k+1}(\cO)$.
	Since $\Ad_{t^{-\lambda}}$ acts trivially on $\fl_\lambda$,
	we obtain 
	\[
	\exp(t^k\fl_\lambda)G_{k+1}(\cO)\cap\Ad_{t^{-\lambda}}G_{k+1}(\cO)
	=G_{k+1}(\cO)\cap\Ad_{t^{-\lambda}}G_{k+1}(\cO).
	\]
	Thus $a\in G_{k+1}(\cO)$.
	We deduce from \eqref{eq:3} that
	\begin{equation}\label{eq:X_4}
		X_1=-\epsilon^k\Ad_{g_0^{-1}\theta_0(w_1)}\theta_0(X_1).
	\end{equation}
	
	Now let $h=\exp(\frac{1}{2}(\epsilon t)^k\theta_0(\Ad_{w_1}X_1))$.
	We obtain $h\cdot g\in g_0G_{k+1}(\cO)$ as desired.
	
	For each $k>0$, the twisted conjugation is by an element
	$h\in\exp(t^k\fg)$.
	Their product converges to an element of $G(\cO)$.
	This completes the proof of the statement.
\end{proof}

\begin{lem}\label{l:pi_0}
	$L_\lambda\backslash A_{\lambda,0}\simeq\pi_0(A_{\lambda,0})$.
\end{lem}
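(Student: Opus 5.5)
The strategy is to show that each twisted $L_\lambda$-orbit in $A_{\lambda,0}$ is open in $A_{\lambda,0}$. Since the orbits partition $A_{\lambda,0}$, each orbit is then also closed, being the complement of the union of the others. Hence every orbit is a union of connected components.

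Conversely, $L_\lambda$ is connected (a Levi subgroup of a connected group), so each orbit is connected. An open-and-closed connected subset is a connected component, so orbits and components coincide, giving $L_\lambda\backslash A_{\lambda,0}\simeq\pi_0(A_{\lambda,0})$.

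To make this precise, set $\sigma:=\Ad_{w_2}\circ\Ad_{w_1^{-1}}\circ\theta_0$ restricted to $L_\lambda$. Up to the constant factor $\epsilon^\lambda$, the space $A_{\lambda,0}$ is the set of $g$ with $g=\sigma'(g)^{-1}\epsilon^\lambda$ for the relevant twisted map. I would first record that $\tau:=\Ad_{w_1^{-1}}\theta_0$ is an automorphism of $L_\lambda$, as noted in the paper. The twisted action is $h\cdot g=hg\tau(h)^{-1}$. The map $\psi:g\mapsto g\tau(g)$ (or its appropriate variant) shows that $A_{\lambda,0}$ is a closed subvariety. It is a translate of an ``anti-fixed point set'' of a finite-order automorphism $\tau'$ of $L_\lambda$: one can write $g=g'c$ with $c$ fixed so that the condition becomes $\tau'(g')=g'^{-1}$. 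For this I check that $\tau^2=\Ad_{\theta_0(w_1^{-1})w_1^{-1}}=\Ad_{w_2^{-1}}$, so $\tau$ has finite order modulo inner automorphisms by elements of the normalizer of the torus, which is finite order since $w_2$ has finite order in $N_{G_c}(T_c)$.

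The key step is a tangent space computation. At $g\in A_{\lambda,0}$, the tangent space to the orbit is the image of $X\mapsto Xg-g\,d\tau(X)$, which after right translation by $g^{-1}$ is $X-\Ad_g d\tau(X)$. The tangent space to $A_{\lambda,0}$ at $g$ is $\{Y: \Ad_g d\tau (\ldots)=-\ldots\}$, i.e.\ the $(-1)$-eigenspace of the linear map $\phi_g:=\Ad_g\circ d\tau$ on $\fl_\lambda$. The defining equation implies $\phi_g^2=\mathrm{id}$, using $g\tau(g)=w_2\epsilon^\lambda$ with $\Ad_{\epsilon^\lambda}$ and $\Ad_{w_2}$ cancelling against $\tau^2$. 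The analogous linearization already appears as equation \eqref{eq:X_4} in the proof of Proposition \ref{p:coset conj classes,general}. Since $\phi_g$ is an involution, $\fl_\lambda=\fl_\lambda^{+}\oplus\fl_\lambda^{-}$, and the image of $X\mapsto X-\phi_g(X)$ is exactly $\fl_\lambda^-=T_g A_{\lambda,0}$. So orbit maps are submersions onto $A_{\lambda,0}$, and in particular the orbits are open (working with the reduced structure, or noting smoothness via this computation).

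I expect the main obstacle to be verifying carefully that $\phi_g^2=\mathrm{id}$ and that the scheme-theoretic tangent space of $A_{\lambda,0}$ is the full $(-1)$-eigenspace, tracking the constants $w_1,w_2,\epsilon^\lambda$. I would first try the direct differentiation of $g=w_2\tau(g)^{-1}\epsilon^\lambda$, writing $g(s)=\exp(sY)g$ and using that $w_2,\epsilon^\lambda\in L_\lambda$ commute appropriately with $t^\lambda$.

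Even if the scheme structure is delicate, the openness argument suffices on the level of the underlying complex manifold: the orbit map $L_\lambda\to A_{\lambda,0}$ has surjective differential onto $T_gA_{\lambda,0}\supseteq$ the Zariski tangent space of the reduced variety. This forces the orbit to contain a neighborhood of $g$.
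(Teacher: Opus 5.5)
Your proposal is correct and follows the paper's route: the orbit map is a submersion, so orbits are open, hence closed, hence connected components because $L_\lambda$ is connected. Your computation that $\phi_g=\Ad_g\circ d\tau$ squares to $\Ad_{\epsilon^\lambda}=\mathrm{id}$ on $\fl_\lambda$ makes the paper's one-line claim explicit; two slips are harmless: the correct formula is $\tau^2=\Ad_{w_1^{-1}\theta_0(w_1)^{-1}}=\Ad_{w_2^{-1}}$ (your product is in the wrong order), and the remark that $w_2$ has finite order is neither needed nor justified in general.
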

\begin{proof}
    Comparing the equation of $A_{\lambda,0}$ with the twisted action,
we can see that every $L_\lambda$-orbit on $A_{\lambda,0}$
is a submersion, thus open.
Since $A_{\lambda,0}$ is noetherian,
there are only finitely many open orbits.
Thus they are also closed.
The proof is complete.
\end{proof}

Combining the above, we obtain:
\begin{prop}
	For $\lambda=-w_1^{-1}\theta_0(\lambda)$,
	$G(\cO)\backslash_{\theta}(G(\cO)t^\lambda G(\cO))^{\inv\circ\theta}\simeq\pi_0(A_{\lambda,0})$.
\end{prop}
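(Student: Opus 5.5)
The plan is to reduce the count of $G(\cO)$-orbits on the double coset $(G(\cO)t^\lambda G(\cO))^{\inv\circ\theta}$ to the count of orbits on the single coset $(t^\lambda G(\cO))^{\inv\circ\theta}$, and then apply Proposition \ref{p:coset conj classes,general} and Lemma \ref{l:pi_0}.

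\textbf{Step 1: every orbit meets the single coset.} Let $\gamma=g_1t^\lambda g_2\in(G(\cO)t^\lambda G(\cO))^{\inv\circ\theta}$. Twisted conjugating by $g_1^{-1}$ gives
\[
g_1^{-1}\cdot_\theta\gamma=t^\lambda g_2\theta(g_1)\in t^\lambda G(\cO),
\]
and this element is still $\theta$-anti-fixed, since the anti-fixed set is stable under $\theta$-conjugation. So every $G(\cO)$-orbit meets $(t^\lambda G(\cO))^{\inv\circ\theta}$.

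\textbf{Step 2: identify the stabilizer of the coset.} Suppose $h\in G(\cO)$ and both $t^\lambda g$ and $h\,t^\lambda g\,\theta(h)^{-1}$ lie in $t^\lambda G(\cO)$. Since $\theta(h)\in G(\cO)$, this forces $t^{-\lambda}ht^\lambda\in G(\cO)$, that is, $h\in G(\cO)\cap\Ad_{t^\lambda}G(\cO)$. Conversely, any such $h$ preserves $t^\lambda G(\cO)$. Hence the inclusion induces a bijection
\[
G(\cO)\backslash_\theta(G(\cO)t^\lambda G(\cO))^{\inv\circ\theta}\simeq(G(\cO)\cap\Ad_{t^\lambda}G(\cO))\backslash(t^\lambda G(\cO))^{\inv\circ\theta}.
\]

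\textbf{Step 3: transport to $A_\lambda$.} By \eqref{eq:A_lambda general}, the map $g\mapsto t^\lambda gw_1^{-1}$ identifies $A_\lambda$ with $(t^\lambda G(\cO))^{\inv\circ\theta}$. Under this map,
\[
h\,t^\lambda gw_1^{-1}\,\theta(h)^{-1}=t^\lambda\,(\Ad_{t^{-\lambda}}h)\,g\,(\Ad_{w_1^{-1}}\theta(h))^{-1}\,w_1^{-1}.
\]
This is exactly the twisted action on $A_\lambda$ defined before Proposition \ref{p:coset conj classes,general}. The only point to check here is the bookkeeping with $w_1$ and the sign $\epsilon^\lambda$; I expect this to be the main obstacle, though it is routine.

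\textbf{Step 4: conclude.} The orbit set $(G(\cO)\cap\Ad_{t^\lambda}G(\cO))\backslash A_\lambda$ is in bijection with $L_\lambda\backslash A_{\lambda,0}$ by Proposition \ref{p:coset conj classes,general}. This in turn is in bijection with $\pi_0(A_{\lambda,0})$ by Lemma \ref{l:pi_0}. Composing the bijections from Steps 2–4 gives the claim.
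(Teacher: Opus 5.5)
Your proposal is correct and is essentially the paper's argument: the paper likewise reduces the double coset to $(t^\lambda G(\cO))^{\inv\circ\theta}$, identifies it with $A_\lambda$ under the twisted action of $G(\cO)\cap\Ad_{t^\lambda}G(\cO)$, and then combines Proposition \ref{p:coset conj classes,general} with Lemma \ref{l:pi_0}. Your Step 2 makes explicit a reduction the paper leaves implicit, and Step 3 needs no real $\epsilon^\lambda$ bookkeeping: $A_\lambda$ is defined in \eqref{eq:A_lambda general} exactly as the preimage of $(t^\lambda G(\cO))^{\inv\circ\theta}$ under $g\mapsto t^\lambda g w_1^{-1}$ (with $w_1\in G_c\subset G(\cO)$).
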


\subsubsection{}
Observe that for $g_0\in L_\lambda$, 
$\Ad_{\lambda(\bR^\times)}\eta_{c,0}(g_0)=\eta_{c,0}(\Ad_{\lambda(\bR^\times)}g_0)=\eta_{c,0}(g_0)$,
so that $\eta_{c,0}$ acts on $L_\lambda$.
Denote $L_{\lambda,c}=L_\lambda^{\eta_{c,0}}$.
Define
\begin{equation}\label{eq:C_lambda}
	C_{\lambda,0}:=L_{\lambda,c}\cap A_{\lambda,0}
	=\{g_0\in L_{\lambda,c}\mid g_0=w_2\Ad_{w_1^{-1}}\theta_0(g_0)^{-1}\epsilon^\lambda\}
\end{equation}
on which $L_{\lambda,c}$ acts.

\begin{prop}\label{p:C_lambda to A_lambda,0}
	The natural map
	$L_{\lambda,c}\backslash C_{\lambda,0}\rightarrow L_\lambda\backslash A_{\lambda,0}$
	is a bijection.
\end{prop}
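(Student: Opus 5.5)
This is a Cartan-decomposition argument in the style of the classical Matsuki setting: rewrite the twisted action as an ordinary action twisted by an involution, then use polar decomposition on the Levi.

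\textbf{Setup.} Put $\sigma:=\Ad_{w_1^{-1}}\circ\theta_0$ on $L_\lambda$; since $\Ad_{w_1^{-1}}\theta_0$ preserves $L_\lambda$, this is an automorphism of $L_\lambda$. The twisted action of $h\in L_\lambda$ on $A_{\lambda,0}$ is then $h\cdot g=hg\sigma(h)^{-1}$. Because $w_1\in N_{G_c}(T_c)$ and $\theta_0$ commutes with $\eta_{c,0}$, $\sigma$ commutes with $\eta_{c,0}$. The equation defining $A_{\lambda,0}$ involves only $w_2=\theta_0(w_1)w_1\in G_c$ and $\epsilon^\lambda=\lambda(\epsilon)$, which is $\eta_{c,0}$-fixed since $\epsilon$ is real (Lemma \ref{theta=eta} type computation). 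Hence $\eta_{c,0}$ preserves $A_{\lambda,0}$, and $C_{\lambda,0}=A_{\lambda,0}^{\eta_{c,0}}$.

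\textbf{Surjectivity.} Take $g\in A_{\lambda,0}$ and form $p:=g\,\eta_{c,0}(g)^{-1}$, a "positive" element of $L_\lambda$. I would look for $h=\exp(Y)$ with $Y\in i\fl_{\lambda,c}$ (the $-1$-eigenspace of $\eta_{c,0}$ on $\fl_\lambda$) such that $h\cdot g$ is $\eta_{c,0}$-fixed. Polar decomposition gives $g=k\exp(Z)$ with $k\in L_{\lambda,c}$ and $Z\in i\fl_{\lambda,c}$, both unique.

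The defining relation of $A_{\lambda,0}$ can be rewritten as $g\,\sigma(g)=w_2\epsilon^\lambda=:c$, where $c$ lies in the compact group and is fixed by $\eta_{c,0}$. Substituting the polar form gives $k e^{Z}\sigma(k)e^{\sigma(Z)}=c$. Set $Z'=\Ad_{\sigma(k)^{-1}}Z$; this is still in $i\fl_{\lambda,c}$. The relation becomes $k\sigma(k)\,e^{Z'}e^{\sigma(Z)}=c$.

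By uniqueness of the polar decomposition of $e^{Z'}e^{\sigma(Z)}$, one expects $Z'=-\sigma(Z)$. This step is the \textbf{main obstacle}: a product of two positive elements need not be positive, so uniqueness does not apply directly. To get around it I would use the relation $\sigma(g)=g^{-1}c$, apply $\eta_{c,0}$, and compare the positive parts $g^{*}g$ and $\sigma(g^{*}g)$, where $g^{*}:=\eta_{c,0}(g)^{-1}$. These should give $\sigma(e^{2Z})=\Ad_{c^{-1}}\!\bigl(g\,g^{*}\bigr)^{-1}$. That is a genuine identity between positive elements, so the positive parts can be matched.

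Granting this, twist by $h=\exp(Z/2)$, or a conjugate of it. Then $h\cdot g=e^{Z/2}k e^{Z}e^{-\sigma(Z)/2}$, and after using $Z'=-\sigma(Z)$ this should reduce to the element $k$-conjugate form $k\,\exp(\tfrac12 Z'+Z-\ldots)$. I would arrange the choice of $h$ so that the result is exactly $k$, or $k$ times a central compact element. A cleaner way to do this is as a Mostow-type fixed-point argument. Consider the map $Y\mapsto e^{Y}\cdot g$ on the Riemannian symmetric space $L_\lambda/L_{\lambda,c}$. The composition "$\sigma$-twisted action then $\eta_{c,0}$" defines an isometric involution on this space. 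A midpoint of the geodesic from $o$ to its image is a fixed point, and the corresponding $h$ makes $h\cdot g\in C_{\lambda,0}$.

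\textbf{Injectivity.} Suppose $g,g'\in C_{\lambda,0}$ and $g'=hg\sigma(h)^{-1}$ with $h=k e^{Y}$, $k\in L_{\lambda,c}$, $Y\in i\fl_{\lambda,c}$. Apply $\eta_{c,0}$ to this identity and compare it with the original. This gives $e^{Y}g\,e^{-\sigma(Y)}=e^{-Y}g\,e^{\sigma(Y)}$, hence $e^{2Y}=g\,e^{2\sigma(Y)}g^{-1}=e^{2\Ad_g\sigma(Y)}$. Uniqueness of logarithms on $\exp(i\fl_{\lambda,c})$ then gives $Y=\Ad_g\sigma(Y)$, so $e^{Y}g\,e^{-\sigma(Y)}=g$. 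Therefore $g'=k\cdot g$ with $k\in L_{\lambda,c}$, which shows the two elements are already in the same $L_{\lambda,c}$-orbit.
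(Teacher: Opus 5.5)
Your injectivity argument is essentially the paper's (uniqueness of the decomposition $L_\lambda=L_{\lambda,c}\cdot\exp(\fp_{c,\lambda})$), and your surjectivity strategy is also the paper's. As written, however, surjectivity is not finished: the obstacle you flag does not actually arise, and the twisting element you propose does not work.

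From $k\sigma(k)e^{Z'}e^{\sigma(Z)}=c$ you get $e^{Z'}e^{\sigma(Z)}=(k\sigma(k))^{-1}c=:u$. This $u$ lies in $L_{\lambda,c}$, because $k$, $\sigma(k)$ and $c=w_2\epsilon^\lambda$ are all $\eta_{c,0}$-fixed. Hence $e^{Z'}=u\,e^{-\sigma(Z)}$ is an equality between two polar decompositions. Uniqueness gives $u=1$ and $Z'=-\sigma(Z)$, i.e.\ $k\sigma(k)=c$ (so $k\in C_{\lambda,0}$) and $\Ad_{\sigma(k)}\sigma(Z)=-Z$. You never need a product of positive elements to be positive.

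The paper does this in one line. Since $e^{-Z}k^{-1}=k^{-1}\exp(-\Ad_kZ)$, the defining equation reads $ke^{Z}=(w_2\sigma(k)^{-1}\epsilon^\lambda)\exp(-\sigma(\Ad_kZ))$, which is already in polar form. Your detour through $g^*g$ is correct and yields $\sigma(Z)=-\Ad_{c^{-1}k}Z$, but you would still need $k\sigma(k)=c$ to conclude.

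The correct twist is then $h=\exp(-\tfrac12\Ad_kZ)$, not $\exp(Z/2)$:
\[
h\cdot g=k\,e^{-Z/2}\,e^{Z}\,e^{\frac12\Ad_{\sigma(k)}\sigma(Z)}=k\,e^{-Z/2}e^{Z}e^{-Z/2}=k .
\]
By contrast, $h=e^{Z/2}$ gives $e^{2Z}$ when $k=1$; for example, take $g$ a positive definite real symmetric matrix in the $\GL_2(\bR)$, $\lambda=0$ case.

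Your Mostow-type alternative is a genuinely different and valid route, but it must be stated precisely. The isometry of $X=L_\lambda/L_{\lambda,c}$ should be $\phi(xL_{\lambda,c})=g\,\sigma(x)\,L_{\lambda,c}$, with no further $\eta_{c,0}$ composed in, since the Cartan involution is already encoded in the quotient. Its fixed points $yL_{\lambda,c}$ are exactly those with $y^{-1}\cdot g\in L_{\lambda,c}$. You assert that $\phi$ is an involution, but this must be checked:
\[
\phi^2(xL_{\lambda,c})=g\sigma(g)\,\sigma^2(x)\,L_{\lambda,c}=w_2\epsilon^\lambda\,w_2^{-1}xw_2\,L_{\lambda,c}=xL_{\lambda,c}.
\]
This uses $\sigma^2=\Ad_{w_2^{-1}}$, $g\sigma(g)=w_2\epsilon^\lambda$, the centrality of $\epsilon^\lambda$ in $L_\lambda$, and $w_2,\epsilon^\lambda\in L_{\lambda,c}$. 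The midpoint of the geodesic from $o$ to $\phi(o)$ is then fixed. This gives existence without computation. The paper's route instead produces the explicit $h$ and shows that the compact part $k$ of $g$ is itself a representative of the orbit.
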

\begin{proof}
	First we show surjectivity.
	Denote the Lie algebra of $L_\lambda$ by $\fl_\lambda$,
	which decomposes under $\eta_c$ as
	\[
	\fl_\lambda=\fk_\lambda\oplus\fp_{c,\lambda}.
	\]
	Recall $\Ad_{w_1^{-1}}\circ\theta_0$ acts on $L_\lambda$.
	Since it commutes with $\eta_c$, 
	it acts on $\fk_\lambda,\fp_{c,\lambda}$.
	By Cartan decomposition,
	we can write $g_0=k\exp(Y)\in A_{\lambda,0}$ 
	where $k\in L_{\lambda,c}$ and $Y\in\fp_{c,\lambda}$ satisfy
	\[
	k\exp(Y)
	=(w_2\Ad_{w_1^{-1}}\theta_0(k^{-1})\epsilon^\lambda)
    (\Ad_{w_1^{-1}}\theta_0(\Ad_k\exp(-Y))).
	\]
	Thus we obtain
	\[
	k=w_2\Ad_{w_1^{-1}}\theta_0(k^{-1})\epsilon^\lambda,\quad 
    Y=-\Ad_{w_1^{-1}}\theta_0(\Ad_k Y),
	\]
	so that
	\[
	\exp(-\frac{1}{2}\Ad_k Y)\cdot g_0=k.
	\]
	This gives the surjectivity.
	
	Next we show the injectivity.
	Suppose $h\cdot k_1=k_2$ for $k_1,k_2\in C_{\lambda,0}, h\in L_\lambda$.
	Write $h=k\exp(Y)$ its Cartan decomposition, $k\in L_{\lambda,c}$.
	Then we have
   \begin{equation}\label{cartan}
   \begin{split}
    &h\cdot k_1=k\exp(Y)k_1\exp(-\Ad_{w_1^{-1}}\theta_0(Y))\Ad_{w_1^{-1}}\theta_0(k)^{-1}=k_2\\
   \Leftrightarrow\ &
        kk_1\exp(\Ad_{k_1^{-1}}Y)
        =k_2\Ad_{w_1^{-1}}\theta_0(k)\exp(\Ad_{w_1}^{-1}\theta_0(Y))\\
    \Leftrightarrow\ & 
        kk_1\Ad_{w_1^{-1}}\theta_0(k^{-1})=k_2\ \ \text{and}\ \ \exp(\Ad_{k_1^{-1}}Y)=\exp(\Ad_{w_1}^{-1}\theta_0(Y)).
        \end{split}
        \end{equation}
In particular, we see that  $k_1,k_2$ are also in the same $L_{\lambda,c}$-class.
\end{proof}

\quash{
\Define
\begin{equation}
    T_{\lambda,0}=C_{\lambda,0}\cap T_c=\{g_0\in T_c\mid g_0=\theta_0(g_0^{-1})\epsilon^\lambda\}.
\end{equation}

\begin{prop}
    The natural map 
    $T_{\lambda,0}\rightarrow L_{\lambda,c}\backslash C_{\lambda,0}$
    is surjective.
\end{prop}
\begin{proof}
    This is because in the compact Lie group $L_{\lambda,c}$,
    every element can be twisted conjugated into the 
    $\theta_0$-stable maximal torus $T_c$.
    \textcolor{red}{Can we really reduce to outer automorphisms?}
\end{proof}
}

\subsubsection{}
From the above discussion, we obtain:
\begin{thm}\label{t:theta spherical orbits}
The assignment
$x_\lambda=t^\lambda g_0w_1^{-1}\in G(F)^{\on{inv}\circ\theta}\mapsto g_0\in C_{\lambda,0}$
induces a 
natural bijection between $G(\mathcal O)$ and $L_{\lambda,c}$-orbits 
:
	\[
	G(\cO)\backslash_\theta(G(F))^{\inv\circ\theta}
	\simeq\bigsqcup_{\lambda=-w_1^{-1}\theta_0(\lambda)\in X_*(T)^+}
    L_\lambda\backslash A_{\lambda,0}
   \simeq\bigsqcup_{\lambda=-w_1^{-1}\theta_0(\lambda)\in X_*(T)^+}L_{\lambda,c}\backslash C_{\lambda,0}.
	\]
    Moreover, the natural inclusion $Z_{L_{\lambda,c}}(g_0)\to Z_{G(\mathcal O)}(x_\lambda)$
    of stabilizers
    induces an isomorphism on component groups 
$\pi_0(Z_{L_{\lambda,c}}(g_0))\cong\pi_0(Z_{G(\mathcal O)}(x_\lambda))$.
\end{thm}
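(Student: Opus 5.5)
The bijection itself is assembled from the results just established. First, by the Cartan decomposition displayed at the start of \S\ref{complex orbits}, $G(F)^{\inv\circ\theta}$ is the disjoint union of the $G(\cO)$-stable pieces $(G(\cO)t^\lambda G(\cO))^{\inv\circ\theta}$. Such a piece is nonempty only when $\lambda=-w_1^{-1}\theta_0(\lambda)$, by \eqref{eq:lambda,general}. So it suffices to treat one such $\lambda$.

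For fixed $\lambda$, every $\theta$-twisted $G(\cO)$-orbit in $(G(\cO)t^\lambda G(\cO))^{\inv\circ\theta}$ meets $(t^\lambda G(\cO))^{\inv\circ\theta}$. Indeed, twisting $g_1t^\lambda g_2$ by $g_1^{-1}$ lands in $t^\lambda G(\cO)$. Two points $t^\lambda g w_1^{-1}$ and $t^\lambda g' w_1^{-1}$ are $G(\cO)$-twisted conjugate exactly when the conjugating element $h$ preserves $t^\lambda G(\cO)$. This forces $h\in G(\cO)\cap \Ad_{t^\lambda}G(\cO)$, and $h$ then acts on $A_\lambda$ by the displayed action; this is a direct rewriting of \eqref{eq:A_lambda general}. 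Hence
\[
G(\cO)\backslash_\theta (G(\cO)t^\lambda G(\cO))^{\inv\circ\theta}\simeq (G(\cO)\cap\Ad_{t^\lambda}G(\cO))\backslash A_\lambda .
\]
Proposition \ref{p:coset conj classes,general} identifies the right side with $L_\lambda\backslash A_{\lambda,0}$, and Proposition \ref{p:C_lambda to A_lambda,0} identifies that with $L_{\lambda,c}\backslash C_{\lambda,0}$. Tracing the maps shows the composite sends $x_\lambda=t^\lambda g_0w_1^{-1}$ with $g_0\in C_{\lambda,0}$ to $g_0$, as claimed.

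For the stabilizers, the same reduction identifies $Z_{G(\cO)}(x_\lambda)$ with the stabilizer of $g_0$ in $G(\cO)\cap\Ad_{t^\lambda}G(\cO)$ acting on $A_\lambda$. I would then compare the stabilizers in three steps, matching the two propositions.

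\textbf{Step 1.} Show that $\pi_0$ of the stabilizer in $G(\cO)\cap\Ad_{t^\lambda}G(\cO)$ equals $\pi_0$ of $Z_{L_\lambda}(g_0)$.
\begin{itemize}
\item Use the projection $G(\cO)\cap\Ad_{t^\lambda}G(\cO)\to P_\lambda^-\to L_\lambda$. The injectivity part of Proposition \ref{p:coset conj classes,general} shows that this projection carries the stabilizer into $Z_{L_\lambda}(g_0)$, and it restricts to the identity on $Z_{L_\lambda}(g_0)$.
\item Show the kernel is pro-unipotent. It lies in $U_\lambda^-\cdot(\text{congruence subgroups})$, which is contractible. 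The inductive filtration argument of Proposition \ref{p:coset conj classes,general} gives this, level by level. At level $k$, the stabilizer condition on $\exp(t^kX)$ is a linear condition; this is the linearization of \eqref{eq:X_4} and of the elimination of $X_2$. So each graded piece is a vector space.
\item Conclude that the stabilizer is a semidirect product of $Z_{L_\lambda}(g_0)$ with a pro-unipotent group, hence has the same $\pi_0$.
\end{itemize}

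\textbf{Step 2.} Show that $\pi_0(Z_{L_{\lambda,c}}(g_0))\cong\pi_0(Z_{L_\lambda}(g_0))$ using \eqref{cartan}. If $h=k\exp(Y)$ fixes $g_0=k_1=k_2$, then $k\in Z_{L_{\lambda,c}}(g_0)$. Moreover, $Y$ lies in the linear subspace $\{Y\in\fp_{c,\lambda}: \Ad_{g_0^{-1}}Y=\Ad_{w_1^{-1}}\theta_0(Y)\}$. Here one uses that $\exp$ is injective on $\fp_{c,\lambda}$ to pass from the exponentiated identity to the Lie algebra identity. Thus $Z_{L_\lambda}(g_0)=Z_{L_{\lambda,c}}(g_0)\times\exp(V)$ as spaces, with $V$ a vector space, so the inclusion is a homotopy equivalence.

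The main obstacle is Step 1: making the level-by-level kernel argument rigorous. It requires checking that the stabilizer condition at each level is genuinely linear, and hence that the kernel is a projective limit of vector-space extensions. I would handle this by redoing the computation of \eqref{eq:3} with $g=g_0$ fixed and $h$ varying.
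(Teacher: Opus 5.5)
Your proposal is correct and follows essentially the same route as the paper. The bijection is assembled from Propositions \ref{p:coset conj classes,general} and \ref{p:C_lambda to A_lambda,0}, and the $\pi_0$ claim comes from two maps with contractible fibers: the map $Z_{G(\cO)}(x_\lambda)\to Z_{L_\lambda}(g_0)$ given by evaluation at $t=0$, followed by the Cartan retraction onto $Z_{L_{\lambda,c}}(g_0)$, which together give a left inverse of the inclusion.

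Your write-up is in fact slightly more careful than the paper's. You compose evaluation with $P_\lambda^-\to L_\lambda$, and you treat the Cartan projection as a product decomposition rather than as a homomorphism. Your Step 1 obstacle is settled by noting that $Z_{G(\cO)}(x_\lambda)$ is the fixed-point group of the involution $\Ad_{x_\lambda}\circ\theta$ of $G(\cO)\cap\Ad_{t^\lambda}G(\cO)$, which acts linearly on each graded piece of the pro-unipotent kernel.
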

\begin{proof}
    The bijection follows from Proposition 
    \ref{p:coset conj classes,general} and \ref{p:C_lambda to A_lambda,0}.
    For the second claim, we observe that 
    the evaluation map 
    and the Cartan decomposition of $L_\lambda$ 
    induces   homomorphisms 
    $Z_{G(\mathcal O}(x_\lambda)=Z_{G(\mathcal O)\cap\Ad_{t^\lambda}(G(\mathcal O)}(x_\lambda)\to Z_{L_\lambda}(g_0)$, $\gamma(t)\to \gamma(0)$, 
     and 
     $Z_{L_\lambda}(g_0)\to Z_{L_\lambda,c}(g_0)$, $h=k\on{exp}(Y)\to k$ (see~\eqref{cartan}) with contractible fibers.
     Moreover, the composed map 
     $Z_{G(\mathcal O)}(x_\lambda)\to Z_{L_\lambda}(g_0)\to Z_{L_\lambda,c}(g_0)$ defines a left inverse of the inclusion map 
     $Z_{L_\lambda,c}(g_0)\to Z_{G(\mathcal O)}(x_\lambda)$. 
     The claim follows.
\end{proof}

\begin{rem}\mbox{}
\begin{itemize}
    \item [(i)]
    Note the the proof for the first bijection 
    in the above Theorem \ref{t:theta spherical orbits}
    is purely algebraic,
    thus works also over an algebraically closed base field
    with sufficiently large characteristic.
    \item [(ii)]
    Some sets $C_{\lambda,0}$ in the above parametrization 
    could be empty, see \S\ref{sss:GL2R epsilon=-1}
    for an example.
    We do not yet have a criteria telling 
    when is $C_{\lambda,0}$ nonempty.
\end{itemize}
    
\end{rem}

\subsection{Real spherical orbits}
\subsubsection{}
By Cartan decomposition, we have 
\[
G[t,t^{-1}]^{\inv\circ\eta}=\bigsqcup_{\lambda\in X_*(T)^+}
(G[t]t^\lambda G[t^{-1}])^{\inv\circ\eta}.
\]
Consider $\eta$-twisted conjugacy classes on $G[t,t^{-1}]^{\inv\circ\eta}$.
Since $\eta(G[t])=G[t^{-1}]$,
the classes in the double coset $G[t]t^\lambda G[t^{-1}]$
are the same as classes in $t^\lambda G[t^{-1}]$.
By Lemma \ref{theta=eta}, we have 
\[
\eta(t^\lambda)=\theta(t^\lambda)=
\theta_0((\epsilon t)^\lambda)=\epsilon^{\theta_0(\lambda)}t^{\theta_0(\lambda)}.
\]
Thus for $t^\lambda g\in (t^\lambda G[t^{-1}])^{\inv\circ\eta}$, we have 
\[
g^{-1}t^{-\lambda}=\epsilon^{\theta_0(\lambda)}t^{\theta_0(\lambda)}\eta(g)
\]
where $\eta(g)\in G[t]$. 
We obtain the same condition as \eqref{eq:lambda,general}:
\begin{equation}\label{eq:real lambda}
	\lambda=-w_1^{-1}\theta_0(\lambda).
\end{equation}
Recall we chose $w_1\in G_c$,
so that $w_2=\theta_0(w_1)w_1=\eta_0(w_1)w_1$.

Denote
\begin{equation}\label{eq:B_lambda general}
	\begin{split}
		(t^\lambda G[t^{-1}])^{\inv\circ\eta}
		&=\{t^\lambda gw_1^{-1}\mid g\in G[t^{-1}],\ 
		\Ad_{t^\lambda}g=w_2\Ad_{w_1^{-1}}\eta(g^{-1})\epsilon^\lambda\}\\
		&=:\{t^\lambda gw_1^{-1}\mid g\in B_\lambda\}.
	\end{split}
\end{equation}

\subsubsection{}
Define $P_\lambda$, $L_\lambda$, $U_\lambda$, 
$P_\lambda^-$, $U_\lambda^-$ as before.
By \eqref{eq:real lambda},
$\Ad_{w_1^{-1}}\eta_0$ acts on $L_\lambda$
and preserves $P_\lambda, P_\lambda^-, U_\lambda, U_\lambda^-$.
Note that 
\[
B_\lambda\subset 
G[t^{-1}]\cap\Ad_{t^{-\lambda}}G[t]
=P_\lambda[t^{-1}]\cap\Ad_{t^{-\lambda}}P_\lambda[t]
=(U_\lambda[t^{-1}]\cap\Ad_{t^{-\lambda}}U_\lambda[t])L_\lambda.
\]

Let subspace $B_{\lambda,0}\subset B_\lambda$ be
\begin{equation}\label{eq:B_lambda,0 general}
	B_{\lambda,0}=\{g_0\in L_\lambda\mid g_0=w_2\Ad_{w_1^{-1}}\eta_0(g_0^{-1})\epsilon^\lambda\}.
\end{equation}

Twisted conjugation of $h\in G[t]\cap\Ad_{t^\lambda}G[t^{-1}]$ on $B_\lambda$ 
(resp. $L_\lambda$ on $B_{\lambda,0}$) is given by
\[
h\cdot g=(\Ad_{t^{-\lambda}}h)g\Ad_{w_1^{-1}}\eta(h)^{-1}),
\quad(\text{resp. }h\cdot g=hg\Ad_{w_1^{-1}}\eta_0(h^{-1})).
\]

\begin{prop}\label{p:coset conj classes,general, real}
	The natural map
	$L_\lambda\backslash B_{\lambda,0}\rightarrow 
	(G([t]\cap\Ad_{t^\lambda}G[t^{-1}])\backslash B_\lambda$
	is a bijection.
\end{prop}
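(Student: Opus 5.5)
The plan is to mirror the proof of Proposition \ref{p:coset conj classes,general}, replacing the $t$-adic filtration by the filtration of the polynomial group $G[t^{-1}]$ by degree in $t^{-1}$. One could also use the kernel of evaluation at $t^{-1}=0$ together with the grading coming from the $\bGm$-rotation action. The key simplification is the decomposition already recorded above:
\[
B_\lambda\subset (U_\lambda[t^{-1}]\cap\Ad_{t^{-\lambda}}U_\lambda[t])L_\lambda .
\]
Here $U_\lambda[t^{-1}]\cap\Ad_{t^{-\lambda}}U_\lambda[t]$ is a finite-dimensional unipotent group, since for each root in $\fu_\lambda$ only finitely many powers $t^{-j}$ are allowed, namely $0\le j\le\langle\alpha,\lambda\rangle$. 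So, unlike the formal case, no convergence argument is needed, and the induction terminates after finitely many steps.

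For surjectivity, write $g=u\,g_0$ with $u\in U_\lambda[t^{-1}]\cap\Ad_{t^{-\lambda}}U_\lambda[t]$ and $g_0\in L_\lambda$. Evaluating the defining equation of $B_\lambda$ at the $L_\lambda$-component shows $g_0\in B_{\lambda,0}$. Taking $L_\lambda$-components is legitimate because $\Ad_{t^\lambda}$ fixes $L_\lambda$, and $\Ad_{w_1^{-1}}\eta$ swaps the two relevant unipotent groups while preserving $L_\lambda$. This is the analogue of the first step in the earlier proof.

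Next, kill $u$ by induction on a filtration of the unipotent group, say by the weight $\langle\alpha,\lambda\rangle-j$ of the root-space component $t^{-j}\fg_\alpha$, or equivalently by lower central series pieces. At each step, write the leading part as $\exp(X)$ with $X=X'+X''$. Split it using the twisted conjugation by elements
\[
h=\exp(\tfrac12\,\text{(suitable twist of }X))\in G[t]\cap\Ad_{t^\lambda}G[t^{-1}],
\]
exactly as in the formal proof, where the equation analogous to \eqref{eq:X_4} forces the leading term to be ``anti-symmetric'' so that half of it can be absorbed. The translation between the two groups $G[t]\cap\Ad_{t^\lambda}G[t^{-1}]$ and $\Ad_{t^{-\lambda}}(\cdot)$ is handled by noting that $\Ad_{t^{-\lambda}}$ maps $U^-_\lambda$-type pieces into the unipotent group above and maps $\Ad_{w_1^{-1}}\eta$ of it to the opposite side.

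For injectivity, note that
\[
G[t]\cap\Ad_{t^\lambda}G[t^{-1}]=L_\lambda\ltimes(\text{unipotent part}).
\]
If $h\cdot g_0=g_0'$ with $g_0,g_0'\in B_{\lambda,0}$, then projecting to the Levi factor shows that the $L_\lambda$-part of $h$ already conjugates $g_0$ to $g_0'$, as in the earlier injectivity argument.

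The main obstacle is the bookkeeping in the inductive step. We must check that the correcting element $h$ really lies in $G[t]\cap\Ad_{t^\lambda}G[t^{-1}]$, and that the higher-order error it introduces stays in a strictly deeper filtration step. I would first try ordering by the $\bGm$-weight $\langle\alpha,\lambda\rangle-j$ and verify that the identity $\Ad_{t^{-\lambda}}\eta(\cdot)$ exchanges weight $m$ with weight $m$ on the opposite side. Commutators then only raise the weight, so the error lands in a deeper step.
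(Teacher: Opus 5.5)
Your skeleton is the paper's. The Levi projection gives $g_0\in B_{\lambda,0}$ and also gives injectivity, and surjectivity reduces to the finite-dimensional unipotent group $V:=U_\lambda[t^{-1}]\cap\Ad_{t^{-\lambda}}U_\lambda[t]$.

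What fails as written is the inductive bookkeeping you flag. Write $g=ug_0$. Given $g_0\in B_{\lambda,0}$, the condition $g\in B_\lambda$ is equivalent to $\sigma(u)=u^{-1}$ for the group automorphism
\[
\sigma(v)=\Ad_{t^{-\lambda}g_0\epsilon^\lambda w_1^{-1}}\eta(v)
\]
of $V$. For $a\in V$ and $h=\Ad_{t^\lambda}a$ one has $h\cdot(ug_0)=a\,u\,\sigma(a)^{-1}g_0$.

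The relation $\lambda=-w_1^{-1}\theta_0(\lambda)$ shows that $\Ad_{w_1^{-1}}\eta_0$ fixes $\lambda(r)$ for $r\in\bR_{>0}$. So it preserves the $\lambda$-weight $k=\langle\alpha,\lambda\rangle$, and one computes $\sigma(t^{-j}\fu_{\lambda,k})=t^{-(k-j)}\fu_{\lambda,k}$. Consequently $\sigma$ sends your weight $m=k-j$ to $k-m$, and sends the $t^{-1}$-degree $j$ to $k-j$. Neither filtration is $\sigma$-stable, so the check that weight $m$ is exchanged with weight $m$ fails. Moreover, that grading has a nonabelian degree-$0$ piece, namely $\Ad_{t^{-\lambda}}U_\lambda$. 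Hence commutators do not strictly raise the weight, and the \emph{leading term} does not live in an abelian quotient.

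The lower central series is not equivalent to your grading. However, it is characteristic, hence $\sigma$-stable, and so is the filtration by $k=\langle\alpha,\lambda\rangle$ alone. Both have central vector-group subquotients, and with either one your half-correction does work:
\begin{itemize}
\item In the current subquotient, the image $\bar u$ satisfies $\bar\sigma(\bar u)=-\bar u$ (written additively).
\item Any lift $a$ of $-\bar u/2$ pushes $u$ one step deeper.
\item The relation $\sigma(u)=u^{-1}$ persists, because the action preserves $B_\lambda$ and the Levi part $g_0$.
\end{itemize}

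The paper skips the induction entirely. Since $\log\colon V\to\Lie(V)$ is a bijection intertwining $\sigma$ and $d\sigma$, the element $X=\log u$ satisfies $d\sigma(X)=-X$. Then $a=u^{-1/2}=\exp(-X/2)$ gives $a\,u\,\sigma(a)^{-1}=1$ in a single step.
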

\begin{proof}
    Observe $G[t]\cap\Ad_{t^\lambda}G[t^{-1}]=P_\lambda[t]\cap\Ad_{t^\lambda}P_\lambda[t^{-1}]$.
    The injectivity is then easy.
    
    For surjectivity,
    let $g=u\ell\in B_\lambda$ where
    $\ell\in L_\lambda, u\in U_\lambda[t^{-1}]\cap\Ad_{t^{-\lambda}}U_\lambda[t]$.
    They satisfy
    \[
    \ell\in B_{\lambda,0},\quad
    \Ad_{t^\lambda}u=\Ad_{\ell\epsilon^\lambda w_1^{-1}}\eta(u^{-1}).
    \]
    Since $U_\lambda[t^{-1}]\cap\Ad_{t^{-\lambda}}U_\lambda[t]$ is unipotent
    on which $\Ad_{t^{-\lambda}\ell\epsilon^\lambda}\Ad_{w_1^{-1}}\eta$
    acts as group automorphism,
    $u$ has square root 
    $u^{1/2}\in U_\lambda[t^{-1}]\cap\Ad_{t^{-\lambda}}U_\lambda[t]$
    such that 
    $\Ad_{t^\lambda}u^{1/2}=\Ad_{\ell\epsilon^\lambda w_1^{-1}}\eta(u^{-1/2})$.
    Let $h=\Ad_{t^{-\lambda}}u^{-1/2}$.
    Then 
    \[
    h\cdot g=\ell\in B_{\lambda,0}.
    \]
\end{proof}

Observe that $\eta_0$ coincides with $\theta_0$ on $L_{\lambda,c}$.
Thus
\[
C_{\lambda,0}=L_{\lambda,c}\cap B_{\lambda,0}.
\]

\begin{prop}\label{p:C_lambda to B_lambda,0}
	The natural map
	$L_{\lambda,c}\backslash C_{\lambda,0}\rightarrow L_\lambda\backslash B_{\lambda,0}$
	is a bijection.
\end{prop}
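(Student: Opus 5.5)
We follow the proof of Proposition \ref{p:C_lambda to A_lambda,0} almost verbatim, with $\theta_0$ replaced by $\eta_0$. The key observation is that on $L_\lambda$ the two involutions $\Ad_{w_1^{-1}}\theta_0$ and $\Ad_{w_1^{-1}}\eta_0$ differ by the compact conjugation $\eta_{c,0}$. Indeed $\eta_0=\theta_0\circ\eta_{c,0}$, all three commute, and $w_1\in G_c$ is $\eta_{c,0}$-fixed. So $\Ad_{w_1^{-1}}\eta_0$ preserves $L_\lambda$, commutes with $\eta_{c,0}$, and hence preserves the Cartan decomposition $\fl_\lambda=\fk_\lambda\oplus\fp_{c,\lambda}$. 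On $\fk_\lambda$ it agrees with $\Ad_{w_1^{-1}}\theta_0$, while on $\fp_{c,\lambda}$ it equals $-\Ad_{w_1^{-1}}\theta_0$ composed with complex conjugation relative to $\fl_{\lambda,c}$. Concretely, for $k\in L_{\lambda,c}$ and $Y\in\fp_{c,\lambda}$ we have $\eta_0(k\exp(Y))=\theta_0(k)\exp(-\theta_0(Y))$. In particular $\eta_0$ and $\theta_0$ agree on $L_{\lambda,c}$, which is why $C_{\lambda,0}=L_{\lambda,c}\cap B_{\lambda,0}$.

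\textbf{Surjectivity.} Take $g_0\in B_{\lambda,0}$ and write $g_0=k\exp(Y)$ with $k\in L_{\lambda,c}$ and $Y\in\fp_{c,\lambda}$. Substitute into $g_0=w_2\Ad_{w_1^{-1}}\eta_0(g_0^{-1})\epsilon^\lambda$. The right side is
\[
w_2\Ad_{w_1^{-1}}\eta_0(\exp(-Y))\,\Ad_{w_1^{-1}}\eta_0(k^{-1})\epsilon^\lambda .
\]
Using the formula above, rewrite it as an element of $L_{\lambda,c}$ times $\exp$ of an element of $\fp_{c,\lambda}$. This uses that $w_2$ and $\epsilon^\lambda$ lie in $L_{\lambda,c}$; note $\epsilon^\lambda=\lambda(\epsilon)$ is $\eta_{c,0}$-fixed, as are $w_1,w_2$. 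By uniqueness of the Cartan decomposition, we obtain
\[
k=w_2\Ad_{w_1^{-1}}\theta_0(k^{-1})\epsilon^\lambda
\]
together with a linear relation $Y=\Ad_{w_1^{-1}}\theta_0(\Ad_k Y)$ up to sign. The exact sign is the twist relative to the $\theta$-case and must be tracked carefully. Then twisted conjugation by $\exp(-\tfrac12\Ad_kY)$ should send $g_0$ to $k\in C_{\lambda,0}$, exactly as before.

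\textbf{Injectivity.} Suppose $h\cdot k_1=k_2$ with $k_i\in C_{\lambda,0}$ and $h=k\exp(Y)\in L_\lambda$. Expand as in \eqref{cartan} with $\eta_0$ in place of $\theta_0$. The equation splits, via uniqueness of the Cartan decomposition, into
\[
kk_1\Ad_{w_1^{-1}}\eta_0(k^{-1})=k_2
\]
and an equation involving only $Y$. The first equation shows that $k_1$ and $k_2$ are $L_{\lambda,c}$-conjugate.

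\textbf{Main obstacle.} The delicate point is the sign bookkeeping on $\fp_{c,\lambda}$. Since $\eta_0$ acts as $-\theta_0$ there (composed with conjugation), the relation on $Y$ changes form. We must check that the resulting equation is still solved by halving: that is, that the factor $\exp(-\tfrac12\Ad_kY)$, or possibly $\exp(\tfrac12\Ad_kY)$, does satisfy
\[
h\cdot g_0=k .
\]
For this, one needs that the twisted-conjugation orbit map along $\fp_{c,\lambda}$ is $X\mapsto \exp(X)\,g_0\,\Ad_{w_1^{-1}}\eta_0(\exp(-X))$, which on the $\fp_c$-part doubles the exponent. We verify this by computing $\Ad_{w_1^{-1}}\eta_0(\exp(-X))=\exp(\Ad_{w_1^{-1}}\theta_0 X)$ for $X\in\fp_{c,\lambda}$ and combining with the relation on $Y$.
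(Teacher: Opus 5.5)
Your proposal is correct and is exactly the paper's argument: the paper's entire proof is the instruction to replace $\theta_0$ by $\eta_0$ in the proof of Proposition~\ref{p:C_lambda to A_lambda,0}. The sign you left open resolves cleanly. Cartan uniqueness gives $Y=+\Ad_{w_1^{-1}}\theta_0(\Ad_kY)$, and $\Ad_{w_1^{-1}}\eta_0(\exp X)=\exp(-\Ad_{w_1^{-1}}\theta_0X)$ for $X\in\fp_{c,\lambda}$, so the two sign flips cancel and $h=\exp(-\tfrac12\Ad_kY)$ still gives $h\cdot g_0=k$; injectivity goes through verbatim.
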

\begin{proof}
	Simply replace $\theta_0$ with $\eta_0$ in the proof of
	Proposition \ref{p:C_lambda to A_lambda,0}.
\end{proof}

\subsubsection{}
Combining the above discussion, we obtain:
\begin{thm}\label{t:eta spherical orbits}
	We have natural bijections induced by 
    $x_\lambda =t^\lambda g_0w_1^{-1}\mapsto g_0\in C_{\lambda,0}$:
    \[    
    G[t]\backslash_\eta(G[t,t^{-1}])^{\inv\circ\eta}
    \simeq\bigsqcup_{\lambda=-w_1^{-1}\theta_0(\lambda)\in 
    X_*(T)^+}L_\lambda\backslash B_{\lambda,0}
	\simeq\bigsqcup_{\lambda=-w_1^{-1}\theta_0(\lambda)\in 
    X_*(T)^+}L_{\lambda,c}\backslash C_{\lambda,0}.
	\]
    Moreover, the natural inclusion $Z_{L_{\lambda,c}}(g_0)\to Z_{G[t]}(x_\lambda)$
    of stabilizers
    induces an isomorphism on component groups 
$\pi_0(Z_{L_{\lambda,c}}(g_0))\cong\pi_0(Z_{G[t]}(x_\lambda))$.
\end{thm}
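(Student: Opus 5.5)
The plan mirrors the proof of Theorem \ref{t:theta spherical orbits}, with $G(\cO)$ replaced by $G[t]$ and $\theta$ by $\eta$. First, the Cartan decomposition $G[t,t^{-1}]=\bigsqcup_{\lambda\in X_*(T)^+}G[t]t^\lambda G[t^{-1}]$ is stable under $\eta$-twisted conjugation by $G[t]$, since $\eta(G[t])=G[t^{-1}]$. Hence every $G[t]$-orbit on $G[t,t^{-1}]^{\inv\circ\eta}$ lies in a single stratum. Within a stratum it meets $t^\lambda G[t^{-1}]$, and two elements $t^\lambda g w_1^{-1}, t^\lambda g' w_1^{-1}$ there are $G[t]$-conjugate if and only if they are conjugate by the stabilizer $G[t]\cap\Ad_{t^\lambda}G[t^{-1}]$ of the coset $t^\lambda G[t^{-1}]$. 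This is the standard fact that $h t^\lambda G[t^{-1}]\eta(h)^{-1}=t^\lambda G[t^{-1}]$ forces $h\in \Ad_{t^\lambda}G[t^{-1}]$.

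The nonemptiness condition \eqref{eq:real lambda} restricts the union to $\lambda=-w_1^{-1}\theta_0(\lambda)$. In these terms
\[
G[t]\backslash_\eta (G[t]t^\lambda G[t^{-1}])^{\inv\circ\eta}\simeq (G[t]\cap\Ad_{t^\lambda}G[t^{-1}])\backslash B_\lambda .
\]
Proposition \ref{p:coset conj classes,general, real} identifies this with $L_\lambda\backslash B_{\lambda,0}$, and Proposition \ref{p:C_lambda to B_lambda,0} identifies the latter with $L_{\lambda,c}\backslash C_{\lambda,0}$. Composing gives the two bijections. One checks that the map is induced by $x_\lambda=t^\lambda g_0w_1^{-1}\mapsto g_0$, because each step is induced by an inclusion $C_{\lambda,0}\subset B_{\lambda,0}\subset B_\lambda$.

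For the stabilizer statement, first note that $Z_{G[t]}(x_\lambda)$ lies in $G[t]\cap \Ad_{t^\lambda}G[t^{-1}]=P_\lambda[t]\cap\Ad_{t^\lambda}P_\lambda[t^{-1}]$. This group is the semidirect product of $L_\lambda$ with the unipotent group $V:=U_\lambda[t]\cap\Ad_{t^{\lambda}}U_\lambda[t^{-1}]$, which is a finite-dimensional unipotent group because $\lambda$ bounds the $t$-degrees. I would construct the retraction $Z_{G[t]}(x_\lambda)\to Z_{L_\lambda}(g_0)$ by projection to the Levi factor, in place of the evaluation at $0$ used earlier. Its fiber should be the fixed points in $V$ of the twisted action given by $u\mapsto (\Ad_{t^{-\lambda}}u)\, g_0\Ad_{w_1^{-1}}\eta(u)^{-1}g_0^{-1}$ after the evident normalisation. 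Since this fiber is the fixed locus of an involutive automorphism of a unipotent group, it is a connected unipotent subgroup, hence contractible.

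Next, use the Cartan decomposition map $Z_{L_\lambda}(g_0)\to Z_{L_{\lambda,c}}(g_0)$, $k\exp(Y)\mapsto k$. By \eqref{cartan} with $\eta_0$ in place of $\theta_0$, its fibers are vector spaces $\{Y: \Ad_{g_0^{-1}}Y=\Ad_{w_1^{-1}}\eta_0(Y)\}$. The composite $Z_{G[t]}(x_\lambda)\to Z_{L_{\lambda,c}}(g_0)$ is a left inverse of the inclusion with contractible fibers, giving the isomorphism on $\pi_0$.

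I expect the main obstacle to be the stabilizer step. There one must verify that the Levi projection is a homomorphism on the stabilizer, which requires checking that $\eta$ preserves the semidirect product decomposition appropriately. One must also verify that the unipotent fiber is indeed a connected fixed-point group. The route I would try first is the square-root argument from the proof of Proposition \ref{p:coset conj classes,general, real}: in a unipotent group with an involutive automorphism, the fixed points form a connected subgroup (via $\exp$ of the fixed Lie subalgebra), and the map $u\mapsto u\,\sigma(u)^{-1}$ behaves well.
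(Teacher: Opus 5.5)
Your proposal is correct and follows the paper's argument, which is just ``same proof as Theorem \ref{t:theta spherical orbits}'': reduce to $t^\lambda G[t^{-1}]$ and the coset stabilizer $G[t]\cap\Ad_{t^\lambda}G[t^{-1}]$, apply Propositions \ref{p:coset conj classes,general, real} and \ref{p:C_lambda to B_lambda,0}, and for stabilizers compose a retraction onto $Z_{L_\lambda}(g_0)$ with the Cartan retraction onto $Z_{L_{\lambda,c}}(g_0)$, each having contractible fibers. Your Levi projection $L_\lambda\ltimes V\to L_\lambda$ is the right analogue of the paper's ``evaluation at $0$'' (which by itself only lands in a parabolic). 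It is already a homomorphism on all of $L_\lambda\ltimes V$, and on the stabilizer its kernel is $V^\sigma$ with $\sigma=\Ad_{x_\lambda}\circ\eta$, so the obstacle you anticipated does not actually arise.
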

\begin{proof}
    Same proof as Theorem \ref{t:theta spherical orbits}.
\end{proof}

\subsection{Matsuki duality for spherical orbits}\label{bijection for spherical orbits}
\subsubsection{Intersection of spherical orbits}
Denote the $\theta$-twisted(resp. $\eta$-twisted) 
conjugation by $g\cdot_\theta h$(resp. $g\cdot_\eta h$).
We have seen from Theorem \ref{t:theta spherical orbits}
and Theorem \ref{t:eta spherical orbits}
that both 
$\theta$-twisted $G(\cO)$-orbits on $(G(F))^{\inv\circ\theta}$
and
$\eta$-twisted $G[t]$-orbits on $(G[t,t^{-1}])^{\inv\circ\eta}$
are represented by $t^\lambda C_{\lambda,0}w_1^{-1}$,
where $\theta_0(\lambda)=-w_1\lambda$
and $C_{\lambda,c}\subset L_{\lambda,c}$.
Given $t^\lambda g_0w_1^{-1}\in t^\lambda C_{\lambda,0}w_1^{-1}$,
we now describe the intersection of 
$G(\cO)\cdot_\theta t^\lambda g_0w_1^{-1}$
with $G[t]\cdot_\eta t^\lambda g_0w_1^{-1}$.

\begin{lem}\label{l:intersection 1}
    For any $\lambda\in X_*(T)^+$, we have
    \[
    G[t^{-1}]\cap(\Ad_{t^{-\lambda}}G(\cO))G(\cO)
    =(L^{<0}G\cap\Ad_{t^{-\lambda}}G(\cO))G
    =(L^{<0}G\cap\Ad_{t^{-\lambda}}G[t])G.
    \]
\end{lem}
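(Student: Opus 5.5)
Here $L^{<0}G$ denotes the kernel of evaluation at $t^{-1}=0$, $G[t^{-1}]\to G$, i.e.\ the first congruence subgroup of the negative arc group. This gives the factorization $G[t^{-1}]=L^{<0}G\rtimes G$. The plan is to prove the three-way equality by showing the chain of inclusions
\[
(L^{<0}G\cap\Ad_{t^{-\lambda}}G[t])G\subset(L^{<0}G\cap\Ad_{t^{-\lambda}}G(\cO))G\subset G[t^{-1}]\cap(\Ad_{t^{-\lambda}}G(\cO))G(\cO)
\]
and then closing the loop with the reverse inclusion from the right-hand side back to the leftmost set.

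The first inclusion is immediate from $G[t]\subset G(\cO)$. The second follows because $L^{<0}G\subset G[t^{-1}]$ and $G\subset G(\cO)\cap G[t^{-1}]$.

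For the first equality between the middle two sets, note that $L^{<0}G\cap\Ad_{t^{-\lambda}}G(\cO)=L^{<0}G\cap\Ad_{t^{-\lambda}}G[t]$. An element of $L^{<0}G\subset G[t^{-1}]$ conjugated by $t^{\lambda}$ lies in $G[t,t^{-1}]$. If it also lies in $G(\cO)$, then it lies in $G[t,t^{-1}]\cap G(\cO)=G[t]$. So the two intersections agree, and the middle two sets coincide.

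The main step is the reverse inclusion. Let $g\in G[t^{-1}]$ with $g=\Ad_{t^{-\lambda}}(a)\,b$, where $a,b\in G(\cO)$. Write $g=g_-g_0$ with $g_-\in L^{<0}G$ and $g_0\in G$. Since $G\subset G(\cO)$, we may absorb $g_0$ into $b$ and assume $g\in L^{<0}G$. It then suffices to show that there is $c\in G$ with $gc\in\Ad_{t^{-\lambda}}G(\cO)$. Indeed, $gc$ still lies in $G[t^{-1}]$, and we can then re-split $gc=(gc)_-(gc)_0$ as before.

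To find $c$, use that $\Ad_{t^\lambda}g\in G(\cO)\,\Ad_{t^\lambda}b$. That is, $t^{\lambda}g$ and $t^\lambda b$ define the same point $t^\lambda g\,G(\cO)=x\,G(\cO)$ of $\Gr$, where $x=t^\lambda g$ with $g\in L^{<0}G$. Equivalently, we need the point $gG(\cO)$ to lie in $t^{-\lambda}G(\cO)\,G(\cO)/G(\cO)$, and we want to pin down its position in the $L^{<0}G$-orbit stratification. Use the Birkhoff/Iwasawa-type decomposition of $\Gr$ into $L^{<0}G$-orbits through $t^{\mu}$. The point $gG(\cO)$ lies in the $L^{<0}G$-orbit of the base point, so the element of $G(\cO)t^{\lambda}G(\cO)$ written above can be rearranged. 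Concretely, $\Ad_{t^\lambda}(g)\in G(\cO)\,t^{\lambda}G(\cO)t^{-\lambda}$, and we must produce $c\in G$ with $\Ad_{t^\lambda}(gc)\in G(\cO)$.

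The expected route is through the parabolic $P_\lambda$. Decompose $G(\cO)\cap \Ad_{t^{-\lambda}}G(\cO)$ as in the proof of Proposition \ref{p:coset conj classes,general}, so that its image in $G$ is $P_\lambda$. The condition on $g$ should then force $g$ modulo $G$ to lie in $L^{<0}U_\lambda^{-}$-type pieces that conjugate into $G(\cO)$, with $c$ correcting the constant term inside $P_\lambda^{-}$. I expect this step — showing the constant ambiguity can be absorbed by some $c\in G$ — to be the main obstacle. I would attack it by working in a faithful representation with lattices: $g\in L^{<0}G$ stabilizes the lattice structure at $\infty$, and the condition becomes a comparison of $t$-adic valuations. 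Alternatively, I would reduce to $T$ and root subgroups via the big cell $L^{<0}G\times G(\cO)\hookrightarrow G(F)$.
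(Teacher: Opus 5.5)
Your easy inclusions are correct, and so is the reduction to $g\in L^{<0}G$, which is the paper's ``$G$ acts freely on the right'' step. Your proof that $L^{<0}G\cap\Ad_{t^{-\lambda}}G(\cO)=L^{<0}G\cap\Ad_{t^{-\lambda}}G[t]$ via $G[t,t^{-1}]\cap G(\cO)=G[t]$ is also correct, and cleaner than the paper's appeal to affine root subgroups. The gap is that the one substantive inclusion, $L^{<0}G\cap(\Ad_{t^{-\lambda}}G(\cO))G(\cO)\subset\Ad_{t^{-\lambda}}G(\cO)$, is never proved; the proposal ends with possible lines of attack.

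The obstacle you single out is also not the real one. The factorization $G[t^{-1}]=L^{<0}G\rtimes G$ is unique. The constant term of any element of $G[t^{-1}]\cap\Ad_{t^{-\lambda}}G(\cO)$ lies in $G\cap\Ad_{t^{-\lambda}}G(\cO)=P_\lambda$: this group is stable under loop rotation $t\mapsto st$, which degenerates each element to its constant term. Hence any $c\in G$ with $gc\in\Ad_{t^{-\lambda}}G(\cO)$ automatically lies in $P_\lambda$, and your target is exactly $g\in\Ad_{t^{-\lambda}}G(\cO)$. There is no constant-term ambiguity to absorb; the difficulty sits entirely in the higher-order part of $g$.

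Two smaller errors. With the paper's conventions, the pieces of $L^{<0}G$ that $\Ad_{t^\lambda}$ carries into $G(\cO)$ are the $U_\alpha(xt^{-n})$ with $1\le n\le\langle\alpha,\lambda\rangle$, so they are of $U_\lambda$-type, not $U_\lambda^-$-type. And $t^\lambda g$, $t^\lambda b$ do not give the same point of $\Gr$: in fact $t^\lambda g\,G(\cO)=a\,t^\lambda G(\cO)$.

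The missing idea is the geometric input the paper uses. It has two parts: the factorization $L^{<0}G=(L^{<0}G\cap\Ad_{t^{-\lambda}}G(\cO))\cdot(L^{<0}G\cap\Ad_{t^{-\lambda}}L^{<0}G)$, and the transversal-slice fact $L^{<0}G\cdot t^\lambda\cap G(\cO)\cdot t^\lambda=\{t^\lambda\}$ in $\Gr$. Write $g=ab$ according to this factorization.
\begin{itemize}
\item Since $t^\lambda a\in G(\cO)t^\lambda$, we get $t^\lambda b\in G(\cO)t^\lambda G(\cO)$.
\item On the other hand $t^\lambda b=(\Ad_{t^\lambda}b)\,t^\lambda$ with $\Ad_{t^\lambda}b\in L^{<0}G$.
\item The slice fact then gives $b\in G(\cO)\cap L^{<0}G=\{1\}$, so $g=a\in\Ad_{t^{-\lambda}}G(\cO)$.
\end{itemize}
The point $t^\lambda g\,G(\cO)$ itself lies in the $\Ad_{t^\lambda}(L^{<0}G)$-orbit of $t^\lambda$, not in its $L^{<0}G$-orbit. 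So your appeal to the $L^{<0}G$-orbit stratification only becomes usable after the factor $a$ is stripped off. The lattice and big-cell computations you suggest would have to reprove exactly this slice statement, for instance by a $\bGm$-contraction plus transversality argument, and that is where the content of the lemma lies.
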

\begin{proof}
    The second equality is obvious 
    since the intersection is generated by finitely many 
    affine root subgroups.
    So we only need to prove the first equality.
    
    Note that $G$ acts freely on 
    $G[t^{-1}]\cap((\Ad_{t^{-\lambda}}G(\cO))G(\cO)$
    on the right.
    We have
    \begin{align*}
    G[t^{-1}]\cap(\Ad_{t^{-\lambda}}G(\cO)))G(\cO))
    =&(L^{<0}G\cap(\Ad_{t^{-\lambda}}G(\cO))G(\cO))G\\
    =&t^{-\lambda}(t^\lambda L^{<0}G\cap G(\cO)t^\lambda G(\cO))G.
    \end{align*}
    
    We can decompose
    \[
    t^\lambda L^{<0}G
    =(t^\lambda L^{<0}G\cap G(\cO)t^\lambda)
    (L^{<0}G\cap\Ad_{t^{-\lambda}}L^{<0}G).
    \]

    Given any $g=ab\in t^\lambda L^{<0}G\cap G(\cO)t^\lambda G(\cO)$
    where $a\in t^\lambda L^{<0}G\cap G(\cO)t^\lambda$
    and $b\in L^{<0}G\cap\Ad_{t^{-\lambda}}L^{<0}G$.
    Since $ab\in G(\cO)t^\lambda G(\cO)$
    and $a\in G(\cO)t^\lambda$,
    we have $b\in t^{-\lambda}G(\cO)t^\lambda G(\cO)$.
    Observe that the affine Grassmannian slice
    \[
    t^\lambda(L^{<0}G\cap\Ad_{t^{-\lambda}}L^{<0}G)\cap G(\cO)t^{\lambda}G(\cO)=t^\lambda.
    \]
    We obtain
    \[
    b\in t^{-\lambda}(t^\lambda(L^{<0}G\cap\Ad_{t^{-\lambda}}L^{<0}G)\cap G(\cO)t^{\lambda}G(\cO))=1.
    \]
    This implies 
    $t^\lambda L^{<0}G\cap G(\cO)t^\lambda G(\cO)=t^\lambda L^{<0}G\cap G(\cO)t^\lambda$,
    from which the lemma follows.
\end{proof}

\begin{lem}\label{l:intersection 2}
    For any $t^\lambda g_0$ 
    where $\lambda=-w_1^{-1}\theta_0(\lambda)\in X_*(T)^+$
    and $g_0\in C_{\lambda,0}$,
    \[
    G(\cO)\cdot_\theta t^\lambda g_0w_1^{-1}
    \cap G[t]\cdot_\eta t^\lambda g_0w_1^{-1}
    \subset Gt^\lambda G.
    \]
\end{lem}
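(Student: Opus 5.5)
Write $x=t^\lambda g_0w_1^{-1}$. The plan is to take an element $y$ of the intersection, write it in two ways, and use Lemma~\ref{l:intersection 1} to pin down the ``positive'' and ``negative'' parts.

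Suppose $y=a\,x\,\theta(a)^{-1}=b\,x\,\eta(b)^{-1}$ with $a\in G(\cO)$ and $b\in G[t]$. Recall that $\eta(b)\in G[t^{-1}]$. From these two expressions, $b^{-1}a\in G(F)$ satisfies
\[
b^{-1}a\,x\,\theta(a)^{-1}=x\,\eta(b)^{-1}.
\]
Since $b^{-1}a\in G(\cO)$, the left side lies in $G(\cO)\,x\,G(\cO)=G(\cO)t^\lambda G(\cO)$. Hence $x\eta(b)^{-1}\in G(\cO)t^\lambda G(\cO)$.

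Because $g_0w_1^{-1}\in G$, this gives
\[
c:=\Ad_{w_1g_0^{-1}}\eta(b)^{-1}\in G[t^{-1}]\cap t^{-\lambda}G(\cO)t^\lambda G(\cO)=G[t^{-1}]\cap(\Ad_{t^{-\lambda}}G(\cO))G(\cO).
\]
By Lemma~\ref{l:intersection 1}, we can write $c=c_-\,c_0$ with $c_-\in L^{<0}G\cap\Ad_{t^{-\lambda}}G[t]$ and $c_0\in G$. Consequently $\eta(b)^{-1}=\Ad_{g_0w_1^{-1}}(c_-)\cdot g'$ for some $g'\in G$, and
\[
y=b\,x\,\eta(b)^{-1}=b\,t^\lambda\,c_-\,g_0w_1^{-1}g'.
\]
Here $t^\lambda c_-t^{-\lambda}\in G[t]$, so $y\in G[t]\,t^\lambda\,G$.

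Running the symmetric argument with the roles of $a$ and $b$ swapped should show $y\in G\,t^\lambda G(\cO)$; at least, the step is symmetric if one uses $\theta$-anti-invariance of $y$. In fact anti-invariance alone suffices. Since $y\in G[t]t^\lambda G$, we get $\theta(y)^{-1}\in G\,t^{\lambda'}G[t]$ up to constants, with $t^{\lambda'}=\theta(t^{\lambda})^{-1}\in G\,t^\lambda$ by \eqref{eq:lambda,general}. Thus $y\in G[t]t^\lambda G\cap G\,t^\lambda G[t]$.

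It then remains to show
\[
G[t]t^\lambda G\cap G t^\lambda G[t]=Gt^\lambda G.
\]
Write $y=p\,t^\lambda g=g'\,t^\lambda q$ with $p,q\in G[t]$ and $g,g'\in G$. Then $g'^{-1}p=t^\lambda q g^{-1}t^{-\lambda}$, so $p_1:=g'^{-1}p\in G[t]\cap\Ad_{t^\lambda}G[t]$. Since $G[t]\cap\Ad_{t^\lambda}G[t]\subset P_\lambda[t]$ (the $P_\lambda$ here is the one used in Proposition~\ref{p:coset conj classes,general}), we get $p_1=\ell\,u$ with $\ell\in L_\lambda[t]\cap\ldots$. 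This bookkeeping is the expected obstacle: a general $p_1$ in this intersection need not make $p_1t^\lambda$ lie in $Gt^\lambda G$. I would therefore instead evaluate at $t=0$, using the evaluation $G[t]\to G$, to reduce to a problem on the affine Grassmannian slice.

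Concretely, $y\,G(\cO)$ lies in $G[t]t^\lambda G(\cO)\cap Gt^\lambda G[t]\,G(\cO)$. Consider the point $y^{-1}$ instead, where $y^{-1}=\theta(y)$ up to constants, so the same argument applies with $-\theta_0\lambda$. Using the contraction $\lambda'(s)$ and the opposite slice $L^{<0}G$ from Lemma~\ref{l:intersection 1} (the orbit $Gt^\lambda$ is the fixed locus of loop rotation in $G(\cO)t^\lambda$, and the negative slice meets $G(\cO)t^\lambda G(\cO)/G(\cO)$ only at $t^\lambda$), we conclude $y\in Gt^\lambda G$. The main difficulty is this final step: justifying that membership in both the $G[t]$-positive and the $G[t^{-1}]$-type decompositions forces the $U$-parts to be constant. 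I would handle it exactly as in the proof of Lemma~\ref{l:intersection 1}, through the slice identity $t^\lambda(L^{<0}G\cap\Ad_{t^{-\lambda}}L^{<0}G)\cap G(\cO)t^\lambda G(\cO)=t^\lambda$.
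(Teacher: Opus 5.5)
\textbf{There is a genuine gap, at exactly the step you flagged as the obstacle.}

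Your first step is sound, up to a slip: the element to which Lemma~\ref{l:intersection 1} applies is $\Ad_{g_0w_1^{-1}}\eta(b)^{-1}$, not $\Ad_{w_1g_0^{-1}}\eta(b)^{-1}$. It does give $y\in G[t]t^\lambda G$. Your use of $\theta$-anti-invariance to get $y\in Gt^\lambda G[t]$ is also correct.

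The problem is the identity $G[t]t^\lambda G\cap Gt^\lambda G[t]=Gt^\lambda G$ that you then need. It is false: for $\lambda=0$ the left side is $G[t]$. Moreover, no argument can finish from the data you carry into your last paragraph. Take $G=\GL_2$, $\theta_0=\mathrm{id}$ (compact $G_\bR$), $\epsilon=1$, $\lambda=0$, and
$y=\begin{pmatrix}1&t\\0&-1\end{pmatrix}$. Then:
\begin{itemize}
\item $\theta(y)=y=y^{-1}$;
\item $y\in G[t]\subset G(\cO)$, and $y\in G[t]t^0G\cap Gt^0G[t]$;
\item $yG(\cO)$ is the base point of $\Gr$;
\item $y$ even lies in $G(\cO)\cdot_\theta x$ for the legitimate base point $x=\mathrm{diag}(1,-1)$.
\end{itemize}
Yet $y\notin G$. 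So the ``evaluate at $t=0$'' and slice step cannot work. Every constraint you retain concerns positive loops. The slice identity $t^\lambda(L^{<0}G\cap\Ad_{t^{-\lambda}}L^{<0}G)\cap G(\cO)t^\lambda G(\cO)=t^\lambda$ needs an element of $L^{<0}G$ to act on, and you never produce one.

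\textbf{The missing ingredient is $\eta_c$-invariance.} Since $y$ is simultaneously $\theta$- and $\eta$-anti-fixed, it is fixed by $\eta_c=\theta\circ\eta$, where $\eta_c(\gamma)(t)=\eta_{c,0}(\gamma(\bar t^{-1}))$. This involution exchanges $G[t]$ and $G[t^{-1}]$ and fixes $t^\lambda$ and $G_c$; in the example above, $\eta_c(y)\neq y$.

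The paper uses it as follows. Lemma~\ref{l:intersection 1} gives $b=h_0h_1$ with $h_0\in G$ and $h_1\in G[t]_1\cap\Ad_{t^\lambda}G[t^{-1}]\subset U_\lambda[t]$. Then $\eta_c(y)=y$ places $v=h_1\Ad_{t^\lambda g_0w_1^{-1}}\eta(h_1)^{-1}$ in $U_\lambda[t]\cap G[t^{-1}]\Ad_{t^\lambda}G$. A Bruhat decomposition of $G$ then forces $v\in U_\lambda\Ad_{t^\lambda}U_\lambda$, whence $y\in Gt^\lambda G$.

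With $\eta_c$-invariance in hand, your own route also closes quickly:
\begin{itemize}
\item Start from your $y\in Gt^\lambda G[t]$. Use $G=G_cP_\lambda^-$ together with $\Ad_{t^{-\lambda}}P_\lambda^-\subset G[t]$ to write $y=kt^\lambda r$ with $k\in G_c$ and $r\in G[t]$.
\item Then $r=t^{-\lambda}k^{-1}y$ is $\eta_c$-fixed, so $r\in G[t]\cap G[t^{-1}]=G$.
\item Since also $\eta_{c,0}(r)=r$, we get $r\in G_c$, and hence $y\in G_ct^\lambda G_c\subset Gt^\lambda G$.
\end{itemize}
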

\begin{proof}
    Assume for $g_1\in G(\cO), g_2\in G[t]$, we have
    \[
    g:=g_1 t^\lambda g_0w_1^{-1}\theta(g_1)^{-1}
     =g_2 t^\lambda g_0w_1^{-1}\eta(g_2)^{-1}.
    \]
    Observe that $g_0\in L_\lambda$ commutes with $t^\lambda$.
    By Lemma \ref{l:intersection 1}, we obtain
    \begin{align*}
    \Ad_{w_1^{-1}}\eta(g_2)^{-1}
    &=t^{-\lambda}g_0^{-1}g_2^{-1}g_1t^\lambda g_0w_1^{-1}\theta(g_1)^{-1}w_1\\
    &\in G[t^{-1}]\cap(\Ad_{t^{-\lambda}}G(\cO))G(\cO)
    =(L^{<0}G\cap\Ad_{t^{-\lambda}}G[t])G.
    \end{align*}
    
    Denote $G[t]_1=\ker(G[t]\rightarrow G)$.
    Observe $\eta(L^{<0}G)=G[t]_1$.
    We get
    $g_2\in G(G[t]_1\cap\Ad_{t^\lambda}G[t^{-1}])$.
    Write $g_2=h_0h_1$, 
    $h_0\in G$, $h_1\in G[t]_1\cap\Ad_{t^\lambda}G[t^{-1}]$.
    Note that the intersection of orbits belongs to
    $(G(F))^{\inv\circ\theta}\cap(G[t,t^{-1}])^{\inv\circ\eta}\subset G[t,t^{-1}]^{\eta_c}$.
    Thus $\eta_c(g)=g$.
    Note that $t^\lambda, w_1, g_0$ are fixed by $\eta_c$
    and $\eta_c\circ\eta=\theta$.
    We obtain
    \[
    g=h_0h_1t^\lambda g_0w_1^{-1}\eta(h_1)^{-1}\eta(h_0)^{-1}
    =\eta_c(h_0)\eta_c(h_1)t^\lambda g_0w_1^{-1}\theta(h_1)^{-1}\theta(h_0)^{-1}.
    \]
    
    Rewrite this as
    \[
    h_1\Ad_{t^\lambda g_0w_1^{-1}}\eta(h_1)^{-1}
    =h_0^{-1}\eta_c(h_0)\eta_c(h_1)(\Ad_{t^\lambda g_0w_1^{-1}}\theta(h_1)^{-1})
    (\Ad_{t^\lambda g_0w_1^{-1}}(\theta(h_0)^{-1}\eta(h_0))).
    \]
    Observe $h_1\in U_\lambda[t]_1$,
    $\Ad_{t^\lambda g_0w_1^{-1}}\eta(h_1)^{-1}\in U_\lambda[t]$,
    $\eta_c(h_1)\in U_\lambda^-[t^{-1}]$,
    $\Ad_{t^\lambda g_0w_1^{-1}}\theta(h_1)^{-1}\in U_\lambda^-[t^{-1}]$.
    We obtain
    \[
    h_1\Ad_{t^\lambda g_0w_1^{-1}}\eta(h_1)^{-1}
    \in U_\lambda[t]\cap G[t^{-1}]\Ad_{t^\lambda}G.
    \]
    Write
    \[
    \Ad_{t^\lambda}G
    =\Ad_{t^\lambda}(\bigcup_{w\in W}U^-w L_\lambda U_\lambda)
    \subset\bigcup_{w\in W}G[t^{-1}]t^{\lambda-w\lambda}G(\Ad_{t^\lambda}U_\lambda).
    \]
    Thus
    \[
    U_\lambda[t]\cap G[t^{-1}]\Ad_{t^\lambda}G
    \subset\bigcup_{w\in W}U_\lambda[t]\cap G[t^{-1}]t^{\lambda-w\lambda}G(\Ad_{t^\lambda}U_\lambda).
    \]
    Note that
    $U_\lambda[t]\cap G[t^{-1}]t^{\lambda-w\lambda}G(\Ad_{t^\lambda}U_\lambda)
    \subset G[t]\cap G[t^{-1}]t^{\lambda-w\lambda}G[t]$
    is nonempty only if $\lambda=w\lambda$.
    Thus
    \[
    h_1\Ad_{t^\lambda g_0w_1^{-1}}\eta(h_1)^{-1}\in
    U_\lambda[t]\cap G[t^{-1}]\Ad_{t^\lambda}U_\lambda
    =U_\lambda\Ad_{t^\lambda}U_\lambda,
    \]
    and
    \[
    g=h_0(h_1\Ad_{t^\lambda g_0w_1^{-1}}\eta(h_1)^{-1})t^\lambda g_0w_1^{-1}\eta(h_0)^{-1}
    \in G U_\lambda t^\lambda U_\lambda t^{-\lambda}t^\lambda G
    =G t^\lambda G.
    \]
\end{proof}

\begin{prop}\label{p:spherical orbits intersection}
    For any $t^\lambda g_0w_1^{-1}$ 
    where $\lambda=-w_1^{-1}\theta_0(\lambda)\in X_*(T)^+$
    and $g_0\in C_{\lambda,0}$,
    \[
    G(\cO)\cdot_\theta t^\lambda g_0w_1^{-1}
    \cap G[t]\cdot_\eta t^\lambda g_0w_1^{-1}
    =G_c\cdot_\theta t^\lambda g_0w_1^{-1}
    =G_c\cdot_\eta t^\lambda g_0w_1^{-1}.
    \]
\end{prop}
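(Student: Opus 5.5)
We argue by the two inclusions, using Lemma \ref{l:intersection 2} for the hard direction. Write $x=t^\lambda g_0w_1^{-1}$. First observe that on $G_c\subset G$ (constant loops) we have $\eta(h)=\eta_0(h)$ and $\theta(h)=\theta_0(h)$. Since $\eta_0=\theta_0\circ\eta_{c,0}$ and $h\in G_c=G^{\eta_{c,0}}$, these agree. Hence $h\cdot_\theta x=h\cdot_\eta x$ for $h\in G_c$, which gives the equality $G_c\cdot_\theta x=G_c\cdot_\eta x$. Since $G_c\subset G\subset G(\cO)\cap G[t]$, this common orbit lies in the intersection of the two orbits. This proves the inclusion ``$\supseteq$''.

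For ``$\subseteq$'', take $g$ in the intersection. As in the proof of Lemma \ref{l:intersection 2}, $g$ lies in $G(F)^{\inv\circ\theta}\cap G[t,t^{-1}]^{\inv\circ\eta}$, so $\eta_c(g)=g$. Lemma \ref{l:intersection 2} gives $g\in Gt^\lambda G$. I would then write $g=a\,t^\lambda b$ with $a,b\in G$ and use the Cartan decomposition $G=G_c\,\exp(i\fg_c)$ (or $G=G_cP_\lambda$ via Iwasawa), to move $g$ by an element $k\in G_c$ under $\cdot_\theta$ so that $g'=k\cdot_\theta g\in t^\lambda G\,w_1^{-1}$. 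The idea is that $Gt^\lambda G\cdot w_1=G\times^{P_\lambda^{-}}\cdots$, and because $G_c$ acts transitively on $G/P_\lambda$, one can arrange the left factor to be $1$ up to $P_\lambda$. Concretely, $g'=p\,t^\lambda g'_0w_1^{-1}$ with $p\in P_\lambda$ absorbed into $t^\lambda$ up to $U_\lambda$-terms. The fixed-point condition $\eta_c(g')=g'$ together with $\theta(g')=g'^{-1}$ should then force $g'\in t^\lambda L_\lambda w_1^{-1}$. Indeed, $\eta_c$ sends $U_\lambda$ to $U_\lambda^-$, and the Bruhat-type uniqueness in $G(\cO)t^\lambda G(\cO)$ kills the unipotent parts. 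Thus $g'=t^\lambda g_0'w_1^{-1}$ with $g_0'\in L_{\lambda,c}\cap A_{\lambda,0}=C_{\lambda,0}$.

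Finally, $g'$ lies in the $G(\cO)$-orbit of $x$. By Theorem \ref{t:theta spherical orbits}, $g_0'$ and $g_0$ are then in the same $L_{\lambda,c}$-twisted class. Since $L_{\lambda,c}\subset G_c$ acts through the $\cdot_\theta$ action on $t^\lambda C_{\lambda,0}w_1^{-1}$ (it commutes with $t^\lambda$), we get $g'\in G_c\cdot_\theta x$, hence $g\in G_c\cdot_\theta x$.

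The main obstacle is the middle step: reducing an $\eta_c$-fixed anti-fixed element of $Gt^\lambda G$ to $t^\lambda C_{\lambda,0}w_1^{-1}$ by a $G_c$-twisted conjugation. I would handle it by using $G=G_cP_\lambda^{-}$ and the injectivity part of the argument in Proposition \ref{p:coset conj classes,general}, which identifies orbits inside $t^\lambda P_\lambda$-cosets with $L_\lambda$-classes. I would then apply the Cartan-decomposition argument of Proposition \ref{p:C_lambda to A_lambda,0}, which shows that the $\exp(Y)$ part acts trivially on $\eta_c$-fixed data.
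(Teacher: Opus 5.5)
Your argument is correct in outline, but it follows a different route from the paper.

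\textbf{How the two routes differ.} After Lemma~\ref{l:intersection 2}, the paper first reruns the orbit parametrization with $G$ in place of $G(\cO)$: twisted $G$-orbits on $(Gt^\lambda G)^{\inv\circ\theta}$ are indexed by $L_{\lambda,c}\backslash C_{\lambda,0}$. From this it concludes that the intersection lies in $G\cdot_\theta x$. Only then does it descend to $G_c$: it writes the conjugating element as $k\exp(Y)$, and $\eta_c$-fixedness forces $\exp(2Y)\in G\cap\Ad_{t^\lambda}G=L_\lambda$, so $\exp(Y)$ acts trivially.

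You instead normalize the element itself by $G_c$, and then use only the injectivity already contained in Theorem~\ref{t:theta spherical orbits}. This avoids the finite-dimensional parametrization on $Gt^\lambda G$. It also directly gives $((Gt^\lambda G)^{\inv\circ\theta})^{\eta_c}=G_c\cdot_\theta t^\lambda C_{\lambda,0}w_1^{-1}$, which is what Corollary~\ref{c:union of spherical orbits intersection} needs. The paper's route instead yields the cleaner identity $(G\cdot_\theta x)^{\eta_c}=G_c\cdot_\theta x$. Note that in your route the Cartan-decomposition step from your last paragraph is unnecessary; that is the paper's mechanism, not yours.

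\textbf{Fixing the middle step.} The reason you give, ``Bruhat-type uniqueness in $G(\cO)t^\lambda G(\cO)$'', is not what does the work. Here is the argument that does:
\begin{itemize}
\item Since $G=G_cB\subset G_cP_\lambda$, write $g=k\,u\ell\,t^\lambda b$ with $k\in G_c$, $u\in U_\lambda$, $\ell\in L_\lambda$, $b\in G$.
\item Set $g'=k^{-1}\cdot_\theta g=u\,t^\lambda b'$ with $b'=\ell b\theta_0(k)\in G$. It remains $\eta_c$-fixed because $\theta_0$ and $\eta_{c,0}$ commute.
\item Since $\eta_c(t^\lambda)=t^\lambda$ and $\eta_{c,0}(U_\lambda)=U_\lambda^-$, the condition $\eta_c(g')=g'$ reads $\eta_{c,0}(u)^{-1}u=\Ad_{t^\lambda}\bigl(\eta_{c,0}(b')b'^{-1}\bigr)$.
\item The left side is a constant. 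A constant $c$ with $\Ad_{t^\lambda}c$ constant lies in $G\cap\Ad_{t^{-\lambda}}G=L_\lambda$. Hence $\eta_{c,0}(u)^{-1}u\in U_\lambda^-U_\lambda\cap L_\lambda=\{1\}$, using $U_\lambda^-\cap P_\lambda=1$. So $u=1$ and $b'\in G_c$.
\item Since $b'$ is constant, the defining equation of $A_\lambda$ makes $\Ad_{t^\lambda}(b'w_1)$ constant. Therefore $g_0':=b'w_1\in L_\lambda\cap G_c\cap A_{\lambda,0}=C_{\lambda,0}$.
\end{itemize}

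With this in place, your final step finishes the proof. That step uses injectivity of $L_{\lambda,c}\backslash C_{\lambda,0}\to G(\cO)\backslash_\theta G(F)^{\inv\circ\theta}$, together with the identity $h\cdot_\theta x=t^\lambda\bigl(hg_0\Ad_{w_1^{-1}}\theta_0(h)^{-1}\bigr)w_1^{-1}$ for $h\in L_{\lambda,c}$.
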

\begin{proof}
    The second equality is because 
    $\theta$ and $\eta$ coincide on $G_c$.

    To show the first equality,
    we first show that
    \[
    A:=G(\cO)\cdot_\theta t^\lambda g_0w_1^{-1}
    \cap G[t]\cdot_\eta t^\lambda g_0w_1^{-1}
    \subset G\cdot_\theta t^\lambda g_0w_1^{-1}.
    \]
    By Lemma \ref{l:intersection 2},
    \[
    A\subset G(\cO)\cdot_\theta t^\lambda g_0w_1^{-1}\cap Gt^\lambda G
    \subset G(\cO)\cdot_\theta t^\lambda g_0w_1^{-1}
    \cap(G t^\lambda G)^{\inv\circ\theta}.
    \]
    The proof of Theorem \ref{t:theta spherical orbits}
    shows that the $\theta$-twisted $G$-orbits 
    in $(G t^\lambda G)^{\inv\circ\theta}$
    are also parametrized by $L_{\lambda,c}\backslash C_{\lambda,0}$.
    Thus 
    $G(\cO)\cdot_\theta t^\lambda g_0w_1^{-1}
    \cap(G t^\lambda G)^{\inv\circ\theta}
    =G\cdot_\theta t^\lambda g_0w_1^{-1}$.

    Recall $A\subset (G[t,t^{-1}])^{\eta_c}$. 
    We obtain from the above that
    \[
    A\subset(G\cdot_\theta t^\lambda g_0w_1^{-1})^{\eta_c}.
    \]
    Let $g=k\exp(Y)\in G$, $k\in G_c$ 
    and $Y\in\fp_c$,
    such that $g\cdot_\theta t^\lambda g_0w_1^{-1}$ is fixed by $\eta_c$.
    Recall $\theta$ acts on $G_c$ and $\fp_c$.
    We have
    \begin{align*}
    &k\exp(Y)t^\lambda g_0w_1^{-1}\exp(-\theta(Y))\theta(k)^{-1}
    =k\exp(-Y)t^\lambda g_0w_1^{-1}\exp(\theta(Y))\theta(k)^{-1}\\
    \Rightarrow &
    \exp(2Y)=\Ad_{t^\lambda}\exp(2\Ad_{g_0w_1^{-1}}\theta(Y))\in G\cap\Ad_{t^\lambda}G=L_\lambda\ \text{commutes with }t^\lambda\\
    \Rightarrow &
    Y=\Ad_{g_0w_1^{-1}}\theta(Y)\in\Lie(L_\lambda)\cap\fp_c\\
    \Rightarrow &
    g\cdot_\theta t^\lambda g_0w_1^{-1}=k\cdot_\theta t^\lambda g_0w_1^{-1}.
    \end{align*}
    We conclude that
    \[
    G_c\cdot_\theta t^\lambda g_0w_1^{-1}
    \subset A 
    \subset (G\cdot_\theta t^\lambda g_0w_1^{-1})^{\eta_c}
    =G_c\cdot_\theta t^\lambda g_0w_1^{-1}
    \]
    is an equality.
\end{proof}

\begin{cor}\label{c:union of spherical orbits intersection}
    Denote $t^{X_*(T)}=\{t^\lambda\mid\lambda\in X_*(T)\}$.
    Then
    \[
    \bigsqcup_{\substack{\lambda=-w_1^{-1}\theta_0(\lambda)\in X_*(T)^+
    \\
    \bar{g}_0\in L_{\lambda,c}\backslash C_{\lambda,0}}}
    G(\cO)\cdot_\theta t^\lambda g_0w_1^{-1}
    \cap G[t]\cdot_\eta t^\lambda g_0w_1^{-1}
    =((Gt^{X_*(T)}G)^{\inv\circ\theta})^{\eta_c}.
    \]
    In the above $g_0$ is any representative of the orbit $\bar{g}_0$.
\end{cor}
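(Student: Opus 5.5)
By Proposition \ref{p:spherical orbits intersection}, each piece on the left-hand side equals $G_c\cdot_\theta t^\lambda g_0w_1^{-1}$. So the corollary amounts to identifying
\[
\bigsqcup_{\lambda,\bar g_0} G_c\cdot_\theta t^\lambda g_0w_1^{-1}
\quad\text{with}\quad
((Gt^{X_*(T)}G)^{\inv\circ\theta})^{\eta_c}.
\]
I would prove the two inclusions separately and then address disjointness.

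\emph{The inclusion ``$\subset$''.} Each $t^\lambda g_0w_1^{-1}$ with $g_0\in C_{\lambda,0}\subset L_{\lambda,c}$ lies in $(Gt^{X_*(T)}G)^{\inv\circ\theta}$, since it lies in $(t^\lambda G(\cO))^{\inv\circ\theta}$. It is also $\eta_c$-fixed, because $t^\lambda$, $g_0$ and $w_1$ are all $\eta_c$-fixed. Now $G_c$ acts by $\theta$-twisted conjugation on $(Gt^{X_*(T)}G)^{\inv\circ\theta}$. This action commutes with $\eta_c$ on $G_c$, since $\eta_c\circ\theta=\theta\circ\eta_c$ and $G_c$ is $\eta_c$-fixed. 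Hence the whole $G_c$-orbit stays in the right-hand side.

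\emph{The inclusion ``$\supset$''.} Let $x\in((Gt^{\mu}G)^{\inv\circ\theta})^{\eta_c}$, and replace $\mu$ by its dominant representative $\lambda$. The argument at the start of \S\ref{complex orbits} shows that $\lambda=-w_1^{-1}\theta_0(\lambda)$. The proof of Theorem \ref{t:theta spherical orbits}, specialised to constant loops as in the proof of Proposition \ref{p:spherical orbits intersection}, shows that the $\theta$-twisted $G$-orbits on $(Gt^\lambda G)^{\inv\circ\theta}$ are represented by $t^\lambda C_{\lambda,0}w_1^{-1}$. Concretely:
\begin{itemize}
\item writing $x=g_1t^\lambda g_2$, the reduction in \eqref{eq:A_lambda general} gives $g\in G\cap\Ad_{t^{-\lambda}}G(\cO)=P_\lambda$;
\item the $k=1$ step of Proposition \ref{p:coset conj classes,general}, which twists by $U_\lambda^-$, moves $g$ into $A_{\lambda,0}$;
\item Proposition \ref{p:C_lambda to A_lambda,0} then moves it into $C_{\lambda,0}$.
\end{itemize}
Thus $x=g\cdot_\theta t^\lambda g_0w_1^{-1}$ for some $g\in G$ and $g_0\in C_{\lambda,0}$. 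Since $x$ is $\eta_c$-fixed, the Cartan decomposition argument at the end of the proof of Proposition \ref{p:spherical orbits intersection} applies verbatim. Writing $g=k\exp(Y)$, it forces $Y\in\Lie(L_\lambda)\cap\fp_c$ with $\Ad_{g_0w_1^{-1}}\theta(Y)=Y$, and hence $x=k\cdot_\theta t^\lambda g_0w_1^{-1}$ with $k\in G_c$.

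\emph{Disjointness.} The union on the left is indexed by pairs $(\lambda,\bar g_0)$. Distinct pairs give distinct $G(\cO)$-orbits by Theorem \ref{t:theta spherical orbits}, so the corresponding intersections are disjoint. The result is also independent of the representative $g_0$ of $\bar g_0$: if $g_0'=\ell\cdot g_0$ with $\ell\in L_{\lambda,c}\subset G_c$, then, since $\ell$ commutes with $t^\lambda$, we get $t^\lambda g_0'w_1^{-1}=\ell\cdot_\theta t^\lambda g_0w_1^{-1}$.

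The main obstacle is the ``$\supset$'' direction. The delicate points are checking that the twisted conjugations used in Propositions \ref{p:coset conj classes,general} and \ref{p:C_lambda to A_lambda,0} can be taken in $G$ when the starting element lies in $Gt^\lambda G$, and then applying the $\eta_c$-fixed-point argument to the resulting element of $G$.
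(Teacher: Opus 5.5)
Your proposal is correct and follows essentially the paper's argument. You reduce each piece to $G_c\cdot_\theta t^\lambda g_0w_1^{-1}$ via Proposition~\ref{p:spherical orbits intersection}, represent the $\theta$-twisted $G$-orbits on $(Gt^\lambda G)^{\inv\circ\theta}$ by $t^\lambda C_{\lambda,0}w_1^{-1}$, and then use the Cartan-decomposition argument for $\eta_c$-fixed points to cut down to $G_c$.

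The ``delicate point'' you flag is in fact immediate. For constant $g$, the right-hand side of the defining equation of $A_\lambda$ is constant, so $\Ad_{t^\lambda}g$ is constant, i.e.\ $g\in Z_G(\lambda)=L_\lambda$. Hence $g\in A_{\lambda,0}$ directly, the $U_\lambda^-$-twist you invoke is trivial, and only Proposition~\ref{p:C_lambda to A_lambda,0} (which twists by elements of $L_\lambda\subset G$) is needed.
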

\begin{proof}
    Observe that
    \[
    (Gt^{X_*(T)}G)^{\inv\circ\theta}
    =(Gt^{X_*(T)^+}G)^{\inv\circ\theta}
    =\bigsqcup_{\lambda=-w_1^{-1}\theta_0(\lambda)\in X_*(T)^+}
    (Gt^\lambda G)^{\inv\circ\theta}.
    \]
    
    Since the $\theta$-twisted $G$-orbits on $Gt^\lambda G$
    are represented by $L_{\lambda,c}$-orbits on $C_{\lambda,0}$,
    we have
    \[
    (Gt^\lambda G)^{\inv\circ\theta}=G\cdot_\theta t^\lambda C_{\lambda,0}w_1^{-1}.
    \]
    
    Recall in the proof of 
    Proposition \ref{p:spherical orbits intersection}
    we show that
    \[
    (G\cdot_\theta t^\lambda g_0w_1^{-1})^{\eta_c}
    =G_c\cdot_\theta t^\lambda g_0w_1^{-1}.
    \]
    To sum up, we deduce
    \[
    ((Gt^{X_*(T)}G)^{\inv\circ\theta})^{\eta_c}
    =\bigsqcup_{\substack{\lambda=-w_1^{-1}\theta_0(\lambda)\in X_*(T)^+
    \\
    \bar{g}_0\in L_{\lambda,c}\backslash C_{\lambda,0}}}
    G_c\cdot_\theta t^\lambda g_0w_1^{-1}.
    \]
    The desired statement follows from 
    Proposition \ref{p:spherical orbits intersection}.
\end{proof}

\subsubsection{Matsuki duality for spherical orbits}

\begin{thm}\label{t:affine Matsuki}
There is a unique bijection
	\[
	G(\cO)\backslash_\theta(G(F))^{\inv\circ\theta}
	\simeq
	G[t]\backslash_\eta(G[t,t^{-1}])^{\inv\circ\eta}
	\]
    such that a $\theta$-tiwsted $G(\cO)$-orbit  $\mathcal O_X$
    corresponds to an $\eta$-twisted $G[t]$-orbit $\mathcal O_\bR$
    if and only if their intersection 
$\mathcal O_X\cap\mathcal O_\bR$
is a single twisted $G_c$-orbit 
    in
    $((Gt^{X_*(T)}G)^{\inv\circ\theta})^{\eta_c}$.
    Moreover, for any $x\in \mathcal O_X\cap\mathcal O_\bR$, there is a natural isomorphism between component groups of stabilizers 
    \[\pi_0(Z_{G(\mathcal O)}(x))\cong \pi_0(Z_{G[t]}(x)).\]
    
\end{thm}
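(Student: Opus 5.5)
The bijection comes from composing the two parametrizations already established. By Theorem \ref{t:theta spherical orbits}, the assignment $x_\lambda=t^\lambda g_0w_1^{-1}\mapsto g_0$ identifies $G(\cO)\backslash_\theta G(F)^{\inv\circ\theta}$ with $\bigsqcup_{\lambda}L_{\lambda,c}\backslash C_{\lambda,0}$. By Theorem \ref{t:eta spherical orbits}, the same assignment identifies $G[t]\backslash_\eta G[t,t^{-1}]^{\inv\circ\eta}$ with the same set; the index set $\lambda=-w_1^{-1}\theta_0(\lambda)\in X_*(T)^+$ is identical on both sides by \eqref{eq:lambda,general} and \eqref{eq:real lambda}. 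I define the bijection by matching $\mathcal O_X=G(\cO)\cdot_\theta t^\lambda g_0w_1^{-1}$ with $\mathcal O_\bR=G[t]\cdot_\eta t^\lambda g_0w_1^{-1}$ for the same representative $g_0\in C_{\lambda,0}$. This is well defined because both parametrizations send $g_0$ only through its $L_{\lambda,c}$-class.

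Next I would check the intersection property and uniqueness. For corresponding orbits, Proposition \ref{p:spherical orbits intersection} gives $\mathcal O_X\cap\mathcal O_\bR=G_c\cdot_\theta t^\lambda g_0w_1^{-1}$, a single twisted $G_c$-orbit lying in $((Gt^{X_*(T)}G)^{\inv\circ\theta})^{\eta_c}$. For the converse, suppose $\mathcal O_X$ and $\mathcal O_\bR$ are arbitrary orbits whose intersection is a nonempty twisted $G_c$-orbit inside $((Gt^{X_*(T)}G)^{\inv\circ\theta})^{\eta_c}$. By Corollary \ref{c:union of spherical orbits intersection}, any point $y$ of that intersection lies in $G(\cO)\cdot_\theta x_\lambda\cap G[t]\cdot_\eta x_\lambda$ for some pair $(\lambda,\bar g_0)$. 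Then $\mathcal O_X=G(\cO)\cdot_\theta x_\lambda$ and $\mathcal O_\bR=G[t]\cdot_\eta x_\lambda$, since orbits are either equal or disjoint. Hence the two orbits correspond under the bijection above. This forces any bijection with the stated property to coincide with ours, which gives uniqueness. Nonemptiness of the intersection is implicit in its being a single orbit.

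For the stabilizers, take $x\in\mathcal O_X\cap\mathcal O_\bR$ and write $x=k\cdot_\theta x_\lambda$ with $k\in G_c$. Because $\theta$ and $\eta$ agree on $G_c$, we also have $x=k\cdot_\eta x_\lambda$. Conjugation by $k$, which lies in both $G(\cO)$ and $G[t]$, therefore identifies $Z_{G(\cO)}(x)\cong Z_{G(\cO)}(x_\lambda)$ and $Z_{G[t]}(x)\cong Z_{G[t]}(x_\lambda)$ compatibly. By the second parts of Theorems \ref{t:theta spherical orbits} and \ref{t:eta spherical orbits}, both $\pi_0$'s are isomorphic to $\pi_0(Z_{L_{\lambda,c}}(g_0))$ via the inclusions. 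This yields $\pi_0(Z_{G(\cO)}(x))\cong\pi_0(Z_{G[t]}(x))$.

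The main subtlety is naturality: the isomorphism must not depend on the choice of $k$ or of the representative $g_0$. Changing $k$ by an element of $Z_{G_c}(x_\lambda)=Z_{L_{\lambda,c}}(g_0)$, or so I expect, alters both identifications by the same inner automorphism. So I would verify that the $\pi_0$-isomorphism is equivariant for this conjugation, using that both maps factor through $\pi_0(Z_{L_{\lambda,c}}(g_0))$. I would also confirm that $Z_{G_c}(x)$ sits inside both stabilizers, so that the isomorphism can be stated intrinsically as: both component groups are isomorphic to $\pi_0(Z_{G_c}(x))$ via the inclusions.
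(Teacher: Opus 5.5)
Your proposal is correct and follows the paper's proof. You compose the two parametrizations through $\bigsqcup_\lambda L_{\lambda,c}\backslash C_{\lambda,0}$, read off the intersection property (and hence uniqueness) from Proposition \ref{p:spherical orbits intersection} and Corollary \ref{c:union of spherical orbits intersection}, and get the component-group statement from the stabilizer parts of Theorems \ref{t:theta spherical orbits} and \ref{t:eta spherical orbits}. Your extra steps fill in details the paper leaves implicit: the explicit uniqueness argument, and transporting from $x_\lambda$ to an arbitrary $x=k\cdot_\theta x_\lambda$ with $k\in G_c$ using $Z_{G_c}(x_\lambda)=Z_{L_{\lambda,c}}(g_0)$, which does hold because $G\cap\Ad_{t^\lambda}G=L_\lambda$.
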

\begin{proof}
Combining 
 Theorem \ref{t:theta spherical orbits}
 and 
Theorem \ref{t:eta spherical orbits}
we obtain the bijection 
\begin{equation}\label{bijection}
\begin{split}
G(\cO)\backslash_\theta(G(F))^{\inv\circ\theta}
&\simeq\bigsqcup_{\lambda=-w_1^{-1}\theta_0(\lambda)\in X_*(T)^+}
L_\lambda\backslash A_{\lambda,0}
\simeq \bigsqcup_{\lambda=-w_1^{-1}\theta_0(\lambda)\in X_*(T)^+}L_{\lambda,c}\backslash C_{\lambda,0}\\
&\simeq\bigsqcup_{\lambda=-w_1^{-1}\theta_0(\lambda)\in 
X_*(T)^+}L_{\lambda,c}\backslash B_{\lambda,0}\simeq G[t]\backslash_\eta(G[t,t^{-1}])^{\inv\circ\eta}
\end{split}
\end{equation}
and the claim about isomorphism of  component groups of centralizers.
The claim for intersection of orbits 
follows from Corollary \ref{c:union of spherical orbits intersection}.
\end{proof}

\begin{cor}\label{c:finite G-Matsuki}

There is a bijection. 
\[L_\lambda\backslash A_{\lambda,0}\simeq L_{\lambda,c}\backslash C_{\lambda,c}\simeq L_\lambda\backslash B_{\lambda,0}\]
for  each $\lambda=-w_1^{-1}\theta_0(\lambda)\in X_*(T)^+$.
In particular, when $\lambda=0$, 
 we obtain a bijection
\begin{equation}\label{cartan cohomology}
	G\backslash_{\theta_0}G^{\inv\circ\theta_0}
	\simeq
	G\backslash_{\eta_0}G^{\inv\circ\eta_0}.
	\end{equation}
characterized by the property that the intersection of the corresponding $G$-orbits
    is contained in $G_c^{\on{inv}\circ\theta_0}=G_c^{\on{inv}\circ\eta_0}$,
    in which case it is a $G_c$-orbit.   
\end{cor}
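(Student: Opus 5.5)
The first bijection is obtained by composing Proposition \ref{p:C_lambda to A_lambda,0} with Proposition \ref{p:C_lambda to B_lambda,0}: both send $L_{\lambda,c}\backslash C_{\lambda,0}$ bijectively onto $L_\lambda\backslash A_{\lambda,0}$ and $L_\lambda\backslash B_{\lambda,0}$ respectively, via the inclusion $C_{\lambda,0}\subset A_{\lambda,0}$ and $C_{\lambda,0}=L_{\lambda,c}\cap B_{\lambda,0}$. Inverting the first map and composing with the second yields $L_\lambda\backslash A_{\lambda,0}\simeq L_\lambda\backslash B_{\lambda,0}$. No new argument is needed beyond recording that these maps are induced by inclusions, so they are compatible.

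For $\lambda=0$ we have $L_0=G$, $P_0=G$ and $U_0=1$. The condition $\lambda=-w_1^{-1}\theta_0(\lambda)$ is automatic, and $\epsilon^\lambda=1$. The sets become
\[
A_{0,0}=\{g\in G\mid g=w_2\Ad_{w_1^{-1}}\theta_0(g)^{-1}\},\qquad B_{0,0}=\{g\in G\mid g=w_2\Ad_{w_1^{-1}}\eta_0(g)^{-1}\},
\]
with the actions $h\cdot g=hg\Ad_{w_1^{-1}}\theta_0(h)^{-1}$ and $h\cdot g=hg\Ad_{w_1^{-1}}\eta_0(h)^{-1}$. The map $g\mapsto gw_1^{-1}$ identifies $A_{0,0}$ with $G^{\inv\circ\theta_0}$ and intertwines the action with $\theta_0$-twisted conjugation. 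To check this, note that $x=gw_1^{-1}$ satisfies $\theta_0(x)=x^{-1}$ if and only if $\theta_0(g)\theta_0(w_1)^{-1}=w_1g^{-1}$, which unwinds to $g=\theta_0(w_1)w_1\,\Ad_{w_1^{-1}}\theta_0(g)^{-1}=w_2\Ad_{w_1^{-1}}\theta_0(g)^{-1}$. Likewise $hgw_1^{-1}\theta_0(h)^{-1}=(hg\Ad_{w_1^{-1}}\theta_0(h)^{-1})w_1^{-1}$. The same computation, using $\eta_0(w_1)=\theta_0(w_1)$ because $w_1\in G_c$, identifies $B_{0,0}$ with $G^{\inv\circ\eta_0}$. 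This gives \eqref{cartan cohomology}. Equivalently, one can read it off from Theorem \ref{t:affine Matsuki} by restricting to the $\lambda=0$ stratum, where $G(\cO)$-orbits on $(G(\cO))^{\inv\circ\theta}$ reduce to $G$-orbits via evaluation at $t=0$.

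For the characterization, the intersection of two corresponding orbits lies in $G^{\inv\circ\theta_0}\cap G^{\inv\circ\eta_0}$. If $\theta_0(x)=x^{-1}=\eta_0(x)$, then $\eta_{c,0}(x)=\eta_0\theta_0(x)=x$, so $x\in G_c$, and on $G_c$ the two twisted conjugations agree. I then mimic the last step of Proposition \ref{p:spherical orbits intersection} with $t^\lambda$ replaced by $1$. Write $g=k\exp(Y)$ in Cartan form. If $g\cdot_{\theta_0}x_0\in G_c$ with $x_0=g_0w_1^{-1}$, $g_0\in C_{0,0}$, then applying $\eta_{c,0}$ forces $\exp(2Y)=\exp(2\Ad_{x_0}\theta_0(Y))$, hence $Y=\Ad_{x_0}\theta_0(Y)$ and $g\cdot x_0=k\cdot x_0$. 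Therefore
\[
(G\cdot_{\theta_0}x_0)\cap G_c=G_c\cdot x_0,
\]
and symmetrically $(G\cdot_{\eta_0}x_0)\cap G_c=G_c\cdot x_0$. So corresponding orbits meet exactly in the single $G_c$-orbit $G_c\cdot x_0$.

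Conversely, suppose a $\theta_0$-orbit and an $\eta_0$-orbit have nonempty intersection in $G_c^{\inv\circ\theta_0}$. A common point is $G_c$-conjugate to some $g_0w_1^{-1}$ with $g_0\in C_{0,0}$, by the surjectivity part of Proposition \ref{p:C_lambda to A_lambda,0} applied to the $\theta_0$-orbit. That same point then lies in the $\eta_0$-orbit of $g_0w_1^{-1}$, so the two orbits correspond under the bijection. This gives the characterization.

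The main obstacle is the step from $\exp(2Y)=\exp(2Z)$ with $Y,Z\in\fp_c$ to $Y=Z$, and I want to invoke that injectivity explicitly rather than leave it implicit. It holds because $\exp\colon\fp_c\to G$ is injective, which is part of the Cartan decomposition $G=G_c\exp(\fp_c)$ already used in the proof of Proposition \ref{p:C_lambda to A_lambda,0}.
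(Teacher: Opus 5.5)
Your proof is correct and follows essentially the paper's route. The bijections are the $\lambda$-components of the chain \eqref{bijection} built from Propositions \ref{p:C_lambda to A_lambda,0} and \ref{p:C_lambda to B_lambda,0}. For $\lambda=0$, your Cartan-decomposition computation $(G\cdot_{\theta_0}x_0)\cap G_c=G_c\cdot x_0$ is the $\lambda=0$ case of the last step in the proof of Proposition \ref{p:spherical orbits intersection}, done directly in $G$, so Lemmas \ref{l:intersection 1} and \ref{l:intersection 2} are not needed.

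One point of wording in your converse: surjectivity alone only makes a common point $G$-twisted conjugate to some $g_0w_1^{-1}$. It is your identity $(G\cdot_{\theta_0}x_0)\cap G_c=G_c\cdot x_0$ that upgrades this to $G_c$-conjugacy.
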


\begin{rem}\label{AT}
    Note that  the set $G\backslash_{\theta_0}G^{\inv\circ\theta_0}$ (resp. $G\backslash_{\eta_0}G^{\inv\circ\eta_0}$)
is the Cartan cohomology (resp. Galois cohomology) for the real group $G_\bR$, denote by 
$\mathrm H^1(\theta_0,G)$ (resp. $\mathrm H^1(\eta_0,G)$) in  \cite{AT}, 
and the 
 bijection~\eqref{cartan cohomology}
is one of the main results in \cite[Theorem 1.1]{AT}.
\end{rem}
    
\subsection{Matsuki duality for affine Grassmannians}
For $\epsilon=1$,
it is proved in \cite[Theorem 1.2]{N} that 
there is a bijection between
$G(F)^\theta$-orbits and $G[t^{\pm1}]^\eta$-orbits
on $\Gr=G(\cO)\backslash G(F)$
such that then intersection of corresponding orbits
is a single $LK_c=(G(F)^\theta)^\eta$-orbit.
We generalize this to $\epsilon=\pm1$
and also compare this with the intersection of $G(\cO)$-orbits
on $G(F)^{\inv\circ\theta}$ and $G[t^{\pm1}]^{\inv\circ\eta}$.
In \ref{ss:Iwahori double cosets},
we will establish the Iwahori version
which has not been discussed before even for $\epsilon=1$.

To this end, 
we study the images
$\tau_\theta:G(F)\to G(F)^{\on{inv}\circ\theta}$ and 
$\tau_\eta:G(F)\to G(F)^{\on{inv}\circ\eta}$.
In particular, we show that when $\epsilon=-1$ both maps are surjective.

\subsubsection{Image of $\tau_\theta$}
It is known that for $\epsilon=1$, 
$\tau_\theta$ may not be surjective in general,
for example when $(G,K)=(\GL_n,\mathrm{O}_n)$.
However, when $\epsilon=-1$ we have the following result.

\begin{prop}\label{p:tau_theta surj epsilon=-1}
    The map $\tau_\theta:G(F)\rightarrow G(F)^{\inv\circ\theta}$
    is surjective for $\epsilon=-1$,
    which induces an isomorphism
    $G(F)/G(F)^\theta\simeq G(F)^{\inv\circ\theta}$.
\end{prop}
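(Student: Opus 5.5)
The plan is to recast the statement as the vanishing of a nonabelian Galois cohomology set, and then apply Steinberg's theorem. The key observation is that for $\epsilon=-1$ the involution $\theta(\gamma)(t)=\theta_0(\gamma(-t))$ is \emph{semilinear} over the nontrivial automorphism $\sigma:t\mapsto -t$ of $F=\bC(\!(t)\!)$. The fixed field of $\sigma$ is $F_0=\bC(\!(t^2)\!)$, and $F/F_0$ is a quadratic Galois extension with group $\Gal(F/F_0)=\{1,\sigma\}$.

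First, I will use Galois descent along $F/F_0$ to view $\theta$ as a twisted $\sigma$-action on $G(F)=G_F(F)$. Concretely, $\theta$ is $\theta_0\otimes\sigma$ acting on $G\times_{\bC}F$, and $\theta_0$ is an algebraic involution of $G$ defined over $\bC$. Hence $(\theta_0\otimes\sigma)^2=1$, and this is a descent datum. It defines a connected reductive group $H$ over $F_0$, an outer form of $G_{F_0}$ twisted by $\theta_0$, with $H(F)=G(F)$ and $H(F_0)=G(F)^\theta$, and with $\theta$ equal to the Galois action of $\sigma$ on $H(F)$. Under this dictionary, a $1$-cocycle of $\Gal(F/F_0)$ in $H(F)$ is determined by an element $a=a_\sigma$ satisfying $a\,\sigma(a)=1$, i.e.\ $a\,\theta(a)=1$. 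This is exactly the condition $a\in G(F)^{\inv\circ\theta}$. Two cocycles $a,a'$ are cohomologous iff $a'=h a\theta(h)^{-1}$, which is the $\theta$-twisted conjugation. So $G(F)\backslash_\theta G(F)^{\inv\circ\theta}=\mathrm H^1(F/F_0,H)$, and the trivial class consists precisely of the coboundaries $h\theta(h)^{-1}=\tau_\theta(h)$.

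Second, I will show that $\mathrm H^1(F/F_0,H)=\{1\}$. Inflation $\mathrm H^1(F/F_0,H)\hookrightarrow \mathrm H^1(F_0,H)$ is injective, and the field $F_0=\bC(\!(s)\!)$ is perfect of cohomological dimension $1$; its absolute Galois group is $\widehat{\bZ}$, generated by the Puiseux extensions. Steinberg's theorem states that $\mathrm H^1(k,H)=1$ for every connected linear algebraic group over a perfect field $k$ with $\mathrm{cd}(k)\le1$. Applying it here gives $\mathrm H^1(F_0,H)=1$. Therefore every $\gamma\in G(F)^{\inv\circ\theta}$ is of the form $h\theta(h)^{-1}$, which proves that $\tau_\theta$ is surjective.

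Finally, I will identify the fibers of $\tau_\theta$. We have $\tau_\theta(h)=\tau_\theta(h')$ iff $h^{-1}h'=\theta(h^{-1}h')$, i.e.\ iff $h'\in hG(F)^\theta$. Hence $\tau_\theta$ induces a bijection $G(F)/G(F)^\theta\simeq G(F)^{\inv\circ\theta}$, and this bijection is $G(F)$-equivariant for left multiplication on the source and $\theta$-twisted conjugation on the target.

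The main obstacle is the first step: checking carefully that $\theta$ is a genuine descent datum, so that $H$ is a connected reductive $F_0$-group and Steinberg's theorem applies. The points to verify are the semilinearity and the relation $\theta^2=1$, which holds because $\theta_0^2=1$. I would also remark why the argument needs $\epsilon=-1$: for $\epsilon=1$ the map $\theta$ is $F$-linear, there is no Galois action, and the relevant set is not an $\mathrm H^1$ over a field of cohomological dimension $1$. This is consistent with the failure of surjectivity for $(\GL_n,\mathrm O_n)$. If one wants to avoid Steinberg's theorem, a fallback is to use the orbit representatives $t^\lambda g_0w_1^{-1}$, $g_0\in C_{\lambda,0}$, from Theorem \ref{t:theta spherical orbits}, and to exhibit explicit $h$ with $h\theta(h)^{-1}$ equal to each representative, using $\theta(t^\mu)=(-1)^{\theta_0(\mu)}t^{\theta_0(\mu)}$. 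The cohomological route is cleaner.
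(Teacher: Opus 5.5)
Your proposal is correct and follows essentially the same route as the paper: you identify $G(F)\backslash_\theta G(F)^{\inv\circ\theta}$ with $\mathrm{H}^1(\Gal(F/F_0),H(F))$ for the $F_0=\bC(\!(t^2)\!)$-form $H$ of $G$ determined by $\theta$, inject this into $\mathrm{H}^1(F_0,H)$ by inflation, and use that $F_0$ has cohomological dimension $\le 1$ (Steinberg's theorem) to conclude that it vanishes. Your explicit check that the fibers of $\tau_\theta$ are exactly the cosets $hG(F)^\theta$ supplies a step that the paper leaves implicit.
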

\begin{proof}
The $G(F)$-orbits 
on $G(F)^{\on{inv}\circ\theta}$
are in bijection with the group cohomology
$H^1(\mu_2,G(F))$ where the generator $-1$ acts by $\theta$.
Note that there is an identification
$H^1(\mu_2,G(F))\cong H^1(\on{Gal}(F/F'),H_{F'}(F))$
where $F'=k((t^2))$ and 
the right hand side is the Galois cohomology of a 
$F/F'$-form $H_{F'}$ of 
$G_{F'}$ such that $H_{F'}(F')=G_\theta(F)$.
 Since the field $F'=k((t^2))$ is of dimension $\leq1$, we have 
 $H^1(\on{Gal}(F'),H_{F'}(F_s))=0$ (where $F_s$ is a separable closure of $F'$ ).
 The desired claim follows from the injection 
$0\to H^1(\on{Gal}(F/F'),H_{F'}(F))\to H^1(\on{Gal}(F'),H_{F'}(F_s))$.
\end{proof}

\quash{
    Fix a $\theta$-stable standard pair $T\subset B$,
    with associated Iwahori $I$.
    Note that this is a different Torus than before.
    But the proof of Iwahori orbits is still valid for showing that
    any $I$-twisted orbit on $G(F)^{\inv\circ\theta}$
    contains an element of $N_{G(F)}(T(F))^{\inv\circ\theta}$.
    Thus we only need to show that such elements have preimage.

    Any $\tw\in N_{G(F)}(T(F))^{\inv\circ\theta}$,
    can be written as $\tw=n t^\lambda$ where $n\in N_G(T)$.
    Let $w$ be the image of $n$ in $W=N_G(T)/T$.
    Then $\theta(w)=w^{-1}$.
    By \cite[Proposition 3.3.(a)]{Springer},
    $w$ is $\theta$-twisted conjugated to 
    a the longest element $w_L$ of 
    a $\theta$-stable standard Levi subgroup $L$.
    Thus we may assume $w=w_L$ and $L=G$,
    i.e. $w=w_0$ is the longest element of $G$.
    Note $w_0=w_0^{-1}=\theta(w_0)$.

    It is well-known that any involution of Weyl group is 
    a product of commuting reflections.
    Assume $w=s_{\beta_1}s_{\beta_2}\cdots s_{\beta_k}$
    where all the $s_{\beta_i}$ commute,
    $\beta_i>0$, 
    and $k$ is minimal.
    
    \textcolor{red}{Suppose the set $\beta_1,...,\beta_k$ is $\theta$-stable},
    which holds for example when $\theta_0=1$.
    Then we can partition this set into 
    pairs $\{\beta\neq\theta(\beta)\}$
    or singleton $\{\beta=\theta(\beta)\}$.
    We may now replace $G$
    with the reductive subgroup generated by $T$
    and $U_{\pm\beta_i}$,
    whose root system is a product of type $A_1$.
    Since $\theta$ intertwines as most two $A_1$ factors,
    we may further reduce the surjectivity problem
    to a reductive group of type $A_1$ or $A_1\times A_1$.
    Then we can try to verify the surjectivity by hand.
}

\subsubsection{Image of $\tau_\eta$}
Recall the based compact loop group
$\Omega G_c=\{\gamma(t)\in G[t^{\pm1}]^{\eta_c}\mid \gamma(1)=1\}$.

It is well known that the inclusion induces natural homeomorphism
$\Omega G_c\simeq\Gr$,
so that $G(F)=\Omega G_c\cdot G(\cO)$ is a unique factorization.
Clearly $G[t^{\pm1}]^{\eta_c}=\Omega G_c\cdot G_c$.
Denote $G(\cO)_\fp=\exp(\fp)G(\cO)_1$.
By Cartan decomposition $G=G_c\cdot\exp(\fp)$,
we have decomposition of space
\begin{equation}\label{eq:Iwasawa decomp loop}
    G(F)=G[t^{\pm1}]^{\eta_c}\cdot G(\cO)_\fp,
\end{equation}
i.e. every element of $g\in G(F)$ 
can be written in a unique way as a product
$kp$, $k\in G[t^{\pm1}]^{\eta_c}$, $p\in G(\cO)_\fp$.
Note that both $G[t^{\pm1}]^{\eta_c}$
and $G(\cO)_\fp$ are $\theta$-stable.

\begin{lem}\label{l:tau_theta cpt preimage}
    We have 
    $\tau_\theta(G[t^{\pm1}]^{\eta_c})=\tau_\theta(G(F))\cap G[t^{\pm1}]^{\eta_c}$.
\end{lem}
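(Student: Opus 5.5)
The inclusion $\tau_\theta(G[t^{\pm1}]^{\eta_c})\subset\tau_\theta(G(F))\cap G[t^{\pm1}]^{\eta_c}$ is immediate. Indeed, $G[t^{\pm1}]^{\eta_c}$ is a $\theta$-stable subgroup, since $\theta$ commutes with $\eta_c$, so $\tau_\theta(k)=k\theta(k)^{-1}$ lies in it for every $k\in G[t^{\pm1}]^{\eta_c}$. The work is the reverse inclusion.

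For the reverse inclusion, take $g\in G(F)$ with $x:=g\theta(g)^{-1}\in G[t^{\pm1}]^{\eta_c}$. Use the unique factorization \eqref{eq:Iwasawa decomp loop} to write $g=kp$ with $k\in G[t^{\pm1}]^{\eta_c}$ and $p\in G(\cO)_\fp$. Then
\[
x=k\,p\theta(p)^{-1}\,\theta(k)^{-1},
\]
so $y:=p\theta(p)^{-1}=k^{-1}x\theta(k)$ lies in $G[t^{\pm1}]^{\eta_c}$. Since $G(\cO)_\fp$ is $\theta$-stable, $y$ also lies in $G(\cO)\cap G[t^{\pm1}]^{\eta_c}=G_c$. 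Here we use that a polynomial loop fixed by $\eta_c$ and regular at $0$ is also regular at $\infty$, hence constant.

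It now suffices to show $y\in\tau_\theta(G_c)$; then $x=k\,y\,\theta(k)^{-1}\in\tau_\theta(G[t^{\pm1}]^{\eta_c})$. The main step is therefore the following finite-dimensional claim. Write $p=\exp(Y)p_1$ with $Y\in\fp$ and $p_1\in G(\cO)_1$, and evaluate at $t=0$. This gives
\[
y=\exp(Y)\,p_1(0)\,\theta_0(p_1(0))^{-1}\exp(-\theta_0(Y))=\exp(Y)\exp(-\theta_0(Y)),
\]
since $p_1(0)=1$ and $\theta$ acts on the constant term through $\theta_0$ (it only changes $t$ to $\epsilon t$). Note that $\theta_0$ preserves $\fp$, because it commutes with $\eta_{c,0}$, so $\exp(-\theta_0(Y))\in\exp(\fp)$.

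So $y\in G_c$ is equal to $\exp(Y)\exp(-\theta_0 Y)$. Apply $\eta_{c,0}$, which acts as $-1$ on $\fp$:
\[
y=\exp(-Y)\exp(\theta_0 Y).
\]
Comparing the two expressions gives $\exp(2Y)=y\exp(2\theta_0 Y)y^{-1}=\exp(2\Ad_y\theta_0 Y)$. Uniqueness of the Cartan decomposition (injectivity of $\exp$ on $\fp$) gives $Y=\Ad_y\theta_0(Y)$. Then
\[
y=\exp(Y)\exp(-\theta_0Y)=\exp(Y)\,y^{-1}\exp(-Y)\,y,
\]
so $y\exp(Y)=\exp(Y)y$, i.e. $y$ and $\exp(Y)$ commute. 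We also have $\exp(-\theta_0 Y)=y^{-1}\exp(-Y)y=\exp(-Y)$, so $\theta_0(Y)=Y$, and then $y=\exp(Y)\exp(-Y)=1$. This argument is the same one used in the proof of Proposition \ref{p:spherical orbits intersection}.

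Hence $y=1$, and $x=k\theta(k)^{-1}$ with $k\in G[t^{\pm1}]^{\eta_c}$, which proves the reverse inclusion. I expect the main obstacle to be justifying the two structural inputs carefully: that $G(\cO)\cap G[t^{\pm1}]^{\eta_c}=G_c$, and that the evaluation at $0$ behaves as claimed. A more robust alternative, if needed, is to argue directly from the uniqueness in \eqref{eq:Iwasawa decomp loop}, since both the decomposition and $\theta$-stability are already recorded just before the lemma.
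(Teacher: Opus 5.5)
Your proof is correct, but it takes a somewhat different route from the paper's. The paper uses the \emph{uniqueness} in \eqref{eq:Iwasawa decomp loop} directly. In $kp=\bigl(\tau_\theta(g)\theta(k)\bigr)\theta(p)$, the factor $\tau_\theta(g)\theta(k)$ lies in $G[t^{\pm1}]^{\eta_c}$ and $\theta(p)$ lies in $G(\cO)_\fp$, by $\theta$-stability. Uniqueness then gives $k=\tau_\theta(g)\theta(k)$, i.e.\ $\tau_\theta(g)=\tau_\theta(k)$, in one line; this is the ``more robust alternative'' you mention at the end.

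You use only the \emph{existence} of the factorization. You then show by hand that $y=p\theta(p)^{-1}$ is trivial, using $G(\cO)\cap G[t^{\pm1}]^{\eta_c}=G_c$, evaluation at $t=0$, and the finite-dimensional Cartan decomposition. These two facts are exactly what uniqueness of \eqref{eq:Iwasawa decomp loop} rests on, so your argument unpacks that uniqueness in the $\theta$-twisted setting. Yours is more self-contained; the paper's is shorter given the recorded uniqueness.

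Your final step can be shortened. From $p=y\theta(p)$, evaluation at $0$ gives $\exp(Y)=y\exp(\theta_0Y)$ with $y\in G_c$ and $\theta_0Y\in\fp$. These are two Cartan decompositions of the same element, so $y=1$ immediately, with no need to apply $\eta_{c,0}$ and compare. Similarly, once you have $y=\exp(Y)y^{-1}\exp(-Y)y$, cancelling gives $y=1$ directly, without the detour through commutation. If you keep your version, state that $\Ad_y$ preserves $\fp$ because $y\in G_c$. You need this to apply injectivity of $\exp$ on $\fp$ to $\exp(2Y)=\exp(2\Ad_y\theta_0Y)$.
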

\begin{proof}
    The left hand side is obviously contained in the right hand side.
    It suffices to show the converse inclusion.
    Let $g\in G(F)$ be such that 
    $\tau_\theta(g)=g\theta(g)^{-1}\in G[t^{\pm1}]^{\eta_c}$.
    Write $g=kp$, $k\in G[t^{\pm1}]^{\eta_c}$, $p\in G(\cO)_\fp$.
    Then
    \[
    kp\theta(p)^{-1}\theta(k)^{-1}=\tau_\theta(g),\quad
    kp=\tau_\theta(g)\theta(k)\theta(p).
    \]
    
    By uniqueness of the decomposition \eqref{eq:Iwasawa decomp loop},
    we obtain
    \[
    k=\tau_\theta(g)\theta(k),\qquad
    \tau_\theta(g)=k\theta(k)^{-1}=\tau_\theta(k),
    \]
    which completes the proof.
\end{proof}

\quash{
\textcolor{red}{Can we have the above lemma for $\eta$? The lemma for $\theta$ only gives an embedding from $G(\cO)\backslash G(F)/G(F)^\theta$ into $G[t]\backslash G[t^{\pm1}]/G[t^{\pm1}]^\eta$,
but the converse direction requires the lemma for $\eta$.
The problem is that $G(\cO)_\fp$ is not $\eta$-stable.
With both directions, we can lift the Matsuki correspondence
from anti-fixed points to loop group double cosets.}\\
}

\begin{cor}\label{c:cpt image epsilon=-1}
    For $\epsilon=-1$, we have
    \begin{itemize}
        \item [(i)]
        $\tau_\theta(G[t^{\pm1}]^{\eta_c})
        =(G[t^{\pm1}]^{\eta_c})^{\inv\circ\theta}$,

        \item [(ii)]
        $\tau_\eta(G[t^{\pm1}]^{\eta_c})
        =\tau_\eta(G[t^{\pm1}])\cap G[t^{\pm1}]^{\eta_c}$.
    \end{itemize}
\end{cor}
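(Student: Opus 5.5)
The plan is to derive both parts directly from Proposition \ref{p:tau_theta surj epsilon=-1} and Lemma \ref{l:tau_theta cpt preimage}, plus a few elementary compatibilities between the involutions $\theta$, $\eta$ and $\eta_c$ on $G[t^{\pm1}]$. Here I take $\eta_c(\gamma)(t)=\eta_{c,0}(\gamma(\bar t^{-1}))$, as used implicitly in the proof of Lemma \ref{l:intersection 2}.

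First I record these compatibilities. Since $\theta_0,\eta_0,\eta_{c,0}$ pairwise commute and $\theta_0=\eta_0\circ\eta_{c,0}$, a direct substitution in the formulas gives that the three involutions $\theta,\eta,\eta_c$ of $G[t^{\pm1}]$ pairwise commute. It also gives $\theta=\eta_c\circ\eta$, equivalently $\eta=\eta_c\circ\theta$. Consequences:
\begin{itemize}
\item $\theta$ and $\eta$ both preserve $G[t^{\pm1}]^{\eta_c}$.
\item On $G[t^{\pm1}]^{\eta_c}$ we have $\eta=\theta$, because $\eta(k)=\eta_c(\theta(k))=\theta(\eta_c(k))=\theta(k)$.
\item For $k\in G[t^{\pm1}]^{\eta_c}$ we get $\tau_\eta(k)=k\eta(k)^{-1}=k\theta(k)^{-1}=\tau_\theta(k)$.
\end{itemize}
The only care needed is the sign $\epsilon$ in the arguments $\epsilon t$ and $\epsilon\bar t^{-1}$, but it cancels correctly in each composition.

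For (i), the inclusion $\subset$ is immediate: $\theta$ preserves $G[t^{\pm1}]^{\eta_c}$, and every element of the form $k\theta(k)^{-1}$ is $\theta$-anti-fixed. For $\supset$, let $x\in (G[t^{\pm1}]^{\eta_c})^{\inv\circ\theta}$. Then $x\in G(F)^{\inv\circ\theta}$, so by Proposition \ref{p:tau_theta surj epsilon=-1} (this is where $\epsilon=-1$ is used) we have $x\in\tau_\theta(G(F))$. Therefore $x\in \tau_\theta(G(F))\cap G[t^{\pm1}]^{\eta_c}$, and Lemma \ref{l:tau_theta cpt preimage} gives $x\in\tau_\theta(G[t^{\pm1}]^{\eta_c})$.

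For (ii), the inclusion $\subset$ again follows since $\eta$ preserves $G[t^{\pm1}]^{\eta_c}$. For $\supset$, take $x=\tau_\eta(g)$ with $g\in G[t^{\pm1}]$ and $x\in G[t^{\pm1}]^{\eta_c}$.
\begin{enumerate}
\item Since $\eta(x)=x^{-1}$ and $\eta_c(x)=x$, we get $\theta(x)=\eta_c(\eta(x))=\eta_c(x)^{-1}=x^{-1}$. Hence $x\in (G[t^{\pm1}]^{\eta_c})^{\inv\circ\theta}$.
\item By (i), $x=\tau_\theta(k)$ for some $k\in G[t^{\pm1}]^{\eta_c}$.
\item Since $\eta=\theta$ on $G[t^{\pm1}]^{\eta_c}$, we have $\tau_\theta(k)=\tau_\eta(k)$, so $x\in\tau_\eta(G[t^{\pm1}]^{\eta_c})$.
\end{enumerate}

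The main point to check carefully is the identity $\theta=\eta_c\circ\eta$ and its commutation properties, since everything else is formal. Note also that the $\eta$-analogue of Lemma \ref{l:tau_theta cpt preimage} is not available directly, because $G(\cO)_\fp$ is not $\eta$-stable. This is why (ii) is routed through (i), and hence depends on the surjectivity in Proposition \ref{p:tau_theta surj epsilon=-1}.
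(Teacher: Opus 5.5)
Your proposal is correct and follows the paper's proof essentially verbatim: (i) comes from Proposition \ref{p:tau_theta surj epsilon=-1} combined with Lemma \ref{l:tau_theta cpt preimage}, and (ii) is routed through (i) via the same chain $\tau_\eta(G[t^{\pm1}])\cap G[t^{\pm1}]^{\eta_c}\subset (G[t^{\pm1}]^{\inv\circ\eta})^{\eta_c}=(G[t^{\pm1}]^{\eta_c})^{\inv\circ\theta}=\tau_\theta(G[t^{\pm1}]^{\eta_c})=\tau_\eta(G[t^{\pm1}]^{\eta_c})$. Your explicit check that $\theta=\eta_c\circ\eta$, with $\eta_c(\gamma)(t)=\eta_{c,0}(\gamma(\bar t^{-1}))$ carrying no $\epsilon$, supplies the compatibilities the paper uses tacitly.
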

\begin{proof}
    (i):
    This follows immediately from 
    Proposition \ref{p:tau_theta surj epsilon=-1}
    and Lemma \ref{l:tau_theta cpt preimage}.

    (ii):
    We only need to show surjectivity.
    This follows from (i):
    \[
    \tau_\eta(G[t^{\pm1}])\cap G[t^{\pm1}]^{\eta_c}
    \subset (G[t^{\pm1}]^{\inv\circ\eta})^{\eta_c}
    =(G[t^{\pm1}]^{\eta_c})^{\inv\circ\theta}
    =\tau_\theta(G[t^{\pm1}]^{\eta_c})
    =\tau_\eta(G[t^{\pm1}]^{\eta_c}).
    \]
\end{proof}

\begin{prop}\label{p:tau_eta surj epsilon=-1}
    The map $\tau_\eta:G[t^{\pm1}]\rightarrow G[t^{\pm1}]^{\inv\circ\eta}$
    is surjective for $\epsilon=-1$,
    which induces an isomorphism
    $G[t^{\pm1}]/G[t^{\pm1}]^\eta\simeq G[t^{\pm1}]^{\inv\circ\eta}$.
\end{prop}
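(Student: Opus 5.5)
Given $\gamma\in G[t^{\pm1}]^{\inv\circ\eta}$, the plan is to twisted-conjugate it into the $\eta_c$-fixed locus and then use Corollary \ref{c:cpt image epsilon=-1}. The isomorphism $G[t^{\pm1}]/G[t^{\pm1}]^\eta\simeq G[t^{\pm1}]^{\inv\circ\eta}$ is then formal: $\tau_\eta(g)=g\eta(g)^{-1}$ is constant on right $G[t^{\pm1}]^\eta$-cosets, and it separates them because $g\eta(g)^{-1}=g'\eta(g')^{-1}$ gives $g^{-1}g'\in G[t^{\pm1}]^\eta$. So the only content is surjectivity.

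\textbf{Step 1 (reduction to the compact locus).} By Theorem \ref{t:eta spherical orbits}, every $\eta$-twisted $G[t]$-orbit on $G[t,t^{-1}]^{\inv\circ\eta}$ contains a representative $x_\lambda=t^\lambda g_0w_1^{-1}$ with $g_0\in C_{\lambda,0}\subset L_{\lambda,c}$. Here $\lambda=-w_1^{-1}\theta_0(\lambda)$ and $w_1\in N_{G_c}(T_c)$. Since $t^\lambda$, $g_0$ and $w_1$ are all fixed by $\eta_c$, we get $x_\lambda\in G[t^{\pm1}]^{\eta_c}$. Thus $\gamma=h\cdot_\eta x_\lambda=hx_\lambda\eta(h)^{-1}$ for some $h\in G[t]\subset G[t^{\pm1}]$.

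\textbf{Step 2 (compact case).} Next I show $x_\lambda\in\tau_\eta(G[t^{\pm1}])$. We have $x_\lambda\in (G[t^{\pm1}]^{\inv\circ\eta})^{\eta_c}$. On $G[t^{\pm1}]^{\eta_c}$ the maps $\eta$ and $\theta$ agree, because $\eta=\eta_c\circ\theta$ with $\eta_c$ acting loopwise as $\gamma\mapsto\eta_{c,0}(\gamma(\bar t^{-1}))$. This is the identity already used in the proof of Corollary \ref{c:cpt image epsilon=-1}(ii). Hence
\[
x_\lambda\in (G[t^{\pm1}]^{\eta_c})^{\inv\circ\theta}=\tau_\theta(G[t^{\pm1}]^{\eta_c})=\tau_\eta(G[t^{\pm1}]^{\eta_c})
\]
by Corollary \ref{c:cpt image epsilon=-1}(i), which rests on Proposition \ref{p:tau_theta surj epsilon=-1}. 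Write $x_\lambda=k\eta(k)^{-1}$ with $k\in G[t^{\pm1}]^{\eta_c}$.

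\textbf{Step 3 (conclusion).} Then $\gamma=hk\eta(k)^{-1}\eta(h)^{-1}=\tau_\eta(hk)$, which proves surjectivity.

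\textbf{Main obstacle.} The delicate point is Step 2, specifically that Corollary \ref{c:cpt image epsilon=-1}(i) really produces a polynomial preimage $k$ lying in $G[t^{\pm1}]^{\eta_c}$, and not merely a formal loop in $G(F)$. This depends on Lemma \ref{l:tau_theta cpt preimage}. There the uniqueness of the factorization \eqref{eq:Iwasawa decomp loop}, $G(F)=G[t^{\pm1}]^{\eta_c}\cdot G(\cO)_\fp$, together with its $\theta$-stability, forces the $G[t^{\pm1}]^{\eta_c}$-component of any $\theta$-preimage to be a preimage itself. I would also double-check two inputs: that $\theta$ preserves $G(\cO)_\fp$ when $\epsilon=-1$ (it does, since $\theta_0$ preserves $\fp_c$ and the substitution $t\mapsto -t$ preserves $G(\cO)_1$), and that Theorem \ref{t:eta spherical orbits} covers every orbit, with the representatives lying in $L_{\lambda,c}$.
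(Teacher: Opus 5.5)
Your proposal is correct and follows essentially the same route as the paper: reduce by $G[t]$-equivariance of $\tau_\eta$ to the $\eta_c$-fixed orbit representatives $t^\lambda g_0w_1^{-1}$ supplied by Theorem~\ref{t:eta spherical orbits}. Then use $\eta=\theta$ on $G[t^{\pm1}]^{\eta_c}$ together with Corollary~\ref{c:cpt image epsilon=-1}(i) to write each such representative as $\tau_\eta(k)$ with $k\in G[t^{\pm1}]^{\eta_c}$. The only addition on your side is the (routine) verification that $\tau_\eta$ separates right $G[t^{\pm1}]^\eta$-cosets, which the paper leaves implicit.
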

\begin{proof}
    Since $\tau_\eta$ is equivariant with respect to $G[t]$-action, 
    it suffices to show
    each $G[t]$-orbit on $G[t^{\pm1}]^{\inv\circ\eta}$
    intersects with the image of $\tau_\eta$.
    By Theorem \ref{t:eta spherical orbits},
    any orbit contains an element fixed by $\eta_c$.
    By Corollary \ref{c:cpt image epsilon=-1}.(i),
    such an element is contained in
    \[
    (G[t^{\pm1}]^{\inv\circ\eta})^{\eta_c}
    =(G[t^{\pm1}]^{\eta_c})^{\inv\circ\theta}
    =\tau_\theta(G[t^{\pm1}]^{\eta_c})
    =\tau_\eta(G[t^{\pm1}]^{\eta_c}),
    \]
    thus is an image of $\tau_\eta$.
    This completes the proof.
\end{proof}

\begin{rem}
The proposition is not true if $\epsilon=1$.
Indeed, consider the case when $G=\mathbb G_m$ and $\eta_0(t)=\bar t^{-1}$
is the compact involution.
Then we have $ G[t,t^{-1}]^{\on{inv}\circ\eta}=\bbR^\times$
and $\tau_{\eta}( G[t,t^{-1}])=\mathbb R_{\geq0}$.

%Indeed, in this case, the image of 
%$G[t]\backslash G[t,t^{-1}]/G[t,t^{-1}]^\eta\to 
%G[t]\backslash G[t,t^{-1}]^{\on{inv}\circ\eta}\cong
%\Bun_G(\mathbb P^1)_\eta(\mathbb R)$ is the union of components 
%consisting of $G_\bbR$-bundles on the real projective line $%\mathbb P^1R$ that admit complex uniformization \cite[Section 6.2]{CN} 

\end{rem}

\subsubsection{Intersection of cosets}
Recall from Theorem \ref{t:affine Matsuki} 
and Corollary \ref{c:union of spherical orbits intersection}
that $\theta$-twisted $G(\cO)$-orbits are parametrized by
twisted $G_c$-orbits on $((Gt^{X_*(T)}G)^{\inv\circ\theta})^{\eta_c}$.
By Lemma \ref{l:tau_theta cpt preimage},
any element in $\tau_\theta(G(F))\in((Gt^{X_*(T)}G)^{\inv\circ\theta})^{\eta_c}$
has a preimage in $G[t^{\pm1}]^{\eta_c}$ under $\tau_\theta$.

Henceforth, we denote 
$\tau=\tau_\theta|_{G(F)^{\eta_c}}=\tau_\eta|_{G(F)^{\eta_c}}$.

\begin{prop}\label{p:spherical double cosets intersection}
    For any $x\in G[t^{\pm1}]^{\eta_c}$ such that
    $\tau_\theta(x)\in((Gt^{X_*(T)}G)^{\inv\circ\theta})^{\eta_c}$,
    we have
    \begin{itemize}
        \item [(i)]
        $
        G(\cO)x G(F)^\theta\cap G[t]x G[t^{\pm1}]^\eta\cap G[t^{\pm1}]^{\eta_c}
        =G_c x G[t^{\pm1}]^{\theta,\eta}
        $,
        
        \item [(ii)]
        $
        G(\cO)x G(F)^\theta\cap G(\cO)x G[t^{\pm1}]^\eta
        =G(\cO)x G[t^{\pm1}]^{\theta,\eta}
        $.
    \end{itemize}
\end{prop}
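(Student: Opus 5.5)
The plan is to transport the orbit-intersection statement, Proposition~\ref{p:spherical orbits intersection}, along the twisted-conjugation maps $\tau_\theta$ and $\tau_\eta$. Both maps agree on $G(F)^{\eta_c}$, where they equal $\tau$, and they are equivariant for left multiplication:
\[
\tau_\theta(gy)=g\cdot_\theta\tau_\theta(y),\qquad \tau_\eta(gy)=g\cdot_\eta\tau_\eta(y).
\]
Their fibres are right cosets of $G(F)^\theta$ and $G[t^{\pm1}]^\eta$ respectively. Consequently
\[
G(\cO)xG(F)^\theta=\tau_\theta^{-1}\bigl(G(\cO)\cdot_\theta\tau(x)\bigr),\qquad G[t]xG[t^{\pm1}]^\eta=\tau_\eta^{-1}\bigl(G[t]\cdot_\eta\tau(x)\bigr).
\]
By the parametrization (Theorem~\ref{t:theta spherical orbits} and Corollary~\ref{c:union of spherical orbits intersection}), the hypothesis on $\tau(x)$ lets us replace $x$ by $gx$ with $g\in G_c$. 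We may therefore assume $\tau(x)=t^\lambda g_0w_1^{-1}$ with $g_0\in C_{\lambda,0}$. This replacement is harmless, because all the sets in (i) and (ii) are stable under left multiplication by $G_c$.

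\textbf{Part (i).} The inclusion $\supseteq$ is clear, since $G_c\subset G(\cO)\cap G[t]\cap G[t^{\pm1}]^{\eta_c}$. For $\subseteq$, take $y$ in the left-hand side. Then $\tau(y)$ lies in
\[
G(\cO)\cdot_\theta\tau(x)\;\cap\;G[t]\cdot_\eta\tau(x),
\]
which equals $G_c\cdot_\theta\tau(x)$ by Proposition~\ref{p:spherical orbits intersection}. Hence $\tau(y)=\tau(kx)$ for some $k\in G_c$. Since $y$ and $kx$ both lie in $G[t^{\pm1}]^{\eta_c}$, the element $h=(kx)^{-1}y$ lies in $G[t^{\pm1}]^{\eta_c}$ and satisfies $\tau_\theta(h)=1$. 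So $h$ is fixed by $\theta$, and therefore also by $\eta=\theta\circ\eta_c$ on $G[t^{\pm1}]^{\eta_c}$. Thus $h\in G[t^{\pm1}]^{\theta,\eta}$, which gives $y\in G_cxG[t^{\pm1}]^{\theta,\eta}$.

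\textbf{Part (ii).} The right-hand side is contained in the left-hand side because $G[t^{\pm1}]^{\theta,\eta}$ lies in both $G(F)^\theta$ and $G[t^{\pm1}]^\eta$. Conversely, let $y=g_1xh_1=g_2xh_2$ with $g_i\in G(\cO)$, $h_1\in G(F)^\theta$ and $h_2\in G[t^{\pm1}]^\eta$. Using the factorization $G(F)=\Omega G_c\cdot G(\cO)$, write $y=g\,y'$ with $g\in G(\cO)$ and $y'\in G[t^{\pm1}]^{\eta_c}$. Then $y'$ still lies in $G(\cO)xG(F)^\theta\cap G(\cO)xG[t^{\pm1}]^\eta$, and it suffices to show $y'\in G(\cO)xG[t^{\pm1}]^{\theta,\eta}$.

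\textbf{Main obstacle.} The difficulty in (ii) is that the second factor is $G(\cO)xG[t^{\pm1}]^\eta$, not $G[t]xG[t^{\pm1}]^\eta$, so part (i) does not apply directly. My first approach is as follows. Any element of $G(\cO)x$ lying in $G[t^{\pm1}]$ should already lie in $G[t]x$. Indeed, if $ax\in G[t^{\pm1}]$ with $a\in G(\cO)$, then $a=(ax)x^{-1}\in G(\cO)\cap G[t^{\pm1}]=G[t]$. Since $y'\in G[t^{\pm1}]$, we have $y'h_2^{-1}\in G[t^{\pm1}]$. Writing $y'=g_2'xh_2$ gives $g_2'x=y'h_2^{-1}$, so $g_2'\in G[t]$. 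Hence $y'$ lies in the set of part (i), and (i) yields $y'\in G_cxG[t^{\pm1}]^{\theta,\eta}$. Left multiplication by $g$ then gives the claim.

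The step I expect to need the most care is checking that $y'$ keeps membership in $G(\cO)xG[t^{\pm1}]^\eta$ after the factorization. This holds because the change from $y$ to $y'$ is a left multiplication by an element of $G(\cO)$. I also need to verify that $G(\cO)\cap G[t^{\pm1}]=G[t]$ inside $G(F)$, which I expect to be standard.
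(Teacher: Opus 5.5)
Your proposal is correct and follows essentially the same route as the paper. For (i), both arguments push the intersection through $\tau$ and invoke Proposition~\ref{p:spherical orbits intersection}; the paper phrases this as an injection of double-coset sets, while you argue elementwise. For (ii), both reduce to (i) via the factorization $G(F)=G(\cO)\,G[t^{\pm1}]^{\eta_c}$, which is obtained by inverting $G(F)=\Omega G_c\cdot G(\cO)$. Your explicit check that $G(\cO)xG[t^{\pm1}]^\eta\cap G[t^{\pm1}]^{\eta_c}\subset G[t]xG[t^{\pm1}]^\eta$, using $G(\cO)\cap G[t^{\pm1}]=G[t]$, fills in a step the paper leaves implicit.
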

\begin{proof}
    (i):
    Observe that $\tau$ induces an embedding
    \begin{align*}
    \tau:G_c\backslash(G(\cO)xG(F)^\theta\cap G[t]xG[t^{\pm1}]^\eta\cap G[t^{\pm1}]^{\eta_c})/G[t^{\pm1}]^{\eta_c,\theta}
    &\hookrightarrow
    G_c\backslash G[t^{\pm1}]^{\eta_c}/G[t^{\pm1}]^{\eta_c,\theta}\\
    &\hookrightarrow
    G_c\backslash_\theta (G[t^{\pm1}]^{\eta_c})^{\inv\circ\theta}.
    \end{align*}

    By Theorem \ref{t:affine Matsuki}, we have
    \begin{align*}
    \tau(G(\cO)xG[t^{\pm1}]^\theta\cap G[t]xG[t^{\pm1}]^\eta\cap G[t^{\pm1}]^{\eta_c})
    &\subset\tau_\theta(G(\cO)xG[t^{\pm1}]^\theta)\cap\tau_\eta(G[t]xG[t^{\pm1}]^\eta)\\
    &=(G(\cO)\cdot_\theta\tau(x))\cap (G[t]\cdot_\eta\tau(x))
    =G_c\cdot_\theta\tau(x).
    \end{align*}

    Thus $G(\cO)xG[t^{\pm1}]^\theta\cap G[t]xG[t^{\pm1}]^\eta\cap G[t^{\pm1}]^{\eta_c}$
    is a single $G_c-G[t^{\pm1}]^{\eta_c,\theta}$ orbit,
    which must be $G_c xG[t^{\pm1}]^{\eta_c,\theta}$.

    (ii):
    We can rewrite decomposition \eqref{eq:Iwasawa decomp loop}
    as $G(F)=G(\cO)G[t^{\pm1}]^{\eta_c}$,
    where the product may not be unique.
    Therefore, any subspace $X\subset G(F)$
    that is stable under left $G(\cO)$-multiplication
    satisfies $X=G(\cO)X^{\eta_c}$.
    In particular, by (i) we have
    \begin{align*} 
    G(\cO)x G(F)^\theta\cap G(\cO)x G[t^{\pm1}]^\eta
    &=G(\cO)(G(\cO)x G(F)^\theta\cap G(\cO)x G[t^{\pm1}]^\eta)^{\eta_c}\\
    &=G(\cO)x G[t^{\pm1}]^{\theta,\eta}.
    \end{align*}
\end{proof}

\subsubsection{Affine Grassmannians}
In the following,
we establish
the  Matsuki duality for affine Grassmannians using Theorem \ref{t:affine Matsuki}, 
extending the case of $\epsilon=1$ studied in
\cite[Theorem 1.2]{N}.

\begin{prop}\label{p:spherical orbits diagram}
    We have a commutative diagram of sets of $G(\cO)$ or $G[t]$ orbits
    \[\xymatrix{
        G(\cO)\backslash G(F)/G(F)^\theta\ar[d]^{\tau_\theta}\ar[r]^{\iota_0}& 
        G[t]\backslash G[t^{\pm1}]/G[t^{\pm1}]^\eta\ar[d]^{\tau_\eta}\\ 
        G(\cO)\backslash_\theta (G(F))^{\inv\circ\theta}\ar[r]^{\iota}&G[t]\backslash_\eta(G[t^{\pm1}])^{\inv\circ\eta}}
        \]
    where $\iota$ is the bijection in Theorem \ref{t:affine Matsuki},
    and $\iota_0$ is an induced bijection. 
    Moreover, the intersection of corresponding double cosets
    under $\iota_0$ is a single $G[t]-G[t^{\pm1}]^{\eta_c,\theta}$ coset.
\end{prop}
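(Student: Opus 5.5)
We construct $\iota_0$ directly on double cosets using $\eta_c$-fixed representatives, and then check that it is compatible with $\iota$. First, every double coset in $G(\cO)\backslash G(F)/G(F)^\theta$ has a representative in $G[t^{\pm1}]^{\eta_c}$. This follows from the factorization $G(F)=G(\cO)G[t^{\pm1}]^{\eta_c}$ coming from \eqref{eq:Iwasawa decomp loop}. The same holds for $G[t]\backslash G[t^{\pm1}]/G[t^{\pm1}]^\eta$: $G[t^{\pm1}]$ is $G[t]\cdot G[t^{\pm1}]^{\eta_c}$ up to the Cartan factor $\exp(\fp)\subset G$, and $G\subset G[t]$ absorbs it. So we define $\iota_0(G(\cO)xG(F)^\theta)=G[t]xG[t^{\pm1}]^\eta$ for $x\in G[t^{\pm1}]^{\eta_c}$.

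For this to be well defined, suppose $x,x'\in G[t^{\pm1}]^{\eta_c}$ lie in the same $G(\cO)$–$G(F)^\theta$ double coset. Then $\tau(x)=\tau_\theta(x)$ and $\tau(x')$ lie in the same $\theta$-twisted $G(\cO)$-orbit and are both $\eta_c$-fixed. By Theorem \ref{t:affine Matsuki} and Corollary \ref{c:union of spherical orbits intersection}, every $\eta_c$-fixed point of an orbit $\mathcal O_X$ lies in $\mathcal O_X\cap\iota(\mathcal O_X)$, which is a single $G_c$-orbit. Hence $\tau(x')=k\cdot_\theta\tau(x)$ for some $k\in G_c$. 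Since $\theta=\eta$ on $G_c$ and on $\eta_c$-fixed loops, $\tau_\eta(kx)=\tau_\eta(x')$, so $x'\in kxG[t^{\pm1}]^\eta$ and the images agree.

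The step I expect to be the main obstacle is that every $\eta_c$-fixed $\tau(x)$ lies in $((Gt^{X_*(T)}G)^{\inv\circ\theta})^{\eta_c}$. This is needed to invoke Proposition \ref{p:spherical orbits intersection} and Corollary \ref{c:union of spherical orbits intersection}. I would prove it by showing that an $\eta_c$-fixed element of $(G(\cO)t^\lambda G(\cO))^{\inv\circ\theta}$ is $\theta$-conjugate by $G_c$ into $(t^\lambda C_{\lambda,0})w_1^{-1}$, mimicking the Cartan-decomposition argument at the end of the proof of Proposition \ref{p:spherical orbits intersection}. If this fails in general, I would restrict to the elements satisfying the hypothesis of Proposition \ref{p:spherical double cosets intersection}, which is enough for the diagram.

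The diagram then commutes by construction: $\tau_\theta$ sends $x$ to the $\theta$-orbit of $\tau(x)$, $\tau_\eta$ sends $\iota_0$ of it to the $\eta$-orbit of $\tau(x)$, and these correspond under $\iota$ because their intersection contains $\tau(x)$. For bijectivity, the same construction with the roles of $\theta$ and $\eta$ exchanged gives an inverse map, well defined by the symmetric argument (Theorem \ref{t:eta spherical orbits} also supplies $\eta_c$-fixed representatives). Finally, take $x\in G[t^{\pm1}]^{\eta_c}$. Apply Proposition \ref{p:spherical double cosets intersection}(i) to the intersection of $G(\cO)xG(F)^\theta$ with $G[t]xG[t^{\pm1}]^\eta$, after writing it as $G_c$ times its $\eta_c$-fixed part using the factorization above. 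This intersection equals $G_cxG[t^{\pm1}]^{\theta,\eta}$, and $G_c\subset G[t]$ places it inside the single $G[t]$–$G[t^{\pm1}]^{\eta_c,\theta}$ coset of $x$.
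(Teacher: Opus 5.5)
There is a genuine gap, in two places.

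\textbf{Well-definedness.} Two of your claims are false: that every $\eta_c$-fixed point of a $\theta$-twisted $G(\cO)$-orbit $\mathcal O_X$ lies in $\mathcal O_X\cap\iota(\mathcal O_X)$, and (the step you flag as the main obstacle) that $\tau(x)\in((Gt^{X_*(T)}G)^{\inv\circ\theta})^{\eta_c}$ for all $x\in G[t^{\pm1}]^{\eta_c}$. As a result, $\iota_0$ as you first define it is not well defined.

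Take $\epsilon=1$ and $G_\bR=\GL_2(\bR)$, so $\tau(x)=x\,{}^tx$. Let $x=\diag(t,1)\,k\,\diag(t,1)$ with $k=\begin{pmatrix}p&q\\ r&s\end{pmatrix}\in\mathrm U(2)$ having all entries nonzero. Then $x\in G[t^{\pm1}]^{\eta_c}$ and
\[
\tau(x)=\begin{pmatrix}p^2t^4+q^2t^2 & prt^3+qst\\ prt^3+qst & r^2t^2+s^2\end{pmatrix}\in G(\cO)t^{(4,0)}G(\cO).
\]
However $\tau(x)\notin Gt^{X_*(T)}G$, because the entries of $g_1t^\lambda g_2$ only involve $t^{\lambda_1},t^{\lambda_2}$. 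By Corollary \ref{c:union of spherical orbits intersection}, $\tau(x)\notin\mathcal O_X\cap\iota(\mathcal O_X)$.

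Now $x'=t^{(2,0)}$ lies in the same $G(\cO)$--$G(F)^\theta$ double coset, since $\tau(x')=t^{(4,0)}$ and $\lambda=(4,0)$ carries a single orbit. But $\tau_\eta(x)=\tau(x)$ and $\tau_\eta(x')=t^{(4,0)}$ lie in different $\eta$-twisted $G[t]$-orbits, since otherwise $\tau(x)\in\mathcal O_X\cap\iota(\mathcal O_X)$. Hence $G[t]xG[t^{\pm1}]^\eta\neq G[t]x'G[t^{\pm1}]^\eta$.

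So your fallback is mandatory, not optional: use only representatives with $\tau(x)$ in that good set. Such representatives exist by Theorem \ref{t:theta spherical orbits} and Lemma \ref{l:tau_theta cpt preimage}. With this restriction, well-definedness, injectivity and commutativity go through exactly as in the paper. The same restriction is needed in your last step. There you should also write the intersection $Y$ as $G[t]\cdot(Y\cap G[t^{\pm1}]^{\eta_c})$, not as $G_c$ times its $\eta_c$-fixed part. Then Proposition \ref{p:spherical double cosets intersection}(i) gives $Y=G[t]xG[t^{\pm1}]^{\theta,\eta}$.

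\textbf{Surjectivity.} The argument ``the same construction with $\theta$ and $\eta$ exchanged'' is not justified. The inverse map needs, in every $G[t]$--$G[t^{\pm1}]^\eta$ double coset, a representative $y\in G[t^{\pm1}]^{\eta_c}$ with $\tau(y)$ in the good set. Theorem \ref{t:eta spherical orbits} only gives a good $\eta_c$-fixed element $t^\lambda g_0w_1^{-1}$ of the $\eta$-orbit, i.e.\ an element of $\tau_\eta(G[t^{\pm1}])\cap G[t^{\pm1}]^{\eta_c}$. To lift it to an $\eta_c$-fixed loop you need the $\eta$-analogue $\tau_\eta(G[t^{\pm1}])\cap G[t^{\pm1}]^{\eta_c}=\tau_\eta(G[t^{\pm1}]^{\eta_c})$ of Lemma \ref{l:tau_theta cpt preimage}. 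By the first point, an arbitrary $\eta_c$-fixed representative will not do.

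The proof of Lemma \ref{l:tau_theta cpt preimage} uses the unique factorization \eqref{eq:Iwasawa decomp loop} into $\theta$-stable factors. There is no $\eta$-stable substitute for $G(\cO)_\fp$, since $\eta$ swaps $G[t]$ and $G[t^{-1}]$. The paper has the $\eta$-analogue only for $\epsilon=-1$ (Corollary \ref{c:cpt image epsilon=-1}(ii), deduced from Proposition \ref{p:tau_theta surj epsilon=-1}). In that case $\tau_\theta$ and $\tau_\eta$ are both bijective, and $\iota_0=\tau_\eta^{-1}\circ\iota\circ\tau_\theta$ directly.

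For $\epsilon=1$ the paper only proves that $\iota_0$ is a well-defined injection. It gets surjectivity by identifying $\iota_0$, through the intersection property, with Nadler's bijection \cite[Theorem 1.2]{N}. Your argument needs either that external input or a proof of the $\eta$-version of the lemma when $\epsilon=1$.
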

\begin{proof}
    The existence of bijection $\iota_0$
    is equivalent to that the images of $\tau_\theta$ and $\tau_\eta$
    on the set of orbits coincide under the bijection $\iota$.
    For $\epsilon=-1$,
    this follows from Proposition \ref{p:tau_theta surj epsilon=-1}
    and Proposition \ref{p:tau_eta surj epsilon=-1},
    where both $\tau_\theta,\tau_\eta$ are bijections.
    For $\epsilon=1$, 
    for any orbit $A$ in $G(\cO)\backslash G(F)/G(F)^\theta$,
    we know from Theorem \ref{t:theta spherical orbits} 
    and Lemma \ref{l:tau_theta cpt preimage}
    that $A$ contains an element $x\in G[t^{\pm1}]^{\eta_c}$
    such that 
    $\tau_\theta(x)\in((Gt^{X_*(T)}G)^{\inv\circ\theta})^{\eta_c}$.
    We get
    \[
    \iota\circ\tau_\theta(A)=\tau_\eta(G[t]xG[t^{\pm1}]^\eta).
    \]
    This shows that $\iota_0$ is a well-defined embedding.
    Note that by 
    Proposition \ref{p:spherical double cosets intersection}.(ii),
    the intersection of the orbits matched under $\iota_0$
    is a single $G[t]-G[t^{\pm1}]^{\eta_c}$ orbit.
    Thus by \cite[Theorem 1.2]{N},
    $\iota_0$ coincides with the affine Matsuki correspondence in \emph{loc. cit.}.
    By \cite[Theorem 1.2]{N} again, 
    $\iota_0$ is a bijection.
\end{proof}

In particular, we immediately obtain the following.
\begin{thm}\label{Gr}
    There is a  bijection 
    \[\Gr/G(F)^\theta\simeq \Gr/G[t,t^{-1}]^\eta\]
   between $G(F)^\theta$ and  $G[t,t^{-1}]^\eta$-orbits on the affine Grassmannian $\Gr=G(\mathcal O)\backslash G(F)$ such that the intersection of the corresponding orbit is a single $G[t,t^{-1}]^{\eta_c,\eta}$-orbit.
\end{thm}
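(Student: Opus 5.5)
The plan is to obtain Theorem \ref{Gr} by inverting the two sides of the double coset bijection $\iota_0$ of Proposition \ref{p:spherical orbits diagram}. Since $\Gr=G(\cO)\backslash G(F)$, the set $\Gr/G(F)^\theta$ of $G(F)^\theta$-orbits on $\Gr$ is literally the double coset set $G(\cO)\backslash G(F)/G(F)^\theta$. On the real side, $G[t]\backslash G[t^{\pm1}]\to G(\cO)\backslash G(F)=\Gr$ is a bijection, because $G(F)=G(\cO)\,G[t^{\pm1}]$ (for instance via $G(F)=G(\cO)\,\Omega G_c$) and $G(\cO)\cap G[t^{\pm1}]=G[t]$. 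This map is $G[t^{\pm1}]^\eta$-equivariant for right multiplication, so $\Gr/G[t,t^{-1}]^\eta\simeq G[t]\backslash G[t^{\pm1}]/G[t^{\pm1}]^\eta$. Composing these two identifications with $\iota_0$ gives the desired bijection.

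For the intersection statement, take corresponding orbits represented by $x\in G[t^{\pm1}]^{\eta_c}$ as in the proof of Proposition \ref{p:spherical orbits diagram}. As subsets of $\Gr$ they are the images of $G(\cO)xG(F)^\theta$ and of $G[t]xG[t^{\pm1}]^\eta$, and the latter has the same image in $\Gr$ as $G(\cO)xG[t^{\pm1}]^\eta$. Their intersection in $\Gr$ is therefore the image of $G(\cO)xG(F)^\theta\cap G(\cO)xG[t^{\pm1}]^\eta$, since both sets are left $G(\cO)$-stable. By Proposition \ref{p:spherical double cosets intersection}.(ii), this equals $G(\cO)xG[t^{\pm1}]^{\theta,\eta}$, whose image in $\Gr$ is the single orbit $G(\cO)\backslash G(\cO)\,x\,G[t^{\pm1}]^{\theta,\eta}$ of $G[t^{\pm1}]^{\theta,\eta}$. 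Since $\theta=\eta_c\circ\eta$ on $G[t^{\pm1}]$, this group is $G[t,t^{-1}]^{\eta_c,\eta}$, as in the statement.

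The main obstacle is ensuring that Proposition \ref{p:spherical double cosets intersection} applies to every orbit. This requires choosing the representative $x$ in $G[t^{\pm1}]^{\eta_c}$ with $\tau_\theta(x)\in((Gt^{X_*(T)}G)^{\inv\circ\theta})^{\eta_c}$. I would take this from Theorem \ref{t:theta spherical orbits} combined with Lemma \ref{l:tau_theta cpt preimage}, exactly as in the proof of Proposition \ref{p:spherical orbits diagram}. For $\epsilon=1$ that proof also invokes \cite[Theorem 1.2]{N} for surjectivity of $\iota_0$, and I would cite it the same way. For $\epsilon=-1$, surjectivity instead follows from the surjectivity of $\tau_\theta$ and $\tau_\eta$ (Propositions \ref{p:tau_theta surj epsilon=-1} and \ref{p:tau_eta surj epsilon=-1}).
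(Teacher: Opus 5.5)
Your proposal is correct and follows the paper's own route. The paper deduces Theorem~\ref{Gr} immediately from the double coset bijection $\iota_0$ of Proposition~\ref{p:spherical orbits diagram}, which relies on Nadler's theorem for $\epsilon=1$ and on the surjectivity of $\tau_\theta,\tau_\eta$ for $\epsilon=-1$, together with Proposition~\ref{p:spherical double cosets intersection}.(ii) for the intersection; you simply make explicit what the paper leaves implicit, namely the identifications $\Gr/G(F)^\theta=G(\cO)\backslash G(F)/G(F)^\theta$ and $\Gr\simeq G[t]\backslash G[t^{\pm1}]$, and the fact that the images in $\Gr$ of left $G(\cO)$-stable subsets intersect in the image of their intersection.
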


\section{Iwahori orbits parametrization}\label{iwahori orbits para}
In this section we classify Iwahori orbits on anti-fixed points by twisted conjugation, for $\theta$ and $\eta$ respectively.
The affine Matsuki correspondence will follow from comparing the two classifications.
We also study orbit intersections and the induced correspondence
on the affine flag varieties.

\subsection{Complex Iwhoric orbits}
\subsubsection{}
Let $I$ be the Iwahori group corresponding to our fixed $B$,
with pro-unipotent radical $I(1)$ so that $I=TI(1)$.
Since $\theta_0$ preserves $T$, 
$\theta$ takes the Iwahori subgroup $I$ 
to a $W$-conjugation of $I$
that decomposes as $\theta(I)=T\theta(I(1))$.
Also, $\theta$ has induced action on the extended Weyl group
$\tW=N_{G(F)}(T(F))/T(\cO)\simeq X_*(T)\rtimes W_c$.
We fix a set of representatives $w$ of $W\simeq W_c$ in $N_{G_c}(T_c)$,
which gives rise to a set of representatives $\tw=t^\lambda w$
of $\tW$.
Denote the $\theta$-twisted conjugation by
\[
h\cdot g=hg\theta(h)^{-1}.
\]

We have decomposition
\[
G(F)^{\inv\circ\theta}=\bigsqcup_{\tw\in \tW}
(I\tw\theta(I))^{\inv\circ\theta}.
\]

If $g_1\tw g_2=\theta(g_2)^{-1}\theta(\tw)^{-1}\theta(g_1)^{-1}$,
then $\tw$ and $\theta(\tw)^{-1}$
are in the same double coset.
Write $\tw=t^\lambda w$.
We obtain
\begin{equation}\label{eq:Iwahori lambda,general}
	\tw\equiv\theta(\tw)^{-1}\in\tW\Leftrightarrow
	\theta_0(\lambda)=-w^{-1}\lambda\in X_*(T)\ \& \
	\theta_0(w)\equiv w^{-1}\in W_c.
\end{equation}

For the above $\tw$, we have 
$t_{\tw}:=(\tw\theta(\tw))^{-1}=\epsilon^\lambda(w\theta_0(w))^{-1}\in T_c$.
Denote
\begin{equation}\label{eq:Iwahori A_lambda general}
	\begin{split}
		(\tw\theta(I))^{\inv\circ\theta}
		&=\{\tw g\mid g\in\theta(I),\ 
		\Ad_{\tw}g=\theta(g)^{-1}t_{\tw}\}\\
		&=:\{\tw g\mid g\in I_\tw\}.
	\end{split}
\end{equation}

Clearly
\[
I_\tw\subset \theta(I)\cap\Ad_{\tw^{-1}}I
\]
on which $I\cap\Ad_{\tw}\theta(I)$ acts by 
$h\cdot g=(\Ad_{\tw^{-1}}h)g\theta(h)^{-1}$.

\subsubsection{}
Define
\begin{equation}\label{eq:T_lambda theta}
	T_\tw:=\{g_0\in T\mid \Ad_w g_0=\theta_0(g_0)^{-1}t_\tw\}\subset I_\tw,
\end{equation}
on which $T$ acts by $h\cdot g_0=(\Ad_{w^{-1}}h)g_0\theta_0(h)^{-1}$.

\begin{prop}\label{p:Iwahori theta}
	The natural map
	$T\backslash T_\tw\rightarrow (I\cap\Ad_{\tw}\theta(I))\backslash I_\tw$
	is a bijection.
\end{prop}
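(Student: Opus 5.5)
The plan is to imitate the proof of Proposition \ref{p:coset conj classes,general}, with the Levi $L_\lambda$ replaced by $T$ and the filtration $G_k(\cO)$ replaced by a filtration of the pro-unipotent part by affine root heights. Write $\theta(I)=T\,\theta(I(1))$. Then $I_\tw\subset\theta(I)\cap\Ad_{\tw^{-1}}I$, and this group equals $T\cdot(\theta(I(1))\cap\Ad_{\tw^{-1}}I(1))$. The second factor $U_\tw:=\theta(I(1))\cap\Ad_{\tw^{-1}}I(1)$ is a pro-unipotent group, topologically generated by the affine root subgroups $U_\alpha$ with $\alpha$ positive for $\theta(I)$ and $\tw\alpha$ positive for $I$. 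The acting group is $I\cap\Ad_\tw\theta(I)=T\cdot(I(1)\cap\Ad_\tw\theta(I(1)))$. This acting group is $\Ad_\tw$ of the group $\theta(I)\cap\Ad_{\tw^{-1}}I$ containing $I_\tw$.

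\textbf{Step 1 (projection to $T$).} For $g=g_0u$ with $g_0\in T$ and $u\in U_\tw$, I would reduce the defining equation $\Ad_\tw g=\theta(g)^{-1}t_\tw$ modulo the pro-unipotent radicals. Both sides lie in $I$ by \eqref{eq:Iwahori lambda,general}, since $\theta(\tw)^{-1}\equiv\tw$. Comparing $T$-components gives $\Ad_w g_0=\theta_0(g_0)^{-1}t_\tw$, that is, $g_0\in T_\tw$. The same comparison shows that twisted conjugation by $h=h_0h_1$ acts on the $T$-component through the $T$-action on $T_\tw$. This gives a well-defined map $(I\cap\Ad_\tw\theta(I))\backslash I_\tw\to T\backslash T_\tw$ that is left inverse to the natural one, which settles injectivity. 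For that I still need to check that the $T$-component of $h\cdot g$ is $h_0\cdot g_0$ modulo the unipotent parts; this holds because $T$ normalizes each root subgroup.

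\textbf{Step 2 (killing the unipotent part).} For surjectivity I would order the affine roots in $U_\tw$ by height, giving a decreasing filtration $U_\tw^{(k)}$ by normal subgroups with abelian quotients. Inductively, suppose $g\in g_0U_\tw^{(k)}$ and write the leading term as $\prod_\alpha x_\alpha$ over roots $\alpha$ of height $k$. The map $\alpha\mapsto\tw\theta(\alpha)$ (up to sign) is the relevant involution on roots, arising because $\theta(\tw)\tw$ lies in $T$. The equation for $I_\tw$ pairs the coefficient $x_\alpha$ with the coefficient of the image root. I would split into two cases.

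In the first case, both $\alpha$ and its partner lie in $U_\tw$. Then I conjugate by $h=$ half of the corresponding element, in the style of $\exp(\tfrac12\cdots)$ as in \eqref{eq:X_4}. This is possible because the equation forces the anti-symmetry relation, just as \eqref{eq:X_4} did.

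In the second case, the partner root lies in $\Ad_\tw$ of the acting group but its $\Ad_{\tw^{-1}}$ image is in $U_\tw$. Then I conjugate by the root element that eliminates $x_\alpha$ outright, in the style of the $X_2$-elimination step.

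Each step changes $g$ only in higher filtration, so the product of the conjugating elements converges in $I$, giving $h\cdot g\in T_\tw$.

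\textbf{Main obstacle.} The hard step is the combinatorics of Step 2. I must show that for each positive affine root $\alpha$ with $U_\alpha\subset U_\tw$, the equation really forces one of the two cases, and that the correcting element lies in $I(1)\cap\Ad_\tw\theta(I(1))$. The assumption $\theta(\tw)\equiv\tw^{-1}$ gives the needed symmetry of the set of roots. A second subtlety is that heights must be chosen compatibly with $\tw\theta$, so that corrections do not reintroduce lower-order terms. If the height filtration fails here, I would instead use a filtration by $t$-adic valuation combined with the root decomposition of $\fg$, following \eqref{eq:3}.
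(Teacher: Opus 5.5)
Your Step 1 is correct; it is the injectivity argument the paper dismisses as ``clear''. The gap is in Step 2. Surjectivity is the whole content of the proposition, and you leave its core open, as you acknowledge yourself. The mechanism you sketch also targets a difficulty that is not there.

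The missing observation is this. Write $g=g_0u$ with $g_0\in T_\tw$ and $u\in U_\tw$. Comparing unipotent parts in $\Ad_\tw g=\theta(g)^{-1}t_\tw$ says exactly $u=\sigma(u)$, where
\[
\sigma=\Ad_{\tw^{-1}r}\circ\inv\circ\theta,\qquad r=t_\tw^{-1}\theta_0(g_0)\in T .
\]
This $\sigma$ is an involutive anti-automorphism mapping $U_\tw$ onto itself; this uses $\theta^2=1$ and $\theta(\tw)^{-1}=t_\tw\tw\in\tw T$. Moreover, for $v\in U_\tw$ the element $h=\Ad_{\tw g_0}v$ lies in $I\cap\Ad_\tw\theta(I)$ and satisfies $h\cdot g_0u=g_0\,v\,u\,\sigma(v)$.

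Given this, any $\sigma$-stable filtration of $U_\tw$ by normal subgroups with finite-dimensional unipotent quotients finishes the proof. On each quotient write $u_k=\exp(a_k)$; then $a_k$ is fixed by $d\sigma$, and $v=\exp(-\tfrac12 a_k)$ makes $vu\sigma(v)$ trivial in the quotient. The paper uses the subgroups $G(\cO)_k\cap\Ad_{\tw^{-1}}G(\cO)_k$, which are $\sigma$-stable, and kills each whole quotient in one step. No root-by-root analysis is needed.

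Three specific points in your sketch go wrong.

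First, since $\sigma$ preserves $U_\tw$, the partner of every root of $U_\tw$ again lies in $U_\tw$. So your second, ``elimination'' case never occurs. The $X_2$-step in Proposition \ref{p:coset conj classes,general} reflects that there the filtration $G_k(\cO)$ is not compatible with the $\Ad_{t^{-\lambda}}$ in the twisted action; it has no analogue here.

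Second, the involution on roots is $\alpha\mapsto\tw^{-1}\theta(\alpha)$, not $\tw\theta(\alpha)$. The map $\tw\theta$ preserves the roots of $I(1)\cap\Ad_\tw\theta(I(1))$, not those of $U_\tw$. For example, for $\GL_2$ with $\theta_0(g)={}^tg^{-1}$ and $\tw=t^{(1,0)}$, it sends the root $\alpha+\delta$ of $U_\tw$ to $-\alpha$, which is not a root of $U_\tw$.

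Third, the filtrations you propose are not $\sigma$-stable. The $I$-height need not be positive on $U_\tw$: for $\tw=s$ the root $-\alpha$ of $\theta_0(B)$ lies in $U_\tw$. It also need not be $\sigma$-invariant: for $\tw=1$, $\sigma$ sends $\alpha+\delta$ to $-\alpha+\delta$. The naive $t$-adic filtration $U_\tw\cap G(\cO)_k$ fails as well, because $\sigma(G(\cO)_k)=\Ad_{\tw^{-1}}G(\cO)_k$.

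A height that does work is the sum of the $\theta(I)$-height and the $\Ad_{\tw^{-1}}I$-height, which $\sigma$ interchanges. With that choice, or with the paper's filtration, your Step 2 reduces to the paper's halving argument.
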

\begin{proof}
	Observe that
	\[
	\theta(I)\cap\Ad_{\tw^{-1}}I=T(\theta(I(1))\cap\Ad_{\tw^{-1}}I(1)),\quad 
	I\cap\Ad_{\tw}\theta(I)=T(I(1)\cap\Ad_{\tw}\theta(I(1))).
	\]
	The injectivity is then clear.
	
	For surjectivity, 
	let $g=g_0u\in I_\tw$, $g_0\in T$, $u\in \theta(I(1))\cap\Ad_{\tw^{-1}}I(1)$.
	They satisfy
	\[
	(\Ad_w g_0)(\Ad_\tw u)
	=(\theta_0(g_0)^{-1}t_\tw)\Ad_{t_\tw^{-1}\theta_0(g_0)}\theta(u)^{-1}.
	\]
	We get
	\[
	\Ad_\tw u=\Ad_{t_\tw^{-1}\theta_0(g_0)}\theta(u)^{-1}.
	\]
	Denote $r=t_\tw^{-1}\theta_0(g_0)\in T$.
	
	Consider $G(\cO)_k:=\ker(G(\cO)\rightarrow G(\cO/t^k))$, $k\geq1$.
	Then $\sigma:=\Ad_{\tw^{-1}r}\circ\inv\circ\theta$ acts on 
	$G(\cO)_k\cap\Ad_{\tw^{-1}}G(\cO)_k$ and $\theta(I(1))\cap\Ad_{\tw^{-1}}I(1)$.
	Since $G(\cO)_k$ is a normal subgroup of $I(1)$,
	$\sigma$ acts on unipotent group
	$U_k=\theta(I(1))\cap\Ad_{\tw^{-1}}I(1)/G(\cO)_k\cap\Ad_{\tw^{-1}}G(\cO)_k$.
	Denote $\fu_k=\mathrm{Lie}(U_k)$.
	Clearly $\sigma$ induces a linear action on $\fu_k$.
	Let $u_k$ be the image of $u$ in $U_k$.
	Since $\sigma(u)=u$, 
    $u_k=\exp(a_k)$ for $a_k\in\fu_k^\sigma$.
	Let $h_k=\Ad_{\tw g_0}\exp(-\frac{1}{2}a_k)\in I\cap\Ad_{\tw}\theta(I)$ 
	where $\exp(-\frac{1}{2}a_k)$ is taken to be any preimage in $\theta(I(1))\cap\Ad_{\tw^{-1}}I(1)$.
	We have
	\[
	h_k\cdot\tw g_0u=\tw g_0 g_k,\quad g_k\in G(\cO)_k\cap\Ad_{\tw^{-1}}G(\cO)_k.
	\]
	Repeating this process for $g_k$, 
	the action by the convergent product of the sequence of $h_k$
	maps $\tw g$ to $\tw g_0$.
	This completes the proof.
\end{proof}

Denote
\[
T_{\tw,c}=T_\tw\cap G_c
=\{g_0\in T_c\mid \Ad_w g_0=\theta_0(g_0)^{-1}t_\tw\}.
\]
Recall we take $w\in G_c$.
\begin{lem}\label{l:T_tw,c to T_tw}
	The natural map
	$T_c\backslash T_{\tw,c}\rightarrow T\backslash T_\tw$
	is a bijection.
\end{lem}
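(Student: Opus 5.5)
This is the torus analogue of Proposition \ref{p:C_lambda to A_lambda,0}, and I would run the same argument. Replace $L_\lambda$ by $T$, $L_{\lambda,c}$ by $T_c$, and the involution $\Ad_{w_1^{-1}}\theta_0$ by $\sigma_0:=\Ad_{w^{-1}}\circ\theta_0$. The defining equation of $T_\tw$ can be rewritten as $g_0=\Ad_{w^{-1}}(\theta_0(g_0)^{-1}t_\tw)$, and the action becomes $h\cdot g_0=(\Ad_{w^{-1}}h)\,g_0\,\theta_0(h)^{-1}$.

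The tool is the polar decomposition $T=T_c\times\exp(\ft_\bR)$, where $\ft_\bR=\ft^{-\eta_{c,0}}$ (this is $\ft\cap\fp_c$). Since $w\in N_{G_c}(T_c)$ and $\theta_0$ commutes with $\eta_{c,0}$, both $\Ad_w$ and $\theta_0$ preserve $T_c$ and $\ft_\bR$. Also $t_\tw\in T_c$. Because $T$ is abelian, everything decouples, which makes this easier than the Levi case.

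\emph{Surjectivity.} Take $g_0=k\exp(Y)\in T_\tw$ with $k\in T_c$ and $Y\in\ft_\bR$. Apply the defining equation and use uniqueness of the polar decomposition. The compact parts give $\Ad_w k=\theta_0(k)^{-1}t_\tw$, so $k\in T_{\tw,c}$. The vector parts give $\Ad_w Y=-\theta_0(Y)$.

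Now act by $h=\exp(X)$ with $X\in\ft_\bR$. This gives
\[
h\cdot g_0=k\exp\bigl(Y+\Ad_{w^{-1}}X-\theta_0(X)\bigr).
\]
I want to choose $X$ with $\Ad_{w^{-1}}X-\theta_0(X)=-Y$. Try $X=-\tfrac12\Ad_w Y$. Then $\Ad_{w^{-1}}X=-\tfrac12 Y$, and
\[
\theta_0(X)=-\tfrac12\theta_0(\Ad_w Y)=-\tfrac12\Ad_{\theta_0(w)}\theta_0(Y).
\]
To finish I need $\Ad_{\theta_0(w)}\theta_0(Y)=-Y$. Using $\theta_0(w)\equiv w^{-1}$ in $W_c$ (from \eqref{eq:Iwahori lambda,general}) together with $\theta_0(Y)=-\Ad_w Y$, we get
\[
\Ad_{\theta_0(w)}\theta_0(Y)=-\Ad_{w^{-1}}\Ad_w Y=-Y.
\]
Hence $\theta_0(X)=\tfrac12 Y$, the total correction is $-\tfrac12Y-\tfrac12Y=-Y$, and $h\cdot g_0=k\in T_{\tw,c}$. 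This step, checking that $\sigma_0$ acts compatibly on $\ft_\bR$ so that the halving trick works, is the only delicate point. It relies on $\Ad_w$ and $\theta_0$ acting on $\ft$ through $W$, so representatives don't matter.

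\emph{Injectivity.} Suppose $h\cdot k_1=k_2$ with $k_i\in T_{\tw,c}$ and $h=k\exp(X)\in T$. Again split into polar parts, using commutativity of $T$. The compact part gives $(\Ad_{w^{-1}}k)\,k_1\,\theta_0(k)^{-1}=k_2$, so $k_1$ and $k_2$ are already $T_c$-related. The $\exp(\ft_\bR)$-part only constrains $X$ and plays no further role, exactly as in \eqref{cartan}. This completes the plan.
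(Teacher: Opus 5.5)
Your proposal is correct and follows the paper's own route. The paper's proof uses the same polar decomposition $T\simeq T_c\times\exp(i\ft_c)$, observes that $\theta_0$ and $\Ad_w$ preserve both factors, and then defers to the argument of Proposition \ref{p:C_lambda to A_lambda,0}; your explicit choice $X=-\tfrac12\Ad_w Y$, which uses $\theta_0(w)\equiv w^{-1}$ in $W$, supplies precisely the detail the paper leaves implicit.
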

\begin{proof}
	Write $\ft=\ft_c\oplus i\ft_c$ the decomposition for $\eta_{c,0}$,
	$T\simeq T_c\times\exp(i\ft_c)$.
	Note that $\theta_0$ and $\Ad_w$ act on $\ft_c$ and $i\ft_c$.
	The proof is essentially the same as Proposition \ref{p:C_lambda to A_lambda,0}.
\end{proof}

\subsubsection{}
From the above discussion, we obtain:
\begin{thm}\label{c:theta Iwahori orbits}
We have bijections induced by $x_{\tilde w}=\tw g \mapsto g\in T_{\tw,c}$:
	\[
	I\backslash_\theta G(F)^{\inv\circ\theta}
	\simeq T(\cO)\backslash_\theta(N_{G(F)}(T(F)))^{\inv\circ\theta}
    \simeq\bigsqcup_{\theta(\tw)\equiv\tw^{-1}\in\tW}T\backslash T_{\tw}
	\simeq\bigsqcup_{\theta(\tw)\equiv\tw^{-1}\in\tW}T_c\backslash T_{\tw,c}.
	\]
     Moreover, the natural inclusion $Z_{T_{\tilde w,c}}(g)\to Z_{I}(x_{\tilde w})$
    of stabilizers
    induces an isomorphism on component groups 
$\pi_0(Z_{T_{\tilde w,c}}(g))\cong\pi_0(Z_{I}(x_{\tilde w}))$.
\end{thm}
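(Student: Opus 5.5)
The plan mirrors the proof of Theorem \ref{t:theta spherical orbits}, with the Cartan decomposition replaced by the Iwahori--Bruhat decomposition. For the first bijection I use the decomposition $G(F)^{\inv\circ\theta}=\bigsqcup_{\tw}(I\tw\theta(I))^{\inv\circ\theta}$.

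\textbf{Step 1: reduction to a single double coset.} The $\theta$-twisted $I$-action preserves each piece $(I\tw\theta(I))^{\inv\circ\theta}$. By \eqref{eq:Iwahori lambda,general}, such a piece is nonempty only if $\theta(\tw)\equiv\tw^{-1}$ in $\tW$. For such $\tw$, I show that every element of $(I\tw\theta(I))^{\inv\circ\theta}$ is $I$-conjugate into $(\tw\theta(I))^{\inv\circ\theta}$. Write an element as $g_1\tw g_2$, and twist-conjugate by $g_1^{-1}$ to get $\tw g_2\theta(g_1)$. This lies in $\tw\theta(I)$, since $\theta(I)$ is a group. Two elements $\tw g,\tw g'$ of this set are $I$-conjugate iff they are conjugate by $I\cap\Ad_{\tw}\theta(I)$: if $h\tw g\theta(h)^{-1}=\tw g'$, then $h\in \tw\theta(I)\tw^{-1}$. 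Hence
\[
I\backslash_\theta (I\tw\theta(I))^{\inv\circ\theta}\simeq (I\cap\Ad_{\tw}\theta(I))\backslash I_\tw .
\]
Proposition \ref{p:Iwahori theta} identifies the right side with $T\backslash T_\tw$, and Lemma \ref{l:T_tw,c to T_tw} identifies that with $T_c\backslash T_{\tw,c}$.

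\textbf{Step 2: the middle term.} Since $T_\tw\subset T$, the representatives $\tw g_0$ lie in $N_{G(F)}(T(F))$. The same argument with $I$ replaced by $T(\cO)$ gives the $T(\cO)$-orbits on $N_{G(F)}(T(F))^{\inv\circ\theta}$. Concretely, an element $\tw t$ with $t\in T(F)$, after reduction modulo $T(\cO)$, becomes $\tw t_0 u$ with $u\in T(\cO)_1$. The pro-unipotent part $u$ is killed by a square-root argument, exactly as in the proof of Proposition \ref{p:Iwahori theta}, which leaves $T\backslash T_\tw$. Compatibility with the inclusion $N_{G(F)}(T(F))\subset G(F)$ is then automatic.

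\textbf{Step 3: component groups of stabilizers.} Following Theorem \ref{t:theta spherical orbits}, the stabilizer $Z_I(x_\tw)$ lies in $I\cap\Ad_{\tw}\theta(I)=T\cdot(I(1)\cap\Ad_\tw\theta(I(1)))$. Projecting to $T$ gives a homomorphism $Z_I(x_\tw)\to Z_T(g)$. Its fibres are contractible: they are the fixed points of the involution $\sigma$ on a pro-unipotent group, and such fixed-point sets are pro-unipotent because $\sigma$ acts linearly on each $\fu_k$ and $\exp$ is an isomorphism. Next, the decomposition $T=T_c\times\exp(i\ft_c)$ gives a retraction $Z_T(g)\to Z_{T_c}(g)$ with contractible fibre, by the argument of \eqref{cartan}. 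The composite $Z_I(x_\tw)\to Z_{T_c}(g)$ is a left inverse to the inclusion, so the inclusion induces an isomorphism on $\pi_0$.

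I expect the main obstacle to be the contractibility of the fibre of $Z_I(x_\tw)\to Z_T(g)$. One must check that the fibre over a point is a torsor for the fixed group $(I(1)\cap\Ad_\tw\theta(I(1)))^{\sigma'}$, where $\sigma'$ is the appropriately twisted involution. The fixed points of an involution on a pro-unipotent group form a pro-unipotent, hence contractible, group. I would establish this level by level using the filtration by the $G(\cO)_k$, as in Proposition \ref{p:Iwahori theta}.
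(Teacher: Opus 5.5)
Your proposal is correct and follows the paper's route. The paper's proof is ``same as Theorem \ref{t:theta spherical orbits}'': reduce each piece $(I\tw\theta(I))^{\inv\circ\theta}$ to $(I\cap\Ad_{\tw}\theta(I))\backslash I_\tw$, then apply Proposition \ref{p:Iwahori theta} and Lemma \ref{l:T_tw,c to T_tw}. For the stabilizers it composes the projection $Z_I(x_\tw)\to Z_T(g)$ with the Cartan retraction $Z_T(g)\to Z_{T_c}(g)$, both with contractible fibres, to get a left inverse of the inclusion. That is exactly your Step 1 and Step 3, and your Step 2 and fixed-point argument just make explicit details the paper leaves to the reader.
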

\begin{proof}
    Same proof as Theorem \ref{t:theta spherical orbits}.
\end{proof}

\subsection{Real Iwahoric orbits}
\subsubsection{}
Recall $I_p=I\cap G[t,t^{-1}]\subset G[t]$.
Since $T$ is $\eta_0$-stable, 
$\eta(I_p)$ is a $W$-conjugation of the 
standard Iwahori subgroup in $G[t^{-1}]$. 
We have Birkhoff decomposition
\[
G[t,t^{-1}]^{\inv\circ\eta}=
\bigsqcup_{\tw\in\tW}(I_p\tw\eta(I_p))^{\inv\circ\eta},
\]
on which $I_p$ acts by $h\cdot g=hg\eta(h)^{-1}$.
Those $\tw$ appearing in the above satisfy
\[
\eta(\tw)\equiv\tw^{-1}\in\tW.
\]
Since $\eta$ has the same action on $X_*(T)$ and $W_c$ as $\theta_0$,
the above conditions on $\eta$
are the same as \eqref{eq:Iwahori lambda,general}
with the same $t_\tw=(\tw\eta(\tw))^{-1}\in T_c$.

The $I_p$-orbits on $(I_p\tw\eta(I_p))^{\inv\circ\eta}$
are in bijection with $I_p\cap\Ad_{\tw}\eta(I_p)$-orbits on $(\tw \eta(I_p))^{\inv\circ\eta}$:
\begin{equation}\label{eq:Iwahori B_lambda general}
	\begin{split}
		(\tw \eta(I_p))^{\inv\circ\eta}
		&=\{\tw g\mid g\in \eta(I_p),\ 
		\Ad_{\tw}g=\eta(g)^{-1}t_{\tw}\}\\
		&=:\{\tw g\mid g\in \eta(I_p)_\tw\}.
	\end{split}
\end{equation}

Clearly
\[
\eta(I_p)_\tw\subset \eta(I_p)\cap\Ad_{\tw^{-1}}I_p,
\]
on which $I_p\cap\Ad_{\tw}\eta(I_p)$ acts by $h\cdot g=(\Ad_{\tw^{-1}}h)g\eta(h)^{-1}$.

\subsubsection{}
Define
\begin{equation}\label{eq:T_lambda eta}
	T_\tw':=\{g_0\in T\mid \Ad_w g_0=\eta_0(g_0)^{-1}t_\tw\}\subset \eta(I_p)_\tw,
\end{equation}
on which $T$ acts by $g\cdot t=(\Ad_{w^{-1}})gt\eta_0(g)^{-1}$.

\begin{prop}\label{p:Iwahori eta}
	The natural map
	$T\backslash_{\eta_0}T_\tw'\rightarrow (I_p\cap\Ad_{\tw}\eta(I_p))\backslash_\eta \eta(I_p)_\tw$
	is a bijection.
\end{prop}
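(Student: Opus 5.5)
The plan is to follow the proof of Proposition \ref{p:Iwahori theta}, using the same reduction from Proposition \ref{p:coset conj classes,general, real} in the spherical case. The point is that $\eta(I_p)\cap\Ad_{\tw^{-1}}I_p$ is a polynomial group. Hence the unipotent part is a finite-dimensional unipotent group, and no convergent infinite product of conjugating elements is needed.

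\textbf{Step 1: Levi decompositions.} I will first record
\[
\eta(I_p)\cap\Ad_{\tw^{-1}}I_p=T\,U_\tw,\qquad I_p\cap\Ad_{\tw}\eta(I_p)=T\,\Ad_{\tw}U_\tw,
\]
where $U_\tw:=\eta(I_p(1))\cap\Ad_{\tw^{-1}}I_p(1)$ and $I_p(1)=I(1)\cap G[t,t^{-1}]$. This holds because $\eta(I_p)$ is a $W$-conjugate of the standard negative polynomial Iwahori, so both groups are generated by $T$ together with the affine root subgroups lying in the appropriate intersection.

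The key finiteness fact is that the set of affine roots occurring in $\eta(I_p(1))$ and also in $\Ad_{\tw^{-1}}I_p(1)$ is finite. Indeed, one side consists of positive affine roots and the other of affine roots that become negative after applying $\tw$; their common part is an inversion set of length at most $\ell(\tw)$. Therefore $U_\tw$ is a finite-dimensional unipotent group.

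\textbf{Step 2: Injectivity.} Suppose $t_1,t_2\in T'_\tw$ and $h=h_0v$ with $h_0\in T$ and $v\in\Ad_\tw U_\tw$ satisfies $h\cdot t_1=t_2$. Projecting to the torus factor via $TU_\tw\to T$ gives $h_0\cdot t_1=t_2$. This is the same argument as in Proposition \ref{p:Iwahori theta}.

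\textbf{Step 3: Surjectivity.} Write $g=g_0u\in\eta(I_p)_\tw$ with $g_0\in T$ and $u\in U_\tw$. Comparing torus parts in $\Ad_\tw g=\eta(g)^{-1}t_\tw$ shows $g_0\in T'_\tw$. Comparing the unipotent parts gives
\[
\Ad_\tw u=\Ad_{r}\eta(u)^{-1},\qquad r=t_\tw^{-1}\eta_0(g_0).
\]
Now set $\sigma:=\Ad_{\tw^{-1}r}\circ\inv\circ\eta$. The main point to verify is that $\sigma$ is an involutive anti-automorphism-composed-with-inverse, i.e.\ a group automorphism of $U_\tw$ of order two. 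Preservation of $U_\tw$ follows from the equation defining $\eta(I_p)_\tw$. For $\sigma^2=1$, I will use $\eta(\tw)\tw\in t_\tw^{-1}T_c$ together with $g_0\in T'_\tw$.

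Granting this, $u$ satisfies $\sigma(u)=u$, so $u=\exp(a)$ with $a\in\Lie(U_\tw)^\sigma$, since $\exp$ is a $\sigma$-equivariant bijection on a unipotent group. Take the square root $u^{1/2}=\exp(a/2)$, which is again $\sigma$-fixed. The element $h=\Ad_{\tw g_0}u^{-1/2}\in I_p\cap\Ad_\tw\eta(I_p)$ then satisfies $h\cdot g=g_0$; this matches the choice of $h$ in Proposition \ref{p:coset conj classes,general, real}.

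\textbf{Main obstacle.} The hard part is the bookkeeping that $\sigma$ really preserves $U_\tw$ and has order two, with the torus twist $r$ placed correctly. If $\sigma$ fails to be an involution, I would instead replace $a/2$ by solving a linear equation $(1+\sigma)b=a$ on $\Lie(U_\tw)$, proceeding weight by weight along a filtration of $U_\tw$ by root height, as in the inductive step of Proposition \ref{p:coset conj classes,general}.
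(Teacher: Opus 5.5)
Your proposal is correct and follows essentially the same route as the paper. The paper likewise writes $\eta(I_p)\cap\Ad_{\tw^{-1}}I_p=T(\eta(I_p(1))\cap\Ad_{\tw^{-1}}I_p(1))$ with a finite-type unipotent part identified with its Lie algebra, and then runs the square-root argument of Proposition \ref{p:coset conj classes,general, real}.

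One remark on your "main obstacle": $\sigma=\Ad_{\tw^{-1}r}\circ\inv\circ\eta$ is an anti-automorphism, not an automorphism. This is harmless, and no check that $\sigma^2=1$ (and no fallback) is needed. Since $\exp$ is a $\sigma$-equivariant bijection, $\sigma(u)=u$ already forces $a$, hence $a/2$ and $\exp(a/2)$, to be $\sigma$-fixed.
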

\begin{proof}
	Observe that $\eta(I_p)\cap\Ad_{\tw^{-1}}I_p=T(\eta(I_p(1))\cap\Ad_{\tw^{-1}}I_p(1))$
	where $\eta(I_p(1))\cap\Ad_{\tw^{-1}}I_p(1)$ is a finite type unipotent group,
	thus isomorphic to its Lie algebra.
	The proof is then the same as Proposition \ref{p:coset conj classes,general, real}.
\end{proof}

Since $\eta_0$ and $\theta_0$ coincide on $G_c$, we have
$T_{\tw,c}=T_\tw'\cap G_c$

\begin{lem}\label{l:T_tw,c to T_tw'}
	The natural map
	$T_c\backslash_{\theta_0} T_{\tw,c}\simeq T\backslash_{\eta_0}T_\tw'$
	is a bijection.
\end{lem}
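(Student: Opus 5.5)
We argue exactly as in Lemma \ref{l:T_tw,c to T_tw} and Proposition \ref{p:C_lambda to A_lambda,0}, with $\theta_0$ replaced by $\eta_0$ on the torus. Write $T\simeq T_c\times\exp(i\ft_c)$ using the decomposition $\ft=\ft_c\oplus i\ft_c$ for $\eta_{c,0}$. Since $T$ is stable under $\eta_0,\theta_0,\eta_{c,0}$, and these three commute, $\eta_0$ preserves both $T_c$ and $\exp(i\ft_c)$. On $T_c$ we have $\eta_0=\theta_0$, while on $i\ft_c$ we have $\eta_0=-\theta_0$, because $\eta_{c,0}=-1$ there. The representative $w\in N_{G_c}(T_c)$ is fixed by $\eta_{c,0}$, so $\Ad_w$ also preserves both factors. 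Finally, $t_\tw\in T_c$.

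\emph{Surjectivity.} Take $g_0=k\exp(Y)\in T'_\tw$ with $k\in T_c$ and $Y\in i\ft_c$. Since $T$ is abelian, the defining equation $\Ad_w g_0=\eta_0(g_0)^{-1}t_\tw$ splits along the product decomposition into two equations:
\[
\Ad_w k=\eta_0(k)^{-1}t_\tw=\theta_0(k)^{-1}t_\tw,
\qquad
\Ad_wY=-\eta_0(Y).
\]
The first says $k\in T_{\tw,c}$. For the second, act by $h=\exp(Z)$ with $Z\in i\ft_c$. This multiplies the $\exp(i\ft_c)$-component by $\exp(\Ad_{w^{-1}}Z-\eta_0(Z))$. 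Taking $Z=-\tfrac12\Ad_wY$ gives
\[
\Ad_{w^{-1}}Z=-\tfrac12 Y,
\qquad
\eta_0(Z)=-\tfrac12\eta_0\Ad_wY=\tfrac12 Y,
\]
where the last step uses $\eta_0\Ad_w=\Ad_{\eta_0(w)}\eta_0$ together with the second relation. (Alternatively, $\eta_0(Y)=-\Ad_wY$ directly gives $\eta_0Z=\tfrac12\eta_0\Ad_wY\cdot(-1)$.) The exponent therefore becomes $-Y$, so $h\cdot g_0=k$.

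One commutation needs care here: that $\eta_0\Ad_w Y$ is compatible with the relation $\Ad_wY=-\eta_0 Y$. This should follow from $\eta_0(\Ad_w Y)=\Ad_{\eta_0(w)}\eta_0(Y)$ together with $\eta_0(w)w\in T$, which acts trivially on $\ft$. This is the analogue of $w_2\in L_\lambda$ in the spherical case. I would verify the identity on the linear action on $i\ft_c$, which is just a linear-algebra computation.

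\emph{Injectivity.} Suppose $h\cdot k_1=k_2$ with $h=k\exp(Z)\in T$, $k\in T_c$, $Z\in i\ft_c$. Again decompose into the $T_c$ and $\exp(i\ft_c)$ components, using that $T_c\times\exp(i\ft_c)\to T$ is a group isomorphism and that the action preserves both factors. The $T_c$-component gives $(\Ad_{w^{-1}}k)k_1\theta_0(k)^{-1}=k_2$, since $\eta_0=\theta_0$ on $T_c$. Hence $k_1$ and $k_2$ lie in the same $T_c$-orbit, as in \eqref{cartan}.

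The main obstacle is only the bookkeeping of how $\eta_0$ and $\Ad_w$ interact on $i\ft_c$. Once both are seen to preserve the Cartan factors, the abelian torus makes everything split cleanly.
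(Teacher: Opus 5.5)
Your proposal is correct and is essentially the paper's argument: the paper just says to repeat the proof for $T\backslash T_\tw$, namely split $T\simeq T_c\times\exp(i\ft_c)$ and run the Cartan-decomposition square-root argument used to compare $C_{\lambda,0}$ with $A_{\lambda,0}$, which is exactly what you carry out. The step you flag as needing care is immediate: $\eta_0(w)=\theta_0(w)\equiv w^{-1}$ in $W_c$ gives $\Ad_{\eta_0(w)}=\Ad_{w^{-1}}$ on $\ft$, so $\eta_0(\Ad_wY)=\Ad_{w^{-1}}\eta_0(Y)=-Y$.
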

\begin{proof}
	The proof is the same as Proposition \ref{l:T_tw,c to T_tw}.
\end{proof}

All together, we conclude
\begin{thm}\label{c:eta Iwahori orbits}
	We have bijections induced by $x_{\tilde w}=\tw g \mapsto g\in T_{\tw,c}$:
	\[
	I_p\backslash_\eta G[t^{\pm1}]^{\inv\circ\eta}
    \simeq 
    T[t]\backslash_\eta(N_{G[t^{\pm1}]}(T[t^{\pm1}]))^{\inv\circ\eta}
    \simeq
	\bigsqcup_{\theta(\tw)\equiv\tw^{-1}\in\tW}T\backslash_{\eta_0}T'_{\tw}
	\simeq
	\bigsqcup_{\theta(\tw)\equiv\tw^{-1}\in\tW}T_c\backslash_{\theta_0}T_{\tw,c}.
	\]
     Moreover, the natural inclusion $Z_{T_{\tilde w,c}}(g)\to Z_{I_p}(x_{\tilde w})$
    of stabilizers
    induces an isomorphism on component groups 
$\pi_0(Z_{T_{\tilde w,c}}(g))\cong\pi_0(Z_{I_p}(x_{\tilde w}))$.
\end{thm}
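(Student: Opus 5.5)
The plan is to assemble the chain of bijections from the results just proved, mirroring the proof of Theorem \ref{t:theta spherical orbits}.

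\emph{Reduction to the torus normalizer.} By the Birkhoff decomposition, $G[t^{\pm1}]^{\inv\circ\eta}=\bigsqcup_{\tw}(I_p\tw\eta(I_p))^{\inv\circ\eta}$. Here $\tw$ ranges over elements with $\eta(\tw)\equiv\tw^{-1}$, which is the same condition as $\theta(\tw)\equiv\tw^{-1}$ since $\eta$ and $\theta_0$ act identically on $X_*(T)$ and $W_c$ (Lemma \ref{theta=eta}). For each such $\tw$, $I_p$-orbits on the $\tw$-piece correspond to $(I_p\cap\Ad_{\tw}\eta(I_p))$-orbits on $(\tw\eta(I_p))^{\inv\circ\eta}$, i.e.\ on $\eta(I_p)_\tw$. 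Proposition \ref{p:Iwahori eta} then identifies these orbits with $T\backslash_{\eta_0}T'_\tw$. Since $T'_\tw\subset T\subset N_{G[t^{\pm1}]}(T[t^{\pm1}])$, each orbit meets $\tw T'_\tw$. Two elements $\tw g,\tw g'$ with $g,g'\in T'_\tw$ are $I_p$-conjugate iff they are $T$-conjugate, and $T[t]$ acts on $\tw T$ through its constant term up to the $T[t]_1$-part, which can be absorbed. This gives the middle bijection with $T[t]\backslash_\eta(N_{G[t^{\pm1}]}(T[t^{\pm1}]))^{\inv\circ\eta}$. The last bijection is Lemma \ref{l:T_tw,c to T_tw'}.

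\emph{Component groups.} Compose the inclusion $Z_{T_c}(g)\to Z_{I_p}(x_\tw)$ with a retraction in two steps. First, the map $I_p\to T$, $\gamma\mapsto\gamma(0)$ modulo the unipotent radical, restricts to $Z_{I_p}(x_\tw)\to Z_T(g)$. Its fibres are contained in the finite-dimensional unipotent group $\eta(I_p(1))\cap\Ad_{\tw^{-1}}I_p(1)$, more precisely in the $\sigma$-fixed points of such a group as in the proof of Proposition \ref{p:coset conj classes,general, real}. These fixed points are a vector space under $\exp$, hence contractible. Second, use the polar decomposition $T=T_c\times\exp(i\ft_c)$: by the argument of \eqref{cartan}, $Z_T(g)=Z_{T_c}(g)\times(\text{a linear subspace of } i\ft_c)$. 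The composite is a left inverse of the inclusion, and both steps have contractible fibres, so $\pi_0$ is an isomorphism.

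\emph{Main obstacle.} The delicate step is the unipotent part: showing that the stabilizer in $I_p\cap\Ad_\tw\eta(I_p)$ of $\tw g$ retracts onto its torus part. Here one needs that the twisted action of the unipotent group on the affine space $\eta(I_p)_\tw\cap \tw^{-1}\tw gU$ is free up to a vector-space stabilizer. I would handle this by linearizing via $\exp$, exactly as in the square-root argument of Proposition \ref{p:coset conj classes,general, real}.
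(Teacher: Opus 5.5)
Your proposal is correct and follows the paper's route. The bijections come from the Birkhoff decomposition together with Proposition \ref{p:Iwahori eta} and Lemma \ref{l:T_tw,c to T_tw'}. The component-group statement comes from the same two-step retraction used in Theorem \ref{t:theta spherical orbits}: the projection $Z_{I_p}(x_\tw)\to Z_T(g)$, whose kernel is the fixed-point set of the involution $\Ad_{\tw g}\circ\eta$ on the unipotent group $I_p(1)\cap\Ad_\tw\eta(I_p(1))$, followed by the polar decomposition of $T$. One cosmetic point: $T[t]=T$, so there is no $T[t]_1$-part to absorb.
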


\subsection{Matsuki duality for Iwahoric orbits}

\subsubsection{Intersection of Iwahori orbits}
\begin{prop}\label{p:Iwahori orbits intersection}
    For $\theta(\tw)\equiv\tw^{-1}\in\tW$
    and $g_0\in T_{\tw,c}$,
    we have
    \[
    I\cdot_\theta \tw g_0\cap I_p\cdot_\eta \tw g_0=T_c\cdot\tw g_0.
    \]
\end{prop}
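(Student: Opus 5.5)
The plan is to mimic the proof of Proposition \ref{p:spherical orbits intersection}, replacing the spherical data by Iwahori data. The inclusion $T_c\cdot\tw g_0\subset I\cdot_\theta\tw g_0\cap I_p\cdot_\eta\tw g_0$ is immediate: $T_c\subset I_p\subset I$, and $\theta$ and $\eta$ agree on $T_c\subset G_c$. For the reverse inclusion I proceed in three steps.

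\emph{Step 1 (reduction to finite part).} Let $g$ lie in the intersection. Then $g\in G(F)^{\inv\circ\theta}\cap G[t^{\pm1}]^{\inv\circ\eta}$, hence $\eta_c(g)=g$, using $\eta_c\circ\eta=\theta$ as in Lemma \ref{l:intersection 2}. Write $g=h_1\tw g_0\theta(h_1)^{-1}=h_2\tw g_0\eta(h_2)^{-1}$ with $h_1\in I$ and $h_2\in I_p$. I want to show that $h_2$ can be taken in $T\cdot(\text{something acting trivially})$. Following Lemma \ref{l:intersection 2}, I compute
\[
\Ad_{\tw g_0}^{-1}(h_2^{-1}h_1)\,\theta(h_1)^{-1}=\eta(h_2)^{-1},
\]
which lies in $\eta(I_p)\cap \Ad_{(\tw g_0)^{-1}}(I)\,\theta(I)$. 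The analogue of Lemma \ref{l:intersection 1} is needed here: $\eta(I_p)$ is an Iwahori in $G[t^{-1}]$, i.e.\ the opposite Birkhoff side. The intersection of $G[t^{-1}]$-type Iwahori cosets with $G(\cO)$-type Iwahori double cosets should be controlled by the uniqueness in the Birkhoff decomposition $G[t^{-1}]\,\tW\, G[t]$ and by the fact that the big cell $I^-_{p}\cdot I$ meets $\tW$ only at $1$. The expected output is $h_2\in T\cdot(I_p(1)\cap\Ad_{\tw}\eta(I_p(1)))$. Equivalently, $g\in \tw\, \eta(I_p)_\tw$-orbit and also $g\in I\tw\theta(I)$, so $g$ lies in $\tw\cdot(\eta(I_p)\cap\Ad_{\tw^{-1}}I\cap\theta(I))$ up to $T$.

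\emph{Step 2 (killing the unipotent part).} Having $g=h\cdot_\eta \tw g_0$ with $h=s u$, $s\in T$, $u\in U:=I_p(1)\cap\Ad_{\tw}\eta(I_p(1))$ a finite-type unipotent group, I impose $\eta_c(g)=g$. The root subgroups in $U$ are sent by $\eta_c$ to negative affine root subgroups (opposite side). Comparing the two expressions $u\,\tw g_0' \,\eta(u)^{-1}$ and $\eta_c(u)\,\tw g_0'\,\theta(u)^{-1}$, with $g_0'=s\cdot g_0\in T$, uniqueness of the factorization $I_p(1)\,\tw T\,\eta(I_p(1))$ versus its $\eta_c$-image (a Birkhoff/Bruhat-type uniqueness, as in the spherical argument that $U_\lambda[t]\cap G[t^{-1}]t^{\lambda-w\lambda}\cdots$ is nonempty only when trivial) forces $u\tw g_0'\eta(u)^{-1}=\tw g_0'$. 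Thus $g\in T\cdot_\eta\tw g_0$.

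\emph{Step 3 (torus part).} Finally $g=s\cdot\tw g_0$ with $s\in T$ and $\eta_c(g)=g$. Write $s=k\exp(Y)$ with $k\in T_c$ and $Y\in i\ft_c$. Exactly as in the last computation of Proposition \ref{p:spherical orbits intersection}, the $\eta_c$-fixedness gives $\exp(2Y)=\Ad_{\tw g_0}\exp(2\theta(Y))$, so $Y=\Ad_w\theta_0(Y)$, and then $\exp(Y)$ acts trivially, as in Lemma \ref{l:T_tw,c to T_tw}. Hence $g\in T_c\cdot\tw g_0$.

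The main obstacle is Step 1/2: establishing the Iwahori analogue of Lemmas \ref{l:intersection 1}--\ref{l:intersection 2}, namely that a twisted $I$-orbit and a twisted $I_p$-orbit through $\tw g_0$ can only meet inside $T$-translates. I would try first to prove it by decomposing $I=T\,I(1)$ into affine root subgroups ordered compatibly with $\tw$, and using uniqueness of the refined Birkhoff decomposition $G[t^{\pm1}]=\bigsqcup I_p^-\,\tw\, I_p$ with unique expression on $I_p^-(1)\cap \Ad_{\tw}I_p^-(1)$-components.
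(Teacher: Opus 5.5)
Your outline follows the paper's proof step for step: confine $h_2$ to $T\,(I_p(1)\cap\Ad_{\tw}\eta(I_p(1)))$, use $\eta_c(g)=g$ to kill the unipotent part, then handle the torus. The gap is Step 1. It carries the argument, you flag it yourself as the obstacle, and it is only asserted.

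What is missing is a factorization of the Iwahori $\Ad_{\tw^{-1}}I$ along the opposite pair $\theta(I)$, $\eta(I_p)$. Here $\theta(I)$ is the Iwahori attached to $\theta_0(B)$. The group $\eta(I_p)\subset G[t^{-1}]$ is the negative Iwahori attached to $\eta_0(B)=\theta_0(\eta_{c,0}(B))=\theta_0(B^-)$, since $\eta_{c,0}(B)=B^-$. Two facts are needed:
\[
\Ad_{\tw^{-1}}I=(\Ad_{\tw^{-1}}I_p\cap\eta(I_p))\,(\Ad_{\tw^{-1}}I\cap\theta(I)),\qquad \eta(I_p)\cap\theta(I)=\eta_0(B)\cap\theta_0(B)=T.
\]
The first is the standard splitting of an Iwahori into the finitely many affine root groups lying in the opposite Iwahori and the rest; the paper uses its polynomial form. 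The second is the big-cell uniqueness you allude to.

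With these, your identity gives
\[
\eta(h_2)^{-1}=\Ad_{(\tw g_0)^{-1}}(h_2^{-1}h_1)\,\theta(h_1)^{-1}\in\eta(I_p)\cap\Ad_{\tw^{-1}}(I)\,\theta(I).
\]
So you can write $\eta(h_2)^{-1}=ab$ with $a\in\Ad_{\tw^{-1}}I_p\cap\eta(I_p)$ and $b\in\theta(I)$. Then $b=a^{-1}\eta(h_2)^{-1}\in\eta(I_p)\cap\theta(I)=T$. Applying $\eta$ and using $\tw\eta(\tw)=t_{\tw}^{-1}\in T$ yields $h_2\in T\,(I_p(1)\cap\Ad_{\tw}\eta(I_p(1)))$. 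Without this, Steps 2 and 3 have nothing to act on.

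Step 2 is right in substance. Keep the torus factor separate, as the paper does with $h_2=h_0h_1$, so that you only use $\eta_c(\tw g_0)=\tw g_0$; your $\tw g_0'$ is not $\eta_c$-fixed in general. Rearranging $\eta_c(g)=g$ then places $h_1\Ad_{\tw g_0}\eta(h_1)^{-1}$ in
\[
I_p(1)\cap T\,\eta_c(I_p(1))\subset I_p(1)\cap(I_p\cap\eta_c(I_p))=I_p(1)\cap T=\{1\},
\]
using $I_p\cap\eta_c(I_p)=B\cap\eta_{c,0}(B)=T$.

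Your Step 3 is a valid variant of the paper's. The paper writes $g=\tw g_0h$. It gets $h\in T_c$ from $\eta_c(g)=g$ and $g_0h\in T_{\tw,c}$ from $\theta(g)=g^{-1}$. It then appeals to the classification (Proposition \ref{p:Iwahori theta} with Lemma \ref{l:T_tw,c to T_tw}) for $I\cdot_\theta\tw g_0\cap\tw T_{\tw,c}=T_c\cdot\tw g_0$. Your Cartan-decomposition computation proves this directly, with no appeal to the classification.

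Be consistent about the twist, though. For $s=k\exp(Y)$ acting by $\cdot_\eta$ one gets $Y=-\Ad_w\theta_0(Y)$; the sign you wrote is the $\cdot_\theta$ version. The $\cdot_\eta$ condition is exactly the condition for $\exp(Y)$ to fix $\tw g_0$.
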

\begin{proof}
    The proof is similar and simplified from the spherical orbits.
    We still sketch it for completeness.
    Observe that the action of $\eta_{c,0}$ on $T$
    implies that $B\cap\eta_{c,0}(B)=T$,
    so that $I\cap\eta_c(I)=T$
    and $\eta(I)\cap\theta(I)=T$.

    Assume for $g_1\in I,\ g_2\in I_p$, we have
    \[
    g:=g_1\tw g_0\theta(g_1)^{-1}
     =g_2\tw g_0\eta(g_2)^{-1}.
    \]
    We get
    \[
    \eta(g_2)^{-1}\in T(\eta(I_p(1))\cap(\Ad_{\tw^{-1}}I_p(1))\theta(I))
    =T(\eta(I_p(1))\cap\Ad_{\tw^{-1}}I_p(1)).
    \]
    Here we use that 
    $\Ad_{\tw^{-1}}I_p=(\Ad_{\tw^{-1}}I_p\cap\eta(I_p))(\Ad_{\tw^{-1}}I_p\cap\theta(I_p))$.
    We obtain
    \[
    g_2\in T(\Ad_{\tw}\eta(I_p(1))\cap I_p(1)).
    \]

    Write $g_2=h_0h_1$, 
    $h_0\in T$, $h_2\in\Ad_{\tw}\eta(I_p(1))\cap I_p(1)$.
    Since $\eta_c(g)=g$, we get
    \begin{align*}
    h_1\Ad_{\tw g_0}\eta(h_1)^{-1}
    =&h_0^{-1}\eta_c(h_0)\eta_c(h_1)(\Ad_{\tw g_0}\theta(h_1)^{-1})
    (\Ad_{\tw g_0}(\theta(h_0)^{-1}\eta(h_0)))\\
    \in&T(I_p(1)\cap\eta_c(I_p(1)))=T.
    \end{align*}
    Thus $g\in \tw g_0 T=\tw T$.

    Write $g=\tw g_0 h$ where $h\in T$.
    Since $\eta_c(g)=g$ and $\eta_c(\tw g_0)=\tw g_0$,
    we get $h\in G_c$.
    Since $\theta(g)=g^{-1}$,
    $g_0 h\in T_{\tw,c}$.
    Thus $g\in\tw T_{\tw,c}$.
    From the Iwahori orbits classification,
    we know $I\cdot_\theta \tw g_0\cap\tw T_{\tw,c}=T_c\cdot\tw g_0$.
    This completes the proof.
\end{proof}

Note that $\theta$ and $\eta_c$ act on $N_{G(F)}(T(F))$.
\begin{cor}\label{c:union of Iwahori orbits intersection}
    We have decomposition
    \[
    \bigsqcup_{\substack{\theta(\tw)\equiv\tw^{-1}\in\tW\\
    \bar{g}_0\in T_c\backslash T_{\tw,c}}}
    I\cdot_\theta \tw g_0\cap I_p\cdot_\eta \tw g_0
    =((N_{G(F)}(T(F)))^{\inv\circ\theta})^{\eta_c}.
    \]
    In the above $g_0$ is any representative of the orbit $\bar{g}_0$.
\end{cor}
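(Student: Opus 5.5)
The plan is to mirror the proof of Corollary \ref{c:union of spherical orbits intersection}, replacing the Cartan decomposition by the decomposition $N_{G(F)}(T(F))=\bigsqcup_{\tw\in\tW}\tw\,T(\cO)$ and using Proposition \ref{p:Iwahori orbits intersection} in place of Proposition \ref{p:spherical orbits intersection}.

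\textbf{Step 1: the left side lies in the right side.} By Proposition \ref{p:Iwahori orbits intersection}, each intersection on the left equals $T_c\cdot\tw g_0$. Since $\tw$, $g_0$ and $T_c$ all lie in $N_{G(F)}(T(F))$, this set is contained in it. Its elements are $\theta$-anti-fixed, being $\theta$-twisted conjugates of $\tw g_0\in G(F)^{\inv\circ\theta}$. They are also $\eta_c$-fixed. Indeed, $\tw=t^\lambda w$ with $w\in N_{G_c}(T_c)$, and $t^\lambda$ is $\eta_c$-fixed as in Lemma \ref{theta=eta}. Moreover $g_0\in T_c$, and $\theta$ preserves $T_c$.

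\textbf{Step 2: the intersections are disjoint.} Distinct pairs $(\tw,\bar g_0)$ index distinct $I$-orbits by Theorem \ref{c:theta Iwahori orbits}, so the corresponding intersections cannot meet.

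\textbf{Step 3: the right side lies in the left side.} Let $n\in N_{G(F)}(T(F))$ with $\theta(n)=n^{-1}$ and $\eta_c(n)=n$.
\begin{itemize}
\item Write $n=\tw g$ with $g\in T(F)$. Then $g=\tw^{-1}n$ is $\eta_c$-fixed. An $\eta_c$-fixed element of $T(F)$ is a Laurent polynomial loop sending $S^1$ to $T_c$; writing it as $t^\mu s$ and absorbing $t^\mu$ into $\tw$, we may assume $g\in T_c$.
\item The anti-fixed condition then forces $\theta(\tw)\equiv\tw^{-1}$ in $\tW$, and $g\in T_{\tw,c}$.
\item Thus $n\in\tw T_{\tw,c}$, which lies in $T_c\cdot\tw g_0$ for the representative $g_0$ of the class of $g$ in $T_c\backslash T_{\tw,c}$. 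This class is one of the indices on the left.
\end{itemize}

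\textbf{Expected obstacle.} The main care is in the first bullet of Step 3. The non-constant part of an $\eta_c$-fixed element of $T(F)$ must be a cocharacter $t^\mu$; this holds since a torus loop fixed by $\eta_c$ is a polynomial loop into $T_c$, hence of the form $t^\mu\cdot$constant. One must also check that the normalization of $\tw=t^\lambda w$ with $w\in N_{G_c}(T_c)$ is compatible with this absorption.
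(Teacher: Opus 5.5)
Your proposal is correct and follows essentially the paper's argument. The paper also uses Proposition \ref{p:Iwahori orbits intersection} to reduce to the identity $\bigsqcup_{\theta(\tw)\equiv\tw^{-1}}\tw T_{\tw,c}=((N_{G(F)}(T(F)))^{\inv\circ\theta})^{\eta_c}$. Its key step, $(N_G(T)T(\cO))^{\eta_c}\subset G(\cO)\cap G[t^{-1}]=G$, plays the same role as your observation that an $\eta_c$-fixed element of $T(F)$ is $t^\mu$ times an element of $T_c$ (absorbed into $\tw$), after which both arguments simply impose the $\theta$-anti-fixed condition.
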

\begin{proof}
    By Proposition \ref{p:Iwahori orbits intersection},
    it suffices to show
    \[
    \bigsqcup_{\substack{\theta(\tw)\equiv\tw^{-1}\in\tW\\
    \bar{g}_0\in T_c\backslash T_{\tw,c}}}
    T_c\cdot\tw g_0
    =\bigsqcup_{\theta(\tw)\equiv\tw^{-1}\in\tW}\tw T_{\tw,c}
    =((N_{G(F)}(T(F)))^{\inv\circ\theta})^{\eta_c}.
    \]
    where the first equality is obvious.

    We can write $N_{G(F)}(T(F))=t^{X_*(T)}N_G(T)T(\cO)$.
    Recall any $t^\lambda$ is fixed by $\eta_c$.
    Thus 
    \[
    (N_{G(F)}(T(F)))^{\eta_c}=t^{X_*(T)}(N_G(T)T(\cO))^{\eta_c}.
    \]
    Observe $(N_G(T)T(\cO))^{\eta_c}\subset G(\cO)\cap G[t^{-1}]=G$,
    so that $(N_G(T)T(\cO))^{\eta_c}=N_G(T)^{\eta_c}$.
    We have fixed representatives $w\in G_c$ of $W_c\simeq W$,
    so that we can write $N_{G}(T)=\bigsqcup_w wT$
    and $N_G(T)^{\eta_c}=\bigsqcup_w wT_c$.
    We obtain
    \[
    ((N_{G(F)}(T(F)))^{\inv\circ\theta})^{\eta_c}
    =(\bigsqcup_{\tw}\tw T_c)^{\inv\circ\theta}
    =\bigsqcup_{\theta(\tw)\equiv\tw^{-1}\in\tW}\tw T_{\tw,c}
    \]
    as desired.
\end{proof}

\subsubsection{Matsuki duality for Iwahori orbits}
Combining the above with 
Corollary \ref{c:theta Iwahori orbits}
and Corollary \ref{c:eta Iwahori orbits}, 
we conclude
\begin{thm}\label{t:Iwahori Matsuki}
There is a unique bijection
	\[
	I_p\backslash_\eta G[t,t^{-1}]^{\inv\circ\eta}
	\simeq
	I\backslash_\theta G(F)^{\inv\circ\theta}
	\]
such that a $\theta$-twisted $I$-orbit $\mathcal O^I_X$
corresponds to
an $\eta$-twisted $I_p$-orbit $\mathcal O^I_\bR$
if and only if 
their intersection 
$\mathcal O^I_X\cap \mathcal O^I_\bR$
is a single twisted $T_c$-orbit in 
$((N_{G(F)}(T(F)))^{\inv\circ\theta})^{\eta_c}$.
 Moreover, for any $x\in \mathcal O^I_X\cap\mathcal O^I_\bR$, there is a natural isomorphism component groups of stabilizers 
    \[\pi_0(Z_{I}(x))\cong \pi_0(Z_{I_p}(x)).\]

\end{thm}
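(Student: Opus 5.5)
The plan mirrors the proof of Theorem \ref{t:affine Matsuki}, with the Iwahori inputs in place of the spherical ones. We first construct the bijection by chaining the two parametrizations through the common middle term. Theorem \ref{c:theta Iwahori orbits} identifies $I\backslash_\theta G(F)^{\inv\circ\theta}$ with $\bigsqcup_{\theta(\tw)\equiv\tw^{-1}}T_c\backslash T_{\tw,c}$ via $\tw g\mapsto g$. Theorem \ref{c:eta Iwahori orbits} identifies $I_p\backslash_\eta G[t^{\pm1}]^{\inv\circ\eta}$ with the same disjoint union, by the same rule. The index sets agree because $\eta$ and $\theta$ induce the same action on $X_*(T)$ and $W_c$, by Lemma \ref{theta=eta}. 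The constants $t_\tw$ agree as well, so the condition \eqref{eq:Iwahori lambda,general} and the sets $T_{\tw,c}$ are literally the same on both sides. Moreover, $\theta_0$ and $\eta_0$ agree on $T_c$, so the twisted $T_c$-actions on $T_{\tw,c}$ coincide. Composing one parametrization with the inverse of the other gives the bijection. Under it, the $I$-orbit of $\tw g_0$ matches the $I_p$-orbit of the same point $\tw g_0$, for $g_0\in T_{\tw,c}$.

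Next we verify the intersection property. For matched orbits $\mathcal O^I_X=I\cdot_\theta\tw g_0$ and $\mathcal O^I_\bR=I_p\cdot_\eta\tw g_0$, Proposition \ref{p:Iwahori orbits intersection} gives $\mathcal O^I_X\cap\mathcal O^I_\bR=T_c\cdot\tw g_0$. This is a single twisted $T_c$-orbit, and it lies in $((N_{G(F)}(T(F)))^{\inv\circ\theta})^{\eta_c}$.

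Conversely, suppose $\mathcal O^I_X$ and $\mathcal O^I_\bR$ are arbitrary orbits whose intersection is a single twisted $T_c$-orbit inside that set. Pick a point $x$ of the intersection. By Corollary \ref{c:union of Iwahori orbits intersection}, the set $((N_{G(F)}(T(F)))^{\inv\circ\theta})^{\eta_c}$ is the disjoint union of the sets $I\cdot_\theta\tw g_0\cap I_p\cdot_\eta\tw g_0$. Hence $x$ lies in exactly one such piece, indexed by some $(\tw,\bar g_0)$. This forces $\mathcal O^I_X=I\cdot_\theta\tw g_0$ and $\mathcal O^I_\bR=I_p\cdot_\eta\tw g_0$, so the two orbits are matched. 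Uniqueness follows: every $\theta$-orbit meets exactly one $\eta$-orbit in the required way. Indeed, if a $\theta$-orbit met two different $\eta$-orbits in points of the fixed-point set, those points would lie in different pieces of the disjoint union while sharing the same $\theta$-orbit, which the union forbids.

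The step needing the most care is this bookkeeping of disjointness. One has to make sure that the identification of $\pi_0$ and the orbit representatives in the two classification theorems really use the same representatives $\tw=t^\lambda w$ with $w\in N_{G_c}(T_c)$. This is guaranteed by the fixed choice of representatives made at the start of the section.

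For the component groups, let $x\in\mathcal O^I_X\cap\mathcal O^I_\bR$. We may move $x$ by $T_c$ to a representative $\tw g_0$ with $g_0\in T_{\tw,c}$; this changes the stabilizers only by conjugation inside $T_c$, which preserves both sides. Theorem \ref{c:theta Iwahori orbits} then gives an isomorphism $\pi_0(Z_{T_c}(\tw g_0))\cong\pi_0(Z_I(\tw g_0))$. Theorem \ref{c:eta Iwahori orbits} gives $\pi_0(Z_{T_c}(\tw g_0))\cong\pi_0(Z_{I_p}(\tw g_0))$. The twisted $T_c$-stabilizer is the same group in both statements, since $\theta_0=\eta_0$ on $T_c$. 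Composing the two isomorphisms therefore gives the natural isomorphism $\pi_0(Z_I(x))\cong\pi_0(Z_{I_p}(x))$, induced by the common subgroup $Z_{T_c}(x)$.
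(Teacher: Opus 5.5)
Your proposal is correct and is essentially the paper's argument. You chain Theorems \ref{c:theta Iwahori orbits} and \ref{c:eta Iwahori orbits} through the common parameter set $\bigsqcup T_c\backslash T_{\tw,c}$, take the intersection characterization from Proposition \ref{p:Iwahori orbits intersection} and Corollary \ref{c:union of Iwahori orbits intersection}, and get the component-group isomorphism by composing the two stabilizer statements through $Z_{T_c}(x)$. One small point of precision: in your uniqueness step, what rules out a $\theta$-orbit meeting two pieces is the injectivity of the $\theta$-parametrization (distinct pieces lie in distinct $I$-orbits), not the disjointness of the union by itself; and uniqueness is in any case automatic once you have shown that the graph of the bijection equals the stated relation.
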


\begin{rem}\label{r:compatibility}
    In the above, observe that
    \[
    ((N_{G(F)}(T(F)))^{\inv\circ\theta})^{\eta_c}=((N_{G}(T)t^{X_*(T)})^{\inv\circ\theta})^{\eta_c}
    \subset(Gt^{X_*(T)}G)^{\inv\circ\theta})^{\eta_c}.
    \]
    Thus Theorem \ref{t:Iwahori Matsuki}
    is a refinement of Theorem \ref{t:affine Matsuki},
    where an $I$-orbit(resp. $I_p$-orbit) represented by a $T_c$-orbit $A\subset((N_{G(F)}(T(F)))^{\inv\circ\theta})^{\eta_c}$
    is sent to the $G(\cO)$-orbit(resp. $G[t]$-orbit) represented by the
    $G_c$-orbit generated by $A$ in  
    $(Gt^{X_*(T)}G)^{\inv\circ\theta})^{\eta_c}$.
\end{rem}

\begin{cor}\label{c:finite Matsuki}
    There is a unique bijection
\begin{equation}\label{G/B}
    B\backslash_{\eta_0}G^{\inv\circ\eta_0}
    \simeq B\backslash_{\theta_0}G^{\inv\circ\theta_0}  
\end{equation}
such that the intersection of the corresponding $B$-orbits
    is a single $T_c$-orbit in $((N_G(T))^{\inv\circ\theta_0})^{\eta_{c,0}}$.
\end{cor}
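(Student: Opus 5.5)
The plan is to deduce Corollary \ref{c:finite Matsuki} from Theorem \ref{t:Iwahori Matsuki} by restricting to the part of the loop group where no $t$ appears, in the same way that Corollary \ref{c:finite G-Matsuki} comes from the $\lambda=0$ case of Theorem \ref{t:affine Matsuki}. There are two direct routes: specialize the Iwahori classification to $\tw=w\in W_c$, or redo the finite-dimensional version of each step. I will do the second, which is the cleaner of the two, and use the first only as a check.

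The first step is the $\theta_0$ side. The Bruhat decomposition $G=\bigsqcup_{w\in W}BwB$ gives
\[G^{\inv\circ\theta_0}=\bigsqcup_{w}(Bw\theta_0(B))^{\inv\circ\theta_0},\]
since $\theta_0(B)$ is a $W$-conjugate of $B$ (it contains $T$). Repeating the analysis of Proposition \ref{p:Iwahori theta} with $t^\lambda$ removed, we need $\theta_0(w)\equiv w^{-1}$. The orbits of $B\cap\Ad_w\theta_0(B)$ on $(w\theta_0(B))^{\inv\circ\theta_0}$ are then parametrized by $T\backslash T_w$. The surjectivity step reduces the unipotent part, which is now finite type, by a square root, exactly as in Proposition \ref{p:coset conj classes,general, real}. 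Lemma \ref{l:T_tw,c to T_tw} then replaces $T\backslash T_w$ by $T_c\backslash T_{w,c}$. The $\eta_0$ side is word for word the same with $\eta_0$ in place of $\theta_0$, using Proposition \ref{p:Iwahori eta} and Lemma \ref{l:T_tw,c to T_tw'}. Since $\eta_0=\theta_0$ on $G_c$, both parametrizations are $\bigsqcup_{\theta_0(w)\equiv w^{-1}}T_c\backslash T_{w,c}$, and composing them gives the bijection~\eqref{G/B}.

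As a consistency check, these sets are exactly the $\lambda=0$, $\tw=w$ pieces of Theorems \ref{c:theta Iwahori orbits} and \ref{c:eta Iwahori orbits}. Moreover, $G\cap I=B$ and $(w\theta(I))\cap G=w\theta_0(B)$, so the finite classification embeds compatibly into the affine one.

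The second step is the intersection statement, which I would prove by imitating Proposition \ref{p:Iwahori orbits intersection}. Suppose
\[g=b_1wg_0\theta_0(b_1)^{-1}=b_2wg_0\eta_0(b_2)^{-1}\]
with $b_i\in B$ and $g_0\in T_{w,c}$. Any element of $G^{\inv\circ\theta_0}\cap G^{\inv\circ\eta_0}$ is fixed by $\eta_{c,0}$, because $\eta_{c,0}=\theta_0\circ\eta_0$. Then I would use $B\cap\eta_{c,0}(B)=T$ together with the same manipulation of unipotent parts to force $g\in wT$. Combined with $\eta_{c,0}(g)=g$ and $\theta_0(g)=g^{-1}$, this gives $g\in wT_{w,c}$. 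Finally, the classification shows that $B\cdot_{\theta_0}wg_0\cap wT_{w,c}=T_c\cdot wg_0$. The argument also shows that the union of all these intersections is $((N_G(T))^{\inv\circ\theta_0})^{\eta_{c,0}}$, and uniqueness follows as in Theorem \ref{t:Iwahori Matsuki}.

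The main obstacle is this intersection step, specifically showing that the unipotent component is trivial. In the affine case this used that $\Ad_{\tw^{-1}}I_p$ factors through $\eta(I_p)$ and $\theta(I_p)$. In the finite case the analogous fact is
\[\Ad_{w^{-1}}B=(\Ad_{w^{-1}}B\cap\eta_0(B))(\Ad_{w^{-1}}B\cap\theta_0(B)).\]
I would verify this root by root. Because $\eta_{c,0}$ sends every root $\alpha$ to $-\alpha$, the root groups $\eta_0(U_\alpha)$ and $\theta_0(U_\alpha)$ correspond to the opposite roots $\pm\theta_0(\alpha)$. Hence each positive root of $\Ad_{w^{-1}}B$ lies in exactly one of $\eta_0(B)$ and $\theta_0(B)$, and the unipotent group then factors as a product over such root groups.
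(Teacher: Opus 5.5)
Your argument is correct, but it is organized differently from the paper's proof.

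\textbf{The paper's route.} The paper does no finite-dimensional classification at all. It restricts Theorem \ref{t:Iwahori Matsuki} to the cells $\tw=w$ and proves only that the natural inclusions induce bijections
\[
B\backslash_{\theta_0}G^{\inv\circ\theta_0}\simeq I\backslash_\theta G(\cO)^{\inv\circ\theta},\qquad B\backslash_{\eta_0}G^{\inv\circ\eta_0}\simeq I_p\backslash_\eta (G[t]G[t^{-1}])^{\inv\circ\eta}.
\]
Surjectivity holds because the affine representatives $wg_0$, $g_0\in T_{w,c}$, are constant. Injectivity holds because $Iw\theta(I)\cap G=Bw\theta_0(B)$, and two points of $wT_w$ in one $I$-orbit are already $T$-conjugate. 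Since $B\cdot_{\theta_0}x\subset I\cdot_\theta x$ and $B\cdot_{\eta_0}x\subset I_p\cdot_\eta x$, the finite intersection is squeezed between $T_c\cdot wg_0$ and the affine intersection of Proposition \ref{p:Iwahori orbits intersection}. So the intersection property and uniqueness are inherited with no further work.

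\textbf{Your route.} You prove the finite statement directly, and this also holds up. Your one-step square-root reduction works because $u\mapsto \Ad_{\theta_0(wg_0)}\theta_0(u)^{-1}$ is an involutive anti-automorphism of the finite-type unipotent group $\theta_0(U)\cap\Ad_{w^{-1}}U$. Its fixed points are therefore closed under square roots. Your factorization
\[
\Ad_{w^{-1}}B=(\Ad_{w^{-1}}B\cap\eta_0(B))(\Ad_{w^{-1}}B\cap\theta_0(B))
\]
is right: $\eta_0(B)=\theta_0(\eta_{c,0}(B))$ is the Borel opposite to $\theta_0(B)$. With it, $b_2\in T(U\cap\Ad_w\eta_0(U))$, and the unipotent part then lies in $U\cap TU^-=\{1\}$.

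\textbf{Comparison.} Your approach is self-contained and independent of the loop-group machinery: no pro-unipotent filtrations, convergent products or Birkhoff decompositions. It also makes explicit the finite analogue of a factorization that the paper uses without proof in the affine intersection argument. The cost is duplicating the intersection computation. The paper's route gets the intersection property and uniqueness essentially for free, and only needs the comparison of finite and affine orbit sets.
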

\begin{proof}
    Restricting Theorem \ref{t:Iwahori Matsuki}
    to those double cosets of $\tw=t^\lambda w=w$
    where $\lambda=0$,
    it only remains to show the following bijections 
    induced by natural inclusions:
    \[
    B\backslash_{\eta_0}G^{\inv\circ\eta_0}
    \simeq 
    I_p\backslash_\eta (G[t]G[t^{-1}])^{\inv\circ\eta},
    \quad
    B\backslash_{\theta_0}G^{\inv\circ\theta_0}
    \simeq
    I\backslash_\theta(G(\cO))^{\inv\circ\theta}.
    \]
    We explain the second bijection, 
    the proof for the first one is parallel.

    By Corollary \ref{c:theta Iwahori orbits},
    every $I$-orbit in $G(\cO)^{\inv\circ\theta}$
    is represented by an element in $G^{\inv\circ\theta_0}$.
    Thus the above map is surjective.
    To show injectivity,
    decompose $G=BW\theta_0(B)$.
    Elements in different cosets 
    $(Bw\theta_0(B))^{\inv\circ\theta_0}$
    belong to distinct $I$-orbits.
    By Proposition \ref{p:Iwahori theta} and its proof,
    every $B$-orbit in
    $(Bw\theta_0(B))^{\inv\circ\theta_0}$
    contains an element of $wT_w$,
    and two elements of $wT_w$ are in the same $I$-orbit
    if and only if they are in the same $T$-orbit,
    thus apriori are in the same $B$-orbit.
    This completes the proof.
\end{proof}

\begin{rem}\label{Matsuki for G/B}
The bijection in~\eqref{G/B} agrees with the 
 Matsuki duality for the  flag variety $G/B$ \cite{M}. 
\end{rem}

\subsection{Matsuki duality for affine flag varieties}\label{ss:Iwahori double cosets}
By Lemma \ref{l:tau_theta cpt preimage},
any element in 
$\tau_\theta(G(F))\cap((N_{G(F)}(T(F)))^{\inv\circ\theta})^{\eta_c}$
has a preimage in $G[t^{\pm1}]^{\eta_c}$ under $\tau_\theta$.

\begin{prop}\label{p:Iwahori double cosets intersection}
    For any $x\in G[t^{\pm1}]^{\eta_c}$ such that
    $\tau_\theta(x)\in((N_{G(F)}(T(F)))^{\inv\circ\theta})^{\eta_c}$,
    we have
    \begin{itemize}
        \item [(i)]
        $
        Ix G(F)^\theta\cap I_p x G[t^{\pm1}]^\eta\cap G[t^{\pm1}]^{\eta_c}
        =T_c x G[t^{\pm1}]^{\theta,\eta}
        $,
        
        \item [(ii)]
        $
        Ix G(F)^\theta\cap Ix G[t^{\pm1}]^\eta
        =Ix G[t^{\pm1}]^{\theta,\eta}
        $.
    \end{itemize}
\end{prop}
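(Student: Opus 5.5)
The plan is to copy the proof of Proposition \ref{p:spherical double cosets intersection}, replacing the spherical input (Theorem \ref{t:affine Matsuki}, Proposition \ref{p:spherical orbits intersection}) with its Iwahori counterpart (Theorem \ref{t:Iwahori Matsuki}, Proposition \ref{p:Iwahori orbits intersection}). Throughout, $\tau=\tau_\theta|_{G(F)^{\eta_c}}=\tau_\eta|_{G(F)^{\eta_c}}$. We also use that $\theta$ and $\eta$ agree on $G[t^{\pm1}]^{\eta_c}$, since $\eta=\eta_c\circ\theta$ there.

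For (i), set $Y:=IxG(F)^\theta\cap I_pxG[t^{\pm1}]^\eta\cap G[t^{\pm1}]^{\eta_c}$. This set is stable under left multiplication by $T_c=I\cap I_p\cap G^{\eta_c}$ and right multiplication by $G[t^{\pm1}]^{\eta_c,\theta}$. Since $\tau$ is $T_c$-equivariant for twisted conjugation, it induces an injection
\[
T_c\backslash Y/G[t^{\pm1}]^{\eta_c,\theta}\hookrightarrow T_c\backslash_\theta (G[t^{\pm1}]^{\eta_c})^{\inv\circ\theta}.
\]
Equivariance also gives $\tau(Y)\subset \tau_\theta(IxG(F)^\theta)\cap\tau_\eta(I_pxG[t^{\pm1}]^\eta)=(I\cdot_\theta\tau(x))\cap(I_p\cdot_\eta\tau(x))$.

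The hypothesis puts $\tau(x)$ in $((N_{G(F)}(T(F)))^{\inv\circ\theta})^{\eta_c}$. By Corollary \ref{c:union of Iwahori orbits intersection}, $\tau(x)$ then lies in some $T_c\cdot\tw g_0$ with $g_0\in T_{\tw,c}$. Proposition \ref{p:Iwahori orbits intersection} then shows that this intersection is the single twisted orbit $T_c\cdot\tau(x)$. So $Y$ is a single $T_c$–$G[t^{\pm1}]^{\eta_c,\theta}$ double coset, and since it contains $x$ it equals $T_cxG[t^{\pm1}]^{\eta_c,\theta}$. To reach the stated form $T_cxG[t^{\pm1}]^{\theta,\eta}$, note that $G[t^{\pm1}]^{\theta,\eta}\subset G[t^{\pm1}]^{\eta_c}$, because $\eta_c=\eta\circ\theta$ on $G[t^{\pm1}]$; the two groups therefore coincide.

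For (ii), left-stable sets are determined by their $\eta_c$-fixed part. The Iwahori analogue of this step needs more care than in the spherical case. Write $G(F)=G(\cO)G[t^{\pm1}]^{\eta_c}=I\,G\,G[t^{\pm1}]^{\eta_c}$ and use the Iwasawa decomposition $G=B\,G_c$ with $G_c\subset G[t^{\pm1}]^{\eta_c}$. This gives $G(F)=I\cdot G[t^{\pm1}]^{\eta_c}$, so every left-$I$-stable $X\subset G(F)$ satisfies $X=I\,X^{\eta_c}$. Apply this to $X=IxG(F)^\theta\cap IxG[t^{\pm1}]^\eta$. The remaining step is to check $X^{\eta_c}=Y$, i.e.\ that on $\eta_c$-fixed elements, membership in $IxG[t^{\pm1}]^\eta$ is equivalent to membership in $I_pxG[t^{\pm1}]^\eta$. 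The direction ``$\supset$'' is clear. For ``$\subset$'', take $y=ixh$ with $y\in G[t^{\pm1}]^{\eta_c}$, $i\in I$, $h\in G[t^{\pm1}]^\eta$. Then $i=yh^{-1}x^{-1}\in I\cap G[t^{\pm1}]=I_p$, since $y,h,x$ are all polynomial loops. Hence $X=IY=IxG[t^{\pm1}]^{\theta,\eta}$ by (i).

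The main obstacle is the bookkeeping in (i): one must check that the hypothesis on $\tau(x)$ puts it in the form covered by Proposition \ref{p:Iwahori orbits intersection}, and that the groups $T_c$, $G[t^{\pm1}]^{\eta_c,\theta}$ and $G[t^{\pm1}]^{\theta,\eta}$ match up. The decomposition $G(F)=I\,G[t^{\pm1}]^{\eta_c}$ in (ii) is the other point to double-check.
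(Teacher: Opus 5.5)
Your proposal is correct and follows the paper's proof: for (i) you transfer the spherical argument using the Iwahori orbit-intersection result, and for (ii) you use $G=BG_c$ to get $G(F)=I\,G[t^{\pm1}]^{\eta_c}$ and hence $X=IX^{\eta_c}$. You also make explicit a step the paper leaves implicit. On $\eta_c$-fixed polynomial loops, membership in $IxG[t^{\pm1}]^\eta$ coincides with membership in $I_pxG[t^{\pm1}]^\eta$, because $I\cap G[t^{\pm1}]=I_p$; this is what justifies applying (i) to $X^{\eta_c}$.
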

\begin{proof}
    (i):
    Simply replace $G(\cO)$(resp. $G[t], G_c$) in the proof of 
    Proposition \ref{p:spherical double cosets intersection}
    with $I$(resp. $I_p, T_c$)
    and use Theorem \ref{t:Iwahori Matsuki}.

    (ii):
    By Iwasawa decomposition, we have $G=G_c B$.
    Thus \eqref{eq:Iwasawa decomp loop} 
    can be rewritten as $G(F)=I G[t^{\pm1}]^{\eta_c}$
    where the product is not necessarily unique.
    Therefore any subspace $X\subset G(F)$ stable under 
    left $I$-multiplication
    can be decomposed as $X=I X^{\eta_c}$.
    By (i), we have
    \[
    Ix G(F)^\theta\cap Ix G[t^{\pm1}]^\eta
    =I(Ix G(F)^\theta\cap Ix G[t^{\pm1}]^\eta)^{\eta_c}
    =Ix G[t^{\pm1}]^{\theta,\eta}.
    \]
\end{proof}

\begin{rem}
    Replacing $I$ with $I_p$ in the above
    and evaluating at $t=1$,
    we recover the statement in finite type Matsuki correspondence
    that the intersection of the corresponding 
    $G_\bR$-orbit and $K=G^\theta$-orbit on $B\backslash G$
    is a single $K_c=G^{\eta,\theta}$-orbit.
\end{rem}

\begin{prop}\label{p:Iwahori orbits diagram}
    We have a commutative diagram of sets of $G(\cO)$ or $G[t]$ orbits:
    \[\xymatrix{
        I\backslash G(F)/G(F)^\theta\ar[d]^{\tau_\theta}\ar[r]^{\psi_0}
        & I_p\backslash G[t^{\pm1}]/G[t^{\pm1}]^\eta\ar[d]^{\tau_\eta}\\ 
        I\backslash_\theta (G(F))^{\inv\circ\theta}\ar[r]^{\psi}
        &I_p\backslash_\eta(G[t^{\pm1}])^{\inv\circ\eta}}
        \]
    where $\psi$ is the bijection in Theorem \ref{t:Iwahori Matsuki},
    and $\psi_0$ is an induced bijection. 
    Moreover, the intersection of corresponding double cosets
    under $\psi_0$ is a single $I_p-G[t^{\pm1}]^{\eta_c,\theta}$ coset.
\end{prop}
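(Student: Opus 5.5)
We follow the proof of Proposition \ref{p:spherical orbits diagram}, with $G(\cO),G[t],G_c$ replaced by $I,I_p,T_c$, and with Theorem \ref{t:Iwahori Matsuki}, Corollary \ref{c:union of Iwahori orbits intersection} and Proposition \ref{p:Iwahori double cosets intersection} in place of their spherical analogues. The vertical maps are induced by $x\mapsto\tau_\theta(x)=x\theta(x)^{-1}$ and $x\mapsto\tau_\eta(x)$. They are injective on double cosets, since $\tau_\theta(x)=\tau_\theta(y)$ iff $y\in xG(F)^\theta$, and similarly for $\eta$. Hence $\psi_0$ is uniquely determined as a map making the square commute, and it exists and is bijective exactly when $\psi$ carries the image of $\tau_\theta$ onto the image of $\tau_\eta$ (on the level of orbits).

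First we show that $\psi$ maps $\on{im}(\tau_\theta)$ into $\on{im}(\tau_\eta)$. Let $A\subset G(F)$ be an $I$–$G(F)^\theta$ double coset. By Theorem \ref{c:theta Iwahori orbits}, the $I$-orbit $\tau_\theta(A)$ contains an element of $((N_{G(F)}(T(F)))^{\inv\circ\theta})^{\eta_c}$, namely some $\tw g_0$ with $g_0\in T_{\tw,c}$. By Lemma \ref{l:tau_theta cpt preimage}, this element equals $\tau_\theta(x)=\tau(x)$ for some $x\in G[t^{\pm1}]^{\eta_c}$; after right multiplication by $G(F)^\theta$ we may take $x\in A$. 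Since $\theta$ and $\eta$ agree on $G[t^{\pm1}]^{\eta_c}$, also $\tau_\eta(x)=\tau(x)$. By Theorem \ref{t:Iwahori Matsuki} (via Corollary \ref{c:union of Iwahori orbits intersection}), $\psi$ sends $I\cdot_\theta\tau(x)$ to $I_p\cdot_\eta\tau(x)=\tau_\eta(I_pxG[t^{\pm1}]^\eta)$. So we set $\psi_0(A)=I_pxG[t^{\pm1}]^\eta$. This is well defined because the vertical maps are injective, the square commutes, and $\psi_0$ is injective.

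The reverse inclusion, $\on{im}(\tau_\eta)\subset\psi(\on{im}\tau_\theta)$, is the main obstacle: we have no Iwasawa-type decomposition adapted to $\eta$, since $I(1)$ is not $\eta$-stable. The first attempt mirrors the forward step using Theorem \ref{c:eta Iwahori orbits}. Every $I_p$-orbit in $\on{im}(\tau_\eta)$ contains an $\eta_c$-fixed element $\tw g_0$ with $g_0\in T_{\tw,c}$. What we would need is a $y\in G[t^{\pm1}]^{\eta_c}$ with $\tau(y)=\tw g_0$, i.e. an $\eta$-analogue of Lemma \ref{l:tau_theta cpt preimage}.

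For $\epsilon=-1$ this is immediate. Propositions \ref{p:tau_theta surj epsilon=-1} and \ref{p:tau_eta surj epsilon=-1} show that both vertical maps are bijections, so $\psi_0=\tau_\eta^{-1}\circ\psi\circ\tau_\theta$.

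For $\epsilon=1$ we would deduce surjectivity from the spherical case, since $\iota_0$ is a bijection by Proposition \ref{p:spherical orbits diagram}. Take an $I_p$-double coset $I_pyG[t^{\pm1}]^\eta$. By Proposition \ref{p:spherical orbits diagram} it lies in $\iota_0$ of some $G(\cO)$-double coset $G(\cO)xG(F)^\theta$, and we may choose $x\in G[t^{\pm1}]^{\eta_c}$ as above. Then $G[t]xG[t^{\pm1}]^\eta=G[t]yG[t^{\pm1}]^\eta$. Using $G[t]=I_pG$ and $G=BG_c\subset I_pG_c$, we can move $y$ by left multiplication by $I_p$ into $G_cxG[t^{\pm1}]^\eta$, say $y=kxh$ with $k\in G_c$ and $h\in G[t^{\pm1}]^\eta$. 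The element $kx$ still lies in $G[t^{\pm1}]^{\eta_c}$, and its $I_p$-double coset is $I_pyG[t^{\pm1}]^\eta$. By Remark \ref{r:compatibility}, its $\tau$-image is then in the image of $\psi\circ\tau_\theta$, namely of $IkxG(F)^\theta$. We expect the main care to go into checking that $\tau(kx)$ can be brought into $((N_{G(F)}(T(F)))^{\inv\circ\theta})^{\eta_c}$ by $T_c$-conjugation, which is needed for Theorem \ref{t:Iwahori Matsuki} to apply.

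Finally, the statement about intersections follows from Proposition \ref{p:Iwahori double cosets intersection}. Applying part (i) to the representative $x\in G[t^{\pm1}]^{\eta_c}$ above, and then using the decomposition $G(F)=IG[t^{\pm1}]^{\eta_c}$ from part (ii), the intersection $IxG(F)^\theta\cap I_pxG[t^{\pm1}]^\eta$ is the single coset $I_pxG[t^{\pm1}]^{\eta_c,\theta}$.
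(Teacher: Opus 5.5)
\textbf{Gap in the reverse inclusion for $\epsilon=1$.} Your forward inclusion, injectivity of the vertical maps, and the $\epsilon=-1$ case are fine. The $\epsilon=1$ surjectivity step does not work as written. After replacing $y$ by $kx$ with $k\in G_c$, you assert that $\tau_\eta(I_pkxG[t^{\pm1}]^\eta)=\psi(\tau_\theta(IkxG(F)^\theta))$. That is, you claim $I\cdot_\theta\tau(kx)$ and $I_p\cdot_\eta\tau(kx)$ are $\psi$-dual, and you postpone the check that $\tau(kx)$ can be moved into $((N_{G(F)}(T(F)))^{\inv\circ\theta})^{\eta_c}$ by $T_c$. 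That check cannot succeed in general, and the identification itself is false. Take $x=1$ and $k\in G_c$ arbitrary. By Corollary \ref{c:finite Matsuki} and Remark \ref{Matsuki for G/B}, $\psi$ restricted to these orbits is classical Matsuki duality. Your claim would then say that for every $k\in G_c$ the $K$-orbit and the $G_\bR$-orbit through the point $Bk$ of the flag variety are dual. For $G_\bR=\SL_2(\bR)$ this fails: $G_c$ acts transitively on $\bP^1$, and a point of the upper half plane other than the $K$-fixed point lies on the open $K$-orbit and on an open $G_\bR$-orbit, whereas the open $K$-orbit is dual to the closed orbit $\bR\bP^1$. Remark \ref{r:compatibility} only tells you which $G(\cO)$-orbit contains $\psi^{-1}$ of a given $I_p$-orbit, not which $I$-orbit it is.

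\textbf{The fix.} You never need to know which $I$-orbit is dual to a given $I_p$-orbit. Since $h\cdot_\theta\tau_\theta(g)=\tau_\theta(hg)$, the image of $\tau_\theta$ is stable under $\theta$-twisted $G(F)$-conjugation, hence is a union of $G(\cO)$-orbits. Likewise $\on{im}(\tau_\eta)$ is a union of $G[t]$-orbits. By Remark \ref{r:compatibility}, $\psi$ lies over $\iota$ under the projections from Iwahori orbits to spherical orbits. Proposition \ref{p:spherical orbits diagram} says $\iota$ matches the spherical orbits in $\on{im}(\tau_\theta)$ with those in $\on{im}(\tau_\eta)$. Hence for an $I_p$-orbit $O'\subset\on{im}(\tau_\eta)$ we get $\psi^{-1}(O')\subset\iota^{-1}(G[t]\cdot_\eta O')\subset\on{im}(\tau_\theta)$, and symmetrically in the other direction. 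This is the paper's proof. It treats both inclusions and both signs of $\epsilon$ uniformly, so your reduction $y\mapsto kx$ and the separate $\epsilon=-1$ argument become unnecessary. Your treatment of the intersection statement is fine, with one correction: for the left-$I_p$-stable set $IxG(F)^\theta\cap I_pxG[t^{\pm1}]^\eta$, the decomposition to use is $G[t^{\pm1}]=I_pG[t^{\pm1}]^{\eta_c}$, not $G(F)=IG[t^{\pm1}]^{\eta_c}$.
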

\begin{proof}
    Recall from Remark \ref{r:compatibility} that 
    the Iwahori orbits bijection $\psi$
    is a refinement of the spherical orbits bijection $\iota$.
    It follows from Proposition \ref{p:spherical orbits diagram} that
    the Iwahori orbits contained in the spherical orbits
    in the images of $\tau_\theta$ and $\tau_\eta$
    match under $\psi$.
    Thus the bijection $\psi_0$ exists
    and refines $\iota_0$ in 
    Proposition \ref{p:spherical orbits diagram}.
    The cosets intersection follows from 
    Proposition \ref{p:Iwahori double cosets intersection}.(ii).
\end{proof}

\begin{thm}\label{Fl}
    There is a  bijection 
    \[\Fl/G(F)^\theta\simeq \Fl/G[t^{\pm1}]^\eta\]
   between $G(F)^\theta$ and  $G[t^{\pm1}]^\eta$-orbits on the affine flag variety $\Fl=I\backslash G(F)$ such that the intersection of the corresponding orbit is a single $G[t^{\pm1}]^{\eta_c,\eta}$-orbit.
\end{thm}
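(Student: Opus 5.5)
The plan is to deduce Theorem \ref{Fl} directly from Proposition \ref{p:Iwahori orbits diagram}, just as Theorem \ref{Gr} was deduced from Proposition \ref{p:spherical orbits diagram}.

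\emph{Step 1: translate orbits into double cosets.} Since $\Fl=I\backslash G(F)$, the $G(F)^\theta$-orbits on $\Fl$ are the double cosets $I\backslash G(F)/G(F)^\theta$. The $G[t^{\pm1}]^\eta$-orbits on $\Fl$ need one extra observation. Every point of $\Fl$ has a representative in $G[t^{\pm1}]$, since $G(F)=I\,G[t^{\pm1}]^{\eta_c}$ as in the proof of Proposition \ref{p:Iwahori double cosets intersection}.(ii). Moreover $I\cap G[t^{\pm1}]=I_p$. Together these give an identification $\Fl\simeq I_p\backslash G[t^{\pm1}]$, equivariant for the right action of $G[t^{\pm1}]$, and in particular of $G[t^{\pm1}]^\eta$. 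Hence the $G[t^{\pm1}]^\eta$-orbits on $\Fl$ are $I_p\backslash G[t^{\pm1}]/G[t^{\pm1}]^\eta$.

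\emph{Step 2: obtain the bijection.} Under these identifications, the bijection $\psi_0$ of Proposition \ref{p:Iwahori orbits diagram} is exactly the asserted bijection
\[
\Fl/G(F)^\theta\simeq\Fl/G[t^{\pm1}]^\eta .
\]
It refines $\iota_0$ and is compatible with $\psi$ through $\tau_\theta$ and $\tau_\eta$.

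\emph{Step 3: the intersection property.} Let $x\in G[t^{\pm1}]^{\eta_c}$ be a representative as in Proposition \ref{p:Iwahori double cosets intersection}. Such representatives exist by Lemma \ref{l:tau_theta cpt preimage} together with Corollary \ref{c:union of Iwahori orbits intersection}. The corresponding orbits in $\Fl$ are the images of $IxG(F)^\theta$ and $I_pxG[t^{\pm1}]^\eta$. Their intersection in $\Fl$ is the image of $IxG(F)^\theta\cap IxG[t^{\pm1}]^\eta$, using $I\cdot I_p xG[t^{\pm1}]^\eta=IxG[t^{\pm1}]^\eta$. By Proposition \ref{p:Iwahori double cosets intersection}.(ii), this set equals $IxG[t^{\pm1}]^{\theta,\eta}$. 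Its image in $\Fl$ is therefore a single orbit of $G[t^{\pm1}]^{\theta,\eta}=G[t^{\pm1}]^{\eta_c,\eta}$; the two groups agree because $\eta_c=\theta\circ\eta$ on $G[t^{\pm1}]$.

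\emph{Main obstacle.} The delicate point is Step 1: the identification $I\backslash G(F)\simeq I_p\backslash G[t^{\pm1}]$. It requires $I\cap G[t^{\pm1}]=I_p$ and the existence of polynomial representatives, which is the decomposition $G(F)=I\,G[t^{\pm1}]^{\eta_c}$. Once that is in place, the theorem is a reformulation of Proposition \ref{p:Iwahori orbits diagram}.
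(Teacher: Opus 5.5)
Your proposal is correct and follows the paper's route: Theorem \ref{Fl} is read off from Proposition \ref{p:Iwahori orbits diagram}, and the intersection statement comes from Proposition \ref{p:Iwahori double cosets intersection}.(ii). Your Step 1 identification $\Fl\simeq I_p\backslash G[t^{\pm1}]$, via $G(F)=I\,G[t^{\pm1}]^{\eta_c}$ and $I\cap G[t^{\pm1}]=I_p$, together with the equality $G[t^{\pm1}]^{\theta,\eta}=G[t^{\pm1}]^{\eta_c,\eta}$, spells out details the paper leaves implicit.
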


\section{Bundles on real forms of $\mathbb P^1$}\label{Real bundles}

\subsection{Real forms of $\mathbb P^1$}
Let $\mathbb P^1$ be the complex projective line
with coordinate $t$.
Consider the conjugation $\eta:\mathbb P^1\to \mathbb P^1$
sending $t$ to $\epsilon\bar{t}^{-1}$.

\begin{defe}
Consider the scheme 
$\mathbb P^1_{\eta}$ over $\mathbb R$ that is obtained by descending 
the complex projective line $\mathbb P^1$ to $\mathbb R$ via the conjugation $c$.
\end{defe}

If $\epsilon=1$, then $\mathbb P^1_\eta\cong\mathbb P^1_\mathbb R$ is the real projective line with real points $\mathbb P^1_\mathbb R(\mathbb R)=S^1=\{t||t|=1\}$ the unit circle.
If $\epsilon=-1$, then  $\mathbb P^1_\eta\cong\widetilde{\mathbb P}^1_\mathbb R\cong\on{Proj}(\bbR[x,y,z]/(x^2+y^2+z^2))$ is the 
so called twistor-$\mathbb P^1$.
It is known that any real form of $\mathbb P^1$ is isomorphic to $\mathbb P^1_\eta$.

\subsection{Uniformization}
\subsubsection{Unramified cases}
Let $\Bun_G(\mathbb P^1)$ be the moduli stack of 
$G$-bundles on the complex projective line $\mathbb P^1$.
The involution $\eta_0$ on $G$ and the 
conjugation $c:\mathbb P^1\to\mathbb P^1$ induces a 
conjugation $\eta:\Bun_G(\mathbb P^1)\to \Bun_G(\mathbb P^1)$.

\begin{defe}
 Consider the stack 
$\Bun_G(\mathbb P^1)_{\eta}$ over $\mathbb R$ that is obtained by descending 
$\Bun_G(\mathbb P^1)$ to $\mathbb R$ via the conjugation $\eta$. 
\end{defe}

An $\mathbb R$-point of $\Bun_G(\mathbb P^1)_{\eta}$
consists of a $G$-bundle $\cE$ on $\mathbb P^1$
and an isomorphism $c:\cE\cong\eta(\cE)$ such that the following composition is the identity map:
\begin{equation}\label{c^2=id}
\cE\stackrel{c}\longrightarrow\eta(\cE)\stackrel{\eta(c)}\longrightarrow\eta^2(\cE)=\cE.
\end{equation}
If $\epsilon=1$, then
$\Bun_G(\mathbb P^1)_{\eta}=\Bun_{G_\bbR}(\mathbb P^1_\bbR)$ is the stack of $G_\bbR$-bundles on the real projective line $\bbP^1_\bbR$.
If $\epsilon=-1$, then
$\Bun_G(\mathbb P^1)_{\eta}=\Bun_{G_\bbR}(\widetilde{\mathbb P}^1_\bbR)$ is the stack of $G_\bbR$-bundles on the  twistor-$\bbP^1$.

Consider the presentation 
\begin{equation}\label{prsentaiton}
    \mathbb P^1\cong\mathbb A^1_0\bigsqcup_{\mathbb G_m}\mathbb A_\infty^1
\end{equation}
where $\mathbb A_0^1=\on{Spec}(\bC[t])$, $\mathbb A_\infty^1=\on{Spec}(\bC[t^{-1}])$, 
    and $\mathbb G_m=\mathbb A^1_0\cap\mathbb A^1_\infty=\on{Spec}(\bC[t,t^{-1}])$.
The presentation~\eqref{prsentaiton} induces a double coset presentation
\[\Bun_G(\bbP^1)(\bC)\cong G[t]\backslash G[t,t^{-1}]/G[t^{-1}].\]

\begin{prop}\label{Unif spherical}
  There is a canonical bijection
    \begin{equation}
    \Bun_G(\mathbb P^1)_\eta(\mathbb R)\cong G[t]\backslash_\eta(G[t,t^{-1}])^{\inv\circ\eta}
\end{equation}  
such that the forgetful map 
$\Bun_G(\bbP^1)_\eta(\mathbb R)\to\Bun_G(\bbP^1)(\mathbb C)$
is induced by the natural inclusion  
$G[t,t^{-1}]^{\inv\circ\eta}\to  G[t,t^{-1}]$
\end{prop}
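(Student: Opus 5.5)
The plan is to describe real points of $\Bun_G(\mathbb P^1)_\eta$ by descent data written in the trivializations over the two charts $\mathbb A^1_0$ and $\mathbb A^1_\infty$ of \eqref{prsentaiton}. The key observation is that the antiholomorphic involution $\eta(t)=\epsilon\bar t^{-1}$ exchanges the two charts and preserves $\mathbb G_m$. Hence a real structure $c:\cE\cong\eta(\cE)$ is determined by a single trivialization over $\mathbb A^1_0$: the trivialization over $\mathbb A^1_\infty$ is then forced to be its $\eta$-transport.

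First, the input from the complex side. Every $G$-bundle on $\mathbb A^1_0$ is trivial (Quillen–Suslin/Raghunathan for $\bC[t]$ and reductive $G$). Given a real point $(\cE,c)$, choose a trivialization $\phi_0$ of $\cE|_{\mathbb A^1_0}$. Transport it by $c$ to get a trivialization $\phi_\infty:=\eta(\phi_0)\circ c$ of $\cE|_{\mathbb A^1_\infty}$. Here $\eta$ acts on trivial bundles through $\eta_0$ on $G$ and the conjugation on the base, which is exactly the involution $\eta$ of $G[t,t^{-1}]$ and sends $G[t]$ to $G[t^{-1}]$. The transition function $\gamma=\phi_0\circ\phi_\infty^{-1}\in G[t,t^{-1}]$ is then defined. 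The cocycle condition \eqref{c^2=id}, $\eta(c)\circ c=\mathrm{id}$, should translate after unwinding into $\eta(\gamma)=\gamma^{-1}$, i.e. $\gamma\in G[t,t^{-1}]^{\inv\circ\eta}$. Concretely, applying $\eta$ to $\gamma$ swaps the roles of $\phi_0$ and $\phi_\infty$, giving the transition in the opposite direction.

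Second, independence of choices. Changing $\phi_0$ to $h\phi_0$ with $h\in G[t]$ forces $\phi_\infty\mapsto\eta(h)\phi_\infty$, so $\gamma\mapsto h\gamma\eta(h)^{-1}$. This is the $\eta$-twisted $G[t]$-action. Isomorphisms of real bundles compatible with $c$ give the same change. So we obtain a well-defined map $\Bun_G(\mathbb P^1)_\eta(\bR)\to G[t]\backslash_\eta G[t,t^{-1}]^{\inv\circ\eta}$.

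Third, the inverse. Given $\gamma$ in the anti-fixed set, glue trivial bundles on the two charts via $\gamma$ to get $\cE_\gamma$. Define $c$ chartwise as the canonical identification of the trivial bundle on $\mathbb A^1_0$ with the $\eta$-pullback of the trivial bundle on $\mathbb A^1_\infty$, and vice versa. Compatibility with gluing is $\eta(\gamma)=\gamma^{-1}$, and $\eta(c)c=\mathrm{id}$ holds on each chart tautologically.

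Finally, injectivity. Suppose $\gamma,\gamma'$ give isomorphic real bundles. Then the isomorphism restricted to $\mathbb A^1_0$ is some $h\in G[t]$. Compatibility with $c$ forces its restriction to $\mathbb A^1_\infty$ to be $\eta(h)$, so $\gamma'=h\gamma\eta(h)^{-1}$. Forgetting $c$ just views $\gamma$ in $G[t]\backslash G[t,t^{-1}]/G[t^{-1}]$, which gives the compatibility with the forgetful map.

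The main obstacle I expect is the sign and convention bookkeeping in the first step. One has to check that the composite $\eta(c)\circ c$ really corresponds to $\gamma\,\eta(\gamma)$ rather than to some twisted variant. This matters especially for $\epsilon=-1$, where $\eta$ has no fixed points and one cannot normalize a frame at a real point. I would handle it by working purely with functions: identify $\eta(\cE)$ with the bundle whose transition function is $\eta(\gamma)$ with the charts swapped, and verify the identity directly.
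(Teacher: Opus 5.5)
Your proposal is correct and follows essentially the same route as the paper. Both arguments trivialize $\cE$ over $\mathbb A^1_0$, transport the trivialization by $c$ to $\mathbb A^1_\infty=\eta(\mathbb A^1_0)$, read off an element of $G[t,t^{-1}]$ over $\mathbb G_m$ that is $\eta$-anti-fixed by \eqref{c^2=id}, and note that changing the trivialization by $h\in G[t]$ replaces it by $h\gamma\eta(h)^{-1}$. You are in fact more explicit than the paper, which leaves implicit both the triviality of $G$-bundles on $\mathbb A^1_0$ and the inverse gluing construction that gives surjectivity.
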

\begin{proof}
    The conjugation $\eta$ on $\mathbb P^1$ maps  
  $\eta:\mathbb A^1_0\cong\mathbb A_\infty^1$
  and preserves $\eta:\mathbb G_m\cong\mathbb G_m$.
  Let $(\cE,c)\in\Bun_G(\mathbb P^1)_\eta(\bR)$.
  Pick a trivialization $\iota:\cE|_{\mathbb A_0^1}\cong G\times\mathbb A_0^1$. Then the composed isomorphism
  \[G\times\mathbb G_m\stackrel{\iota^{-1}}\cong\cE|_{\mathbb G_m}\stackrel{c}\cong \eta(\cE)|_{\mathbb G_m}\stackrel{\eta(\iota)}\cong G\times\mathbb G_m\]
  is given by right multiplication by an element $g\in G[t,t^{-1}]$
  and the equation~\eqref{c^2=id} implies that 
  $g\in (G[t,t^{-1}])^{\inv\circ\eta}$.
  If we change the trivialization to $\iota'$, the  
  element $g$ changes to $g'=hg\eta(h)^{-1}$ 
  where
  $h=\iota\circ(\iota')^{-1}\in G[t]$.
Since the set of trivializations of $\cE|_{\mathbb A^1_0}$
is a trivial torsor under $G[t]$, the claim follows.
\end{proof}

\subsubsection{Tamely ramified case}
Let $\Bun_{G}(\mathbb P^1,0,\infty)$ be the moduli stack of 
$G$-bundles $\cE$ on $\bbP^1$ with $B$-reductions $\cF_0$ at $0$ and $\eta_0(B)$-reductions $\cF_\infty$
at $\infty$.
The presentation~\eqref{prsentaiton}
induces a double coset presentation
\[\Bun_{G}(\mathbb P^1,0,\infty)(\bC)\cong I_p\backslash G[t,t^{-1}]/\eta(I_p).\]

Since  the conjugation $c$ on $\mathbb P^1$
permutes the points $0,\infty\in\mathbb P^1$, together with the involution $\eta_0$ on $G$, we get a conjugation $\eta:\Bun_{G}(\mathbb P^1,0,\infty)\to \Bun_{G}(\mathbb P^1,0,\infty)$.

\begin{defe}
 Consider the stack 
$\Bun_G(\mathbb P^1,0,\infty)_{\eta}$ over $\mathbb R$ that is obtained by descending 
$\Bun_G(\mathbb P^1,0,\infty)$ to $\mathbb R$ via the conjugation $\eta$. 
\end{defe}

An $\mathbb R$-point of $\Bun_G(\mathbb P^1,0,\infty)_{\eta}$ consists of $(\cE,c)\in\Bun_G(\mathbb P^1)_{\eta}$ 
together with  $B$-reductions $\cF_0$ at $0$ and $\eta_0(B)$-reductions at $\cF_\infty$
as $\infty$
such that $c(\cF_0)=\cF_\infty$.
Pick a trivialization 
$\iota:\cE|_{\mathbb A^1_0}\cong G\times\mathbb A^1_0$
such that $\iota(\cF_0)=B\subset G$.
Noting that the set of such trivializations is a trivial torsor under 
$I_p$, 
the same discussion as in the unramified case implies that:
\begin{prop}\label{unif Iwahori}
  There is a canonical bijection
    \begin{equation}
    \Bun_G(\mathbb P^1,0,\infty)_\eta(\mathbb R)\cong I_p\backslash_\eta G[t,t^{-1}]^{\inv\circ\eta}
\end{equation}  
such that the forgetful map 
$\Bun_G(\bbP^1,0,\infty)_\eta(\bbR)\to\Bun_G(\bbP^1,0,\infty)(\mathbb C)$
is induced by the natural inclusion  
$G[t,t^{-1}]^{\inv\circ\eta}\to  G[t,t^{-1}]$.
  \end{prop}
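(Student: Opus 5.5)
We run the same argument as in Proposition \ref{Unif spherical}, now keeping track of the level structures at $0$ and $\infty$. The plan has three steps: produce a transition function from a real point, check it is anti-fixed, and check how it changes with the choice of trivialization.

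\textbf{Step 1: the transition function.} Take $(\cE,c,\cF_0,\cF_\infty)\in\Bun_G(\mathbb P^1,0,\infty)_\eta(\bR)$.
\begin{itemize}
\item Choose a trivialization $\iota:\cE|_{\mathbb A^1_0}\cong G\times\mathbb A^1_0$ with $\iota(\cF_0)=B$. Such an $\iota$ exists because any trivialization can be corrected by a constant element of $G$ moving the fiber of $\cF_0$ to $B$.
\item Since $\eta$ exchanges $\mathbb A^1_0$ and $\mathbb A^1_\infty$, the conjugate $\eta(\iota)$ trivializes $\eta(\cE)|_{\mathbb A^1_\infty}$.
\item The composite $\eta(\iota)\circ c\circ\iota^{-1}$ on $G\times\mathbb G_m$ is right multiplication by some $g\in G[t,t^{-1}]$.
\item The cocycle condition \eqref{c^2=id} gives $\eta(g)=g^{-1}$, exactly as in the unramified case. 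So $g\in G[t,t^{-1}]^{\inv\circ\eta}$.
\end{itemize}
The condition $c(\cF_0)=\cF_\infty$ says the $\infty$-reduction is carried by $\eta(\iota)$ to $\eta_0(B)$. This condition is built into the choice of $\iota$ and puts no further constraint on $g$. This matches the presentation $I_p\backslash G[t,t^{-1}]/\eta(I_p)$ of complex points.

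\textbf{Step 2: dependence on the trivialization.} The set of trivializations of $\cE|_{\mathbb A^1_0}$ sending $\cF_0$ to $B$ is a torsor under those $h\in G[t]$ with $h(0)\in B$, i.e. under $I_p$. If $\iota$ is replaced by $\iota'=h^{-1}\iota$, then $g$ becomes $hg\eta(h)^{-1}$. Hence the class of $g$ in $I_p\backslash_\eta G[t,t^{-1}]^{\inv\circ\eta}$ is well defined. An isomorphism of real points intertwines chosen trivializations, so the class depends only on the isomorphism class.

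\textbf{Step 3: bijectivity and the forgetful map.}
\begin{itemize}
\item For the inverse, glue trivial bundles on $\mathbb A^1_0$ and $\mathbb A^1_\infty$ along $g$. Define $c$ by the identity on the charts, which is well defined because $\eta(g)=g^{-1}$. Set $\cF_0=B$ at $0$ and $\cF_\infty=c(\cF_0)$.
\item $\eta$-conjugate elements give isomorphic data, via the isomorphism given by $h$ on $\mathbb A^1_0$ and $\eta(h)$ on $\mathbb A^1_\infty$. Injectivity follows from the same observation.
\item Forgetting $c$ gives the class of $g$ in $I_p\backslash G[t,t^{-1}]/\eta(I_p)$. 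This is the compatibility with the natural inclusion $G[t,t^{-1}]^{\inv\circ\eta}\to G[t,t^{-1}]$.
\end{itemize}

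\textbf{Main obstacle.} The only real point to check is the bookkeeping at $\infty$. We must confirm that $c(\cF_0)=\cF_\infty$ is equivalent to the chart at $\infty$ being $\eta(\iota)$ with reduction $\eta_0(B)$. We must also confirm that this chart matches the right $\eta(I_p)$-factor of the complex double coset. Both follow from $\eta(I_p)$ being exactly the stabilizer of the $\eta_0(B)$-reduction in $G[t^{-1}]$.
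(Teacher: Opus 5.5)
Your proposal is correct and follows the paper's argument. You choose a trivialization $\iota$ of $\cE|_{\mathbb A^1_0}$ with $\iota(\cF_0)=B$, note that such trivializations form a torsor under $I_p=\{h\in G[t]\mid h(0)\in B\}$, and rerun the unramified proof of Proposition~\ref{Unif spherical}; your explicit gluing inverse and your check that $\eta(I_p)$ stabilizes the $\eta_0(B)$-reduction at $\infty$ just spell out what the paper leaves implicit in ``the same discussion as in the unramified case.''
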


Combining the above with Proposition \ref{p:tau_eta surj epsilon=-1},
we obtain:
\begin{cor} 
Assume $\epsilon=-1$.
There are double coset presentations
\begin{align*}
    &\Bun_{G}(\mathbb P^1)_\eta(\mathbb R)\cong G[t]\backslash G[t,t^{-1}]/G[t,t^{-1}]^\eta,\\
    &\Bun_G(\mathbb P^1,0,\infty)_\eta(\mathbb R)\cong I_p\backslash G[t,t^{-1}]/G[t,t^{-1}]^\eta.
\end{align*}
\end{cor}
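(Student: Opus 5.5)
The plan is to compose the uniformization bijections of Proposition \ref{Unif spherical} and Proposition \ref{unif Iwahori} with the identification of orbit sets coming from Proposition \ref{p:tau_eta surj epsilon=-1}. For the first presentation, Proposition \ref{Unif spherical} gives
\[
\Bun_G(\mathbb P^1)_\eta(\mathbb R)\cong G[t]\backslash_\eta G[t,t^{-1}]^{\inv\circ\eta}.
\]
For $\epsilon=-1$, Proposition \ref{p:tau_eta surj epsilon=-1} says that $\tau_\eta:\gamma\mapsto\gamma\eta(\gamma)^{-1}$ is surjective, with fibers exactly the right $G[t,t^{-1}]^\eta$-cosets. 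So it induces a bijection
\[
G[t,t^{-1}]/G[t,t^{-1}]^\eta\simeq G[t,t^{-1}]^{\inv\circ\eta}.
\]

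The key point is that this bijection is equivariant for the left actions. Left multiplication by $h\in G[t]$ on the source goes to $\eta$-twisted conjugation on the target:
\[
\tau_\eta(h\gamma)=h\gamma\eta(\gamma)^{-1}\eta(h)^{-1}=h\cdot_\eta\tau_\eta(\gamma).
\]
So the bijection passes to $G[t]$-orbit sets, giving
\[
G[t]\backslash G[t,t^{-1}]/G[t,t^{-1}]^\eta\simeq G[t]\backslash_\eta G[t,t^{-1}]^{\inv\circ\eta}.
\]
Composing with the uniformization yields the first presentation.

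For the second presentation I run the same argument with $G[t]$ replaced by $I_p\subset G[t]$. Restricting the equivariance to $I_p$ gives
\[
I_p\backslash G[t,t^{-1}]/G[t,t^{-1}]^\eta\simeq I_p\backslash_\eta G[t,t^{-1}]^{\inv\circ\eta},
\]
and I then compose with Proposition \ref{unif Iwahori}.

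There is no real obstacle here, because all the substantive content sits in Proposition \ref{p:tau_eta surj epsilon=-1}; the only thing to check is the equivariance computation above. One could also make the bundle-theoretic meaning explicit: the $G[t,t^{-1}]^\eta$-coset of $\gamma$ corresponds to the real bundle whose descent datum is $\tau_\eta(\gamma)$. That is just the gluing description in the proof of Proposition \ref{Unif spherical}, so it needs no separate argument.
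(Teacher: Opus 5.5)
Your proposal is correct and matches the paper's argument, which obtains the corollary by combining Propositions \ref{Unif spherical} and \ref{unif Iwahori} with Proposition \ref{p:tau_eta surj epsilon=-1}. Your equivariance check $\tau_\eta(h\gamma)=h\cdot_\eta\tau_\eta(\gamma)$, for $h\in G[t]$ and hence for $h\in I_p$, is the step the paper leaves implicit.
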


\subsection{Pure inner twists}\label{inner twists}
An involution $\theta_0'$ on $G$ (resp. a conjugation $\eta_0'$ on $G$) is called a \emph{pure inner twist} of $\theta_0$ (resp. $\eta_0$) if there exists $g\in G^{\on{inv}\circ\theta_0}$ (resp. $G^{\on{inv}\circ\eta_0}$) such that 
$\theta_0'=\on{Ad}_g\circ\theta_0$ (resp. $\eta_0'=\on{Ad}_g\circ\eta_0$).

\begin{lem}\label{l:pure inner twist}\mbox{}
\begin{itemize}
    \item [(i)]
    Let $\theta_0'=\Ad_g\circ\theta_0$ be a pure inner twist of $\theta_0$. There are natural bijections\newline
    $G(\mathcal O)\backslash_{\theta} G(F)^{\on{inv}\circ\theta}\simeq G(\mathcal O)\backslash_{\theta'} G(F)^{\on{inv}\circ\theta'}$
    and $I\backslash_{\theta} G(F)^{\on{inv}\circ\theta}\simeq I\backslash_{\theta'} G(F)^{\on{inv}\circ\theta'}$.
    \item [(ii)] 
    Let $\eta_0'=\Ad_g\circ\eta_0$ be a pure inner twist of $\eta_0$. There are natural bijections\newline $G[t]\backslash_{\eta}G[t,t^{-1}]^{\on{inv}\circ\eta}\simeq G[t]\backslash_{\eta'} G[t,t^{-1}]^{\on{inv}\circ\eta'_0}$
    and $I_p\backslash_{\eta}G[t,t^{-1}]^{\on{inv}\circ\eta}\simeq I_p\backslash_{\eta'}G[t,t^{-1}]^{\on{inv}\circ\eta'}$.
    \item [(iii)]
    Let $\eta_0'=\Ad_g\circ\eta_0$ be a pure inner twist of $\eta_0$. There are natural bijections\newline 
    $\Bun_{G}(\mathbb P^1)_\eta(\mathbb R)\simeq \Bun_{G}(\mathbb P^1)_{\eta'}(\mathbb R)$
    and $\Bun_G(\mathbb P^1,0,\infty)_\eta(\mathbb R)\simeq \Bun_G(\mathbb P^1,0,\infty)_{\eta'}(\mathbb R)$.
    \end{itemize}
\end{lem}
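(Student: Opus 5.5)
The plan is to realize each bijection as right multiplication by $g$ (or by $g$ viewed as a constant loop), as in the usual pure inner twist dictionary for Galois/Cartan cohomology.

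For (i), let $g\in G^{\on{inv}\circ\theta_0}$, so $\theta_0(g)=g^{-1}$, and set $\theta'_0=\Ad_g\circ\theta_0$. Then $\theta'(\gamma)(t)=g\,\theta(\gamma)(t)\,g^{-1}$, because $g$ is a constant loop. Define $\Phi(\gamma)=\gamma g^{-1}$. If $\theta(\gamma)=\gamma^{-1}$, then
\[
\theta'(\gamma g^{-1})=g\,\theta(\gamma)\theta(g)^{-1}g^{-1}=g\gamma^{-1}g\,g^{-1}=g\gamma^{-1}=(\gamma g^{-1})^{-1},
\]
where we used $\theta(g)=\theta_0(g)=g^{-1}$. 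So $\Phi$ maps $G(F)^{\on{inv}\circ\theta}$ into $G(F)^{\on{inv}\circ\theta'}$.

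Equivariance comes next. One has
\[
\Phi(h\gamma\theta(h)^{-1})=h\gamma g^{-1}\,g\theta(h)^{-1}g^{-1}=h\,\Phi(\gamma)\,\theta'(h)^{-1}.
\]
Hence $\Phi$ intertwines the $\theta$-twisted action of any subgroup of $G(F)$ with the $\theta'$-twisted action. In particular this holds for $G(\cO)$ and for $I$, and neither subgroup has to be $\theta$-stable for this step.

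For the inverse, note that $g^{-1}\in G^{\on{inv}\circ\theta'_0}$, since
\[
\theta'_0(g^{-1})=g\theta_0(g)^{-1}g^{-1}=g\,g\,g^{-1}=g.
\]
Moreover $\Ad_{g^{-1}}\circ\theta'_0=\theta_0$. The same construction applied to $(\theta'_0,g^{-1})$ therefore gives $\gamma\mapsto\gamma g$, which is inverse to $\Phi$. This yields both bijections in (i).

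Part (ii) is identical with $\eta$ in place of $\theta$, using $\eta(g)=\eta_0(g)=g^{-1}$ for the constant loop $g$, and $\eta'(\gamma)=g\eta(\gamma)g^{-1}$. The acting groups $G[t]$ and $I_p$ are again arbitrary subgroups for the equivariance computation.

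For (iii), combine (ii) with the uniformizations of Proposition \ref{Unif spherical} and Proposition \ref{unif Iwahori}, applied to both $\eta_0$ and $\eta_0'$. Alternatively, one can argue geometrically: twisting the real structure $c$ on $\cE$ by the right action of $g$ turns an $\eta$-real structure into an $\eta'$-real structure.

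The only point needing care is that the uniformization propositions use only that $\eta'$ is an anti-holomorphic involution of $G[t,t^{-1}]$ exchanging $G[t]$ and $G[t^{-1}]$. For the Iwahori case one also needs that $\eta'_0$ preserves $T$, or at least is compatible with the chosen level structure $B$ at $0$ and $\eta'_0(B)$ at $\infty$. The main obstacle is this level-structure compatibility for the Iwahori statements. If $\eta'_0(B)\neq\eta_0(B)$, I would interpret $\Bun_G(\mathbb P^1,0,\infty)_{\eta'}$ with the $\eta'_0(B)$-reduction at $\infty$, so that the condition $c(\cF_0)=\cF_\infty$ still makes sense. The general argument of Proposition \ref{unif Iwahori} then goes through verbatim, since it only uses that trivializations of $\cE|_{\mathbb A^1_0}$ sending $\cF_0$ to $B$ form an $I_p$-torsor.
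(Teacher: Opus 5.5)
Your proof is correct. For (i) it uses the same map as the paper, but more uniformly; for (ii) it takes a genuinely different and more direct route.

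\textbf{Part (i).} The paper splits by the sign of $\epsilon$.
\begin{itemize}
\item For $\epsilon=1$ it uses $x\mapsto xg^{-1}$ on $G^{\inv\circ\theta_0}(F)$.
\item For $\epsilon=-1$ it invokes Proposition~\ref{p:tau_theta surj epsilon=-1} to write $g=\gamma\theta(\gamma)^{-1}$ with $\gamma\in G(F)$. It then notes $\theta'=\Ad_\gamma\circ\theta\circ\Ad_{\gamma^{-1}}$ and uses $x\mapsto x\theta(\gamma)\gamma^{-1}$, which is again just $x\mapsto xg^{-1}$.
\end{itemize}
Your observation that $\theta(g)=\theta_0(g)=g^{-1}$ for the constant loop $g$ makes the direct computation work for both signs of $\epsilon$. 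So the Galois-cohomology input over $\bC(\!(t^2)\!)$ is not needed for the lemma as stated. What the paper's detour additionally gives, and you do not need, is that for $\epsilon=-1$ the fixed-point loop groups $G(F)^{\theta'}=\gamma G(F)^{\theta}\gamma^{-1}$ are conjugate.

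\textbf{Part (ii).} The paper deduces this from (i) together with the Matsuki dualities for $\eta_0$ and $\eta_0'$. That tacitly requires matching the pure inner twist $\eta_0'$ of $\eta_0$ with a pure inner twist $\theta_0'$ of $\theta_0$ (for instance via Corollary~\ref{c:finite G-Matsuki}), and choosing the auxiliary compact conjugation and torus compatibly. Your direct computation, using $\eta(g)=\eta_0(g)=g^{-1}$ and $\eta'(\gamma)=g\,\eta(\gamma)\,g^{-1}$, avoids all of this. It yields an explicit bijection that is equivariant for $G[t]$ and $I_p$ alike.

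\textbf{Part (iii).} Here you and the paper argue the same way, by combining (ii) with the uniformization propositions applied to both $\eta_0$ and $\eta_0'$. Your worry about level structures is harmless. The reduction at $\infty$ is determined by $\cF_0$ through $c$, and the uniformization only uses that trivializations of $\cE|_{\mathbb A^1_0}$ sending $\cF_0$ to $B$ form an $I_p$-torsor. This holds verbatim for $\eta_0'$ once the reduction at $\infty$ is taken to be an $\eta_0'(B)$-reduction.
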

\begin{proof}
Assume $\epsilon=1$.
The map $x\to xg^{-1}$ defines a $G$-equivariant bijection $G^{\on{inv}\circ\theta_0}\cong G^{\on{inv}\circ\theta_0'}$,
and hence a $G(F)$-equivariant bijection 
$G(F)^{\on{inv}\circ\theta}=G^{\on{inv}\circ\theta_0}(F)\simeq G^{\on{inv}\circ\theta'_0}(F)=G(F)^{\on{inv}\circ\theta'}$.
Assume $\epsilon=-1$. 
Proposition \ref{p:tau_theta surj epsilon=-1} implies that $g=\gamma\theta(\gamma)^{-1}$
for some $\gamma\in G(F)$. Then 
we have $\theta'=\Ad_\gamma\circ\theta\circ\Ad_{\gamma^{-1}}$
and 
the map $x\to x\theta(\gamma)\gamma^{-1}$
defines an $G(F)$-equivariant bijection $G(F)^{\on{inv}\circ\theta}\cong G(F)^{\on{inv}\circ\theta'}$.
Part (i) follows.

For part (ii) 
follows from part (i) and  the Matsuki duality for spherical and Iwahori orbits, and
part (iii) follows from part (ii) and uniformization of real bundles in Proposition \ref{Unif spherical} and \ref{unif Iwahori}.

\end{proof}

    \subsection{Kottwitz sets}\label{Kottwitz}
    \subsubsection{Kottwitz sets $B(G,\eta)$}
Let $\Gamma=\on{Gal}(\bC/\bR)$ be the Galois group of $\bR$.
Let $W_{\bbR,\epsilon}$ be the semi-direct product $W_{\bbR,\epsilon}=\bC^\times\rtimes\Gamma$
if $\epsilon=1$ and real Weil group 
$W_{\bbR,\epsilon}=\bC^\times\sqcup j\bC^\times\subset\mathbb H^\times$
if $\epsilon=-1$ (here $\mathbb H$ is the real quaternion).
Note that we have a group extension
\[
0\rightarrow\bC^\times\rightarrow W_{\bbR,\epsilon}\stackrel{\pi}\rightarrow\Gamma\rightarrow 1\]
which is a split extension if and only if $\epsilon=1$.
The Galois group $\Gamma$ acts on $G$
via the conjugation $\eta_0$
and it induces an action of $W_{\bbR,\epsilon}$ on $G$
via the projection $\pi$.
\begin{defe}
We define $B(G,\eta)=H^1_{\on{alg}}(W_{\bbR,\epsilon},G)$ consisting of cocycles $x:W_{\bbR,\epsilon}\to G(\bC)$, such that $\lambda(\bC)=x|_{\bC^\times}:\bC^\times\to G$ for a cocharcter $\lambda:\mathbb{G}_m\to G$
over $\bC$, modulo $\eta_0$-twisted conjugation.

\end{defe}

It follows from the definition
(see, e.g., \cite[Lemma 3.1.11]{Jaburi}
or \cite[\S13.2]{Fargues})
that 
we have $B(G,\eta)\simeq\{(\lambda,g)\}/\sim$
where 
\begin{itemize}
	\item [(i)] $\lambda:\bGm\rightarrow G$ is a cocharacter over $\bC$,
	\item [(ii)] $g\in G(\bC)$ such that $g\eta_0(g)=\lambda(\epsilon)$,
	\item [(iii)] $g^{-1}\lambda(\bar t) g=\eta_0(\lambda(t))$, for $t\in\bC^\times$,
	\item [(iv)] $(\lambda,g)\sim(h\lambda h^{-1},hg\eta_0(h)^{-1})$ for $h\in G(\bC)$.
\end{itemize}

\begin{prop}\label{t:B(G_R)}
The assignment sending $(\lambda,g)$ to $\gamma(t)=\lambda(t) g$
induces a  bijection
	\[
	B(G,\eta)
	\simeq
	G[t]\backslash_\eta(G[t,t^{-1}])^{\inv\circ\eta}.
	\]
\end{prop}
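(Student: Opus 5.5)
Consider the map $(\lambda,g)\mapsto\gamma(t)=\lambda(t)g$. We first check that it lands in $G[t,t^{-1}]^{\inv\circ\eta}$, then that it is compatible with the equivalence relations, and finally prove surjectivity and injectivity on orbits using the parametrization of Theorem \ref{t:eta spherical orbits}.

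\emph{Well-definedness.} Clearly $\gamma\in G[t,t^{-1}]$. We compute
\[
\eta(\gamma)(t)=\eta_0\bigl(\lambda(\epsilon\bar t^{-1})\bigr)\eta_0(g).
\]
Condition (iii), applied at the point $\epsilon t^{-1}$ (note $\overline{\epsilon\bar t^{-1}}=\epsilon t^{-1}$), gives $\eta_0(\lambda(\epsilon\bar t^{-1}))=g^{-1}\lambda(\epsilon t^{-1})g$. Hence
\[
\gamma(t)\eta(\gamma)(t)=\lambda(t)\lambda(\epsilon)\lambda(t^{-1})\,g\eta_0(g).
\]
By (ii), $g\eta_0(g)=\lambda(\epsilon)$, and $\lambda(\epsilon)^2=1$, so $\gamma\eta(\gamma)=1$. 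Under (iv), $\gamma$ changes to $h\gamma\eta_0(h)^{-1}=h\cdot_\eta\gamma$, since a constant $h\in G\subset G[t]$ satisfies $\eta(h)=\eta_0(h)$. Thus we obtain a well-defined map
\[
B(G,\eta)\to G[t]\backslash_\eta G[t,t^{-1}]^{\inv\circ\eta}.
\]

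\emph{Surjectivity.} By Theorem \ref{t:eta spherical orbits}, every $G[t]$-orbit contains an element $t^\lambda g_0w_1^{-1}$ with $\lambda=-w_1^{-1}\theta_0(\lambda)$ dominant and $g_0\in C_{\lambda,0}\subset L_\lambda$. Since $g_0$ commutes with $t^\lambda$, this is $\lambda(t)g$ with $g=g_0w_1^{-1}$. Conditions (ii) and (iii) for $(\lambda,g)$ should follow from the defining equation of $B_{\lambda,0}$ together with the relation $\eta_0(\lambda)=\theta_0(\lambda)=-w_1\lambda$ (Lemma \ref{theta=eta}). Explicitly, $g^{-1}\lambda(\bar t)g=\Ad_{w_1}\lambda(\bar t)$, and one checks that this equals $\eta_0(\lambda(t))$ because $\eta_0$ acts on $\lambda(\bar t)$ as $\theta_0\circ\eta_{c,0}$. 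The identity $g\eta_0(g)=\lambda(\epsilon)$ should be a rewriting of $g_0=w_2\Ad_{w_1^{-1}}\eta_0(g_0^{-1})\epsilon^\lambda$. These are sign and conjugation bookkeeping computations.

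\emph{Injectivity.} This is the main obstacle. Suppose $\lambda(t)g$ and $\lambda'(t)g'$ are $\eta$-conjugate by some $h\in G[t]$. Let $\mu$ be the dominant representative of the $W$-orbit of $\lambda$. The Birkhoff cell of $\lambda(t)g$ is that of $t^\mu$, so the dominant representatives of $\lambda$ and $\lambda'$ agree. After conjugating by a constant element we may therefore assume $\lambda=\lambda'=\mu$. Applying the first step of the proof of Proposition \ref{p:coset conj classes,general, real}, and noting that the stabilizer-type group $G[t]\cap\Ad_{t^\lambda}G[t^{-1}]$ equals $P_\lambda[t]\cap\Ad_{t^{\lambda}}P_\lambda[t^{-1}]$, we can replace $h$ by a constant element. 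Concretely, evaluating the relation $h\,t^\lambda g\,\eta(h)^{-1}=t^\lambda g'$ on $L_\lambda$-components, Proposition \ref{p:coset conj classes,general, real} tells us that $g,g'$ are $L_\lambda$-twisted conjugate. Since elements of $L_\lambda$ centralize $\lambda$, this is exactly relation (iv).

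The delicate point is the reduction from an arbitrary pair $(\lambda,g)$ to one with $\lambda$ dominant. We would handle it by conjugating with $n\in N_{G_c}(T_c)$ combined with an element of $G$ conjugating $\lambda$ into $T$, using (iv) on the $B(G,\eta)$ side. We then need to verify that (iii) forces the resulting $g$ to have the form $g_0w_1^{-1}$ with $g_0\in L_\lambda$; this holds because (iii) says $\Ad_g$ maps $\lambda$ to $\eta_0(\lambda)=-w_1\lambda$.
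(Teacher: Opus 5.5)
Your proposal is correct and follows essentially the paper's route: you reduce to dominant $\lambda=-w_1^{-1}\theta_0(\lambda)$ via (iv), use (iii) to get $g_0=gw_1\in L_\lambda$ and (ii) to get $g_0\in B_{\lambda,0}$, and then invoke the parametrization of Theorem \ref{t:eta spherical orbits} by $\bigsqcup_\lambda L_\lambda\backslash B_{\lambda,0}$; you phrase this as surjectivity and injectivity of the explicit map, while the paper classifies $B(G,\eta)$ directly by $\bigsqcup_\lambda L_\lambda\backslash B_{\lambda,0}$. Two points are left implicit but are routine: any $h\in G[t]$ with $h\,t^\lambda g\,\eta(h)^{-1}=t^\lambda g'$ automatically lies in $G[t]\cap\Ad_{t^\lambda}G[t^{-1}]$ because $\Ad_{t^{-\lambda}}h=g'\eta(h)g^{-1}\in G[t^{-1}]$, and your ``$\eta_0(\lambda)=\theta_0(\lambda)$'' should be read as $\eta_0(\lambda(t))=(\theta_0\lambda)(\bar t^{-1})$.
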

\begin{proof}
We have 
\begin{align*}
    \gamma(t)\eta(\gamma(t))&=(\lambda(t)g)(\eta(\lambda(t)g))=\lambda(t)g\eta_0(\lambda(\epsilon\bar t^{-1}))\eta_0(g)\stackrel{(ii)}=\lambda(t)g\eta_0(\lambda(\epsilon\bar t^{-1}))g^{-1}\lambda(\epsilon)\\
    &\stackrel{(iii)}=\lambda(t)\lambda(\epsilon t^{-1})\lambda(\epsilon)=\lambda(1)=e.
\end{align*}
Thus we have $\gamma(t)=\lambda(t)g\in G[t,t^{-1}]^{\on{inv}\circ\eta}$.

Choose an $\eta_0$-stable maximal torus $T$.
Via conjugation, we can assume $\lambda\in X_*(T)^+$ in (i).
Note that condition (iii) is equivalent to
\begin{equation}
	g^{-1}\lambda(t) g=\eta_0(\lambda(\bar t))=\theta_0\circ\eta_{c,0}(\lambda(\bar t))=\theta_0(\lambda(t^{-1})).
\end{equation}
According to~\eqref{eq:lambda,general}, we have 
\[
w_1\lambda=-\theta_0(\lambda),
\]
and it follows that 
\[
g^{-1}\lambda g=w_1\lambda, 
\]
so that $g_0:=gw_1\in L_\lambda$.
Observe that the condition (ii) on $g$ is the same as $g_0\in B_{\lambda,0}$,
on which $L_\lambda$ acts by the $\Ad_{w_1^{-1}}\eta_0$-conjugation.
We conclude that $B(G,\eta)$ is classified by the union of
$L_\lambda\backslash B_{\lambda,0}$ 
for $\lambda=-w_1^{-1}\theta_0(\lambda)$,
which is in bijection with $G[t]\backslash_\eta(G[t,t^{-1}])^{\inv\circ\eta}$ 
by Theorem \ref{t:eta spherical orbits}. 
\end{proof}

\subsubsection{Generalizations}\label{generalizations}
Inspired by the notions of  strong real forms, rigid inner forms 
and extended Kottwitz sets
in \cite{ABV,Fargues,K},
we state the following generalizations of Matsuki bijections. 

Let $Z_{\on{tor}}$ be the torsion elements in the center of $G$.
Note that the actions of 
$\eta_0$ and $\theta_0$
on $Z_{\on{tor}}$ coincide and hence 
$Z_{\on{tor}}^{\eta_0}=Z_{\on{tor}}^{\theta_0}$ (see, e.g., \cite[Lemma 8.7]{AT}).
\begin{defe}
Fix an element $z\in Z_{\on{tor}}^{\eta_0}=Z_{\on{tor}}^{\theta_0}$.
Consider the following sets
\begin{enumerate}
    \item
$G(F)^{\on{inv}\circ\theta,z}=\{\gamma\in G(F)|\gamma\theta(\gamma)=z\}$.
\item $G[t,t^{-1}]^{\on{inv}\circ\eta,z}=\{\gamma\in G[t,t^{-1}]|\gamma\eta(\gamma)=z\}$.
\item $\Bun_G(\mathbb P^1)_{\eta,z}(\bbR)=\{(\cE,c)\}$
consisting of a $G$-bundle $\cE$ on $\mathbb P^1$
and an isomorphism $c:\cE\cong\eta(\cE)$ such that the composition is multiplication by $z$
\[
z:\cE\stackrel{c}\longrightarrow\eta(\cE)\stackrel{\eta(c)}\longrightarrow\eta^2(\cE)=\cE.
\]
\item $\Bun_G(\mathbb P^1,0,\infty)_{\eta,z}(\bbR)=\{(\cE,c,\cF_0,\cF_\infty)\}$ consisting of $(\cE,c)\in\Bun_G(\mathbb P^1)_{\eta,z}$ 
together with $B$-reductions $\cF_0$ at $0$ and $\eta_0(B)$-reduction at $\cF_\infty$
as $\infty$
such that $c(\cF_0)=\cF_\infty$.
\item
$B(G,\eta)_z=\{(\lambda,g)\}/\sim$
where 
\begin{itemize}
	\item [(i)] $\lambda:\bGm\rightarrow G$ is a cocharacter over $\bC$,
	\item [(ii)] $g\in G(\bC)$ such that $g\eta_0(g)=\lambda(\epsilon)z$,
	\item [(iii)] $g^{-1}\lambda(\bar t) g=\eta_0(\lambda(t))$, for $t\in\bC^\times$,
	\item [(iv)] $(\lambda,g)\sim(h\lambda h^{-1},hg\eta_0(h)^{-1})$ for $h\in G(\bC)$.
    \end{itemize}
\end{enumerate}
\end{defe}

Note that $G(\cO)$ (resp. $I$)
acts on $G(F)^{\on{inv}\circ\theta,z}$
by the $\theta$-twisted conjugation 
and $G[t]$ (resp. $I_p$)
acts on $G[t,t^{-1}]^{\on{inv}\circ\eta,z}$
by the $\eta$-twisted conjugation.
We have 
natural presentations
\[G[t]\backslash_\eta G[t,t^{-1}]^{\on{inv}\circ\eta,z}\cong\Bun_G(\mathbb P^1)_{\eta,z}(\bbR)\cong B(G,\eta)_z,\]
\[I_p\backslash_\eta G[t,t^{-1}]^{\on{inv}\circ\eta,z}\cong\Bun_G(\mathbb P^1,0,\infty)_{\eta,z}(\bbR),\]
and  Matsuki bijections
	\[
G(\cO)\backslash_\theta(G(F))^{\inv\circ\theta,z}
	\cong
	G[t]\backslash_\eta(G[t,t^{-1}])^{\inv\circ\eta,z},
	\]
    \[	I\backslash_\theta(G(F))^{\inv\circ\theta,z}
	\cong
I_p\backslash_\eta(G[t,t^{-1}])^{\inv\circ\eta,z}
	\]
between spherical and Iwahori orbits such that the intersections of the corresponding orbits is a single $G_c$ and $T_c$ orbit respectively.
The proof is similar to the setting when $z=1$ and we will leave the details to the readers.

\begin{rem}
    Note that  the union $\bigcup_{z\in Z_{\on{tor}}^{\eta_0}}B(G,\eta)_z$ is the extended Kottwitz set $B_e(G)$ in \cite[Section 13.6]{Fargues}.
\end{rem}

\section{Examples}\label{examples}
We describe the spherical and Iwahori orbits parametrization for the real forms 
$\GL_2(\bR)$, $\GL_1(\mathbb H)$, $\mathrm{U}(2)$,
$\mathrm U(1,1)$
of $G=\GL_2$.
Let $T\cong\mathbb G_m^2=\{t=(t_1,t_2)\}$ be the diagonal torus
and $B$ the Borel subgroup of upper triangular matrices.
We identify $X_*(T)\cong\mathbb Z^{2}=\{\lambda=(\lambda_1,\lambda_2)\}$
and $X_*(T)^+\cong\{\lambda=(\lambda_1,\lambda_2)\in\mathbb Z^2|\lambda_1\geq\lambda_2\}$.
Recall the involution 
$\theta_0$, conjugation
$\eta_0$, the compact conjugation $\eta_{c,0}$.
Let $W(G_c)=\{\begin{pmatrix}
   1& 0 \\
    0 & 1
\end{pmatrix},s=\begin{pmatrix}
   0& 1 \\
    1 & 0
\end{pmatrix}\}$ be the Weyl group of the compact real form 
$G_c=\mathrm U(2)$ and 
$\widetilde W=X_*(T)\rtimes W(G_c)=\{\tilde w=t^\lambda w\}$
be the extended affine Weyl group.
Recall $w_1\in N_{G_c}(T_c)$ such that  $\theta_0(B^-)=\Ad_{w_1}B$, $w_2=\theta_0(w_1)w_1$, 
$t_{\tw}=(\tw\theta(\tw))^{-1}$,
and the sets
\[X_*(T)^+_{\theta_0,w_1}:=\{\lambda\in X_*(T)^+|\lambda=-w_1^{-1}\theta_0(\lambda)\},\]
 \[A_{\lambda,0}=\{g\in L_\lambda\mid g_0=w_2\Ad_{w_1^{-1}}\theta_0(g)^{-1}\epsilon^\lambda\},\quad
 C_{\lambda,0}=A_{\lambda,0}\cap G_c,\]
 \[\tW_{\theta}:=\{\tw\in\tW\mid\tw\equiv\theta(\tw)^{-1}\},\]
 \[T_\tw=\{g_0\in T\mid \Ad_w g_0=\theta_0(g_0)^{-1}t_\tw\},\quad
 T_{\tw,c}=T_{\tw}\cap G_c,\]
where $L_\lambda$ is the Levi 
associated to $\lambda\in X_*(T)^+_{\theta_0,w_1}$.
Moreover, $L_\lambda$ acts on $A_{\lambda,0}$ by
$h\cdot g=hg\Ad_{w_1^{-1}}\theta_0(h^{-1})$
and $T$ acts on $T_{\tw}$ by
$h\cdot g=(\Ad_{w^{-1}}h)g\theta_0(h^{-1})$.

\subsection{$\GL_2(\bbR)$ case }
We have $\theta_0(g)={^t}g^{-1}$, $\eta_0(g)=\bar g$, $w_1=w_2=\begin{pmatrix}
   1& 0 \\
    0 & 1
\end{pmatrix}$,
and $X_*(T)^+=X_*(T)^+_{\theta_0,w_1}$.
\subsubsection{Assume $\epsilon=1$}
We shall describe the spherical and Iwahori orbits parametrization.

Spherical orbits:
Assume $\lambda_1=\lambda_2=\mu$.
Then $L_\lambda=\GL_2$, $A_{\lambda,0}=\{g\in\GL_2|g={^t}g\}\cong\GL_2/\mathrm O_2$, and 
$|L_\lambda\backslash A_{\lambda,0}|=1$
with representative $g_\lambda=e\in C_{\lambda,0}$.
Assume $\lambda_1>\lambda_2$.
Then $L_\lambda=T$, $A_{\lambda,0}=T$
and $|L_\lambda\backslash A_{\lambda,0}|=|T\backslash_{[2]} T|=1$
where $L_\lambda$ acts on $A_{\lambda,0}$
by the square actions with orbit representative 
$g_\lambda=\begin{pmatrix}
   1& 0 \\
    0 & 1
\end{pmatrix}$.

Here is the list of $G(\mathcal O)$-orbits representatives $x_\lambda=t^\lambda g_\lambda w_1^{-1}$ in
$G(F)^{\on{inv}\circ\theta}$
and the component groups
$S_\lambda=\pi_0(\on{Stab}_{G(\mathcal O)}(x_\lambda))$
of $G(\mathcal O)$-stabilizers
:
\begin{itemize}
    \item 
    For $\lambda=(\mu,\mu)$, we have 
    $x_\lambda=\begin{pmatrix}
   t^\mu& 0 \\
    0 & t^\mu
\end{pmatrix}$ and $S_\lambda=\mathbb Z/2\mathbb Z$.
\item 
    For $\lambda=(\lambda_1,\lambda_2)$, $\lambda_1>\lambda_2$, we have 
    $x_\lambda=\begin{pmatrix}
   t^{\lambda_1}& 0 \\
    0 & t^{\lambda_2}
\end{pmatrix}$ 
and $S_\lambda=\mathbb Z/2\mathbb Z\times\mathbb Z/2\mathbb Z$.
\end{itemize}

Here is the list of $\mathbb R$-points $(\mathcal E,c)\in\Bun_{G}(\mathbb P^1)_{\eta}$
and the reductive quotients 
$\on{Aut}(\mathcal E,c)_{red}$
of their automorphism groups:
\begin{itemize}
    \item 
    For $\lambda=(\mu,\mu)$,
      $\mathcal E=\mathcal O(\mu)\oplus\mathcal O(\mu)$, $c=\begin{pmatrix}
   1& 0 \\
    0 & 1
\end{pmatrix}$, and $\on{Aut}(\mathcal E,c)_{red}=\GL_2(\mathbb R)$.
\item 
For $\lambda=(\lambda_1,\lambda_2)$, $\lambda_1>\lambda_2$,
    $\mathcal E=\mathcal O(\lambda_1)\oplus\mathcal O(\lambda_2)$, $c=\begin{pmatrix}
   1& 0 \\
    0 & 1
\end{pmatrix}$, and $\on{Aut}(\mathcal E,c)_{red}=\mathbb R^\times\times\mathbb R^\times$.
\end{itemize}

Iwahori orbits: We have 
\[\{\tilde w=t^\lambda w\in\widetilde W|\theta(\tilde w)=\tilde w^{-1}\}=
\{\tilde w=t^\lambda w|w(\lambda)=\lambda \}=\{t^\lambda|\lambda\in X_*(T)\}\sqcup\{t^\lambda s|\lambda=(\mu,\mu)\}.\]
Assume $\tilde w=t^\lambda$. 
Then  $T_{\tilde w}=T$ and $|T\backslash T_{\tilde w}|=|T\backslash_{[2]}T|=1$
with representative $g_{\tilde w}=\begin{pmatrix}
   1& 0 \\
    0 & 1
\end{pmatrix}$.
Assume $\tilde w=t^\lambda s$ with $\lambda=(\mu,\mu)$.
Then $T_{\tilde w}=\{(z,z)|z\in\bC^\times\}$
and $|T\backslash T_{\tilde w}|=1$ with representative 
$g_{\tilde w}=\begin{pmatrix}
   1& 0 \\
    0 & 1\end{pmatrix}$.
    
Here is the list of $I$-orbits representatives 
$x_{\tilde w}=\tilde w g_{\tilde w}$ in
$G(F)^{\on{inv}\circ\theta}$
and the component groups
$H_\lambda=\pi_0(\on{Stab}_{I}(x_{\tilde w}))$
of $I$-stabilizers
:
\begin{itemize}
    \item 
    For $\tilde w=t^\lambda$, we have 
    $x_{\tilde w}=\begin{pmatrix}
   t^{\lambda_1}& 0 \\
    0 & t^{\lambda_2}
\end{pmatrix}$ and $S_\lambda=\mathbb Z/2\mathbb Z\times \mathbb Z/2\mathbb Z$.
\item 
    For $\tilde w=t^\lambda s$,  $\lambda=(\mu,\mu)$, we have 
    $x_{\tilde w}=\begin{pmatrix}
   0& t^{\mu} \\
    t^{\mu} & 0
\end{pmatrix}$ and $S_\lambda=e$.
\end{itemize}

Here is the list of $\mathbb R$-points $(\mathcal E,c,l_0,l_\infty)\in\Bun_{G}(\mathbb P^1,0,\infty)_{\eta}$, $l_0$ and $l_\infty=c(l_0)$ are lines in the fiber $\mathcal E$ at $0$ and $\infty$ respectively, 
and the reductive quotients 
$\on{Aut}(\mathcal E,c,l_0,l_\infty)_{red}$
of their automorphism groups:
\begin{itemize}
    \item 
    For $\tilde w=t^\lambda$,
      $\mathcal E=\mathcal O(\lambda_1)\oplus\mathcal O(\lambda_2)$, $c=\begin{pmatrix}
   1& 0 \\
    0 & 1
\end{pmatrix}$, $l_0=[1,0]$, $l_\infty=[1,0]$, and\newline 
$\on{Aut}(\mathcal E,c,l_0,l_\infty)_{red}=\bR^\times\times\bR^\times$.
\item 
For $\tilde w=t^\lambda s$, $\lambda=(\mu,\mu)$,
    $\mathcal E=\mathcal O(\mu)\oplus\mathcal O(\mu)$, $c=\begin{pmatrix}
   1& 0 \\
    0 & 1
\end{pmatrix}$, $l_0=[1,0]$, $l_\infty=[0,1]$, and $\on{Aut}(\mathcal E,c,l_0,l_\infty)_{red}\simeq\bC^\times$.
\end{itemize}    

When $\lambda=(0,0)$, the two reductions 
$l_0=[1,0],l_\infty=[1,0]$ or $[0,1]$ 
on the trivial bundle $\mathcal E=\mathcal O\oplus\mathcal O$
corresponds to the 
two $\GL_2(\bR)$-orbits on $\GL_2/B$.

\subsubsection{Assume $\epsilon=-1$}\label{sss:GL2R epsilon=-1}
We shall describe the spherical orbits parametrization.
Assume $\lambda_1=\lambda_2=\mu$.
Then $L_\lambda=\GL_2$ and 
\[A_{\lambda,0}=\{g\in\GL_2|g={^t}g(-1)^\lambda\}=
	\begin{cases}
		\{g\in\GL_2|g={^t}g\}\cong\GL_2/\mathrm O_2, \ \ \ \ \ \ \ \ \  \ \ \ \ \ \ \ \ \ \ \  \mu\in2\mathbb Z,\\
		\{g\in\GL_2|g=-{^t}g\}\cong\GL_2/\mathrm{SL}_2\cong\mathbb G_m,\ \ \ \ \ \  \ \ \mu\in2\mathbb Z+1.
	\end{cases}
\]
It follows that 
$|L_\lambda\backslash A_{\lambda,0}|=1$
with representative 
$g_\lambda=
		\begin{pmatrix}
   1& 0 \\
    0 & 1
\end{pmatrix},   \mu\in2\mathbb Z$,
and $g_\lambda=
        \begin{pmatrix}
   0& 1 \\
    -1 &0
\end{pmatrix},\newline \mu\in2\mathbb Z+1$.
Assume $\lambda_1>\lambda_2$.
Then $L_\lambda=T$ and 
\begin{align*}
A_{\lambda,0}
&=\{t\in T|t=t(-1)^\lambda\}\\
&=\{(t_1,t_2)\in \mathbb G_m^2|(t_1,t_2)=(t_1(-1)^{\lambda_1},t_2(-1)^{\lambda_2})\}=
	\begin{cases}
		T, \ \ \ \ \ \lambda_1,\lambda_2\in2\mathbb Z,\\
        \emptyset, \ \ \ \ \ \ \text{otherwise}.
	\end{cases}
\end{align*}
In the first case we have $|L_\lambda\backslash A_{\lambda,0}|=|T\backslash_{[2]} T|=1$
 with orbit representative 
$g_\lambda=\begin{pmatrix}
   1& 0 \\
    0 & 1
\end{pmatrix}$.

Here is the list of $G(\mathcal O)$-orbits 
representatives $x_\lambda=t^\lambda g_\lambda w_1^{-1}$ in
$G(F)^{\on{inv}\circ\theta}$
and the component groups
$S_\lambda=\pi_0(\on{Stab}_{G(\mathcal O)}(x_\lambda))$
of $G(\mathcal O)$-stabilizers:
\begin{itemize}
    \item 
    For $\lambda=(2\mu+1,2\mu+1)$, we have 
    $x_\lambda=\begin{pmatrix}
   0& t^{2\mu+1} \\
    -t^{2\mu+1} & 0
\end{pmatrix}$ and $S_\lambda=e$.
\item 
    For $\lambda=(2\mu,2\mu)$,  we have 
    $x_\lambda=\begin{pmatrix}
   t^{2\mu}& 0 \\
    0 & t^{2\mu}
\end{pmatrix}$ and $S_\lambda=\mathbb Z/2\mathbb Z$.
\item 
    For $\lambda=(2\mu_1,2\mu_2)$, $\mu_1>\mu_2$, we have 
    $x_\lambda=\begin{pmatrix}
   t^{2\mu_1}& 0 \\
    0 & t^{2\mu_2}
\end{pmatrix}$ and $S_\lambda=\mathbb Z/2\mathbb Z\times\mathbb Z/2\mathbb Z$.
\end{itemize}

Here is the list of $\mathbb R$-points $(\mathcal E,c)\in\Bun_{G}(\mathbb P^1)_{\eta}$
and the reductive quotients 
$\on{Aut}(\mathcal E)_{red}$
of their automorphism groups:
\begin{itemize}
    \item 
    For $\lambda=(2\mu+1,2\mu+1)$,
      $\mathcal E=\mathcal O(2\mu+1)\oplus\mathcal O(2\mu+1)$, $c=\begin{pmatrix}
   0& 1 \\
    -1 & 0
\end{pmatrix}$, and $\on{Aut}(\mathcal E)_{red}=\GL_1(\mathbb H)$.
   \item 
    For $\lambda=(2\mu,2\mu)$,
      $\mathcal E=\mathcal O(2\mu)\oplus\mathcal O(2\mu)$, $c=\begin{pmatrix}
   1& 0 \\
    0 & 1
\end{pmatrix}$, and $\on{Aut}(\mathcal E)_{red}=\GL_2(\mathbb R)$.
\item 
For $\lambda=(2\mu_1,2\mu_2)$, $\mu_1>\mu_2$,
    $\mathcal E=\mathcal O(2\mu_1)\oplus\mathcal O(2\mu_2)$, $c=\begin{pmatrix}
   1& 0 \\
    0 & 1
\end{pmatrix}$ and $\on{Aut}(\mathcal E)_{red}=\mathbb R^\times\times\mathbb R^\times$.
\end{itemize}

\begin{rem}
 The list above agrees with the well-known description of 
    vector bundles on twistor $\widetilde{\mathbb P^1_\bbR}$, see, e.g., \cite{Jaburi}.   
\end{rem}

\subsection{$\GL_1(\mathbb H)$ case}
We shall describe the spherical orbits parametrization.
We have $\theta_0(g)=\begin{pmatrix}
   0& 1 \\
    -1 & 0
\end{pmatrix}{^t}g^{-1}\begin{pmatrix}
   0& -1 \\
    1 & 0
\end{pmatrix}$, $\eta_0(g)=\begin{pmatrix}
   0& 1 \\
    -1 & 0
\end{pmatrix}\bar g\begin{pmatrix}
   0& -1 \\
    1 & 0
\end{pmatrix}$, $w_1=\begin{pmatrix}
   0& 1 \\
    -1 & 0
\end{pmatrix}$,
$w_2=\begin{pmatrix}
   -1& 0 \\
    0 & -1
\end{pmatrix}$
and $X_*(T)^+=X_*(T)^+_{\theta_0,w_1}$.
\subsubsection{Assume $\epsilon=1$}
Assume $\lambda_1=\lambda_2=\mu$.
Then $L_\lambda=\GL_2$ and 
\[A_{\lambda,0}=\{g\in\GL_2|g=-^tg\}\cong\GL_2/\on{SL}_2.\]
It follows that 
$|L_\lambda\backslash A_{\lambda,0}|=1$
with representative
$g_\lambda=
		\begin{pmatrix}
   0& 1 \\
    -1 & 0
\end{pmatrix}$.

Assume $\lambda_1>\lambda_2$.
Then $L_\lambda=T$ and 
\[A_{\lambda,0}=
\{(t_1,t_2)\in \mathbb G_m^2|(t_1,t_2)=-(t_1,t_2)\}=\emptyset.\]

Here is the list of $G(\mathcal O)$-orbits 
representatives $x_\lambda=t^\lambda g_\lambda w_1^{-1}$ in
$G(F)^{\on{inv}\circ\theta}$
and the component groups
$S_\lambda=\pi_0(\on{Stab}_{G(\mathcal O)}(x_\lambda))$
of $G(\mathcal O)$-stabilizers:
\begin{itemize}
    \item 
    For $\lambda=(\mu,\mu)$, we have 
    $x_\lambda=\begin{pmatrix}
   t^\mu& 0 \\
    0 & t^\mu
\end{pmatrix}$ and $S_\lambda=e$.
\end{itemize}

Here is the list of $\mathbb R$-points $(\mathcal E,c)\in\Bun_{G}(\mathbb P^1)_{\eta}$
and the reductive quotients 
$\on{Aut}(\mathcal E)_{red}$
of their automorphism groups:
\begin{itemize}
    \item 
    For $\lambda=(\mu,\mu)$,
      $\mathcal E=\mathcal O(\mu)\oplus\mathcal O(\mu)$, $c=\begin{pmatrix}
   1& 0 \\
    0 & 1
\end{pmatrix}$, and $\on{Aut}(\mathcal E)_{red}=\GL_1(\mathbb H)$.
\end{itemize}

\subsubsection{Assume $\epsilon=-1$}
Assume $\lambda_1=\lambda_2=\mu$.
Then $L_\lambda=\GL_2$ and 
\[A_{\lambda,0}=\{g\in\GL_2|g=-^tg(-1)^\lambda\}=\begin{cases}
		\{g\in\GL_2|g=-{^t}g\}\cong\GL_2/\mathrm{SL}_2, \ \ \  \ \ \ \ \ \   \mu\in2\mathbb Z,\\
		\{g\in\GL_2|g={^t}g\}\cong\GL_2/\mathrm{O}_2, \ \ \ \ \ \ \ \ \ \ \ \  \mu\in2\mathbb Z+1.
	\end{cases}
\]
It follows that 
$|L_\lambda\backslash A_{\lambda,0}|=1$
with representative 
$g_\lambda=
		\begin{pmatrix}
   0& 1 \\
    -1 & 0
\end{pmatrix}$ if $\mu$ even
and $g_\lambda=
		\begin{pmatrix}
   0& 1 \\
    1 & 0
\end{pmatrix}$ if $\mu$ odd.
Assume $\lambda_1>\lambda_2$.
Then $L_\lambda=T$ and 
\[A_{\lambda,0}=
\{(t_1,t_2)\in \mathbb G_m^2|(t_1,t_2)=-(t_1(-1)^{\lambda_1},t_2(-1)^{\lambda_2})\}=\begin{cases}
		 T,  \ \ \ \ \ \ \ \ \ \ \lambda_1,\lambda_2\in2\mathbb Z+1,\\
        \emptyset,  \ \ \ \ \ \ \ \ \ \ \ \ \ \ \ \ \ \ \text{otherwise}.
	\end{cases}\]
In the first case, we have $|L_\lambda\backslash A_{\lambda,0}|=|T\backslash_{[2]} T|=1$
 with orbit representative 
$g_\lambda=\begin{pmatrix}
1& 0 \\
    0 & 1
\end{pmatrix}$. 

Here is the list of $G(\mathcal O)$-orbits 
representatives $x_\lambda=t^\lambda g_\lambda w_1^{-1}$ in
$G(F)^{\on{inv}\circ\theta}$
and the component groups
$S_\lambda=\pi_0(\on{Stab}_{G(\mathcal O)}(x_\lambda))$
of $G(\mathcal O)$-stabilizers:
\begin{itemize}
    \item 
    For $\lambda=(2\mu,2\mu)$, we have 
    $x_\lambda=\begin{pmatrix}
   t^{2\mu}& 0 \\
    0 & t^{2\mu}
\end{pmatrix}$ and $S_\lambda=e$.
\item 
    For $\lambda=(2\mu+1,2\mu+1)$, we have 
    $x_\lambda=\begin{pmatrix}
t^{2\mu+1}& 0 \\
   0 & -t^{2\mu+1}
\end{pmatrix}$ and $S_\lambda=\mathbb Z/2\mathbb Z$.

\item
 For $\lambda=(2\mu_1+1,2\mu_2+1)$, $\mu_1>\mu_2$, we have 
    $x_\lambda=\begin{pmatrix}
0& -t^{2\mu_1+1} \\
 t^{2\mu_2+1}   & 0
\end{pmatrix}$ and $S_\lambda=\bZ/2\bZ\times\bZ/2\bZ$.
\end{itemize}

Here is the list of $\mathbb R$-points $(\mathcal E,c)\in\Bun_{G}(\mathbb P^1)_{\eta}$
and the reductive quotients 
$\on{Aut}(\mathcal E)_{red}$
of their automorphism groups:
\begin{itemize}
    \item 
    For $\lambda=(2\mu,2\mu)$,
      $\mathcal E=\mathcal O(2\mu)\oplus\mathcal O(2\mu)$, $c=\begin{pmatrix}
   1& 0 \\
    0 & 1
\end{pmatrix}$, and $\on{Aut}(\mathcal E)_{red}=\GL_1(\mathbb H)$.
  
\item 
 For $\lambda=(2\mu+1,2\mu+1)$,
      $\mathcal E=\mathcal O(2\mu+1)\oplus\mathcal O(2\mu+1)$, $c=\begin{pmatrix}
   1& 0 \\
    0 & -1
\end{pmatrix}$, and $\on{Aut}(\mathcal E)_{red}\simeq\GL_2(\mathbb R)$.

\item 
 For $\lambda=(2\mu_1+1,2\mu_2+1)$, $\mu_1>\mu_2$,
      $\mathcal E=\mathcal O(2\mu_1+1)\oplus\mathcal O(2\mu_2+1)$, $c=\begin{pmatrix}
   0& -1 \\
    1 & 0
\end{pmatrix}$, and $\on{Aut}(\mathcal E)_{red}=\bR^\times\times\bR^\times$.
\end{itemize}

\subsection{$\mathrm U(2)$ and $\mathrm U(1,1)$ cases}
We shall describe the spherical orbits parametrization.
Since $\mathrm U(1,1)$ is a pure inner twist of $\mathrm U(2)$, according to Lemma \ref{l:pure inner twist},
they have the same parametrization. Thus we can restrict to the case $\mathrm U(2)$ where $\theta_0(g)=g$, $\eta_0(g)=\eta_{c,0}(g)={^t}\bar g^{-1}$, $w_1=\begin{pmatrix}
		0&1\\
		1&0
	\end{pmatrix}$,  $w_2=\begin{pmatrix}
		1&0\\
		0&1
	\end{pmatrix}$, and $X_*(T)^+_{\theta_0,w_1}=\{((\mu,-\mu)|\mu\in\bZ_{\geq0})\}$.
    
\subsubsection{Assume $\epsilon=1$} 
Assume $\mu=0$. Then $L_\lambda=\GL_2$
and $A_{\lambda,0}=\{g\GL_2|g\Ad_{w_1^{-1}}g=e\}$.
We have 
$|L_\lambda\backslash A_{\lambda,0}|=|\mathrm{H}^1(\on{Ad}_{w_1^{-1}},\GL_2)|=3$ with representatives 
$g_{\lambda}=\begin{pmatrix}
		1&0\\
		0&1
	\end{pmatrix},\begin{pmatrix}
		0&1\\
		1&0
	\end{pmatrix},\begin{pmatrix}
		0&-1\\
		-1&0
	\end{pmatrix} $.
 Here $\mathrm{H}^1(\on{Ad}_{w_1^{-1}},\GL_2)$ is the Cartan cohomology of $\GL_2$ with respect to the involution 
 $\on{Ad}_{w_1^{-1}}$.

Assume $\mu>0$. Then $L_\lambda=T$,  
$A_{\lambda,0}=\{t\in T|t=\Ad_{w_1^{-1}}t^{-1}\}=\{(z,z^{-1})|z\in\bC^\times\}$, and 
$|L_\lambda\backslash A_{\lambda,0}|=| T/\{(z,z)|z\in\bC^\times\}|=1$ with representative 
$g_\lambda=\begin{pmatrix}
   1& 0 \\
    0 & 1
\end{pmatrix}$.

Here is the list of $G(\mathcal O)$-orbits 
representatives $x_\lambda=t^\lambda g_\lambda w_1^{-1}$ in
$G(F)^{\on{inv}\circ\theta}$
and the component groups
$S_\lambda=\pi_0(\on{Stab}_{G(\mathcal O)}(x_\lambda))$
of $G(\mathcal O)$-stabilizers:
\begin{itemize}
    \item 
    For $\lambda=(0,0)$, we have 
    $x_\lambda=\begin{pmatrix}
   0& 1 \\
    1 & 0
\end{pmatrix},\begin{pmatrix}
   1& 0 \\
    0 & 1
\end{pmatrix},\begin{pmatrix}
   -1& 0 \\
    0 & -1
\end{pmatrix}$ and $S_\lambda=e$.
\item 
    For $\lambda=(\mu,-\mu)$, $\mu>0$, we have 
    $x_\lambda=\begin{pmatrix}
0& t^\mu \\
   t^{-\mu} & 0
\end{pmatrix}$ and $S_\lambda=e$.
\end{itemize}

Here is the list of $\mathbb R$-points $(\mathcal E,c)\in\Bun_{G}(\mathbb P^1)_{\eta}$
and the reductive quotients 
$\on{Aut}(\mathcal E)_{red}$
of their automorphism groups:
\begin{itemize}
    \item 
    For $\lambda=(0,0)$,
      $\mathcal E=\mathcal O\oplus\mathcal O$, $c=\begin{pmatrix}
   0& 1 \\
    1 & 0
\end{pmatrix},\begin{pmatrix}
   1& 0 \\
    0 & 1
\end{pmatrix},\begin{pmatrix}
   -1& 0 \\
    0 & -1
\end{pmatrix}$, and $\on{Aut}(\mathcal E)_{red}=\mathrm{U}(1,1), \mathrm{U}(2,0), \mathrm{U}(0,2)$ are the purely inner forms of $\mathrm U(1,1)$.
  
\item 
 For $\lambda=(\mu,-\mu)$, $\mu>0$,
      $\mathcal E=\mathcal O(\mu)\oplus\mathcal O(-\mu)$, $c=\begin{pmatrix}
   0& 1 \\
    1 & 0
\end{pmatrix}$, and $\on{Aut}(\mathcal E)_{red}=\{(z,\bar z)|z\in\bC^\times\}\subset T$.
\end{itemize}

\subsubsection{Assume $\epsilon=-1$} 
Assume $\mu=0$. Then $L_\lambda=\GL_2$
and $A_{\lambda,0}=\{g\in\GL_2|g\Ad_{w_1^{-1}}g=e\}$.
We have 
$|L_\lambda\backslash A_{\lambda,0}|=|\mathrm{H}^1(\on{Ad}_{w_1^{-1}},\GL_2)|=3$ with representatives 
$g_{\lambda}=\begin{pmatrix}
		1&0\\
		0&1
	\end{pmatrix},\begin{pmatrix}
		0&1\\
		1&0
	\end{pmatrix},\begin{pmatrix}
		0&-1\\
		-1&0
	\end{pmatrix} $.
 Here $H^1(\on{Ad}_{w_1^{-1}},\GL_2)$ is the Cartan cohomology of $\GL_2$ with respect to the involution 
 $\on{Ad}_{w_1^{-1}}$.
 
Assume $\mu>0$. Then $L_\lambda=T$,  
$A_{\lambda,0}=\{t\in T|t=\Ad_{w_1^{-1}}t^{-1}(-1)^\mu\}=\{(z,(-1)^\mu z^{-1})|z\in\bC^\times\}$, and 
$|L_\lambda\backslash A_{\lambda,0}|=| T/\{(z,z)|z\in\bC^\times\}|=1$ with representative 
$g_\lambda=\begin{pmatrix}
   1& 0 \\
    0 & (-1)^\mu
\end{pmatrix}$.

Here is the list of $G(\mathcal O)$-orbits 
representatives $x_\lambda=t^\lambda g_\lambda w_1^{-1}$ in
$G(F)^{\on{inv}\circ\theta}$
and the component groups
$S_\lambda=\pi_0(\on{Stab}_{G(\mathcal O)}(x_\lambda))$
of $G(\mathcal O)$-stabilizers:
\begin{itemize}
    \item 
    For $\lambda=(0,0)$, we have 
    $x_\lambda=\begin{pmatrix}
   0& 1 \\
    1 & 0
\end{pmatrix},\begin{pmatrix}
   1& 0 \\
    0 & 1
\end{pmatrix},\begin{pmatrix}
   -1& 0 \\
    0 & -1
\end{pmatrix}$ and $S_\lambda=e$.

\item 
    For $\lambda=(\mu,-\mu)$, $\mu>0$, we have 
    $x_\lambda=\begin{pmatrix}
0& t^\mu \\
   (-t)^{-\mu} & 0
\end{pmatrix}$ and $S_\lambda=e$.
\end{itemize}

Here is the list of $\mathbb R$-points $(\mathcal E,c)\in\Bun_{G}(\mathbb P^1)_{\eta}$
and the reductive quotients 
$\on{Aut}(\mathcal E)_{red}$
of their automorphism groups:
\begin{itemize}
    \item 
    For $\lambda=(0,0)$,
      $\mathcal E=\mathcal O\oplus\mathcal O$, $c=\begin{pmatrix}
   0& 1 \\
    1 & 0
\end{pmatrix},\begin{pmatrix}
   1& 0 \\
    0 & 1
\end{pmatrix},\begin{pmatrix}
   -1& 0 \\
    0 & -1
\end{pmatrix}$, and $\on{Aut}(\mathcal E)_{red}=\mathrm{U}(1,1), \mathrm{U}(2,0), \mathrm{U}(0,2)$ are the pure inner forms of $\mathrm U(2)$.
  
\item 
 For $\lambda=(\mu,-\mu)$, $\mu>0$,
      $\mathcal E=\mathcal O(\mu)\oplus\mathcal O(-\mu)$, $c=\begin{pmatrix}
   0& 1 \\
    (-1)^\mu & 0
\end{pmatrix}$, and $\on{Aut}(\mathcal E)_{red}=\{(z,\bar z)|z\in\bC^\times\}\subset T$.
\end{itemize}
    
\begin{rem}
Note that 
$G(F)^{\theta}=\GL_2(\!(t^2)\!)$. 
Under the bijection 
$\tau_{\theta}:\GL_2(F)/\GL_2(\! t^2)\!)\cong\GL_2(F)^{\on{inv}\circ\theta}$
sending $\gamma\mapsto\gamma\theta(\gamma)^{-1}$, 
the elements
$\begin{pmatrix}
	t&0\\
	0&1
\end{pmatrix},\begin{pmatrix}
	1&0\\
	0&1
\end{pmatrix},\begin{pmatrix}
	t&0\\
	0&t
\end{pmatrix},\begin{pmatrix}
	1&t\\
	0&t^{\mu+1}
\end{pmatrix}$
are mapped into the $\GL_2(\mathcal O)$-orbits of 
$\begin{pmatrix}
	-1&0\\
	0&1
\end{pmatrix},\begin{pmatrix}
	1&0\\
	0&1
\end{pmatrix},\begin{pmatrix}
	-1&0\\
	0&-1
\end{pmatrix},\begin{pmatrix}
	0&t^{\mu}\\
	(-t)^{-\mu}&0
\end{pmatrix}$
 respectively,
 where the first matrix is in the same orbit as 
 $\begin{pmatrix}
   0& 1 \\
    1 & 0
\end{pmatrix}$.
Then the above description of orbits 
implies that 
   the double cosets $\GL_2(\mathcal O)\backslash\GL_2(F)/\GL_2(\!(t^2)\!)$
	are represented by  
    \[\begin{pmatrix}
	t&0\\
	0&1
\end{pmatrix},\begin{pmatrix}
	1&0\\
	0&1
\end{pmatrix},\begin{pmatrix}
	t&0\\
	0&t
\end{pmatrix},\begin{pmatrix}
	1&t\\
	0&t^{\mu+1}
\end{pmatrix}, \mu>0.\] 
    In fact, the same computation
    shows that 
    the double cosets $\GL_n(\mathcal O)\backslash\GL_n(F)/\GL_n(\!(t^2)\!)$
    for $\GL_n$
	are represented by 1-by-1 blocks of $1,t$ together with
    2-by-2 blocks
    $\begin{pmatrix}
	1&t\\
	0&t^{\mu+1}
\end{pmatrix}$, $\mu>0$, providing a different argument of  \cite[Theorem 2.1]{Jin}.
\end{rem}

%%%%%%%
\quash{
\section{}
\subsection{The $\theta_0=1$ case}
\subsubsection{}
We first consider the case of $\theta_0=1$.
Then $\theta(\gamma(t))=\gamma(-t)$,
 $G_\theta(F)=G(\!(t^2)\!)$.

Instead of $G(\cO)$-orbits on $X_\theta(F)$,
we will compute orbits on $G(F)^{\inv\circ\theta}$
under the twisted $G(\cO)$-conjugation:
\begin{equation}
	h\cdot g=h(t)gh(-t)^{-1}.
\end{equation}

By Cartan decomposition, 
\[
G(F)^{\inv\circ\theta}=\bigsqcup_{\lambda\in X_*(T)^+}
(G(\cO)t^\lambda G(\cO))^{\inv\circ\theta}.
\]

If $g_1t^\lambda g_2=(g_1(-t)(-t)^\lambda g_2(-t))^{-1}=g_2(-t)^{-1}(-1)^\lambda t^{-\lambda}g_1(-t)^{-1}$,
then $t^\lambda$ and $t^{-\lambda}$
are in the same $G(F)$-double coset.
Denote by $w_0$ the longest element in the Weyl group.
Then we must have
\begin{equation}\label{eq:lambda}
	\lambda=-w_0\cdot\lambda.
\end{equation}

Fix a $\lambda$ as in \eqref{eq:lambda}.
We classify $G(\cO)$-twisted conjugacy classes in 
$(G(\cO)t^\lambda G(\cO))^{\inv\circ\theta}$.

By conjugation, it suffices to consider 
$(t^\lambda G(\cO))^{\inv\circ\theta}$.
Fix a lift of $w_0\in G$ where $w_0^2\in T$,
and denote $t_\lambda=(-1)^\lambda w_0^{-2}\in T$.
By \eqref{eq:lambda},
$\Ad_{w_0}t_\lambda=t_\lambda$.
Then $t^\lambda gw_0\in(t^\lambda G(\cO))^{\inv\circ\theta}$ for $g\in G(\cO)$
if and only if
\begin{equation}\label{eq:A_lambda}
	g\in A_\lambda:=\{\Ad_{t^\lambda}g=\Ad_{w_0^{-1}}g(-t)^{-1}t_\lambda\}.
\end{equation}

\begin{rem}
	For $G=\GL_n$, we can choose $w_0$ so that $t_\lambda=1$,
	but in general this is unclear.
\end{rem}

For $h\in G(\cO)\cap\Ad_{t^\lambda}G(\cO)$, 
its $\theta$-twisted conjugation induces an action on the above $g$ by
\begin{equation}\label{eq:h action}
	ht^\lambda gw_0h(-t)^{-1}=t^\lambda(\Ad_{t^{-\lambda}}h)g(\Ad_{w_0}h(-t)^{-1})w_0,
	\quad
	h\cdot g:=(\Ad_{t^{-\lambda}}h)g(\Ad_{w_0}h(-t)^{-1}).
\end{equation}

Denote the parabolic subgroup for $\lambda$ by $P_\lambda$,
with Levi $L_\lambda$, unipotent radical $U_\lambda$,
opposite parabolic $P_\lambda^-$, opposite unipotent radical $U_\lambda^-$:
\begin{align*}
	&P_\lambda=\langle T,U_\alpha\mid\alpha(\lambda)\geq0\rangle,\quad
	P_\lambda^-=\langle T,U_\alpha\mid\alpha(\lambda)\leq0\rangle,\\
	&L_\lambda=\langle T,U_\alpha\mid\alpha(\lambda)=0\rangle,\quad
	U_\lambda=\langle U_\alpha\mid\alpha(\lambda)>0\rangle,\quad
	U_\lambda^-=\langle U_\alpha\mid\alpha(\lambda)<0\rangle.
\end{align*}
From \eqref{eq:lambda}, we have 
$\Ad_{w_0}L_\lambda=L_\lambda$,
$\Ad_{w_0}U_\lambda=U_\lambda^-$,
$\Ad_{w_0}P_\lambda=P_\lambda^-$.
Denote their Lie algebras by $\fp_\lambda,\fp_\lambda^-,\fl_\lambda,\fu_\lambda,\fu_\lambda^-$.

Define subset
\begin{equation}\label{eq:A_lambda,0}
	A_{\lambda,0}=\{g_0\in L_\lambda\mid g_0=\Ad_{w_0^{-1}}g_0^{-1}t_\lambda\}.
\end{equation}

Since $t^\lambda$ acts trivially on $L_\lambda$,
from \eqref{eq:A_lambda} we see $A_{\lambda,0}\subset A_\lambda$.
The action \eqref{eq:h action} 
induces an $L_\lambda$-action on $A_{\lambda,0}$ via 
$h\cdot g_0=hg_0\Ad_{w_0}h^{-1}$.

\subsubsection{}
\begin{prop}\label{p:coset conj classes}
	The inclusion $A_{\lambda,0}\subset A_\lambda$ induces a bijection
	\[
	L_\lambda\backslash A_{\lambda,0}\simeq 
	(G(\cO)\cap\Ad_{t^\lambda}G(\cO))\backslash A_\lambda.
	\]
\end{prop}
\begin{proof}
	We inductively show that $g\in A_\lambda$
	can be twisted conjugated into $A_{\lambda,0}$ up to 
	$G_k(\cO):=\ker(G(\cO)\rightarrow G(\cO/t^k))$,
	$\forall k>0$.
	This will imply the surjectivity of the statement 
	by taking a (convergent) product of
	the twisted conjugations.
	
	Write $g=g_0\tilde{g}_1$ where $g_0\in G, \tilde{g}_1\in G_1(\cO)$.
	From \eqref{eq:A_lambda}, $g\in G(\cO)\cap\Ad_{t^{-\lambda}}G(\cO)$,
	so that $g_0\in G\cap \Ad_{t^{-\lambda}}G(\cO)=P_\lambda$
	and $\tilde{g}_1\in G_1(\cO)\cap\Ad_{t^{-\lambda}}G(\cO)$.
	Write $g_0=g_0^1g_0^2$ where $g_0^1\in L_\lambda,g_0^2\in U_\lambda$.
	Observe that $\Ad_{t^\lambda}g_0^1=g_0^1$,
	$\Ad_{t^\lambda}g_0^2\in G_1(\cO)$,
	$\Ad_{t^\lambda}\tilde{g}_1\in U_\lambda^- G_1(\cO)$.
	Therefore, from \eqref{eq:A_lambda} we obtain that $g_0^1\in A_{\lambda,0}$.
	
	Let $h=\Ad_{w_0^{-1}}g_0^2\in U_\lambda^-\subset G(\cO)\cap\Ad_{t^\lambda}G(\cO)$.
	Observe that $\Ad_{t^{-\lambda}}U_\lambda^-\subset G_1(\cO)$.
	We get
	\[
	h\cdot g\in g_0^1G_1(\cO).
	\]
	This establish the induction basis for $k=1$.
	Moreover, observe that $G(\cO)\cap\Ad_{t^\lambda}G(\cO)\subset P_\lambda^-G_1(\cO)$.
	Thus two elements in $A_{\lambda,0}G_1(\cO)\cap A_\lambda$
	and twisted conjugated by $G(\cO)\cap\Ad_{t^\lambda}G(\cO)$
	if and only if their factors in $A_{\lambda,0}$
	are twisted conjugated by $L_\lambda$.
	This implies the injectivity of the statement.
	
	It remains to establish the induction.
	Assume the induction hypothesis holds for $k>0$.
	We need to show that any $g\in A_{\lambda,0}G_k(\cO)\cap A_\lambda$
	can be twisted conjugated into $A_{\lambda,0}G_{k+1}(\cO)$.
	Write $g=g_0g_k\tilde{g}_{k+1}$
	where $g_k=\exp(t^kX),X\in\fg$ and $\tilde{g}_{k+1}\in G_{k+1}(\cO)$.
	Note that $g\in A_\lambda$ and 
	$g_0\in L_\lambda$ commutes with $t^\lambda$. 
	We deduce that
	\[
	g_k\tilde{g}_{k+1}\in G_k(\cO)\cap\Ad_{t^{-\lambda}}G_k(\cO)
	\subset\exp(t^k\fp_\lambda)G_{k_1}(\cO),
	\]
	i.e. $X\in\fp_\lambda$.
	Write $X=X_1+X_2$, $X_1\in\fl_\lambda$, $X_2\in\fu_\lambda$.
	Twisted conjugate by $h=\exp((-t)^k\Ad_{w_0^{-1}}X_2)$,
	we can kill $X_2$ without changing $g_0,X_1$.
	Thus we may assume $X_2=0$.
	Then the equation \eqref{eq:A_lambda} gives
	\begin{equation}\label{eq:1}
		\exp(t^kX_1)\tilde{g}_{k+1}=
		(\Ad_{t^{-\lambda}}\Ad_{g_0^{-1}w_0^{-1}}\tilde{g}_{k+1}(-t)^{-1})
		\exp((-1)^{k+1}t^k\Ad_{g_0^{-1}w_0^{-1}}X_1).	
	\end{equation}

	Note that 
	$a=\Ad_{t^{-\lambda}}\Ad_{g_0^{-1}w_0^{-1}}\tilde{g}_{k+1}(-t)^{-1}
	\in\Ad_{t^{-\lambda}}G_{k+1}(\cO)$.
	Also, from \eqref{eq:1} we see
	$a\in\exp(t^k\fl_\lambda)G_{k+1}(\cO)$.
	Since $\Ad_{t^{-\lambda}}$ as trivially on $\fl_\lambda$,
	we obtain 
	\[
	\exp(t^k\fl_\lambda)G_{k+1}(\cO)\cap\Ad_{t^{-\lambda}}G_{k+1}(\cO)
	=G_{k+1}(\cO)\cap\Ad_{t^{-\lambda}}G_{k+1}(\cO).
	\]
	Thus $a\in G_{k+1}(\cO)$.
	We deduce from \eqref{eq:1} that
	\begin{equation}\label{eq:X_1}
		X_1=(-1)^{k+1}\Ad_{g_0^{-1}w_0^{-1}}X_1.
	\end{equation}

    Now let $h=\exp(\frac{1}{2}(-t)^k\Ad_{w_0^{-1}}X_1)$.
    We obtain $h\cdot g\in g_0G_{k+1}(\cO)$ as desired.
    
    For each $k>0$, the twisted conjugation is by an element
    $h\in\exp(t^k\fg)$.
    Their product converges to an element of $G(\cO)$.
    This completes the proof of the statement.
\end{proof}

\subsubsection{Comparison with $B(G_c)$}
The trivial involution $\theta_0=id$ associates to the compact real form $G_c$ of $G$
fixed by the compact conjugation $\eta_c$.
We choose the maximal torus $T$ be to $\eta_c$-stable,
so that in a splitting $T\simeq\bGm^n$,
$\eta_c$ acts as inverse conjugation.
For $g\in G(\bC)$, we denote $\bar{g}:=\eta_c(g)$. 

According to \cite[Lemma 3.1.11]{Jaburi}
or \cite[\S13.2]{Fargues},
the set $B(G_c)=\{(\lambda,g)\}/\sim$
where 
\begin{itemize}
	\item [(1)] $\lambda:\bGm\rightarrow G$ is a group homomorphism over $\bC$,
	\item [(2)] $g\in G(\bC)$ such that $g\bar{g}=\lambda(-1)$,
	\item [(3)] $g^{-1}\lambda(\bar{z})g=\overline{\lambda(z)}$, $\forall z\in\bC^\times$,
	\item [(4)] $(\lambda,g)\sim(h\lambda h^{-1},hg\bar{h}^{-1})$ for $h\in G(\bC)$.
\end{itemize}

Via conjugation, we can assume $\lambda\in X_*(T)^+$.
Note that the above (3) is equivalent to
\begin{equation}
	g^{-1}\lambda g=\lambda^{-1}.
\end{equation}
In particular, $\lambda$ is conjugated to $-\lambda$.
The unique dominant coweight in the conjugacy class of $-\lambda$ is
$-w_0\cdot\lambda$.
Thus we have 
\[
\lambda=-w_0\cdot\lambda,
\]
which coincides with \eqref{eq:lambda}.
Moreover, we obtain
\[
g^{-1}\lambda g=w_0\lambda w_0^{-1},\qquad
\lambda=\Ad_{gw_0}\lambda,
\]
so that $gw_0\in L_\lambda$, $g\in L_\lambda w_0$.
Denote $g=g_0w_0$, $g_0\in L_\lambda$.

Denote
\begin{equation}\label{eq:B_lambda}
	B_\lambda=\{g_0\in L_\lambda\mid g_0w_0\overline{g_0w_0}=\lambda(-1)\},
\end{equation}
on which $L_\lambda$ acts by
\begin{equation}\label{eq:real L_lambda action}
	h\cdot g_0=hg_0\Ad_{w_0}\bar{h}^{-1}.
\end{equation}

Observe that \eqref{eq:A_lambda,0} can be rewritten as
\[
A_{\lambda,0}=\{g_0\in L_\lambda\mid (g_0w_0)^2=\lambda(-1)\},
\]
on which $L_\lambda$ acts by $h\cdot g_0=hg_0\Ad_{w_0}h^{-1}$.

\subsubsection{}
\begin{thm}\label{t:B(G_c)}
	There is a bijection $L_\lambda\backslash A_{\lambda,0}\simeq L_\lambda\backslash B_\lambda$,
	which implies a bijection 
	\[
	G(\cO)\backslash G(F)^{\inv\circ\theta}\simeq B(G_c).
	\]
\end{thm}

Observe that for $g\in L_\lambda$, 
$\Ad_{\lambda(\bR^\times)}\eta_c(g)=\eta_c(\Ad_{\lambda(\bR^\times)}g)=\eta_c(g)$,
so that $\eta_c$ acts on $L_\lambda$.
Also, we can choose $w_0$ to be a representative of the Weyl group of $K=G_c$.
Denote 
\[
C_\lambda:=\{g_0\in L_\lambda^{\eta_c}\mid (g_0w_0)^2=\lambda(-1)\},
\]
on which $L_\lambda^{\eta_c}$ acts by $h\cdot g_0=hg_0\Ad_{w_0}h^{-1}$.
Clearly
\[
C_\lambda=A_{\lambda,0}\cap B_\lambda.
\]

The theorem follows immediately from the following.
\begin{lem}
	The natural maps
	\[
	L_\lambda^{\eta_c}\backslash C_\lambda\rightarrow L_\lambda\backslash A_{\lambda,0},
	\quad
	L_\lambda^{\eta_c}\backslash C_\lambda\rightarrow L_\lambda\backslash B_\lambda
	\]
	are bijections.
\end{lem}
\begin{proof}
	We prove surjectivity for the first one map,
	the proof for second map is formally equal.
	
	Recall that we choose $w_0\in K$, i.e. $\eta_c(w_0)=w_0$.
	Denote the Lie algebra of $L_\lambda$ by $\fl_\lambda$,
	which decomposes under $\eta_c$ as
	\[
	\fl_\lambda=\fk_\lambda\oplus\fp_\lambda.
	\]
	Note $\Ad_{w_0}$ acts on $\fk_\lambda,\fp_\lambda$.
	By Iwasawa decomposition,
	we can write $g_0=k\exp(Y)\in A_{\lambda,0}$ 
	where $k\in L_\lambda^{\eta_c}$ and $Y\in\fp_\lambda$ satisfy
	\[
	(kw_0)^2\exp(\Ad_{w_0^{-1}}Y)
	=\lambda(-1)\exp(-\Ad_{w_0^{-1}}\Ad_{(w_0k)^{-1}}Y).
	\]
	Note that $\lambda(-1)\in K$.
	Thus we obtain
	\[
	(kw_0)^2=\lambda(-1),\quad Y=-\Ad_{(w_0k)^{-1}}Y,
	\]
	so that
	\[
	\exp(-\frac{1}{2}\Ad_k Y)\cdot g_0=k.
	\]
	This gives the first surjectivity.
	The only difference in the second map is that the condition on $Y$ becomes
	$Y=\Ad_{(w_0k)^{-1}}Y$.
	
	Next we show the injectivity for the first map,
	the proof for second is the same. 
	Suppose $h\cdot k_1=k_2$ for $k_1,k_2\in C_\lambda, h\in L_\lambda$.
	Write $h=k\exp(Y)$ its Iwasawa decomposition.
	Then we have
	\begin{align*}
		&h\cdot k_1=k\exp(Y)k_1\Ad_{w_0^{-1}}(\exp(-Y)k^{-1})=k_2\\
		\Rightarrow &(kk_1\Ad_{w_0}k^{-1})\exp(\Ad_{(k_1\Ad_{w_0}k^{-1})^{-1}}Y)
		=k_2\exp(\Ad_{w_0k}Y)\\
		\Rightarrow&kk_1\Ad_{w_0}k^{-1}=k_2,
	\end{align*}
	so that $k_1,k_2$ are also in the same $L_\lambda^{\eta_c}$-class.
\end{proof}

\begin{rem}
	The above can be regarded as a variant of the Matsuki correspondence for 
	the affine Grassmannian \cite[Theorem 1.2]{NadlerMatsuki}.
	Check the proof of \cite[Proposition 5.1]{NadlerMatsuki}.
	Note that a difference is here we work with $G(F)^{\inv\circ\theta}$,
	so we are taking more components into consideration.
\end{rem}

\begin{rem}
	For $G=\GL_n$ and $\theta_0=1$,
	we will see that we have a bijection
	\[
	\tau:G(\cO)\backslash X_\theta(F)\simeq G(\cO)\backslash_\theta G(F)^{\inv\circ\theta},
	\]
    see Corollary \ref{c:X_theta(F) vs G(F)^inv theta} for the proof.
    We conjecture this holds for general $G$,
    which by Theorem \ref{t:B(G_c)} leads to a bijection
    \[
    G(\cO)\backslash X_\theta(F)\simeq B(G_c).
    \]
\end{rem}

\subsection{Examples for the $\theta_0=1$ case}
\subsubsection{}
For $G=\GL_n$, the classification is known before:

\subsubsection{Example: Orbits for $\lambda=0$}\label{sss:lambda=0}
We have
\[
A_{0,0}=\{g_0\in G\mid (g_0w_0)^2=1\}
\]
and $G$ acts via usual conjugation on $g_0w_0$.
Denote two-torsion elements in a subspace $X\subset G$ by $X[2]$.
Thus we obtain
\begin{cor}\label{cor:lambda=0}
	For $\lambda=0$,
	$G(\cO)\backslash_\theta G(\cO)^{\inv\circ\theta}\simeq G\backslash_{\Ad} G[2]\simeq W\backslash T[2]$.
\end{cor}

Note that since $G[2]$ consists of semisimple elements,
every $G$-orbit in $G[2]$ is closed.
Pulling them back along $G(\cO)^{\inv\circ\theta}\hookrightarrow G(\cO)\twoheadrightarrow G$,
we see that every $G(\cO)$-orbit in $G(\cO)^{\inv\circ\theta}$
is closed.

\subsubsection{Example: Orbits for $G=\GL_n$}\label{sss:G=GL_n}
We give an alternative proof of Theorem \ref{t:Jin}
using Proposition \ref{p:coset conj classes}.

In this case we can choose $w_0$ to be a permutation matrix,
so that $w_0^2=1$, $t_\lambda=(-1)^\lambda$.
For $\lambda=(\lambda_1,...,\lambda_n)$ satisfying \eqref{eq:lambda},
\[
L_\lambda\simeq\GL_{n_1}\times\GL_{n_2}\times\cdots\times\GL_{n_r}.
\]

Take $a_\lambda=((-1)^{\lambda_1},...,(-1)^{\lambda_{\lfloor n/2\rfloor}},1,...,1)\in L_\lambda$.
It is easy to check that $a$ is in the center of $L_\lambda$.
Note that when $n=2m+1$, $\lambda_{m+1}-=0$.
Thus $(\Ad_{w_0}a)a=(-1)^\lambda$.
Multiplication by $a$ gives a bijection
\[
A_{\lambda,0}\rightarrow A_{\lambda,0}'=\{g_0\in L_\lambda\mid g_0=\Ad_{w_0}g_0^{-1}\}.
\]
Since $a$ commutes with $L_\lambda$,
the induced action of $L_\lambda$ on $A_{\lambda,0}'$ is still given by
$h\cdot g=hg\Ad_{w_0}h^{-1}$.

For $g_0=(g_1,...,g_r)\in A_{\lambda,0}'$
where $g_i\in\GL_{n_i}$,
denote by $w_i$ the permutation matrix of 
the longest element of the Weyl group of $\GL_{n_i}$,
we have $g_i=\Ad_{w_i}g_{r+1-i}^{-1}$.
Let $h=(g_1^{-1},...,g_{\lfloor n/2\rfloor}^{-1},1,...,1)$.

When $r=2m$, we have $h\cdot g_0=1$.
We conclude
\[
G(\cO)\backslash_{\theta}(G(\cO)t^\lambda G(\cO))^{\inv\circ\theta}
\simeq L_\lambda\backslash_\theta A_{\lambda,0}
\simeq L_\lambda\backslash_\theta A_{\lambda,0}'
=1.
\]
It is easy to check that the unique orbit
$t^\lambda a_\lambda w_0$ 
is in the image of $\tau$.

When $r=2m+1$, we have $h\cdot g_0=(1,...,1,g_{m+1},1,...,1)$.
Denote the longest Weyl group element of $\GL_{n_{m+1}}$ by $w_{n_{m+1}}$.
Then $\Ad_{w_0}$ acts on $\GL_{n_{m+1}}$ by $\Ad_{w_{n_{m+1}}}$.
The $L_\lambda$-orbits on $A_{\lambda,0}'$
are in bijection with $w_{n_{m+1}}$-twisted $\GL_{n_{m+1}}$-orbits on $A_{0,0}$ for $\GL_{n_{m+1}}$,
which is the case of \S\ref{sss:lambda=0}.
We conclude that
\[
G(\cO)\backslash_{\theta}(G(\cO)t^\lambda G(\cO))^{\inv\circ\theta}
\simeq L_\lambda\backslash_\theta A_{\lambda,0}
\simeq L_\lambda\backslash_\theta A_{\lambda,0}'
=S_{n_m}\backslash(\pm1)^{n_{m+1}}.
\]
Note that $n_{m+1}$ is the number of $0$ in $\lambda$.
Again, one can verify that elements $t^\lambda a_\lambda(\pm1)^{n_{m+1}}w_{n_{m+1}}w_0$
are in the image of $\tau$.

The above calculation matches with Theorem \ref{t:Jin}.
Moreover,
we obtain
\begin{cor}\label{c:X_theta(F) vs G(F)^inv theta}
	For $G=\GL_n$ and $\theta_0=1$,
	$\tau:G(\cO)\backslash X_\theta(F)\simeq G(\cO)\backslash_\theta G(F)^{\inv\circ\theta}$
	is a bijection.
\end{cor}

\subsubsection{Theorem \ref{t:B(G_c)} for $\lambda=0$.}
\begin{prop}\label{p:lambda=0}
	Theorem \ref{t:B(G_c)} holds for $\lambda=0$.
\end{prop}
\begin{proof}
	In this case $L_\lambda=G$.
	Then $B_0\simeq\{g\in G\mid g\bar{g}=1\}=:B_0'$
	on which $G$ acts by $h\cdot g=hg\bar{h}^{-1}$.
	
	Let $\fg=\fk\oplus\fp$ by eigenspace decomposition with respect to $\eta_c$,
	where $\fk$(resp. $\fp$) has eigenvalue $1$(resp. $-1$).
	Let $K=G_c$.
	Iwasawa decomposition give a diffeomorphism 
	\[
	G\simeq K\times\exp(\fp).
	\]
	
	For $g=k\exp(Y)\in B_0'$, $Y\in\fp$,
	\[
	g\bar{g}=k^2\exp(\Ad_{k^{-1}Y})\exp(-Y)=1
	\Rightarrow k^2\exp(\Ad_{k^{-1}Y})=\exp(Y)
	\Rightarrow k^2=1,\ Y=\Ad_{k}Y.
	\]
	Thus
	$\exp(\frac{1}{2}Y)\cdot g=k$.
	We obtain 
	$L_0\backslash B_0\simeq K\backslash_{\Ad}K[2]$,
	where the latter is the same as $W\backslash T^{\eta_c}[2]\simeq W\backslash T[2]$.
	By Corollary \ref{cor:lambda=0},
	we obtain a bijection
	\[
	L_0\backslash A_{0,0}\simeq L_0\backslash B_0.
	\]
\end{proof}

\subsubsection{Theorem \ref{t:B(G_c)}for $G=\GL_n$}
\begin{prop}\label{p:conj for GLn}
	Theorem \ref{t:B(G_c)} holds for $G=\GL_n$.
\end{prop}
\begin{proof}
	For $G=\GL_n(\bC)$,
	$\eta_c(g)={}^t\bar{g}^{-1}$.
	The permutation matrix $w_0$ satisfies $\eta_c(w_0)=w_0=w_0^{-1}$.
	Similar as in the case of \S\ref{sss:G=GL_n}, 
	we may remove $\lambda(-1)$ in $B_\lambda$,
	so that
	\[
	B_\lambda\simeq\{g_0\in L_\lambda\mid g_0=\Ad_{w_0}{}^t\bar{g}\}.
	\]
	
	The $L_\lambda$-action on $B_\lambda$ is given by
	\[
	h\cdot g_0=hg_0\Ad_{w_0}{}^t\bar{h}.
	\]
	
	Since transpose conjugate preserves blocks in the Levi $L_\lambda$,
	by the same argument in \S\ref{sss:G=GL_n},
	when $L_\lambda$ has even number of blocks,
	$L_\lambda\backslash B_\lambda=1$;
	when $L_\lambda$ has odd number of blocks,
	let $n_m$ be the number of $0$ in $\lambda$,
	then $L_\lambda\backslash B_\lambda$
	is in bijection with $L_0\backslash B_0$ for $\GL_{n_m}$ where $\lambda=0$.
	
	It remains to treat the case of $\lambda=0$.
	Let $g=g_0w_0$. 
	We have
	\[
	B_0\simeq\{g\in\GL_n(\bC)\mid g={}^t\bar{g}\},
	\]
	on which $\GL_{n_m}(\bC)$ acts by
	\[
	h\cdot g=hg{}^t\bar{h}.
	\]
	Then $L_0\backslash B_0$ are just 
	non-singular Hermitian matrices up to Hermitian-congruent,
	which are in bijection with their inertia,
	thus in bijection with Weyl group conjugacy classes of $n$-tuples of $\pm1$.
	By Corollary \ref{cor:lambda=0}, we obtain
	\[
	L_0\backslash A_{0,0}\simeq L_0\backslash B_0.
	\]
	This completes the proof.
\end{proof}
}

\end{document}